\documentclass{article}
\usepackage[margin=1.0in]{geometry}
\usepackage{todonotes}

\usepackage{amsmath, amsthm, thmtools, amssymb, enumerate, multicol, mathrsfs}

\makeatletter
\newcommand{\hangpara}[2]{\hangindent#1\hangafter#2\noindent}
\newenvironment{hangparas}[2]{\setlength{\parindent}{\z@}
  \everypar={\hangpara{#1}{#2}}}{\par}
\makeatother

\usepackage{tikz}
\usetikzlibrary{calc}
\newcommand{\vertex}{
coordinate[draw, shape=circle, inner sep=2]
}

\newcommand{\graphnode}[3]{
\ifthenelse{\equal{#3}{left}}
{\path[ultra thick] #1 coordinate[draw, shape=circle, inner sep=2] (#2) node[#3, xshift=-3] {$#2$};}
{	
	\ifthenelse{\equal{#3}{right}}
	{\path[ultra thick] #1 coordinate[draw, shape=circle, inner sep=2] (#2) node[#3, xshift=3] {$#2$};}
	{     	
		\ifthenelse{\equal{#3}{above}}
		{\path[ultra thick] #1 coordinate[draw, shape=circle, inner sep=2] (#2) node[#3, yshift=3] {$#2$};}
		{  	
			\ifthenelse{\equal{below}{below}}
			{\path[ultra thick] #1 coordinate[draw, shape=circle, inner sep=2] (#2) node[#3, yshift=-3] {$#2$};}
			{{\path[ultra thick] #1 coordinate[draw, shape=circle, inner sep=2] (#2) node[#3] {$#2$};}}
		}
	}	
}
}

\definecolor{mixed}{HTML}{E4E4C9}

\theoremstyle{plain}
\newtheorem{Thm}{Theorem}[section]
\newtheorem{Cor}[Thm]{Corollary}
\newtheorem{Prop}[Thm]{Proposition}

\theoremstyle{definition}
\newtheorem{Exam}[Thm]{Example}


\newcommand{\thmref}[1]{Theorem \ref{#1}}
\newcommand{\corref}[1]{Corollary \ref{#1}}
\newcommand{\propref}[1]{Proposition \ref{#1}}
\newcommand{\examref}[1]{Example \ref{#1}}

\renewcommand{\S}{\mathcal{S}}

\newcommand{\T}{\mathcal{T}}
\newcommand{\Q}{\mathbb{Q}}
\newcommand{\Z}{\mathbb{Z}}
\newcommand{\N}{\mathbb{N}}
\newcommand{\QG}{\Q[G]}
\newcommand{\QS}{\Q[\S]}
\newcommand{\QT}{\Q[\T]}
\newcommand{\I}{\mathscr{I}} 
\newcommand{\B}{\mathscr{B}}
\renewcommand{\H}{\mathscr{H}} 
\renewcommand{\O}{\mathrm{O}}
\newcommand{\K}{\mathrm{K}}
\newcommand{\LO}{\mathrm{LO}}
\newcommand{\RO}{\mathrm{RO}}
\newcommand{\ORO}{\mathrm{ORO}}
\newcommand{\OLO}{\mathrm{OLO}}
\newcommand{\OROP}{\mathrm{OROP}}
\newcommand{\LOZ}{\mathrm{LOZ}}
\newcommand{\LOO}{\mathrm{LOO}}
\newcommand{\LORO}{\mathrm{LORO}}
\newcommand{\CH}{\mathrm{CH}}
\newcommand{\z}{\mathrm{Z}}
\newcommand{\OO}{\mathrm{OO}}
\newcommand{\OCH}{\mathrm{OCH}}
\newcommand{\OZ}{\mathrm{OZ}}
\newcommand{\OG}{\mathrm{OG}}
\newcommand{\s}{\ \mathfrak{s}\ }
\newcommand{\shat}{\ \hat{\mathfrak{s}}_\varphi\ }
\renewcommand{\t}{\ \mathfrak{t}\ }
\renewcommand{\u}{\ \mathfrak{u}\ }
\renewcommand{\r}[1]{\ \mathfrak{r}_{#1}\ }
\newcommand{\rhat}[1]{\ \widehat{\mathfrak{r}_{#1}}\ }
\DeclareMathOperator*{\Aut}{Aut}
\DeclareMathOperator*{\Span}{Span}

\title{On Schur Rings Over Semigroups}
\author{Joseph E. Marrow\footnote{Joseph E. Marrow, Southern Utah University, josephmarrow@suu.edu},  Andrew Misseldine\footnote{Corresponding Author: Andrew Misseldine, Southern Utah University, andrewmisseldine@suu.edu, (telephone) 435-865-8228}}

\date{\today}

\begin{document}
\maketitle

\begin{abstract}
We generalize the idea of a Schur ring of a group to the category of semigroups. Fundamental results of Schur rings over groups are shown to be true for Schur rings over semigroups. Examples where Schur rings differ between the two categories are provided. We prove some results for Schur rings over specific families of semigroups. We consider parallels between  semigroup extensions and their Schur rings. We fully enumerate the Schur rings for all semigroups of orders 0 -- 7, and some statistical analysis is performed. 
\end{abstract}

\textbf{Keywords}:
Schur ring, semigroup, semigroup ring, association scheme\\

\textbf{MSC Classification}:
20m25, 
16s36, 
20c05, 
05e30 
\\

Schur rings are partitions of groups that themselves behave like subgroups. More specifically, Schur rings are combinatorial subrings of the group algebra afforded by partitions of the group. With this perspective on Schur rings, this paper considers the question of whether one can consider a Schur ring over a semigroup, that is, to investigate those partitions of a semigroup that behave like subsemigroups. 

Let $G$ be a set, and let $\S$ be a partition over $G$. We call the elements of $\S$ the \emph{primitive sets} of $\S$ or the \emph{$\S$-classes}. All partitions in this paper are required to have finite primitive sets, even if $G$ is infinite.\footnote{In order for Schur rings over infinite groups to be embedded into the group algebra, as discussed in \cite{InfiniteI}, primitive sets must be finite as to avoid infinite sums.} For $x\in G$, let $X$ denote the unique primitive set in $\S$ containing $x$, that is, we will generally denote elements of $G$ using lower-case Roman letters, such as $x$, and the primitive set in $\S$ containing it by its corresponding upper-case Roman letter, such as $X$. We follow a similar scheme for Greek letters, e.g., the primitive set containing $\theta$ is $\Theta$. For any subset $C\subseteq G$, we say that $C$ is an \emph{$\S$-subset} if $C$ is the union of $\S$-classes.  Let $H, K \subseteq G$, and let $\S$ and $\T$ be partitions of $H$ and $K$, respectively. If all the primitive sets of $\T$ can be written as unions of primitive sets of $\S$ we call $\T$ a \emph{coarsening} of $\S$ and $\S$ a \emph{refinement} of $\T$. Let $\S\wedge \T$ be the \emph{common coarsening} of $\S$ and $\T$, which is the largest partition over $H\cap K$ which is a coarsening of both $\S$ and $\T$. Similarly, let $\S\vee \T$ be the \emph{common refinement} of $\S$ and $\T$, which is the smallest partition over $H\cup K$ which is a refinement of both $\S$ and $\T$. 

Consider now a group $G$ and a partition $\S$ over $G$. Let $X\in \S$. Then $X^* = \{x^{-1}\mid x\in X\}$. Let $\QG$ denote the group ring over $G$ with rational coefficients. For each finite subset $X\subseteq G$, we may identify $X$ with an element of the group ring $\QG$, namely the \emph{simple quantity} $\sum_{x\in X} x\in \QG$. By abuse of notation, we will denote this simple quantity in $\QG$ as $X$ itself. As such, we will use $x$ and $\{x\}$ interchangeably for singletons. When we discuss products of subsets, say $XY = \{xy\mid x\in X,y\in Y\}$, this is always understood as a multiset, where specific elements have a multiplicity within $XY$, so that the multiset $XY$ coincides with the product of simple quantities within $\QG$. In the few instances where we do consider only a product of sets, we will indicate that multiplicities are being ignored.

Let $\QS$ denote the subspace of $\QG$ spanned by the simple quantities in $\S$, that is, $\QS = \Span_\Q(\S) = \Span_\Q\{X\mid X\in \S\}$.  We say $\S$ is a \emph{Schur ring} over the group $G$ if:
\begin{enumerate}
    \item\label{multiply} $\forall X,Y\in \S$, $XY\in \QS$;
    \item\label{identity} $1\in \QS$;
    \item\label{inverse} $\forall X\in \S$, $X^*\in \QS$
\end{enumerate}
The term Schur ring (or S-ring) was coined by Helmut Wielandt \cite{Wielandt64} and originally developed by Issai Schur \cite{Schur33} and Wielandt \cite{Wielandt49}. Schur rings were originally developed as an alternative to characters within representation theory but have become useful tools within algebraic combinatorics, see \cite{KlinPoschel}. 

By definition, $\QS$ is a subring of $\QG$. A quick examination of the axioms of a Schur ring shows parallels between the axioms of a subgroup, that is, the partition associated to a Schur ring is closed under multiplication, identity, and inverses. This observation leads to a simple, useful, but less precise notion of a Schur ring, that is, Schur rings are those partitions of $G$ which behave like subgroups. Under such a perspective, \emph{does it necessarily need to be a group?}\footnote{In some ways, this question has already been considered. Mathematicians have previously studied partitions $\S$ over a group $G$ satisfying axioms \ref{multiply} and \ref{inverse}, from above, which we will refer to as a \emph{pre-Schur ring} \cite{MePhD}. It is well known that the primitive set that contains the identity in a pre-Schur ring is a subgroup of $G$. Furthermore, every primitive set is a union of double cosets of this same subgroup. In other words, each pre-Schur ring is a coarsening of the double coset partition. Refining the partition associated to a pre-Schur ring by separating off the identity as a singleton always forms a Schur ring over $G$ (see Proposition \ref{prop:monoidschur}). Hence, pre-Schur rings generalize the definition of a Schur ring by relaxing axiom \ref{identity}, but, on the other hand, all the pre-Schur rings can essentially be found already within the set of Schur rings. Using the language of this paper, a pre-Schur ring is a Schur ring that is closed under multiplication and inversion, that is, a Schur ring over an inverse semigroup. This correspondence shows that no truly new Schur rings are obtained from a group when its structure is relaxed to the ambient inverse semigroup structure of the group.}

 As a motivating example, consider \label{sym}$S_3$, the symmetric group on three letters. A Schur ring over $S_3$, when viewed as a group, must be closed under multiplication, identity, and inversion. There are ten such Schur rings, listed below:
\begin{multicols}{2}
\begin{enumerate}
    \item $\{1, (123)+(132) +(12)+ (13)+(23)\}$,
    \item $\{1, (12), (13)+(23)+(123)+(132)\}$,
    \item $\{1, (13), (12)+(23)+(123)+(132)\}$,
    \item $\{1, (23), (12)+(13)+(123)+(132)\}$,
    \item $\{1, (123)+(132), (12)+(13)+(23)\}$,
    \item $\{1, (123), (132), (12)+(13)+(23)\}$,
    \item $\{1, (12), (123)+(132), (13)+(23)\}$,
    \item $\{1, (13), (123)+(132), (12)+(23)\}$,
    \item $\{1, (23), (123)+(132), (12)+(13)\}$,
    \item $\{1, (123), (132), (12), (13), (23)\}$.
\end{enumerate}
\end{multicols}

In this paper, we explore the notion of a Schur ring over a semigroup,\footnote{In \cite{Tamaschke70}, Tamaschke introduces the notion of a Schur semigroup, which was an early attempt to extend Schur rings toward infinite objects, as Wielandt's original definition was restricted to finite groups. Tamaschke's Schur semigroups are not the semigroup Schur rings that we discuss here. Speaking of infinite groups, this definition of semigroup Schur ring, following the definition of infinite group Schur ring from \cite{InfiniteI}, allows the semigroup to be finite or infinite.} a natural generalization of groups. Let $G$ be a semigroup. Then we may generalize the above terminology and notation related to Schur rings. Let $\S$ be a partition over $G$. We say that $\QS$ is a \emph{Schur ring} over $G$ if $\QS$ is a subring of the semigroup ring $\QG$. Note that for a subset of $G$ to be a subsemigroup, we only require that it be closed under multiplication, as notions of identity or inverses might be absent over $G$. As such, a Schur ring over the semigroup $G$ is a relaxation of the notion of a Schur ring over a group for which we only retain the first axiom.

Now, we see that there exist additional Schur rings over $S_3$ when viewed as a semigroup. Two such examples are given as:
\[\{1+(12)+(13)+(23)+(123)+(132)\},\quad \{(12)+(123), 1+(132)+(13)+(23)\}. \] In total there are $45$ semigroup Schur rings over $S_3$. This includes the $10$ inherited from the group structure, and shows that there are $35$ additional Schur rings over $S_3$ when viewed as a semigroup.

This generalization to a category with less algebraic structure allows potentially more partitions of a semigroup to yield Schur rings than comparative groups. For example, some semigroups have sufficiently high symmetry to admit all partitions as Schur rings, such as in \thmref{thm:leftnullschur}. The \emph{Bell number} $\B(n)$ denotes the number of partitions over a set of $n$ objects. 
For any semigroup $G$ of order $n$, $\B(n)$ is an upper bound to the number of Schur rings over $G$. As alluded here, there do exist semigroups that obtain this bound, although $\z_2$, the cyclic group of order $2$, is the only group that obtains this bound; see \thmref{leftnullconverse}. On the other hand, other semigroups will have rigid structures which heavily regulate possible Schur rings, such as requiring certain elements only appear in singletons, as in \thmref{thm:nullschur}.  Semigroups which have structures allowing every partition to be admissible, or every partition of a subset to be admissible, cause the Bell numbers to appear frequently when counting the number of semigroup Schur rings. As shown in Table \ref{fig:sgsr4}, only $22$ of the $126$ semigroups of order $4$ have a number of semigroup Schur rings which is not a Bell number. 

We can define analogous Schur rings for other group generalizations, namely a monoid Schur ring is a partition closed under multiplication and identity, and an inverse (semigroup) Schur ring is a partition closed under multiplication and inverses. Clearly, every group Schur ring is both a monoid and inverse Schur ring and all are semigroup Schur rings. In Table \ref{tab:groupsemigroup} we provide the enumeration of Schur rings for each group of order at most $15$ when considered as a group, monoid, and semigroup. We take the values of the numbers of Schur rings attained as a group from \cite{MePhD,Ziv14} and \cite[A384156]{semicount}. We point out that for a fixed group
 \[\text{Number of group Schur rings} \leq \text{Number of monoid Schur rings} \leq \text{Number of semigroup Schur rings}\]
 where the second inequality is strict if the order of the group is not $1$.\label{kl}\label{Cg}

\begin{table}[!ht]
\begin{center}
\begin{tabular}{cccc|cccc|cccc|cccc}
 & G & M & S &  & G & M & S &  & G & M & S & & G & M & S\\
\hline
$\z_1$ & 1 & 1 & 1  & $S_3$ & 10 & 22 & 45    & $\z_9$ & 7 & 7 & 10 & $A_4$ & 52 & 266 & 482\\
$\z_2$ & 1 & 1 & 2  & $\z_7$ & 4 & 4 & 5      & $E_9$ & 40 & 40 & 49 & $D_6$ & 120 & 324 & 532\\
$\z_3$ & 2 & 2 & 3  & $\z_8$ & 10 & 10 & 15   & $\z_{10}$ & 10 & 10 & 15 & $\text{Dic}_{12}$ & 54 & 90 & 138\\
$\z_4$ & 3 & 3 & 5  & $E_8$ & 100 & 100 & 143 & $D_5$ & 25 & 95 & 197 & $\z_{13}$ & 6 & 6 & 7\\
$V_4$ & 5 & 5 & 9 & $\z_4\times \z_2$ & 28 & 28 & 43 & $\z_{11}$ & 4 & 4 & 5 & $\z_{14}$ & 13 & 13 & 19\\
$\z_5$ & 3 & 3 & 4  & $D_4$ & 34 & 66 & 107   & $\z_{12}$ & 32 & 32 & 46 & $D_7$ & 55 & 237 & 505\\
$\z_6$ & 7 & 7 & 11 & $Q_8$ & 25 & 26 & 35    & $\z_6\times\z_2$ & 76 & 76 & 108 & $\z_{15}$ & 21 & 21 & 27
\end{tabular}
\caption{Enumeration of Schur rings for groups of order at most $15$, considered as groups, monoids, and semigroups.}\label{tab:groupsemigroup}
\end{center}
\end{table}

In Section \ref{Schurring}, we explore fundamental concepts of semigroup Schur rings, generalizing many properties from the category of groups, and classify Schur rings over a few families of semigroups. In Section \ref{sec:trad} we discuss the four traditional Schur rings and how they adapt to the category of semigroups. In Section \ref{indecomposable}, we explore how indecomposable elements of a semigroup influence the structure of Schur rings. In Section \ref{sec:roster}, we explore one element extensions of semigroups and how these extensions likewise influence the Schur rings over these semigroups. In Section \ref{gap}, using GAP, we enumerate all Schur rings over semigroups of order $0-7$. Lastly, in Section \ref{sec:questions} we postulate questions of future research.

\section{Semigroup Schur Rings}\label{Schurring}
In this section, we consider and generalize fundamental notions of Schur rings over groups to the category of semigroups, as well as consider novel constructions of Schur rings over semigroups which have no analog for groups. We also classify the set of Schur rings over certain fundamental semigroup families. 

Let $\S$ be a Schur ring over a semigroup $G$.  A subsemigroup $H\le G$ is called an \emph{$\S$-subsemigroup} if $H$ is also an $\S$-subset. Likewise, an ideal $I\trianglelefteq G$ is called an \emph{$\S$-ideal} if $I$ is also an $\S$-subset. Analogous definitions hold for \emph{left $\S$-ideals} and \emph{right $\S$-ideals}.

The following properties of group Schur rings carry over to semigroup Schur rings immediately.

\begin{Prop} Let $G$ be a semigroup and $\S$ is a Schur ring over $G$. Let $X$ be an $\S$-subset. Then the semigroup generated by $X$, namely $\langle X\rangle$, is an $\S$-subsemigroup. Likewise, the ideal generated by $X$, namely $(X)$, is an $\S$-ideal.
\end{Prop}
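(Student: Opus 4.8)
The plan is to reduce both claims to a single structural observation: the support of any element of $\QS$ is an $\S$-subset, and the set product of two $\S$-subsets is again an $\S$-subset. Once these are in hand, $\langle X\rangle$ and $(X)$ are built from $X$ and $G$ purely by taking products and unions, and the result follows.

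First I would record the linear-algebra fact underlying everything. Since the $\S$-classes partition $G$, their simple quantities are supported on disjoint subsets of $G$ and hence are linearly independent; thus they form a basis of $\QS$, and the expansion $v=\sum_{X\in\S}a_X X$ of any $v\in\QS$ is unique, with $a_X$ equal to the coefficient in $v$ of any single element $g\in X$. Consequently every element of a fixed $\S$-class carries the same coefficient in $v$, so the support of $v$ equals $\bigcup_{a_X\neq 0}X$, a union of $\S$-classes, i.e.\ an $\S$-subset.

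Next I would show that if $A,B\in\S$ then the set product $AB=\{ab\mid a\in A,\ b\in B\}$ is an $\S$-subset. Because $A$ and $B$ are finite, the product of simple quantities $A\cdot B=\sum_{g}m_g\,g$ is a genuine element of $\QG$ with nonnegative integer multiplicities $m_g=\lvert\{(a,b)\in A\times B:ab=g\}\rvert$, so no cancellation occurs and the support of $A\cdot B$ is exactly the set $AB$. Closure under multiplication gives $A\cdot B\in\QS$, and by the previous paragraph its support is an $\S$-subset; hence $AB$ is an $\S$-subset. Writing arbitrary $\S$-subsets as unions of $\S$-classes, $C=\bigcup A$ and $D=\bigcup B$, and noting $CD=\bigcup AB$, we conclude that the set product of any two $\S$-subsets is an $\S$-subset, since unions of $\S$-subsets are evidently $\S$-subsets.

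Finally I would assemble the two statements. For the subsemigroup, $\langle X\rangle=\bigcup_{n\geq 1}X^n$; each power $X^n$ is an $\S$-subset by induction using the product result, so $\langle X\rangle$ is an $\S$-subset, and it is a subsemigroup by construction, hence an $\S$-subsemigroup. For the ideal, $G$ itself is an $\S$-subset (the union of all $\S$-classes), so each of $GX$, $XG$, and $GXG$ is a product of $\S$-subsets and therefore an $\S$-subset; thus $(X)=X\cup GX\cup XG\cup GXG$ is a union of $\S$-subsets, an $\S$-subset, and an ideal by construction. I expect the main obstacle to be the support/coefficient-matching step, which carries the combinatorial content. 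The subtlety is that the argument must be run class-by-class on the finite simple quantities $A\cdot B$ so that it remains valid when $G$ is infinite, and the nonnegativity of the multiplicities $m_g$ is essential to rule out cancellation that could shrink the support below $AB$.
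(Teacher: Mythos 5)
Your proposal is correct and follows essentially the same route as the paper: induction on the powers $X^n$ and the union $\langle X\rangle=\bigcup_{n\geq 1}X^n$ for the subsemigroup, and products with $G$ (equivalently, with the $\S$-classes) for the ideal. You are in fact slightly more careful than the paper in two places --- you make explicit the supporting lemma that the set product of two $\S$-subsets is again an $\S$-subset (the paper takes this coefficient/support argument for granted), and you write the ideal as $X\cup GX\cup XG\cup GXG$ rather than the paper's $GXG$, which is the technically correct form when $G$ lacks an identity.
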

\begin{proof}
    As $X$ is an $\S$-subset and $\S$ is closed under multiplication, it follows by induction that $X^n$ is an $\S$-subset for each $n$. Since $\langle X\rangle = \bigcup_{n=1}^\infty X^n$, the subsemigroup generated by $X$ is likewise an $\S$-subset. Likewise, note that $(X) = GXG = \bigcup_{C,D\in \S} CXD$, where each $CXD$ is an $\S$-subset.
\end{proof}

\begin{Prop}\label{prop:subring} Let $G$ be a semigroup and $\S$ is a Schur ring over $G$. Let $H$ be an $\S$-subsemigroup. Then the set of primitive $\S$-sets contained within $H$ forms a Schur ring over $H$, called a Schur subring of $\S$, which we denote as $\S|_H$.
\end{Prop}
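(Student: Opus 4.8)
The plan is to verify the single defining axiom of a semigroup Schur ring, namely closure under multiplication, since over a semigroup that is the only requirement (the span $\Q[\S|_H]$ is automatically a $\Q$-subspace of $\Q[H]$, hence closed under addition, so only the ring product needs checking). First I would observe that $\S|_H$ is genuinely a partition of $H$: because $H$ is an $\S$-subset it is a union of $\S$-classes, and each element of $H$ lies in the unique $\S$-class containing it, which is therefore one of the classes comprising $H$. Hence the primitive $\S$-sets contained in $H$ partition $H$, and each is finite, so it gives a well-defined simple quantity in $\Q[H]$.

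The heart of the argument is the following. Take $X, Y \in \S|_H$, so that $X, Y \subseteq H$ are $\S$-classes. Since $\S$ is a Schur ring over $G$, we have $XY \in \QS$, so we may write $XY = \sum_{Z\in\S} a_Z Z$ with each $a_Z \in \Q$; that is, the coefficient function of $XY$ in $\Q[G]$ is constant on every $\S$-class. I would then exploit the two conditions packaged in ``$\S$-subsemigroup'' separately. Because $H$ is a subsemigroup and $X, Y \subseteq H$, the multiset product $XY$ is supported on $H$, i.e.\ every element of the semigroup occurring with nonzero coefficient in $XY$ lies in $H$. Because $H$ is an $\S$-subset, any $\S$-class meeting $H$ is contained in $H$. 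Combining these: if $a_Z \neq 0$, then the elements of $Z$ all appear with nonzero coefficient in $XY$, hence lie in $H$, and therefore $Z \subseteq H$, i.e.\ $Z \in \S|_H$. Consequently $XY = \sum_{Z \in \S|_H} a_Z Z \in \Q[\S|_H]$, which is exactly closure of $\S|_H$ under multiplication.

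I do not expect any serious obstacle here; the proof is short once the right framing is in place. The only point requiring care, and the step I would highlight, is the clean separation of the two roles played by the hypothesis that $H$ is an $\S$-subsemigroup: subsemigroup-hood controls \emph{where} the support of $XY$ lands, while being an $\S$-subset controls \emph{which} whole $\S$-classes can appear in the expansion. Neither condition alone suffices, and it is their conjunction that forces every class with a nonzero coefficient to be one of the primitive sets of $\S|_H$. Since over a semigroup there is no identity or inverse axiom to check, this completes the verification.
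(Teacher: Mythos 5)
Your proof is correct and follows essentially the same route as the paper's: both expand $XY=\sum_{C\in\S}\lambda_C C$ using that $\S$ is a Schur ring over $G$, note that the product is supported in $H$ because $H$ is a subsemigroup, and conclude that every primitive set with nonzero coefficient lies inside $H$, hence in $\S|_H$. Your write-up merely makes explicit the separation of the two hypotheses (subsemigroup versus $\S$-subset) that the paper's terser proof leaves implicit.
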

\begin{proof}
    As $H$ is an $\S$-subsemigroup, the restriction of $\S$ to only those primitive sets inside $H$, namely $\S|_H$, forms a partition of $H$ itself. Let $X,Y\in \S|_H$. Clearly, $XY\subseteq H$, but also $XY = \sum_{C\in \S} \lambda_CC$. Those primitive sets with nonzero multiplicities appearing in $XY$ are necessarily contained within $H$. This implies that $\S|_H$ is closed under multiplication, that is, $\S|_H$ is a Schur ring over $H$.
\end{proof}

\begin{Prop}\label{prop:singleton} Let $G$ be a semigroup and $\S$ is a Schur ring over $G$. Let $H$ be the subset of $G$ consisting of all singletons in $\S$. Then $H$ is a subsemigroup of $G$.    
\end{Prop}
\begin{proof}
    Let $x,y\in H$. Then $x$, $y\in \S$. As $\S$ is closed under multiplication, the product $xy$ is a linear combination of primitive $\S$-sets, but in this combination the only nonzero multiplicity is the singleton $xy$ itself. Hence, the product $xy$ is a singleton in $\S$, which proves the claim.
\end{proof}

If $G$ is a semigroup, then let \label{Pageop}$G^\text{op}$ denote the \emph{opposite semigroup} of $G$, that is, the semigroup formed by reversing the order of multiplication. In the category of groups, it holds that $G\cong G^{\text{op}}$. In the category of semigroups, a semigroup and is opposite are generally not isomorphic. Of course, there is a natural anti-isomorphism between a semigroup and its opposite. We say two semigroups are \emph{equivalent} if they are isomorphic or anti-isomorphic.

\begin{Prop} Let $G$ be a semigroup and $\S$ is a Schur ring over $G$. Then $\S$ is a Schur ring over $G^\text{op}$.
\end{Prop}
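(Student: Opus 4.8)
The plan is to reduce the single closure axiom for $G^{\text{op}}$ to the closure axiom for $G$ that we already possess, exploiting the fact that reversing the multiplication on simple quantities merely reverses their product. First I would record that $G$ and $G^{\text{op}}$ share the same underlying set, so the partition $\S$, each simple quantity $X = \sum_{x \in X} x$, and hence the subspace $\QS = \Span_\Q(\S)$ are literally unchanged when passing from $\QG$ to $\Q[G^{\text{op}}]$; only the ring multiplication differs. I would write $\circ$ for the product in $\Q[G^{\text{op}}]$, so that $x \circ y = yx$ for all $x, y \in G$.

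The key computation is that for any $X, Y \in \S$, viewed as simple quantities, the product in $\Q[G^{\text{op}}]$ satisfies
$$X \circ Y = \sum_{x \in X}\sum_{y \in Y} x \circ y = \sum_{x \in X}\sum_{y \in Y} yx = YX,$$
where the right-hand side is evaluated in $\QG$ (as a multiset, following our convention that $YX$ coincides with the product of simple quantities). Because $\S$ is a Schur ring over $G$, it is closed under multiplication, so $YX \in \QS$. Hence $X \circ Y \in \QS$, which is precisely the assertion that $\QS$ is a subring of $\Q[G^{\text{op}}]$, i.e., that $\S$ is a Schur ring over $G^{\text{op}}$.

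I expect there to be essentially no obstacle; the only point requiring care is the bookkeeping of multiplicities. Since we treat products of subsets as multisets matching products in the semigroup ring, the identity $X \circ Y = YX$ holds with multiplicities rather than merely as sets, so the coefficient of each primitive set in $X \circ Y$ agrees with its coefficient in $YX$, and closure transfers verbatim. Alternatively, one could argue more structurally: the canonical anti-isomorphism $G \to G^{\text{op}}$ extends $\Q$-linearly to an algebra anti-isomorphism $\QG \to \Q[G^{\text{op}}]$ that fixes every simple quantity and therefore carries the subring $\QS$ of $\QG$ onto the subring $\QS$ of $\Q[G^{\text{op}}]$. Either formulation finishes the proof.
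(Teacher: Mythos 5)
Your proof is correct and follows essentially the same argument as the paper: both write the opposite product as $X\cdot Y = YX$ and invoke closure of $\QS$ under multiplication in $\QG$ to conclude closure in $\Q[G^{\text{op}}]$. The added remarks on multiplicity bookkeeping and the anti-isomorphism formulation are fine but not needed beyond what the paper itself does.
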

\begin{proof}
    Let juxtaposition denote the multiplication in $G$ and $\cdot$ denote the multiplication in $G^\text{op}$. Hence, $x\cdot y = yx$. Let $X,Y\in \S$ be primitive sets. Then $X\cdot Y = YX = \sum_{C\in \S}\lambda_CC$, where the second equality holds since $\S$ is a Schur ring over $G$. As $X\cdot Y$ is a linear combination of primitive $\S$-sets, $\S$ is closed under the multiplication of $G^\text{op}$. Therefore, $\S$ is a Schur ring over $G^\text{op}$.
\end{proof}

As the Schur rings over $G$ and $G^\text{op}$ are identical, when studying Schur rings over a semigroup, it suffices to study semigroups up to equivalence.

For a semigroup $G$ with $\theta\in G$, we say $\theta$ is a \emph{left-zero} if $\theta x=\theta$ for all $x\in G$ and $\theta$ is a \emph{right-zero} if $x\theta = \theta$ for all $x\in G$. The semigroup $G$ has a \emph{zero element} $\theta$ if $\theta$ is both a left- and right-zero. A \emph{zero-semigroup} is a semigroup with a zero element $\theta$. Let $e\in G$. We say $e$ is a \emph{left-identity} if $ex=x$ for all $x\in G$ and $e$ is a \emph{right-identity} if $xe = x$ for all $x\in G$. The semigroup $G$ has an \emph{identity} $e$ if $e$ is both a left- and right-identity. A \emph{monoid} is a semigroup with an identity $e$. 

\begin{Prop}\label{prop:monoidschur} Let $G$ be a monoid with identity $e$, and let $\S$ be a Schur ring over $G$. Then the partition $(\S - \{E\}) \cup \{e, E - e\}$ affords a Schur ring over $G$.
\end{Prop}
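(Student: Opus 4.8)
The plan is to verify the single semigroup Schur ring axiom, multiplicative closure, working at the level of the subalgebra rather than checking products block by block. Write $\T = (\S - \{E\}) \cup \{e, E - e\}$ for the proposed partition, where $E \in \S$ is the block containing the identity $e$. If $E = \{e\}$ already, then $E - e = \emptyset$ and $\T = \S$, so there is nothing to prove; I would therefore assume $|E| > 1$, in which case $\{e\}$ and $E - e$ are genuine nonempty blocks and $\T$ is a bona fide partition of $G$.

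The key structural observation I would record first is that, as simple quantities in $\QG$, we have $E = e + (E - e)$, and hence $\QT = \QS + \Q e$. Indeed, every block of $\S$ other than $E$ lies in both spaces, and adjoining the vector $e$ to $\QS$ also produces $E - e$, since $E \in \QS$ gives $E - e \in \QS + \Q e$. Since $\QS$ is already closed under multiplication (being a Schur ring over $G$) and $\QT$ is visibly a $\Q$-subspace of $\QG$, it suffices to show that $\QT$ is closed under multiplication.

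For the closure computation I would take arbitrary $\alpha, \beta \in \QT$ and write $\alpha = a + \lambda e$, $\beta = b + \mu e$ with $a, b \in \QS$ and $\lambda, \mu \in \Q$. Then
\[
\alpha\beta = ab + \lambda\, eb + \mu\, ae + \lambda\mu\, e^2.
\]
Here the identity property of $e$ does all the work: as $e$ is a two-sided identity of $G$, it is the multiplicative unit of $\QG$, so $eb = b$, $ae = a$, and $e^2 = e$. Thus $\alpha\beta = (ab + \lambda b + \mu a) + \lambda\mu\, e$, where the parenthesized term lies in $\QS$ (as $a, b, ab \in \QS$ and $\QS$ is a subspace closed under products) and the last term lies in $\Q e$. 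Hence $\alpha\beta \in \QS + \Q e = \QT$.

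The only delicate point, and the step I would be most careful about, is the identification $\QT = \QS + \Q e$; this must be derived from $E = e + (E - e)$ rather than assumed, since it is exactly what guarantees that the decomposition $\alpha = a + \lambda e$ exists for every $\alpha \in \QT$. Once that is in hand, the computation is forced purely by the identity axiom, and notably uses nothing about inverses or group structure — which is precisely why this construction persists in the monoid setting.
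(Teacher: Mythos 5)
Your proof is correct, but it is organized differently from the paper's. The paper works at the level of primitive sets: it notes that every $\S$-class is a $\T$-subset (so $\QS\subseteq\QT$), and then verifies the handful of products involving the new blocks directly, computing $(E-e)(E-e)=E^2-2E+e$, $(E-e)C=EC-C$, and $C(E-e)=CE-C$ for $C\in\S$, $C\neq E$, each of which visibly lies in $\QT$. You instead prove the subspace identity $\QT=\QS+\Q e$ and then run a single closure computation for arbitrary elements $\alpha=a+\lambda e$, $\beta=b+\mu e$, using only that $e$ is the unit of $\QG$. The two proofs rest on the same facts (namely $E=e+(E-e)$ and the identity axiom), and the paper's block computations are just your general computation specialized to basis elements; but your framing isolates the real mechanism more cleanly: it exhibits the result as an instance of the general ring-theoretic fact that if $A$ is a (not necessarily unital) subring of a unital algebra $R$ with unit $e$, then $A+\Q e$ is again a subring, with the only Schur-ring-specific content being the identification $\QT=\QS+\Q e$. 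What the paper's approach buys in exchange is the explicit structure constants (e.g., how $(E-e)^2$ decomposes into $\T$-classes), which is the information one typically wants when working with Schur rings concretely, and it stays within the standard convention of checking closure on primitive sets. One small courtesy of your version that the paper omits: you dispose of the degenerate case $E=\{e\}$, where $E-e=\emptyset$ and $\T=\S$.
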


Essentially, one may turn any semigroup Schur ring over a monoid into a monoid Schur ring by separating the identity.

\begin{proof}
    Let $\T = (\S - \{E\}) \cup \{e, E - e\}$. Since $XY\in \QT$ for all $X,Y\in \S$, we note that every $\S$-subset is necessarily a $\T$-subset. Likewise, every primitive $\S$-set is a primitive $\T$-set, except $E$. In that case, $E-e$ and $e$ are primitive $\T$-sets. We note $(E-e)(E-e) = E^2 - 2E + e\in \QT$. If $C\in \S$ such that $C\neq E$, then $(E-e)C = EC-C\in \QT$. Likewise, $C(E-e)\in \QT$. Therefore, $\QT$ is a subring of $\QG$.
\end{proof}

\begin{Thm}\label{thm:zeroschur} Let $G$ be a zero-semigroup, with zero $\theta$, and let $\S$ be a Schur ring over $G$. Then $\theta\in \S$. 
\end{Thm}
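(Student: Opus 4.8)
The plan is to take $\Theta\in\S$ to be the primitive set containing $\theta$, set $n=|\Theta|$, and prove that $n=1$, which is precisely the assertion that $\theta\in\S$. The whole argument is a double-counting analysis of the single product $\Theta\cdot\Theta$. Since $\S$ is a Schur ring, $\Theta\cdot\Theta\in\QS$, so we may write $\Theta\cdot\Theta=\sum_{C\in\S}\lambda_C C$ as a rational combination of simple quantities. The first thing I would record is that this forces every element of a fixed primitive set to occur with the same coefficient in the multiset $\Theta\cdot\Theta$; in particular, each of the $n$ elements of $\Theta$ appears with a single common multiplicity $\lambda_\Theta$.

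Next I would bound $\lambda_\Theta$ from below by computing the coefficient of $\theta$ itself. As a multiset, $\Theta\cdot\Theta=\sum_{s,t\in\Theta} st$, and I count the pairs $(s,t)$ with $st=\theta$. Because $\theta$ is a zero, $\theta t=\theta$ for every $t\in\Theta$ and $s\theta=\theta$ for every $s\in\Theta$; these contribute $n$ pairs of the first kind and $n$ of the second, overlapping only in $(\theta,\theta)$. Hence at least $2n-1$ pairs produce $\theta$, so $\lambda_\Theta\ge 2n-1$.

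For the matching upper bound I would instead sum the coefficients over all of $\Theta$. The quantity $\sum_{u\in\Theta}(\text{coeff of }u\text{ in }\Theta\cdot\Theta)$ equals the number of pairs $(s,t)\in\Theta\times\Theta$ whose product lands in $\Theta$, which is at most the total number $n^2$ of such pairs. Since each of the $n$ summands equals $\lambda_\Theta$, this reads $n\lambda_\Theta\le n^2$, i.e.\ $\lambda_\Theta\le n$. Combining the two bounds gives $2n-1\le\lambda_\Theta\le n$, which forces $n\le 1$ and hence $\Theta=\{\theta\}$.

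The substantive step, and the only place where any genuine idea is needed, is the lower bound: recognizing that the zero property makes $\theta$ absorb a full ``cross'' of $2n-1$ pairs in $\Theta\cdot\Theta$, which strictly exceeds the average multiplicity $n$ as soon as $n\ge 2$. Everything else, namely the common-coefficient observation and the global count of pairs, is bookkeeping, so I do not foresee a real obstacle. The one point to state carefully is that these coefficients are computed as multiplicities in the multiset product, matching the convention fixed earlier for simple quantities.
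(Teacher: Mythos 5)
Your proposal is correct and follows essentially the same argument as the paper: both lower-bound the multiplicity of $\theta$ in $\Theta^2$ by $2|\Theta|-1$ using the absorbing property, transfer that bound to every element of $\Theta$ because $\Theta^2\in\QS$, and then compare against the total of $|\Theta|^2$ products to force $|\Theta|=1$. The only cosmetic difference is that the paper writes $\Theta=\theta+X$ and expands $(\theta+X)^2$ directly, while you count pairs $(s,t)$ with $st=\theta$; the underlying counting is identical.
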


By analog to monoid Schur rings defined above, we can define a zero(-semigroup) Schur ring as a partition closed under multiplication and zero. This result shows that all the semigroup Schur rings over a zero-semigroup are necessarily zero Schur rings.

\begin{proof}
    Let $\Theta = \theta + X$ for some $X\subseteq G$. 
\[\Theta^2 = (\theta + X)(\theta + X) = \theta + \theta X+X\theta + X^2 = (2|X|+1)\theta + X^2.\] 
    Then the multiplicity of $\theta$ in $\Theta^2$ is at least $2|X|+1$. Therefore, every element of $\Theta$ appears in $\Theta^2$ with at least that same multiplicity. This accounts for at least $(2|X|+1)(|X|+1)$ products in $\Theta^2$. Of course, $|\Theta^2| = (|X|+1)^2$. This implies $(2|X|+1)(|X|+1) \le (|X|+1)^2$, which is only possible if $|X|=0$.
\end{proof}

The \label{null}\emph{null semigroup} $\O_n$ is the zero-semigroup of order $n$ such that $xy = \theta$ for all $x,y\in G$.

\begin{Thm}\label{thm:nullschur} Let $\S$ be a partition over the null semigroup $\text{O}_n$, for some positive integer $n$. Then $\S$ is a Schur ring if and only if $\theta\in \S$. The number of Schur rings over $\text{O}_n$ is the Bell number $\B(n-1)$.
\end{Thm}
\begin{proof}
Supposing that $\S$ is  a Schur ring, we take the primitive set $X\in\S$ and note that $X^2 = |X|^2 \theta\in \QS$. Then $\theta$ must be a primitive set. Suppose instead that $\theta\in \S$. If $X,Y\in \S$ are primitive sets, then $XY = |X||Y|\theta$. Hence, $\QS$ is closed under multiplication. As $\theta\in \S$ is the only restriction on the partition of $\O_n$, the number of Schur ring is the number of partitions which can be placed on the remaining elements, namely $\B(n-1)$.
\end{proof}

The \label{lnull}\emph{left-null semigroup} $\LO_n$ is the semigroup of order $n$ such that $xy=x$ for all $x,y\in \LO_n$. Every element of the left-null semigroup is a left-zero and a right-identity. The \label{rnull}\emph{right-null semigroup} $\RO_n$ is defined as $\RO_n = \LO_n^\text{op}$.

\begin{Thm}\label{thm:leftnullschur} Let $\S$ be a partition over the left-null semigroup $\LO_n$, for some positive integer $n$. Then $\S$ is a Schur ring. The number of Schur rings over $\LO_n$ is $\B(n)$.
\end{Thm}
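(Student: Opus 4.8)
The plan is to verify the single Schur ring axiom for semigroups — closure of $\QS$ under multiplication — directly from the defining relation $xy = x$ of $\LO_n$, and then to read off the count as the total number of partitions.

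First I would take an arbitrary partition $\S$ of $\LO_n$ and any two primitive sets $X, Y \in \S$, and compute the product $XY$ as a multiset in the semigroup ring $\QG$. Since $xy = x$ for every $x, y \in \LO_n$, each of the $|X|\,|Y|$ pairs $(x,y)$ with $x \in X$ and $y \in Y$ contributes the element $x$. Collecting terms, each $x \in X$ appears exactly $|Y|$ times, once for each choice of $y \in Y$, so
\[ XY = |Y|\,X. \]
This is a scalar multiple of the primitive set $X$, hence lies in $\QS$.

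Because closure under multiplication is the only requirement for a partition of a semigroup to be a Schur ring, this computation shows that \emph{every} partition of $\LO_n$ is a Schur ring, with no constraint whatsoever on the partition. Consequently the number of Schur rings over $\LO_n$ equals the number of partitions of the underlying $n$-element set, which is by definition the Bell number $\B(n)$.

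I do not anticipate a genuine obstacle here: the entire argument reduces to the one-line multiset computation $XY = |Y|\,X$. The only point requiring care is bookkeeping the multiplicities correctly (the product is a multiset, not a set), together with the contrast against \thmref{thm:nullschur}: in the null semigroup $X^2 = |X|^2\theta$ forces $\theta$ to split off as its own class, whereas here $X^2 = |X|\,X$ collapses back onto $X$ itself, so no element is ever forced into a singleton and the full Bell number $\B(n)$ is attained.
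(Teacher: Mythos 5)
Your proposal is correct and follows essentially the same argument as the paper: both establish closure by computing $XY = |Y|X$ directly from the defining relation $xy = x$, and then conclude that every partition is a Schur ring, giving the count $\B(n)$. Your added remark contrasting this with the null semigroup case is a nice touch but does not change the substance of the argument.
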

\begin{proof}
Let $X,Y\in \S$, such that $X=x_1+x_2+\ldots+x_k$ for some $k\in \N$. Then
\begin{multline*}
    XY = (x_1+x_2+\ldots+x_k)Y = x_1Y+x_2Y+\ldots+x_kY = |Y|x_1+|Y|x_2+\ldots+|Y|x_k\\ 
    = |Y|(x_1+x_2+\ldots+x_k) = |Y|X\in \QS.
\end{multline*}  Therefore, $\QS$ is a Schur ring.
\end{proof}

The converse of \thmref{thm:leftnullschur} also holds for almost all semigroups. The cyclic group $\z_2$, viewed as a semigroup, has exactly two Schur rings.

\begin{Thm}\label{leftnullconverse} Let $G$ be a finite semigroup of order $n\neq 2$. Then $G$ is equivalent to $\LO_n$ if and only if the number of Schur rings over $G$ is $\B(n)$.
\end{Thm}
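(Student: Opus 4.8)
The plan is to use the fact that the $\B(n)$ partitions of $G$ are the only candidates for Schur rings, so that the number of Schur rings equals $\B(n)$ exactly when \emph{every} partition of $G$ is a Schur ring. The reverse implication is then immediate: if $G$ is equivalent to $\LO_n$, then either $G\cong\LO_n$ and \thmref{thm:leftnullschur} gives $\B(n)$ Schur rings, or $G\cong\RO_n=\LO_n^{\mathrm{op}}$, in which case $G$ and $\LO_n$ share exactly the same Schur rings (by the proposition that a partition is a Schur ring over a semigroup iff it is one over the opposite semigroup), again giving $\B(n)$. Since isomorphic semigroups manifestly have equinumerous Schur rings, this settles one direction, and the whole content is the forward implication: assuming every partition of $G$ is a Schur ring, I must recover the left-null (or right-null) multiplication table.

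Step one is to show that multiplication satisfies $xy\in\{x,y\}$ for all $x,y$ (in particular every element is idempotent). Fix $x,y$ and set $w=xy$; I test the partition whose only nontrivial block is a well-chosen pair. If $w\notin\{x,y\}$ and $n\ge 4$, I pick $d\notin\{x,y,w\}$ and consider the partition with single block $\{w,d\}$ and all else singletons; then $x,y$ are singletons but their product $w$ fails to lie in $\QS$, since the block $w+d$ forces equal coefficients on $w$ and $d$, preventing the lone element $w$ from belonging to the span. This contradicts that this partition is a Schur ring. The case $n=3$ is the delicate one: a product of two distinct elements might equal the third, and no external $d$ is available. There I first establish idempotency from pair partitions, then use the $X^2\in\QS$ constraint to force $ab=ba$ whenever $ab$ equals the third element, and finally show via associativity that this third element becomes a zero of $G$; \thmref{thm:zeroschur} then forbids any partition that does not isolate it, contradicting the standing assumption. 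Thus in all cases $xy\in\{x,y\}$.

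Step two extracts the type of each pair. For distinct $a,b$, using Step one we have $ab,ba\in\{a,b\}$, so with $X=a+b$ idempotency gives $X^2=a+ab+ba+b$; membership in $\QS$ demands equal coefficients on $a$ and $b$, and a short count forces $\{ab,ba\}=\{a,b\}$, i.e.\ either $ab=a,ba=b$ (call the pair \emph{left-type}, as every internal product is the left factor) or $ab=b,ba=a$ (\emph{right-type}). Step three propagates the type: testing $Xc\in\QS$ for an external singleton $c$ forces $ac=a\iff bc=b$, which says the pairs $\{a,c\}$ and $\{b,c\}$ have the same type. For $n\ge 3$ this means all pairs through a fixed element share one type, and chaining over elements forces a single global type across all pairs. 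A global left-type makes $xy=x$ for all $x,y$, so $G\cong\LO_n$; a global right-type gives $G\cong\RO_n$. Hence $G$ is equivalent to $\LO_n$, and the cases $n\in\{0,1\}$ are trivial since the unique semigroup of that order is already $\LO_n$.

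The main obstacle I anticipate is precisely the low-order case $n=3$: for $n\ge 4$ the ``escape'' product $xy\notin\{x,y\}$ is killed cleanly by a free external element, but in order $3$ one must rule out the commutative semilattice in which two elements multiply to the third (a genuine associative semigroup), and this requires invoking \thmref{thm:zeroschur} rather than a pure coefficient count. A secondary point needing care is the bookkeeping of the coefficient obstructions in Steps one through three, and confirming that the global-type argument is valid exactly when an external element exists, which is why $n=2$ is excluded (there $\z_2$ also realizes $\B(2)$).
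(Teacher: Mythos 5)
Your proposal is correct, and it reaches the conclusion by a genuinely different route than the paper. The paper's proof outsources the hard cases to computation: it assumes $n\ge 5$ (citing the exhaustive enumeration of Section \ref{gap} for orders $3$ and $4$), uses \propref{prop:singleton} to show every subset of size at most $n-2$ is a subsemigroup, uses \propref{prop:subring} to restrict Schur rings to these subsemigroups, and then invokes the classification in Table \ref{fig:sgsr3} to identify every $3$-element subsemigroup (on which all partitions are Schur rings) as equivalent to $\LO_3$, after which a fixed orientation propagates to all of $G$. You instead re-derive the needed local structure by hand: the coefficient obstruction with an external element $d$ forces $xy\in\{x,y\}$ (and idempotency), the $X^2 = a+ab+ba+b$ count forces each pair to be left-type or right-type, and the $Xc$ count propagates a single global type, yielding $\LO_n$ or $\RO_n$; your separate treatment of $n=3$, where the would-be counterexample is the semilattice $\K_{1,2}$ killed by \thmref{thm:zeroschur}, is exactly the point the paper instead settles by table lookup. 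What your approach buys is self-containment: it eliminates all dependence on the GAP enumeration and works uniformly for $n\ge 3$, effectively reproving the relevant fragment of the order-$3$ and order-$4$ classification. What the paper's approach buys is brevity given machinery it already has (the tables and the two propositions), and its subsemigroup-restriction idea generalizes to other "all partitions are Schur rings on subsets" arguments, as in \thmref{bnminus}. Both proofs handle the excluded case identically in spirit: $n=2$ fails because $\z_2$ also attains $\B(2)$, which is precisely where your Step one (needing an external element) breaks down.
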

\begin{proof}
   Necessity is already handled by \thmref{thm:leftnullschur}. So we assume that $G$ has $\B(n)$ many Schur rings. Based upon the enumeration conducted in Section \ref{gap}, we may assume that $|G|\ge 5$.
        
    As there are exactly $\B(n)$ partitions over $G$, every partition over $G$ is a Schur ring. Let $H\subseteq G$ such that $|H|\le n-2$, that is, $|G - H| \ge 2$. Then $\S = H\cup \{G - H\}$ forms a partition over $G$, more specifically, the partition on $H$ is discrete while its complement is a single primitive set. By assumption, $\S$ is a Schur ring over $G$. By \propref{prop:singleton}, $H$ is an $\S$-subsemigroup, that is, $H\le G$. Hence, all subsets of $G$, except maybe those of cardinality $n-1$, are subsemigroups of $G$.

    Continuing, let $\T$ be a partition of $H$. Let $\S = \T\cup \{G - H\}$. Then $\S$ is a Schur ring over $G$. Likewise, $\T=\S|_H$ is a Schur ring over $H$, by \propref{prop:subring}. Therefore, every partition of $H$ is a Schur ring over $H$ for any subsemigroup of $G$, when $|H|\le n-2$. 

    Let $z,y,z\in G$. Then $H=\{x,y,z\}$ is a subsemigroup of $G$ of order $3$ for which every partition is a Schur ring over $H$. The only order $3$ semigroups with this property are equivalent to $\LO_3$, by Table \ref{fig:sgsr3}. Without the loss of generality, we may suppose that $xy = x$. Then $H\cong \LO_3$, and $yz = yy = y$, and $zy = zz = z$. As $y,z$ are arbitrary, this shows that multiplication in $G$ is projection onto the first factor, that is, $G\cong \LO_n$.
\end{proof}

Given a semigroup $G$, a monoid can be formed by adjoining a new element $e$ to $G$, 
that is, $xe=ex=x$ for all $x\in G$. Let \label{monoid}$G^e=G\cup e$ be this monoid. Note that if $G$ is already a monoid, then this new identity $e$ supersedes the old identity in $G$. Similarly, a zero-semigroup can be formed adjoining a new zero element $\theta$ to $G$, that is, $x\theta  = \theta x = \theta$ for all $x\in G$. We denote this by \label{theta}$G^\theta = G\cup \theta$. Again, if $G$ is already a zero-semigroup, then adjoining $\theta$ adds a new zero to $G$ which supersedes the old zero. These two extensions of semigroups have direct consequences for Schur rings.

\begin{Thm}\label{thm:monoidSchur} Let $G$ be a semigroup and let $\S$ be a Schur ring over $G^e$. Then $e\in \S$. It follows that the Schur rings over $G^e$ are in one-to-one correspondence with the Schur rings over $G$. 
\end{Thm}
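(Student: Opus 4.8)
The plan is to prove two things: first, that any Schur ring $\S$ over $G^e$ must contain $e$ as a singleton; and second, that this forces a clean bijection with Schur rings over $G$. The structure closely mirrors \thmref{thm:zeroschur}, where the zero element was forced into its own singleton by a multiplicity-counting argument; here I expect the adjoined identity $e$ to play the dual role.

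First I would show $e\in \S$. Let $E$ be the primitive set containing $e$, and write $E = e + X$ where $X = E - e$. Because $e$ is the identity of $G^e$, multiplication by $e$ fixes every element, so I would compute $E^2 = (e+X)(e+X) = e + eX + Xe + X^2 = e + 2X + X^2$. The key observation is that $e$ appears in $E^2$ only from the product $e\cdot e$, giving $e$ multiplicity exactly $1$ in $E^2$ (since no product $x\cdot y$ with $x$ or $y$ in $X\subseteq G$ can equal $e$, as $e$ is a freshly adjoined identity not in the image of $G^e\times G^e \setminus \{(e,e)\}$ — this is where I must check that $e$ is genuinely not a product of other elements). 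Since $\S$ is a Schur ring, $E^2\in\QS$, so $E^2$ must be a nonnegative-integer combination of primitive sets, and in particular the coefficient of each element of $E$ must be equal. But $e\in E$ has coefficient $1$ while every $x\in X$ has coefficient at least $2$ (from the $2X$ term). This contradiction forces $X=\emptyset$, hence $E=\{e\}$ and $e\in\S$.

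Next I would set up the bijection. Once $e$ is known to be a singleton $\S$-class, I claim the remaining classes partition $G = G^e - e$, and that deleting the class $\{e\}$ yields a Schur ring over $G$; conversely, given any Schur ring $\T$ over $G$, adjoining $\{e\}$ as a new singleton class yields a Schur ring over $G^e$. For the forward direction I would verify that $\S - \{e\}$ is closed under the multiplication of $G$: for primitive sets $C,D\subseteq G$, the product $CD$ computed in $G^e$ coincides with the product in $G$ (no factor equals $e$, so the identity plays no role), and the resulting combination of $\S$-classes cannot involve $\{e\}$ since products of elements of $G$ stay in $G$. For the reverse direction, adjoining $\{e\}$ to $\T$: products $CD$ of $G$-classes are unchanged, while $eC = Ce = C$ and $e\cdot e = e$, all of which lie in the span, so closure holds. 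These two assignments are mutually inverse, giving the one-to-one correspondence.

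The main obstacle, and the step deserving the most care, is the claim that $e$ has multiplicity exactly $1$ in $E^2$ — equivalently, that the adjoined identity $e$ is \emph{indecomposable}, meaning $e = ab$ forces $a=b=e$. This is precisely the content of ``adjoining a \emph{new} element,'' and it is what distinguishes $G^e$ from a semigroup that already happened to contain an identity: if $e$ were a product of other elements, the multiplicity count would collapse and $e$ need not be separated (indeed \propref{prop:monoidschur} shows that for a pre-existing identity one can only \emph{refine} to separate it, not conclude it is automatically separate). So I would state and use the indecomposability of $e$ explicitly, and everything else is routine verification of the ring-closure axioms.
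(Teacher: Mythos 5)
Your proposal is correct and follows essentially the same route as the paper's proof: the identical multiplicity-counting argument in $E^2$ (the adjoined $e$ has multiplicity exactly $1$ since $e=e^2$ is its only factorization, while any other $x\in E$ has multiplicity at least $2$ from $x=ex=xe$), followed by the same mutually inverse maps $\T\mapsto \T\cup\{e\}$ and $\S\mapsto \S|_G$ for the bijection. Your explicit emphasis on the indecomposability of the freshly adjoined $e$, and the contrast with \propref{prop:monoidschur} for pre-existing identities, is exactly the point the paper's proof relies on.
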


Essentially, all the Schur rings over $G$ are the same as the Schur rings over $G^e$.

\begin{proof}
    Since $e\in E^2$, we have $E\subseteq E^2$. Suppose that $x\in E$ such that $x\neq e$. Then $x$ appears in $E^2$ with a multiplicity of at least $2$, since $x = ex=xe$. On the other hand, the multiplicity of $e$ in $E^2$ is exactly $1$ since the only factorization of $e$ is $e=e^2$. Since $e$ and $x$ have different multiplicities in $E^2$, it must be that $x\notin E$, a contradiction. Therefore, $E=e$, that is, $e\in \S$.

    Let $\S$ be a Schur ring over $G$. Define $\S^e = \S\cup \{e\}$, which is a partition over $G^e$. For all $X,Y\in \S$, we have $XY\in \QS$, which implies that $XY\in \Q[\S^e]$. Likewise, $eX=Xe=X\in \QS \le \Q[\S^e]$. Thus, $\Q[\S^e]$ is a Schur ring. Conversely, if $\T$ is a Schur ring over $G^e$, then $e\in \T$. Thus, $G$ is a $\T$-subsemigroup. Hence, $\T|_G$ is a Schur ring over $G$. As these two mappings between Schur rings over $G$ and $G^e$, namely $\S\mapsto \S^e$ and $\T\mapsto \T|_G$, are inverses they establish a one-to-one correspondence. 
\end{proof}

By the same argument used in the proof of Theorem \ref{thm:monoidSchur}, the following corollary is immediate from Theorem \ref{thm:zeroschur}. 

\begin{Cor}\label{cor:thetaschur} Let $G$ be a semigroup. Then the Schur rings over $G^\theta$ are in one-to-one correspondence with the Schur rings of $G$.
\end{Cor}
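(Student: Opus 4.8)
The plan is to reproduce the structure of the proof of \thmref{thm:monoidSchur}, with \thmref{thm:zeroschur} supplying the key rigidity that was previously provided by the multiplicity argument forcing $e$ into its own class. Since $G^\theta$ is a zero-semigroup with zero $\theta$, \thmref{thm:zeroschur} immediately guarantees that $\theta\in \S$ as a singleton primitive set for every Schur ring $\S$ over $G^\theta$. This is precisely the analog of the first conclusion of \thmref{thm:monoidSchur}, so no separate multiplicity computation is needed here; the theorem does that work for us.

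With this in hand, I would define the two maps. Going forward, for a Schur ring $\S$ over $G$, set $\S^\theta = \S\cup \{\theta\}$, which is a partition of $G^\theta$. To check it is a Schur ring I would verify closure on products of primitive sets: for $X,Y\in \S$ we already have $XY\in \QS\subseteq \Q[\S^\theta]$, while the products involving $\theta$ are $\theta X = X\theta = |X|\theta$ and $\theta^2 = \theta$, all of which lie in $\Q[\S^\theta]$. Going backward, for a Schur ring $\T$ over $G^\theta$, take $\T\mapsto \T|_G$. Here I would use that $\theta\in \T$ (from \thmref{thm:zeroschur}) to conclude that $G = G^\theta\setminus\{\theta\}$ is a $\T$-subset; since $G$ is also a subsemigroup of $G^\theta$, it is a $\T$-subsemigroup, and \propref{prop:subring} then yields that $\T|_G$ is a Schur ring over $G$.

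Finally, I would observe that these two assignments are mutually inverse: restricting $\S^\theta$ back to $G$ recovers $\S$, since the only class removed is the adjoined singleton $\theta$, and adjoining $\theta$ to $\T|_G$ recovers $\T$, since $\theta$ was already a singleton class of $\T$. This establishes the bijection. I do not anticipate a genuine obstacle: the only point requiring care is confirming that $\theta$ is forced to be a singleton, and that is exactly what \thmref{thm:zeroschur} asserts, which is why the corollary follows immediately. The remaining verifications are the same routine closure checks as in \thmref{thm:monoidSchur}.
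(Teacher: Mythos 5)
Your proposal is correct and is essentially the paper's own argument: the paper proves this corollary precisely by remarking that the correspondence argument of \thmref{thm:monoidSchur} carries over verbatim once \thmref{thm:zeroschur} forces $\theta$ to be a singleton primitive set in every Schur ring over the zero-semigroup $G^\theta$. Your closure checks ($XY\in \QS$, $\theta X = X\theta = |X|\theta$, $\theta^2=\theta$), the use of \propref{prop:subring} for the restriction map, and the observation that the two maps are mutually inverse are exactly the details the paper leaves implicit.
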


\begin{multicols}{2}
Let $\K_{m,n}$ denote the \emph{complete bipartite graph} with block sizes $m$ and $n$. The graph $\K_{1,n} = \{\theta\} \cup \{1,2, \ldots, n\}$ can be given a natural associative multiplication where, for all $x,y\in \K_{1,n}$, $x^2=x$ and $xy=yx=\theta$ whenever $x\neq y$. This makes \label{bipart}$\K_{1,n}$ into a semilattice. Despite all elements being idempotent, $\K_{1,n}$ behaves algebraically very similar to $\O_{n+1}$. This is manifested by their Schur rings. \columnbreak

\begin{center}
\begin{tikzpicture}
\def\h{1}
\node (0) at (0, 0) {$\theta$};
\node (2) at (-0.5, -\h) {$2$};
\node (1) at (-1.5, -\h) {$1$};
\node (3) at (0.5, -\h) {$3$};
\node (4) at (1.5, -\h) {$4$};

\draw[-] (1) -- (0) -- (2);
\draw[-] (3) -- (0) -- (4);
\end{tikzpicture}\\[0.5cm]
The complete bipartite graph $\K_{1,4}$
\end{center}
\end{multicols}

\begin{Thm}\label{thm:bipartiteschur}
Let $\S$ be a partition over the semilattice $\K_{1,n}$ for positive integer $n$. Then $\S$ is a Schur ring if and only if $\theta\in \S$. The number of Schur rings over $\K_{1,n}$ is the Bell number $\B(n)$.
\end{Thm}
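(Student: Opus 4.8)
The plan is to follow closely the structure of the proof of \thmref{thm:nullschur}, exploiting the fact that $\K_{1,n}$ is a zero-semigroup whose zero is $\theta$. Indeed, one first observes that $\theta$ is a genuine zero element: for any $x$, the product $\theta x$ equals $\theta^2 = \theta$ when $x = \theta$ and equals $\theta$ (since $\theta \neq x$) otherwise, and symmetrically for $x\theta$. Consequently the forward implication is free: if $\S$ is a Schur ring, then \thmref{thm:zeroschur} immediately forces $\theta \in \S$.

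For the converse I would assume $\theta \in \S$ and verify closure under multiplication directly by computing $XY$ for arbitrary primitive sets $X, Y \in \S$. The crucial structural feature is that in $\K_{1,n}$ every element is idempotent while the product of two distinct non-$\theta$ elements collapses to $\theta$; combined with the fact that distinct primitive sets of a partition are disjoint, this makes every product of simple quantities reduce to a combination of a single primitive set and $\theta$. Concretely, I would split into cases: when one factor is $\{\theta\}$, the product is a scalar multiple of $\theta$; when $X = Y$ with neither equal to $\{\theta\}$, the diagonal terms ($x = y$) contribute $X$ itself while the off-diagonal terms contribute copies of $\theta$, giving $X^2 = X + |X|(|X|-1)\theta$; and when $X \neq Y$ are distinct and neither is $\{\theta\}$, disjointness kills all diagonal terms so that $XY = |X||Y|\theta$. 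In each case the result lies in $\QS$ because both $X$ and $\theta$ are primitive sets.

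Finally, for the enumeration I would note that the only constraint on $\S$ imposed by being a Schur ring is that $\theta$ form its own singleton class; the remaining $n$ elements $1, 2, \ldots, n$ may be partitioned arbitrarily. Hence the Schur rings over $\K_{1,n}$ are in bijection with the partitions of an $n$-element set, of which there are $\B(n)$. I do not anticipate a serious obstacle: the argument is essentially a bookkeeping computation, and the only point requiring care is the case analysis in the converse, where one must remember that the disjointness of distinct primitive sets is precisely what forces the cross terms to vanish. This is exactly the feature that makes $\K_{1,n}$ mimic the null semigroup $\O_{n+1}$, whose Schur ring count is likewise $\B(n)$.
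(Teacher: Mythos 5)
Your proposal is correct and matches the paper's proof essentially verbatim: the forward direction via \thmref{thm:zeroschur}, the converse via the same two computations $X^2 = X + |X|(|X|-1)\theta$ and $XY = |X||Y|\theta$ for disjoint non-$\theta$ classes, and the count $\B(n)$ from freely partitioning the $n$ non-zero elements. No gaps; your extra case for a factor equal to $\{\theta\}$ is just bookkeeping the paper leaves implicit.
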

\begin{proof}
    As $\K_{1,n}$ is a zero-semigroup, by \thmref{thm:zeroschur}, $\theta$ is a primitive set of $\S$. Let $X,Y\subseteq \K_{1,n} - \theta$ such that $X\cap Y=\emptyset$. Then $X^2 = X + |X|(|X|-1)\theta$ and $XY = |X||Y|\theta$. Therefore, any partition of $\K_{1,n}$ containing $\theta$ is a Schur ring. 
\end{proof}

Suppose that $G$ is a semigroup with ideal $I$. Then the partition $\{I\} \cup (G-I)$ forms a semigroup, denoted $G/I$ and called the \emph{Rees quotient semigroup}. All products in $G/I$ are identical to products in $G$ except those involving elements of $I$, in which case all elements of $I$ are considered equivalent. Hence, $G/I$ is a zero-semigroup with zero element $I$. We call $G$ an \emph{ideal extension} of $I$ by $G/I$. 

For any semigroup $G$ and positive integer $n$, we define a semigroup \label{og}$\OG_n = \O_n\cup G$ which extends the multiplication of $\O_n$ and $G$,\footnote{When $G$ itself utilizing a subscript in its notation, we place both subscripts after $G$, where the first describes $\O_n$ and the second describes $G$, for example, \label{oz2}$\OZ_{3,2} = \O_3\cup \Z_2$ is the ideal extension of $\O_3$ by $\z_2^\theta$.} such that for $x\in \O_n$ and $g\in G$ we have $xg=\theta$ (the zero element of $\O_n$) and $gx=x$. It is routine to check for associativity. Note that $\O_n$ is an ideal in $\OG_n$ and $\OG_n/\O_n \cong G^\theta$, and $\OG_1=G^\theta$. It is clear to see that if $\S$ and $\T$ are Schur rings over semigroups $\O_n$ and $G$, respectively, then $\S\vee \T$ is a Schur ring over $\OG_n$. 
When \label{oro}$G=\RO_n$, we denote $\OG_m$ as $\ORO_{m,n}$.

\begin{Thm}\label{thm:oro} Let $\S$ be a partition over $\ORO_{m,n}$ for positive integers $m,n$. Then $\S$ is a Schur ring if and only if $\theta\in \S$. The number of Schur rings over $\ORO_{m,n}$ is the Bell number $\B(m+n-1)$.
\end{Thm}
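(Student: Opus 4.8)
The plan is to reduce both assertions to a transparent description of the multiplication in $\ORO_{m,n}$. First I would observe that $\theta$, the zero of $\O_m$, is a zero element for the entire semigroup: for $x\in\O_m$ we have $\theta x=x\theta=\theta$ inside $\O_m$, while for $g\in\RO_n$ the extension rules give $\theta g=\theta$ (the rule $xg=\theta$ with $x=\theta$) and $g\theta=\theta$ (the rule $gx=x$ with $x=\theta$). Hence $\ORO_{m,n}$ is a zero-semigroup, and necessity of $\theta\in\S$ is immediate from \thmref{thm:zeroschur}, disposing of one direction at once.

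For sufficiency, the key step is to record that the product $uv$ in $\ORO_{m,n}$ depends only on the left factor $u$: checking the four cases of the extension against the multiplications of $\O_m$ and $\RO_n$ shows that $uv=\theta$ whenever $u\in\O_m$ and $uv=v$ whenever $u\in\RO_n$, for every $v$. Assuming $\theta\in\S$, I would then fix primitive sets $X,Y\in\S$, split $X$ along $X\cap\O_m$ and $X\cap\RO_n$, and compute
\[ XY = |X\cap\O_m|\,|Y|\,\theta \;+\; |X\cap\RO_n|\,Y. \]
Since $\theta$ is a singleton of $\S$ and $Y\in\S$, both summands lie in $\QS$, so $\QS$ is closed under multiplication and $\S$ is a Schur ring. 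The enumeration then follows immediately: a partition of $\ORO_{m,n}$ is a Schur ring precisely when it keeps $\theta$ as a singleton, and the remaining $m+n-1$ elements may be partitioned arbitrarily, yielding $\B(m+n-1)$ Schur rings.

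I expect no genuine obstacle, the one place needing care being the verification that multiplication depends only on the left factor, where one must not conflate the annihilating behaviour of $\O_m$ (giving $uv=\theta$) with the left-identity behaviour of $\RO_n$ (giving $uv=v$). Once that rule is pinned down, the product formula and the count are routine. I would emphasize that this result is strictly stronger than the earlier remark that $\S\vee\T$ is a Schur ring over $\OG_m$, because here a single primitive set may freely mix elements of $\O_m$ with elements of $\RO_n$; it is exactly the clean left-factor description of multiplication that renders such mixed classes harmless.
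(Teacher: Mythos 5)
Your proof is correct, and your sufficiency half coincides with the paper's: the same splitting $X = (X\cap\O_m)+(X\cap\RO_n)$, the same product formula $XY = |X\cap\O_m|\,|Y|\,\theta + |X\cap\RO_n|\,Y$, and the same conclusion that keeping $\theta$ as a singleton leaves the remaining $m+n-1$ elements free, giving $\B(m+n-1)$. Where you genuinely diverge is the necessity direction. The paper does not route through \thmref{thm:zeroschur}; it runs a self-contained multiplicity count on the primitive set $\Theta$ containing $\theta$: writing $\Theta = \Theta_\O + \Theta_{\RO}$, it computes $\Theta^2 = |\Theta_\O||\Theta|\theta + |\Theta_{\RO}|\Theta$, notes that primitivity forces every element of $\Theta$ to carry the coefficient $|\Theta_\O||\Theta|+|\Theta_{\RO}|$ that $\theta$ carries, and compares the resulting total $\left(|\Theta_\O||\Theta|+|\Theta_{\RO}|\right)|\Theta|$ against the $|\Theta|^2$ available products to force $|\Theta|=1$. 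You instead observe that $\theta$ is a two-sided zero of the whole semigroup ($\theta g=\theta$ from the rule $xg=\theta$, and $g\theta=\theta$ from the rule $gx=x$), so $\ORO_{m,n}$ is a zero-semigroup and \thmref{thm:zeroschur} gives $\theta\in\S$ immediately. Your route is shorter and is consistent with the paper's own practice elsewhere — the proof of \thmref{thm:bipartiteschur} invokes \thmref{thm:zeroschur} in exactly this way, and Section \ref{sec:trad} explicitly lists $\ORO_{m,n}$ among the semigroups governed by that theorem — whereas the paper's inline count keeps the proof self-contained and previews the counting technique reused in \thmref{thm:og}. Both arguments are sound, and your left-factor description of the multiplication (left factor in $\O_m$ annihilates, left factor in $\RO_n$ acts as a left identity) is exactly the observation that makes mixed primitive sets harmless.
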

\begin{proof}
    For any subset $X\subseteq \ORO_{m,n}$, define $X_\O = X\cap \O_n$ and $X_\RO=X\cap \RO_m$. Let $\theta$ be the zero of $\O_n$. Hence, $\Theta\in S$ is the primitive set containing $\theta$. Let $X, Y\in \S$ be primitive sets. Then $XY = (X_\O+X_\RO)Y = X_\O Y + X_\RO Y = |X_\O||Y|\theta + |X_\RO|Y$. Thus, if $\theta\in \S$, then $\QS$ is a Schur ring. 
    
    Conversely, if $\QS$ is a Schur ring, then $\Theta^2 = (\Theta_\O+\Theta_\RO)^2 = |\Theta_\O||\Theta|\theta + |\Theta_\RO\Theta$. As $\theta\in \Theta$, the multiplicity of $\theta$ in $\Theta^2$ is $|\Theta_\O||\Theta|+|\Theta_\RO|$, which says that all elements in $\Theta$ have this same multiplicity in $\Theta^2$. As there are only $|\Theta|^2$ products in $\Theta^2$, this multiplicity would require $(|\Theta_\O||\Theta|+|\Theta_\RO|)|\Theta|$ products. The only solution to the equality $|\Theta|^2=(|\Theta_\O||\Theta|+|\Theta_\RO|)|\Theta|$ is $|\Theta|=1$, that is, $\Theta = \theta$.
\end{proof}
 
Note that $\RO_m$ and $\LO_m$ are equivalent, which implies they have the same number of Schur rings, but $\ORO_{m,n}$ and \label{olo}$\OLO_{m,n}$ generally are not equivalent and will have a very different number of Schur rings. 

\begin{Thm}\label{thm:og} Let $G$ be a finite semigroup such that the regular right action of $G$ onto itself is a permutation action. Let $\S$ be a partition over $\OG_n$ for some positive integer $n$. Then $\S$ is a Schur ring if and only if $\S$ is the common refinement of a Schur ring over $\O_n$ and a Schur ring over $G$, or there exists some $X\in \S$ such that $G\subseteq X$ and $\theta\in \S$. If $\label{omega}\Omega(G)$ is the number of semigroup Schur rings over $G$, then \[\Omega(\OG_n) = \Omega(G)\B(n-1) + \sum_{k=1}^{n-1} \dbinom{n-1}{k}\B(n-k-1).\]
\end{Thm}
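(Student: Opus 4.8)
The plan is to first pin down the role of the zero and then to split into two structural cases according to whether a primitive set meets both $G$ and $\O_n\setminus\theta$. Since $\theta$ is a two-sided zero of all of $\OG_n$ (one checks $g\theta=\theta g=\theta$ and $\theta x=x\theta=\theta$), the semigroup $\OG_n$ is a zero-semigroup, so \thmref{thm:zeroschur} immediately forces $\theta$ to be its own primitive set $\Theta$ in any Schur ring $\S$; in particular the condition $\theta\in\S$ in case (b) is automatic. The basic computational tool will be the product formula obtained by writing each primitive set $C$ as $C=C_G+C_\O$ with $C_G=C\cap G$ and $C_\O=C\cap\O_n$: using $gx=x$, $xg=\theta$ and $xy=\theta$ for $x,y\in\O_n$, $g\in G$, one gets
\[ CD = C_GD_G + |C_G|\,D_\O + \big(|C_\O||D_G|+|C_\O||D_\O|\big)\theta, \]
where $C_GD_G$ is the product computed in $G$. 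I would record this once and reuse it throughout.

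For the forward direction I would argue by dichotomy. If no primitive set straddles, i.e.\ every $C\neq\Theta$ lies entirely in $G$ or entirely in $\O_n$, then the $G$-classes are closed under multiplication (their products land in $G$ and, lying in $\QS$, must be combinations of classes supported on $G$), so $\S|_G$ is a Schur ring over $G$; the $\O_n$-classes automatically form a Schur ring over $\O_n$ by \thmref{thm:nullschur}; and the cross terms lie in $\QS$ by the formula. Hence $\S=\S|_G\vee\S|_{\O_n}$, which is case (a). Otherwise some $C$ straddles, $C_G\neq\emptyset\neq C_\O$, and I claim $G\subseteq C$ with $C$ unique, which is case (b).

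The heart of the proof — and the step I expect to be the main obstacle — is this absorption/uniqueness claim, and it is exactly where the permutation hypothesis on the regular right action enters. From $C^2\in\QS$ and the formula, the coefficient of each element of $C_\O$ in $C^2$ is $|C_G|$; as coefficients are constant on the single class $C$, the contribution $C_GC_G$ must give each element of $C_G$ the same coefficient $|C_G|$, and a count of the total mass $|C_G|^2$ then shows $C_GC_G=|C_G|\,C_G$. Writing $C_GC_G=\sum_{h\in C_G}\rho_h(C_G)$, where $\rho_h$ is right multiplication by $h$ (a permutation of $G$ by hypothesis), forces $\rho_h(C_G)=C_G$ for each $h\in C_G$. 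Running the identical argument on $D\cdot C$ for an arbitrary primitive set $D$ with $D_G\neq\emptyset$ yields $D_GC_G=|D_G|\,C_G$, whence $\rho_h(D_G)\subseteq C_G=\rho_h(C_G)$; cancelling the injection $\rho_h$ gives $D_G\subseteq C_G$. Since distinct classes are disjoint, this is possible only if $D=C$, so $C$ is the unique primitive set meeting $G$ and therefore $G\subseteq C$. The permutation hypothesis is essential precisely for this cancellation; without it one cannot prevent several classes from sharing $G$.

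For sufficiency I would verify the axioms via the product formula: case (a) is routine, and in case (b), writing $X=G+A$ with $A\subseteq\O_n\setminus\theta$ and letting $Y_1,\dots,Y_m$ partition the remaining non-zero elements of $\O_n$, a direct computation (using $G\cdot G=|G|G$, itself a consequence of the permutation hypothesis) shows every product of primitive sets is a nonnegative integer combination of $X$, the $Y_i$, and $\theta$; notably the partition of the leftover $\O_n$-elements is entirely unconstrained. Finally, for the count I would use inclusion–exclusion. Case (a) contributes $\Omega(G)\B(n-1)$, a Schur ring over $G$ paired with one of the $\B(n-1)$ Schur rings over $\O_n$. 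Case (b) is parametrized by $k=|A|$, the choice of $A$ among the $n-1$ non-zero elements, and an arbitrary partition of the remaining $n-1-k$, giving $\sum_{k=0}^{n-1}\dbinom{n-1}{k}\B(n-1-k)$. The two families overlap exactly when the straddling class is $G$ itself ($k=0$), i.e.\ the case-(a) partitions with $\S|_G=\{G\}$, of which there are $\B(n-1)$. Subtracting this overlap leaves $\sum_{k=1}^{n-1}\dbinom{n-1}{k}\B(n-k-1)$, producing the stated total $\Omega(\OG_n)=\Omega(G)\B(n-1)+\sum_{k=1}^{n-1}\dbinom{n-1}{k}\B(n-k-1)$.
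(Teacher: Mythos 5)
Your proof is correct, and each coefficient comparison checks out against your product formula. It follows the same overall skeleton as the paper's proof (dichotomy on whether some class straddles $G$ and $\O_n$, absorption of $G$ into the straddling class, routine sufficiency, then a count), but the two pivotal steps are executed by genuinely different mechanisms. For $\theta\in\S$ you observe at the outset that $\OG_n$ is itself a zero-semigroup and invoke Theorem \ref{thm:zeroschur}; the paper instead extracts $\theta\in\S$ inside the straddling case from a multiplicity mismatch in $X^2$, and is terse about why $\theta$ must then be a singleton class, so your route is cleaner and also frontloads the fact $\theta\notin C_\O$ on which your later coefficient counts rely. For absorption, the paper argues by contradiction: if $G\not\subseteq X$, surjectivity of right multiplication produces $y\in G$ with $yx=z\in G-X$, and the coefficient comparison in $YX$ yields the contradiction. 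You instead prove a stabilizer statement---$C_GC_G=|C_G|C_G$, hence $\rho_h(C_G)=C_G$ for all $h\in C_G$---and then cancel the injection $\rho_h$ to conclude $D_G\subseteq C_G$ for every class $D$ meeting $G$, which delivers uniqueness of the straddling class and $G\subseteq C$ in one stroke; the paper exploits surjectivity of the right regular action where you exploit injectivity (equivalent for finite $G$, but the arguments are distinct). Finally, your count is an inclusion--exclusion over the $k=0$ overlap, which is legitimate because $\{G\}$ is a Schur ring over $G$ (as $G\cdot G=|G|G$ under the permutation hypothesis), whereas the paper makes the two families disjoint from the outset by restricting the straddling family to $k\ge 1$; both yield the stated formula.
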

Note that the condition here on $G$ includes $\LO_m$ and all finite groups.
\begin{proof}
    Let $\S$ be a Schur ring over $\OG_n$. If $G$ is an $\S$-subsemigroup, then $\S|_{G}$ is a Schur ring over $G$. Since $G$ is an $\S$-subset, then $\OG_n-G = \O_n$ is likewise an $\S$-subset. Therefore, $\S = \S|_{\O_n}\vee \S|_{G}$. 

    Suppose that $G$ is not an $\S$-subsemigroup. Let $X\in \S$ such that $X = X_\O+X_G$ where $X_\O = X\cap \O_n \neq \emptyset$ and $X_G = X\cap G\neq \emptyset$. If $G \not\subseteq X$, then there exists some $y\in G$, $x\in X\cap G$, and $z\in G-X$ such that $yx=z$. Let $Y\in \S$ be the primitive set containing $y$. Let $Y = Y_\O+Y_G$ where $Y_\O = Y\cap \O_n$ and $Y_G = Y\cap G\neq \emptyset$. Then $YX = (Y_\O+Y_G)(X_\O+X_G) = |X||Y_\O|\theta + |Y_G|X_\O + X_GY_G$. As $|Y_G|X_\O\subseteq YX$, it must be that $|Y_G|X_G\subseteq YX$. Considering coefficients, $Y_GX_G=|Y_G|X_G \subseteq G$, but $z\in YX-G$, a contradiction. Therefore, $G\subseteq X$. As $X^2 = (|X_\O|+|G|)|X_\O|\theta + |G|X$ and $|X_\O|\ge 1$, $\theta$ and $X$ have differing multiplicities, which implies that $\theta\in\S$. This proves the first direction.

    Let $\S$ and $\T$ be Schur rings over $\O_n$ and $G$, respectively. Let $X\in \S$ and $Y\in \T$. Then $XY = |X||Y|\theta$ and $YX = |Y|X$. Therefore, $\Q[\S\vee \T]$ is closed under multiplication and is a Schur ring over $\OG_n$. 

    Let $\S$ be a partition over $\OG_n$ such that $\theta \in \S$ and there exists some $X\in \S$ such that $G\subseteq X$. Let $X = X_\O + G$. 
    Then $X^2 = (|X_\O|+|G|)|X_\O|\theta + |G|X\in \QS$. If $Y\subseteq \O_n$, then $XY = (X_\O+G)Y = |X_\O||Y|\theta + |G|Y$ and $YX = Y(X_\O+G) = |Y|(|X_\O|+|G|)\theta$. Therefore, $\S$ is a Schur ring over $\OG_n$. This proves the second direction.

    The number of Schur rings over $\O_n$ is $\B(n-1)$ and the number of Schur rings over $G$ is $\Omega(G)$. Therefore, the number of common refinements is $\B(n-1)\Omega(G)$. Each Schur ring of this type has $G$ as a subsemigroup. If a Schur ring instead does not, then the entirety of $G$ is contained within a single primitive set with at least one nonzero element of $\O_n$. Those nonzero elements of $\O_n$ not adjoined to $G$ can be partitioned freely. For $k$ nonzero elements of $\O_n$ adjoined to $G$, there are $\dbinom{n-1}{k}$ choices for this and each choice has $\B(n-k-1)$ many partitions for the remaining $n-k-1$ nonzero elements. This finishes the proof.
\end{proof}

\begin{Cor}\label{cor:olo} For positive integers $m,n$, 
\[\Omega(\OLO_{m,n}) = \B(m-1)\B(n) + \sum_{k=1}^{m-1} \dbinom{m-1}{k}\B(m-k-1).\]
\end{Cor}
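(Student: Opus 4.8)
The plan is to recognize Corollary \ref{cor:olo} as a direct application of Theorem \ref{thm:og} to the specific case $G = \LO_m$. First I would verify that $\LO_m$ satisfies the hypothesis of Theorem \ref{thm:og}, namely that the regular right action of $\LO_m$ on itself is a permutation action. In $\LO_m$, multiplication is projection onto the first factor, so for any fixed $g\in \LO_m$ the right-multiplication map $x\mapsto xg = x$ is just the identity map, which is certainly a permutation. (Indeed, the remark immediately following Theorem \ref{thm:og} already asserts that the condition includes $\LO_m$.) Thus Theorem \ref{thm:og} applies with $G = \LO_m$, and since $\OLO_{m,n} = \O_n \cup \LO_m$ fits the pattern $\OG_n$ with $G = \LO_m$, I can substitute directly into the enumeration formula.

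Next I would plug in the relevant cardinalities and counts. The semigroup $\LO_m$ has order $m$, so in the formula for $\Omega(\OG_n)$ the roles are $\Omega(G) = \Omega(\LO_m)$ and the summation index runs according to the size of the null part $\O_n$. The one substitution that requires care is $\Omega(\LO_m)$: by Theorem \ref{thm:leftnullschur}, every partition of $\LO_m$ is a Schur ring, so $\Omega(\LO_m) = \B(m)$. Wait — I need to check the indexing convention, because the statement of Corollary \ref{cor:olo} has $\B(m-1)\B(n)$ as its leading term, not $\B(m)\B(n-1)$. This signals that in $\OLO_{m,n}$ the first subscript $m$ governs the null semigroup $\O$ and the second subscript $n$ governs the left-null factor $\LO$, matching the footnote convention that the first subscript describes $\O$. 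So here the null part is $\O_m$ and the permutation-action semigroup is $\LO_n$, giving $\Omega(\OG_{\,\cdot}) = \Omega(\LO_n)\B(m-1) + \sum_{k=1}^{m-1}\binom{m-1}{k}\B(m-k-1)$ with $\Omega(\LO_n) = \B(n)$.

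Substituting $\Omega(\LO_n) = \B(n)$ then yields exactly
\[
\Omega(\OLO_{m,n}) = \B(n)\B(m-1) + \sum_{k=1}^{m-1}\binom{m-1}{k}\B(m-k-1),
\]
which is the claimed formula after writing the leading product as $\B(m-1)\B(n)$. The main obstacle I anticipate is purely bookkeeping rather than mathematical: correctly matching the subscript conventions of $\OLO_{m,n}$ (which subscript indexes $\O$ and which indexes $\LO$) against the generic $\OG_n$ of Theorem \ref{thm:og}, since $\OLO_{m,n} = \OG_m$ with $G = \LO_n$, not $\OG_n$ with $G = \LO_m$. Once that identification is pinned down and the value $\Omega(\LO_n) = \B(n)$ from Theorem \ref{thm:leftnullschur} is inserted, the corollary follows with no further computation, so the proof should be a short one- or two-sentence invocation of the theorem.
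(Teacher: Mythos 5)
Your proposal is correct and is exactly the argument the paper intends: Corollary \ref{cor:olo} is stated without proof precisely because it is the instance of Theorem \ref{thm:og} with null part $\O_m$ and $G = \LO_n$ (which satisfies the permutation-action hypothesis, as the remark after the theorem notes), combined with $\Omega(\LO_n) = \B(n)$ from Theorem \ref{thm:leftnullschur}. Your careful resolution of the subscript convention ($\OLO_{m,n} = \O_m \cup \LO_n$, first subscript indexing $\O$) is the only delicate point, and you handled it correctly.
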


\begin{Exam} By these counts, there are $\B(4) = 15$ Schur rings for $\ORO_{3,2}$ and $\B(2)\B(2) + \dbinom{2}{1}\B(1) + \dbinom{2}{2}\B(0) = 7$ Schur rings for $\OLO_{3,2}$. Similarly, $\ORO_{2,3}$ has 15 Schur rings, but $\OLO_{2,3}$ has 6. \end{Exam}

\begin{Exam}
    Consider the semigroup $\OZ_{3,2}$, which has 
has 7 Schur rings, like $\OLO_{3,2}$, since $\Omega(\z_2)=2$. Conversely, $\OZ_{2,3}$ has 4 Schur rings, since $\Omega(\z_3)=3$, whereas $\OLO_{2,3}$ has 6, since $\Omega(\LO_3)=5$. 
\end{Exam}

\begin{Thm}\label{thm:och} Let $G$ be a finite zero-semigroup and let $n$ be a positive integer. Then every Schur ring over $\OG_n$ is a common refinement of Schur rings over $\O_n$ and $G$. If $\Omega(G)$ is the number of Schur rings over $G$, then the number of Schur rings over $\OG_n$ is $\Omega(G)\B(n-1)$.
\end{Thm}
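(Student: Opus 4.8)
The plan is to follow the template of \thmref{thm:og}: prove that for any Schur ring $\S$ over $\OG_n$ both $\O_n$ and $G$ must be $\S$-subsets, so that $\S = \S|_{\O_n}\vee\S|_G$ is forced to be a common refinement, and then invert this correspondence to count. Since $\theta$ is a zero of $\OG_n$, \thmref{thm:zeroschur} gives $\theta\in\S$ as a singleton. Writing each primitive set as $X = X_\O + X_G$ with $X_\O = X\cap\O_n$ and $X_G = X\cap G$, the entire content is to exclude \emph{mixed} primitive sets, those with $\emptyset\ne X_\O\subseteq\O_n-\theta$ and $X_G\ne\emptyset$. Here the hypothesis that $G$ is a zero-semigroup replaces the permutation-action hypothesis of \thmref{thm:og}, and it does so through one decisive asymmetry of the zero $\zeta$ of $G$: on the left $\zeta$ fixes every element of $\O_n$, since $\zeta a = a$, but it collapses all of $G$ onto itself, since $\zeta g = \zeta$. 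It is exactly this that will prevent $\O_n$-elements and $G$-elements from coexisting in a single primitive set.

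Suppose toward a contradiction that $X$ is mixed, and let $Z$ be the primitive set containing $\zeta$. Using $\O_n G = \theta$ and $ga = a$ for $g\in G$, $a\in\O_n$, I would compute
\[
ZX = |Z_\O|\,|X|\,\theta + |Z_G|\,X_\O + Z_G X_G ,
\]
so that $ZX\in\QS$ forces every element of the primitive set $X$ to occur with the common coefficient $|Z_G|$. Since $\zeta\in Z_G$ and $\zeta x = \zeta$ for each $x\in X_G$, these $|X_G|$ products all equal $\zeta$. If $\zeta\notin X$, they land outside $X_G$, so $Z_G X_G$ can place total coefficient at most $|Z_G||X_G| - |X_G| < |Z_G||X_G|$ on $X_G$, contradicting that each of its $|X_G|$ elements carries coefficient $|Z_G|$. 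Hence $\zeta\in X$, so $Z = X$ and the display becomes $X^2$; comparing coefficients now forces $X_G X_G = |X_G|\,X_G$, while $g\zeta = \zeta g = \zeta$ force the coefficient of $\zeta$ in $X_G X_G$ to be at least $2|X_G| - 1$. Equating this with the required value $|X_G|$ gives $|X_G| = 1$, i.e. $X_G = \{\zeta\}$.

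To eliminate the surviving mixed set $X = X_\O + \{\zeta\}$, I would turn to a right-hand probe: pick $g\in G$ with $g\ne\zeta$ (possible exactly because $|G|\ge 2$) and let $Y\ne X$ be its primitive set. Then
\[
XY = |X_\O|\,|Y|\,\theta + Y_\O + |Y_G|\,\zeta ,
\]
so $\zeta\in X$ occurs with coefficient $|Y_G|\ge 1$ while any $a\in X_\O\subseteq X$ occurs with coefficient $0$ (being absent from $Y$), contradicting $XY\in\QS$. With all mixed sets excluded, $\O_n$ and $G$ are $\S$-subsets, whence $\S = \S|_{\O_n}\vee\S|_G$ and, by \propref{prop:subring}, the two restrictions are Schur rings over $\O_n$ and $G$. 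Conversely, as already noted in the text, any common refinement of a Schur ring over $\O_n$ with one over $G$ is a Schur ring over $\OG_n$ (the cross products are $XY=|X||Y|\theta$ and $YX=|Y|X$), and $\S\mapsto(\S|_{\O_n},\S|_G)$ inverts the pairing; combining this bijection with the count $\B(n-1)$ of Schur rings over $\O_n$ from \thmref{thm:nullschur} yields $\Omega(G)\,\B(n-1)$.

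The step I expect to be the main obstacle is the placement of $\zeta$ inside a mixed set: when $\zeta\in X$ the natural left-probe $Z$ degenerates to $X$ itself and no longer separates the $\O_n$-part from the $G$-part, forcing the switch to a right-sided probe by a \emph{non-zero} element of $G$. This is also the precise point where $|G|\ge 2$ is used, and the hypothesis cannot be dropped: when $|G|=1$ there is no such probe, the mixed sets $X_\O + \{\zeta\}$ genuinely survive, and the count becomes $\B(n)$ rather than $\Omega(G)\,\B(n-1)$.
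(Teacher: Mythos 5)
Your proof is correct, and it does more than reproduce the paper's argument: it closes a genuine gap in the paper's own proof. Both arguments open identically, probing a hypothetical mixed class $X=X_\O+X_G$ by the class $\Phi$ containing the zero $\phi$ of $G$ and expanding $\Phi X = |X||\Phi_\O|\theta+(1+|\Phi_G|)X_\O+|X_G|\phi+\Phi_GX_G$. The paper then concludes at once that all the coefficient mass on $X_G$ must come from $\Phi_GX_G$, so that $|\Phi_G||X_G|<(1+|\Phi_G|)|X_G|$ gives a contradiction; but that step silently discards the term $|X_G|\phi$, so it is only valid when $\phi\notin X$. The case $\phi\in X$ (your $\zeta\in X$, where the probe degenerates to $X^2$) is exactly what your proof treats separately: there the coefficient count only forces $X_G=\{\zeta\}$, no contradiction is available from this product alone, and your second, right-sided probe by some $g\in G-\{\zeta\}$ is indispensable --- this being the one place the hypothesis $|G|\ge 2$ enters. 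Your closing observation is also correct and shows the theorem as printed is not quite true: the trivial semigroup $\z_1$ is a zero-semigroup under the paper's definitions, and then $\OG_n\cong \ORO_{n,1}$, which by \thmref{thm:oro} has $\B(n)$ Schur rings (its mixed classes $X_\O+\{\zeta\}$ really are primitive), not $\B(n-1)$; compare $\ORO_{2,1}$ in Table \ref{fig:sgsr3}, which has $2=\B(2)$ Schur rings. So the statement needs the hypothesis that $G$ is nontrivial, your argument proves it in exactly that generality, and the paper's proof requires your extra case analysis to be complete.
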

\begin{proof}
    Let $\theta\in \O_n$ and $\phi\in G$ be the zero elements of these semigroups. Let $\S$ be a Schur ring over $\OG_n$. Suppose to the contrary that $G$ is not an $\S$-subset. Then, there exists a primitive set $X\in \S$ and $X=X_\O+X_G$ where $X_\O=X\cap \O_n\neq \emptyset$ and $X_G=X\cap G\neq \emptyset$. Let $\Phi\in \S$ be the primitive set containing $\phi$. Let $\Phi = \Phi_\O + \phi + \Phi_G$, where $\Phi_\O=\Phi\cap \O_n$ and $\Phi_G = \Phi\cap G-\theta$. Then \[\Phi X = (\Phi_\O + \phi + \Phi_G)(X_\O+X_G) = |X||\Phi_\O|\theta + (1+|\Phi_G|)X_\O + |X_G|\phi + \Phi_GX_G.\] Considering coefficients, since $(1+|\Phi_G|)X_\O\subseteq \Phi X$, it must be that $\Phi_GX_G = (1+|\Phi_G|)X_G$, but $|\Phi_GX_G| = |\Phi_G||X_G| < (1+|\Phi_G|)|X_G|$ when $|X_\O|,|X_G|>0$, a contradiction.
\end{proof}

\begin{Cor}\label{oo} For any cardinals $n,m$, the number of Schur rings over $\OO_{n,m}$ is $\B(n-1)\B(m-1)$.
\end{Cor}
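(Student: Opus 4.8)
The plan is to read $\OO_{n,m}$ as the special case of the construction $\OG_n = \O_n \cup G$ in which $G = \O_m$, and then to invoke \thmref{thm:och} essentially verbatim. The null semigroup $\O_m$ is a zero-semigroup, with zero $\phi$ satisfying $xy = \phi$ for all $x,y \in \O_m$, so for finite $m$ the hypotheses of \thmref{thm:och} are met. That theorem then gives both the structural statement — every Schur ring over $\OO_{n,m}$ is the common refinement of a Schur ring over $\O_n$ with a Schur ring over $\O_m$ — and the count $\Omega(\O_m)\,\B(n-1)$. The only bookkeeping point at this stage is to confirm that the subscript convention indeed matches the footnote defining the notation, so that $\OO_{n,m} = \O_n \cup \O_m$ with the first index governing the outer null semigroup.

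The second ingredient is the evaluation of $\Omega(\O_m)$. By \thmref{thm:nullschur}, a partition of $\O_m$ is a Schur ring exactly when its zero is isolated as a singleton, whence $\Omega(\O_m) = \B(m-1)$. Substituting this into the formula supplied by \thmref{thm:och} yields $\Omega(\OO_{n,m}) = \B(m-1)\,\B(n-1)$, which after commuting the two factors is precisely the asserted count $\B(n-1)\B(m-1)$. For finite $n$ and $m$ this completes the argument, and the corollary is genuinely immediate from the two cited results.

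The one genuine subtlety is the phrase \emph{any cardinals}, which permits $n$ or $m$ to be infinite, whereas \thmref{thm:och} and \thmref{thm:nullschur} are stated for finite data. The remedy I would use is to note that the proofs of both theorems are driven entirely by multiplicity comparisons inside products $XY$ of primitive sets, and that primitive sets are required to be finite throughout the paper; consequently each individual product computation is identical in the infinite setting. The structural dichotomy of \thmref{thm:och} — each Schur ring either retains $\O_m$ as a subsemigroup (producing a common refinement) or swallows all of $G$ into a single primitive set alongside a singleton $\theta$ — therefore transfers unchanged, and the count reduces to the same bijection with pairs of partitions, reading $\B(\kappa)$ as the number of partitions of a $\kappa$-element set into finite blocks. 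I expect this cardinality bookkeeping to be the main obstacle: one must verify that the common-refinement bijection stays well-defined and that no additional Schur rings arise for infinite $n,m$, but the finiteness of primitive sets makes every local argument coincide with the finite case.
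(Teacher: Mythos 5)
Your proposal matches the paper's intended argument exactly: the corollary is stated without proof precisely because it is the instance $G=\O_m$ of \thmref{thm:och} combined with $\Omega(\O_m)=\B(m-1)$ from \thmref{thm:nullschur}, which is exactly what you do, subscript bookkeeping included. Your extra paragraph on the infinite-cardinal case actually goes beyond what the paper records (it silently applies a theorem stated for finite $G$ and positive integer $n$), and your justification via the finiteness of primitive sets is the right way to close that gap.
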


\section{Traditional Schur Rings}\label{sec:trad}
As mentioned above, Schur rings were first used by Schur and Wielandt to study permutation actions of groups. A so-called \emph{transitivity module} (see \cite{Wielandt64}) was a partition of a group formed by an ambient permutation group acting on it. One calls a Schur ring over group $H$ \emph{Schurian} if it can be realized as a transitivity module for some permutation group $G$ acting on $H$. The group $H$ is likewise called \emph{Schurian} if all of its Schur rings are Schurian. Some \cite{Ponomarenko2022, Muzychuk2016, Muzychuk09, Ryabov2015, Ryabov2022,  Ryabov2023} have recently made efforts to classify Schurian groups. Regrettably, not all semigroups can be embedded into a permutation group which would act regularly upon it, as is necessary in the case of a transitivity module. Therefore, outside the category of groups, the discovery of Schurian partitions seems problematic.

In \cite{InfiniteI}, the notion of \emph{traditional Schur rings}\footnote{Traditional Schur rings are exactly those Schur rings used in the classification of Schur rings over finite cyclic groups \cite{LeungII, LeungI}.} is introduced. A traditional Schur ring is a partition belonging to one of four families: automorphic, direct product, trivial, and wedge product. A group $G$ is called \emph{traditional} if all Schur rings are traditional. Unlike Schurian Schur rings coming from external permutations groups, traditional Schur rings involve concepts internal to the group itself, which offer more natural generalizations to semigroups. In this section, we attempt to generalize these four families of traditional Schur rings to the category of semigroups.  

\subsection{Automorphic Schur Rings}\label{sec:auto}

Let $G$ be a semigroup, and let $\Aut(G)$ denote the automorphism group of $G$. Let $\H\le \Aut(G)$. Then $\H$ acts on $G$ via this automorphism action. Then the partition of $G$ associated to the $\H$-action is a Schur ring, called an \emph{automorphic Schur ring}. The proof follows the same argument used for automorphic Schur rings over groups (\cite[see Example 2.20]{MePhD}). Of note is the special case when $\H=1$ is trivial. The automorphic Schur ring afforded by this action on $G$ is the partition of singletons, $\S = \{x\mid x\in G\} = G$, which we call the \emph{discrete Schur ring}. Thus, each semigroup $G$ is a Schur ring over itself. Note that $\Aut(\O_n) = S_{n-1}$, $\Aut(\LO_n)=S_n$, and $\Aut(\K_{1,n})=S_n$. Therefore, all Schur rings over $\O_n$, $\LO_n$, and $\K_{1,n}$ are necessarily automorphic.

\subsection{Direct Product Schur Rings}
Let $G$ and $H$ be two semigroups, and let $\S$ and $\T$ be Schur rings over $G$ and $H$, respectively. Let \label{direct}$\S\times \T$, called the \emph{direct product} of $\S$ and $\T$, be the partition of $G\times H$ whose primitive sets are each of the form $X \times Y = \{(x,y)\mid x\in X, y\in Y\}$ for some $X\in \S$ and $Y\in \T$. The proof follows the same argument used for direct product Schur rings over groups (\cite[see Example 2.26]{MePhD}). 

Every rectangular band is equivalent to $\LO_n\times \RO_m$ for some positive integers $n, m$. While not every Schur ring over a rectangular band is necessarily a direct product Schur ring, we can easily see that $\B(n)\B(m)$ Schur rings will be. 

\subsection{Trivial Schur Rings}
The \emph{trivial Schur ring} of a group $G$ is the partition $\S = \{e, G-e\}$. For a general finite semigroup $G$, we say a Schur ring $\S$ is \emph{trivial} if $\S = \{\varepsilon, G-\varepsilon\}$ for some $\varepsilon\in G$. Note that $\QS = \Span\{\varepsilon,G\}$. For $|G|>2$, then $\varepsilon^2 = \varepsilon$, that is, $\varepsilon$ is an idempotent. Hence, for groups, this extended notion of a trivial Schur ring gives rise only to the original one. Considering next $\varepsilon G$ in $\QS$, we either have $\varepsilon G = |G|\varepsilon$, which implies that $\varepsilon$ is a left-zero, or $\varepsilon G = G$, which implies that $\varepsilon$ is a left-identity. Considering also $G\varepsilon$, we see that $\varepsilon$ is a zero, an identity, both a left-zero and a right-identity, or both a right-zero and a left-identity. The first case obviously implies that $G$ is a zero-semigroup, the second implies that $G$ is a monoid, and the last two cases implies that $G$ is isomorphic to $\LO_n$ or $\RO_n$, respectively. Considering \thmref{thm:zeroschur}, we see that the only semigroup, up to equivalence, which can have multiple trivial Schur rings is $\LO_n$. Hence, the trivial Schur ring, if present, will be unique, except for $\LO_n$.

We have now seen a collection of semigroups which have a trivial Schur ring, including $\K_{1,n}$, $\LO_n^e$, $\O_n$, $\OLO_{m,n}$, and $\ORO_{m,n}$. In each of these cases, due to Theorems \ref{thm:zeroschur} and \ref{thm:monoidSchur}, each Schur ring is a refinement of the trivial Schur ring. Hence, the number of Schur rings $\Omega(G)$ of any of these semigroups is at most $\B(|G|-1)$. The following theorem classifies all semigroups with a trivial Schur ring that obtain this bound.

\begin{Thm}\label{bnminus} Let $G$ be a semigroup of order $n\ge 6$. Suppose that $G$ has the trivial Schur ring and all other Schur rings are a refinement of the trivial Schur ring; hence, $\Omega(G) = \B(n-1)$. Then $G$ is equivalent to one\footnote{Note that $\RO_{n-1}^\theta \cong \ORO_{1,n-1}$.} of $\K_{1,n-1}$, $\LO_{n-1}^e$, $\O_n$, or $\ORO_{k,n-k}$ for $k\in\{1, \ldots, n-1\}$.
\end{Thm}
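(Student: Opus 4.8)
The plan is to pin down the distinguished element $\varepsilon$ (the class that is a singleton in every refinement of the trivial Schur ring) and then reconstruct the multiplication on $H := G - \varepsilon$. By the discussion preceding the statement, $\varepsilon$ is idempotent and is either a zero, an identity, or — up to equivalence, after possibly passing to $G^{\text{op}}$ — a left-zero that is simultaneously a right-identity. First I would record a local product lemma: since every partition refining $\{\varepsilon, H\}$ is a Schur ring, collapsing all of $H$ except two points $x, y$ into one block of size $\ge 2$ (available because $|H| = n-1 \ge 5$) forces the single element $xy$ to be a singleton block, so $xy \in \{\varepsilon, x, y\}$ for distinct $x, y$, and likewise $x^2 \in \{\varepsilon, x\}$. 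Next I would extract the pairwise-lumping conditions: collapsing a pair $\{a, b\}$ (all else singletons) is a Schur ring exactly when $a$ and $b$ occur with equal multiplicity in every product, and comparing the products $c(a+b)$ and $(a+b)c$ for singletons $c \ne a,b$ yields $[ca=a]=[cb=b]$ and $[ac=a]=[bc=b]$. Thus each $c$ either left-fixes every other element of $H$ or none of them, and similarly on the right; I write $L$ and $R$ for the sets of elements that left- (resp. right-) fix all others.

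In the identity case I would reduce to $\thmref{leftnullconverse}$. The only obstruction to $H$ being closed is a product $xy = \varepsilon = e$; but then $x(xy) = x$ while $x(xy) = x^2y$ forces $x^2 = e$ (if instead $x^2 = x$ one gets $x = e$), so $x$ is a unit of order two and $L_x$ is a bijection of $G$ — yet $xy = e \ne y$ gives $x \notin L$, so $xz \in \{e,x\}$ for all $z$ and $L_x$ has image of size $\le 2$, a contradiction. Hence $H$ is a subsemigroup of order $n-1$ on which every partition is a Schur ring (append $e$, then restrict), so $\Omega(H) = \B(n-1) = \B(|H|)$; as $|H| = n-1 \ge 5 \ne 2$, $\thmref{leftnullconverse}$ gives $H \cong \LO_{n-1}$ up to equivalence and $G \cong \LO_{n-1}^e$. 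The left-zero/right-identity case runs the same way: associativity first forces $x^2 = x$ and $xy \ne \varepsilon$ for all $x,y \in H$ (otherwise $\varepsilon$ collapses into $H$), so $H$ is again a closed band with all partitions Schur, hence $H \cong \LO_{n-1}$ (the right-null orientation being incompatible with $\varepsilon x = \varepsilon$, $x\varepsilon = x$); but then $G \cong \LO_n$, whose $\B(n) > \B(n-1)$ Schur rings contradict the hypothesis. So this case and its opposite do not occur.

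The substantive case is $\varepsilon = \theta$ a zero. Here the rules $ab=a \Rightarrow b\in I$ and $ab = b \Rightarrow a \in I$ (with $I := \{x : x^2 = x\}$) give $L, R \subseteq I$ and show distinct square-zero elements multiply to $\theta$, so $N := H - I$ satisfies $N \cup \{\theta\} \cong \O_{|N|+1}$. I would then split on $L, R$. If both are nonempty they share a common two-sided identity $g$ on $H$, which is then an identity of $G$; but a monoid with zero fails all-refine, since lumping $g$ with any point $x$ gives $(g+x)^2 = g + 2x + x^2$ with unequal coefficients. If $L \ne \emptyset$ (hence $R = \emptyset$), choose a left-identity $g$; then $fg \in \{\theta, f, g\}$ with $\theta$ and $f$ excluded, so $fg = g$, and associativity $(fg)f' = f(gf')$ makes every idempotent a left-identity and $I$ right-null, while $R = \emptyset$ forces the cross products $fx = x$, $xf = \theta$ — precisely the table of $\ORO_{|N|+1,\,|I|}$ with $k = |N|+1 \in \{1,\dots,n-1\}$. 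The case $R \ne \emptyset$ is its opposite and yields a semigroup equivalent to an $\ORO$. If $L = R = \emptyset$, then every product of distinct elements is $\theta$, and lumping an idempotent with a square-zero element gives a block $\{f,x\}$ with $(f+x)^2 = f$, again violating equal coefficients; so $H$ is homogeneous, forcing $\K_{1,n-1}$ (all idempotent) or $\O_n$ (all square-zero).

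The hardest part is this zero case, and specifically excluding the left-null idempotent structures $\OLO_{m,n}$ together with all mixed bands. These survive every coarse test — the equal-coefficient condition for lumping the whole of $I$ is met by left-null, right-null, and cross-$\theta$ bands indiscriminately — so the proof genuinely needs the pairwise dichotomies, which rest on the full hypothesis that every refinement of the trivial Schur ring is a Schur ring. The dichotomy defining $L$ is exactly what an $\OLO$ violates, since there an idempotent acts as a left-identity on the null part but not on the other idempotents; making that step airtight, together with the bookkeeping that the null and right-null blocks reassemble into a single $\ORO_{k,n-k}$, is where the write-up must be most careful.
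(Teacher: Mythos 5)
Your overall strategy is sound and genuinely different from the paper's: where the paper pins the $3$- and $4$-element subsemigroups $\{\varepsilon,y,z\}$ and $\{\varepsilon,x,y,z\}$ against the computed order-$3$ and order-$4$ tables and then reassembles $G$, you work from first principles via the pairwise-lumping coefficient conditions, and in the identity case you shortcut through \thmref{leftnullconverse}. Your zero case is correct and essentially complete: the local lemma $xy\in\{\theta,x,y\}$, the rules $ab=a\Rightarrow b^2=b$ and $ab=b\Rightarrow a^2=a$, and the four-way split on whether $L$ and $R$ are empty do reconstruct exactly $\O_n$, $\K_{1,n-1}$, and $\ORO_{k,n-k}$, and the monoid-with-zero exclusion is handled correctly.

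The gap is in the identity case, at the step ``$xy=e\neq y$ gives $x\notin L$.'' Your dichotomy defining $L$ is derived by lumping a pair $\{a,b\}$ and multiplying by a \emph{third} singleton $c$; it therefore only governs whether an element fixes \emph{other} elements and says nothing about an element fixing itself. Consequently, when the closure violation is a square, that is $x^2=e$ with $x=y$, you cannot conclude $x\notin L$. That sub-case is not vacuous: if $x^2=e$ but $x$ left-fixes every other element of $H$, then left multiplication by $x$ is simply the transposition swapping $e$ and $x$, a perfectly good bijection, and your image-counting contradiction evaporates. The configuration must be excluded by a separate argument, for instance: take any $z\in H-x$ (so $xz=z$); lumping $\{x,z\}$ forces equal coefficients of $x$ and $z$ in $(x+z)^2 = e + z + zx + z^2$, and since $z^2\in\{e,z\}$ this forces $zx=x$; but then associativity gives $z = z(xx) = (zx)x = x^2 = e$, a contradiction. (The paper never meets this issue because its order-$3$ table shows the monoid $\{\varepsilon,x,z\}$ must be $\LO_2^e$ or $\RO_2^e$, which rules out $x^2=\varepsilon$ at the outset.) With this repair your identity case, and hence the whole proof, goes through.
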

\begin{proof}
    Let $\S=\{\varepsilon, G-\varepsilon\}$ be the trivial Schur ring. Since $\Omega(\LO_n)=\B(n)$, we may assume that $\varepsilon$ is either a zero or identity. By assumption, a partition of $G$ is a Schur ring if and only if it is a refinement of $\S$. 
    Let $y\in G-\varepsilon$. Then $\{\varepsilon, y, G-\varepsilon-y\}$ is a Schur ring over $G$. Thus, $\{\varepsilon,y\}\le G$ for all $y$, by \propref{prop:singleton}. This implies that $y^2=\varepsilon$ or $y^2=y$. Since $y$ is arbitrary, this lets us split $G-\varepsilon$ into two sets. Let $A = \{y\in G-\varepsilon\mid y^2=\varepsilon\}$ and $B = \{y\in G-\varepsilon\mid y^2=y\}$. Of course, $G=\varepsilon\cup A\cup B$. 
    
    Let $x\in G-\varepsilon$ be a fixed point throughout, and let $y,z\in G-\varepsilon$ be two generic elements. Thus, $\{\varepsilon,y,z,G-\varepsilon-y-z\}$ is a Schur ring, and $\{\varepsilon, y,z\} \le G$ for all $y, z$.  Likewise, $\{\varepsilon, y+z, G-\varepsilon-y-z\}$ is a Schur ring over $G$. In particular, the semigroup $\{\varepsilon,y,z\}$ also has the trivial Schur ring. Applying the enumeration of Schur rings of order 3 conducted in Section \ref{gap} (see Table \ref{fig:sgsr3}), these conditions 
     show that $\{\varepsilon, y,z\}$ must be equivalent to one of \begin{equation}\label{eq:exy}\O_3, \ORO_{2,1}, \K_{1,2}, \LO_2^\theta, \LO_2^e.\end{equation} Similarly, $\{\varepsilon, x,y,z,G-\varepsilon-x-y-z\}$ is a Schur ring, which implies that $\{\varepsilon,x,y,z\}\le G$ and $\{\varepsilon, y,z\}\le\{\varepsilon,x,y,z\}$. Again, $\{\varepsilon,G-\varepsilon-x-y-z\}\cup \T$, where $\T$ is any partition on $\{x,y,z\}$, is a Schur ring of $G$, meaning that every refinement of the trivial Schur ring over $\{\varepsilon, x,y,z\}$ is likewise a Schur ring over $\{\varepsilon, x,y,z\}$. Applying the enumeration of Schur rings of order 4 conducted in Section \ref{gap} (see Table \ref{fig:sgsr4}), considering these conditions
    , it must be that $\{\varepsilon, x,y,z\}$ is equivalent to one of \begin{equation}\label{eq:exyz} \O_4, \ORO_{3,1}, \ORO_{2,2}, \K_{1,3}, \LO_3^\theta, \LO_3^e.\end{equation} 
    
    Suppose $\varepsilon$ is an identity. Then each of $\{\varepsilon,x,y\}$, $\{\varepsilon,x,z\}$, $\{\varepsilon,y,z\}$ is one of $\LO_2^e$, $\RO_2^e$ by \eqref{eq:exy}, but $\{\varepsilon, x,y,z\}\cong \LO_3^e$,$\RO_3^e$  by \eqref{eq:exyz}. Hence, $x^2=x$, $y^2=y$, and $z^2=z$. Without the loss of generality, suppose $xy=x$. Then $\{\varepsilon,x,y,z\}\cong \LO_3^e$. Hence $xz=x$, $zx=z$, $yz=y$, and $zy=z$. Relative to the fixed element $x$, each product in $G$ is determined as $y^2=y$ and $yz=y$. Therefore, $G\cong \LO_{|B|}^e$.

Suppose $\varepsilon$ is a zero. Then each of $\{\varepsilon,x,y\}$, $\{\varepsilon,x,z\}$, $\{\varepsilon,y,z\}$ must be equivalent to one of\begin{equation}\label{eq:exyzero} \O_3, \ORO_{2,1},\K_{1,2}, \LO_2^\theta\end{equation} by \eqref{eq:exy}, but $\{\varepsilon, x,y,z\}$ must be equivalent to one of\begin{equation}\label{eq:exyzzero}\O_4, \ORO_{3,1}, \ORO_{2,2}, \K_{1,3}, \LO_3^\theta\end{equation} by \eqref{eq:exyz}. 

 If $B=\emptyset$ then $y$, $z\in A$. Then  $\{\varepsilon, y,z\}\cong \O_3$ by \eqref{eq:exyzero}. Therefore, $y^2=z^2=yz=zy=\varepsilon$, and each product in $G$ is determined. Therefore, $G\cong \O_{|A|+1}$. For the rest of the proof we may then assume that $x\in B$, that is, $x^2=x$.

If $A=\emptyset$ then $x$, $y$, $z\in B$. Then each of $\{\varepsilon, x,y\}$, $\{\varepsilon, x,z\}$, $\{\varepsilon, y,z\}$ must be isomorphic to one of $\K_{1,2}$, $\LO_2^\theta$, $\RO_2^\theta$. Up to equivalence, there are five cases to consider:
    \begin{enumerate}[i.]
        \item Suppose $\{\varepsilon, x,y\}\cong \LO_2^\theta$, $\{\varepsilon,x,z\} \cong \LO_2^\theta$, and $\{\varepsilon,y,z\}\cong \LO_2^\theta$. Then $\{\varepsilon,x,y,z\}\cong \LO_3^\theta$.
        \item Suppose $\{\varepsilon, x,y\}\cong \LO_2^\theta$, $\{\varepsilon,x,z\} \cong \LO_2^\theta$, and $\{\varepsilon,y,z\}\cong \RO_2^\theta$. Then multiplication is non-associative, since $y(xz) = yx = y\neq z = yz = (yx)z.$ 
        \item Suppose $\{\varepsilon, x,y\}\cong \LO_2^\theta$, $\{\varepsilon,x,z\} \cong \LO_2^\theta$ or $\RO_2^\theta$, and $\{\varepsilon,y,z\}\cong \K_{1,2}$. Then multiplication is also non-associative, since $x(yz) = x\varepsilon  = \varepsilon \neq xz = (xy)z.$ 
        \item Suppose $\{\varepsilon, x,y\}\cong \LO_2^\theta$, $\{\varepsilon,x,z\} \cong \K_{1,2}$, and $\{\varepsilon,y,z\}\cong \K_{1,2}$. Then $\{\varepsilon,x,y,z\}\cong \LO_2^\theta\u \CH_2$, which contradicts \eqref{eq:exyzzero}.
        \item Suppose $\{\varepsilon, x,y\}\cong \K_{1,2}$, $\{\varepsilon,x,z\} \cong \K_{1,2}$, and $\{\varepsilon,y,z\}\cong \K_{1,2}$. Then $\{\varepsilon,x,y,z\}\cong \K_{1,3}$.
    \end{enumerate}
 This shows that if $xy=x$, then $\{\varepsilon,x,y,z\}\cong \LO_3^\theta$. Thus $xz=x$, $zx=z$, $yz=y$, and $zy=z$. Relative to the fixed element $x$, each product in $G$ is determined as $y^2=y$ and $yz=y$. Therefore, $G\cong \LO_{|B|}^e$.  Otherwise, if $xy=\varepsilon$, then $\{\varepsilon,x,y,z\}\cong \K_{1,3}$. Thus $xz=zx=yz=zy=\varepsilon$. Relative to this fixed point $x$, each product in $G$ is determined as $y^2=y$ and $yz=\varepsilon$. Therefore, $G\cong \K_{1,|B|}$.

Next, assume both $A$, $B\neq\emptyset$. Then $A+\varepsilon\cong \O_{|A|+1}$ and, without the loss of generality, $B+\varepsilon$ is isomorphic to one of $\K_{1,|B|}$, $\RO_{|B|}^\theta$. If $B=\{x\}$, then $y$, $z\in A$, $\{\varepsilon, y,z\}\cong \O_3$, and $\{\varepsilon, x,y\}, \{\varepsilon,x,z\}\cong \ORO_{2,1}$ by \eqref{eq:exyzero}. Thus $x^2=x$, $xy=y$, $xz=z$, $yx=y^2=yz=zx=zy=z^2=\varepsilon$. As this determines all products in $G$, we see that $G\cong \ORO_{|A|+1,1}$.

Finally, suppose then $|B|>1$, $y\in A$, and $x$, $z\in B$. 
$\{\varepsilon, x,y\}, \{\varepsilon,y,z\}\cong \ORO_{2,1}$. Then both $\{\varepsilon, x,y\}$, $\{\varepsilon,y,z\}\cong \ORO_{2,1}$. If $B+\varepsilon\cong \K_{1,|B|}$, then $\{\varepsilon, x,z\}\cong \K_{1,2}$, but by \eqref{eq:exyzzero} there is no semigroup $\{\varepsilon,x,y,z\}$ that satisfies the requirements on the subsemigroup structure. Therefore, $B+\varepsilon\cong \RO_m^\theta$. Then $\{\varepsilon,x,y,z\}\cong \ORO_{2,2}$ by \eqref{eq:exyzzero}. Hence, $y^2=\varepsilon$, $yz=\varepsilon$, $zy=y$, and $z^2=z$, which determines all products in $G$. Therefore, $G\cong \ORO_{|A|+1,|B|}$. As these cases exhaust all possibilities on the products $\varepsilon^2$, $\varepsilon y$, $y\varepsilon$, $y^2$, $yz$, $zy$, and $z^2$, utilizing a fixed $x$ we have been able to determine uniformity between choices such as $\LO_k$ versus $\RO_k$, the result is now proven.
\end{proof}

There are a few small counterexamples to the above theorem, thus requiring $n\ge 5$, namely, for $\B(1) \colon \CH_2$ (see Theorem \ref{thm:chainschur} for chain semilattices); for $\B(2) \colon \z_2^\theta$, $\z_2^e$; and for $\B(3) \colon \LO_2\times \RO_2$, $\z_4$. Note that $\B(1)=\B(0)$, which explains $\CH_2$. Also note that $\z_2$ is the only semigroup other than $\LO_n$ that obtains the full Bell number $\B(n)$, which explains $\z_2^\theta$ and $\z_2^e$ obtaining $\B(3-1)$. Related to this, we note that it is difficult for the number of Schur rings to fall between $\B(n-1)$ and $\B(n)$. The only known counterexamples are $\z_3$ which has 3 Schur rings, $\LO_2\times \z_2$ which has 7 Schur rings, and the Klein 4-group $\text{V}_4 = \z_2\times \z_2$ which has 9 Schur rings. Note that all these counterexamples are closely related to $\z_2$ being a counterexample to Theorem \ref{leftnullconverse}. Mimicking the proof of Theorem \ref{leftnullconverse}, if there was any other counterexample to a semigroup having a number of Schur rings strictly between $\B(n-1)$ and $\B(n)$, it would have to be a very large monoid and seems unlikely to exist.

A coarser partition than the trivial partition which we consider is the indiscrete partition $\{G\}$. The indiscrete partition is a Schur ring over $G$ if and only if $G^2 = |G|G$ if and only if all elements appear in the Cayley table of $G$ with the same multiplicity. This implies that $G$ must be finite. If $G$ is a finite rectangular group, that is, a direct product between a finite rectangular band and a finite group, this property holds. When the indiscrete partition is a semigroup Schur ring over $G$, then we call this the \emph{indiscrete Schur ring}. 

By Proposition \ref{prop:monoidschur}, the trivial partition $\{e, G-e\}$ over a monoid $G$ is a Schur ring if the indiscrete partition $\{G\}$ is likewise a Schur ring over $G$, but there do exist monoids which have the trivial Schur ring but not the indiscrete one, as demonstrated in the next proposition which follows immediately from Theorem \ref{thm:monoidSchur}.

\begin{Prop} Let $G$ be a semigroup for which the indiscrete partition $\{G\}$ is a Schur ring over $G$. Then $G^e$ has the trivial Schur ring $\{e,G^e-e\}$ but does not have the indiscrete Schur ring $\{G^e\}$.
\end{Prop}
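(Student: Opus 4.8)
The plan is to treat the two assertions separately, leaning on \thmref{thm:monoidSchur} for both. Throughout, recall that $G^e = G\cup\{e\}$ with $e$ a freshly adjoined identity, so $e\notin G$ and $G^e - e = G$; recall also that the hypothesis ``$\{G\}$ is a Schur ring over $G$'' is exactly the statement $G^2 = |G|G$ in $\Q[G]$, and that this forces $G$ to be a nonempty finite semigroup.

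First I would establish that the trivial partition $\{e, G^e - e\} = \{e\}\cup\{G\}$ is a Schur ring over $G^e$. The quickest route is the one-to-one correspondence $\S\mapsto \S^e = \S\cup\{e\}$ of \thmref{thm:monoidSchur}: applied to the indiscrete Schur ring $\{G\}$ over $G$, it carries $\{G\}$ to $\{e, G\}$, a Schur ring over $G^e$. If one prefers a self-contained check, it suffices to confirm closure on the four products $e\cdot e = e$, $eG = Ge = G$, and $G^2 = |G|G$, each of which lies in $\Span_\Q\{e, G\}$.

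Second I would show that the indiscrete partition $\{G^e\}$ is \emph{not} a Schur ring. The cleanest argument again invokes \thmref{thm:monoidSchur}: every Schur ring over $G^e$ must contain $e$ as a primitive (singleton) set. But the indiscrete partition has the single block $G^e$, which properly contains $e$ since $|G^e|\ge 2$; thus $e$ is not a singleton, and $\{G^e\}$ cannot be a Schur ring. Alternatively one can compute directly in $\Q[G^e]$,
\[(G^e)^2 = (G + e)^2 = G^2 + Ge + eG + e^2 = (|G| + 2)G + e,\]
using $G^2 = |G|G$, $Ge = eG = G$, and $e^2 = e$. For $\{G^e\}$ to be a Schur ring, $(G^e)^2$ would have to be a scalar multiple of $G^e = G + e$, forcing the multiplicity of $e$ to match that of each $g\in G$, i.e. $1 = |G| + 2$, which is impossible.

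The only point requiring any care—and the nearest thing to an obstacle—is verifying that the multiplicity of $e$ in $(G^e)^2$ is exactly $1$: because $G$ is closed under multiplication and $e\notin G$, no product of two elements of $G^e$ other than $e\cdot e$ can equal $e$. This is precisely the distinguishing feature of the adjoined identity, and it is what separates the trivial partition (which survives) from the indiscrete one (which does not).
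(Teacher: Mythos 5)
Your proof is correct and follows essentially the same route as the paper, which offers no separate argument beyond stating that the proposition ``follows immediately from Theorem \ref{thm:monoidSchur}'': you invoke that theorem's correspondence $\S\mapsto\S^e$ to produce the trivial Schur ring $\{e,G^e-e\}$ from $\{G\}$, and its conclusion that $e$ must be a singleton in every Schur ring over $G^e$ to rule out $\{G^e\}$. Your supplementary direct computations (closure of $\Span_\Q\{e,G\}$ and the multiplicity mismatch $1\neq |G|+2$ in $(G^e)^2$) are accurate and simply make explicit what the cited theorem packages.
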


\begin{Prop} If $G$ is a finite semigroup and $\S$ and $\T$ are two of its Schur rings, then $\Q[\S\wedge \T] = \Q[\S]\cap \Q[\T]$ is also a Schur ring over $G$.
\end{Prop}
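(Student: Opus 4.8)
The plan is to establish the set-theoretic equality $\Q[\S\wedge\T] = \Q[\S]\cap\Q[\T]$ first, and then to observe that an intersection of two subrings is automatically a subring; since closure under multiplication is the only surviving Schur-ring axiom over a semigroup, this last observation is exactly what certifies that $\S\wedge\T$ is a Schur ring over $G$.

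For the equality I would pass to the ``functions constant on classes'' description of these spaces. Using the identification of $\Q[G]$ with $\Q^G$ (valid here since $G$ is finite) that sends a formal sum $\sum_g a_g g$ to the function $g\mapsto a_g$, each simple quantity $X$ becomes the indicator of $X$. Hence $\Q[\S]=\Span_\Q\{X\mid X\in\S\}$ is precisely the space of functions $G\to\Q$ that are constant on every $\S$-class, and likewise $\Q[\T]$ is the space of functions constant on every $\T$-class. Therefore $\Q[\S]\cap\Q[\T]$ is the space of functions that are simultaneously constant on each $\S$-class and on each $\T$-class.

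Next I would identify that last space with $\Q[\S\wedge\T]$. Recall that $\S\wedge\T$, the common coarsening, has as its primitive sets the blocks of the equivalence relation generated by ``lie in a common $\S$-class or a common $\T$-class,'' i.e.\ the transitive closure of the union of the two partition relations. A function is constant on every such block exactly when it is constant on each $\S$-class and on each $\T$-class, since constancy along the two generating relations propagates through their transitive closure. This yields $\Q[\S]\cap\Q[\T]=\Q[\S\wedge\T]$. The inclusion $\Q[\S\wedge\T]\subseteq\Q[\S]\cap\Q[\T]$ is the easy half: every $\S\wedge\T$-class is at once a union of $\S$-classes and a union of $\T$-classes, so its simple quantity is a sum of $\S$-simple quantities and also a sum of $\T$-simple quantities, placing it in both $\Q[\S]$ and $\Q[\T]$. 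The transitive-closure argument supplies the reverse inclusion, and this reverse inclusion is the one step where I expect the only genuine care is needed.

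Finally, because $\S$ and $\T$ are Schur rings, $\Q[\S]$ and $\Q[\T]$ are subrings of $\Q[G]$, and the intersection of two subrings inherits closure under addition and multiplication, hence is itself a subring. Thus $\Q[\S\wedge\T]=\Q[\S]\cap\Q[\T]$ is a subring of $\Q[G]$, which by definition means $\S\wedge\T$ is a Schur ring over $G$. Everything after the displayed equality is purely formal, so the combinatorial description of the common coarsening carries the entire weight of the argument.
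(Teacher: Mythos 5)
Your proof is correct, but it reaches the result by a genuinely different route than the paper. The paper's proof is essentially a citation: it invokes Proposition 2.25 of the first author's thesis, whose argument rests on the representation theory of the Hadamard (entrywise) product on $\Q[G]$ — a subspace closed under the Hadamard product is spanned by mutually orthogonal $0$--$1$ vectors, so the intersection $\Q[\S]\cap\Q[\T]$, being closed under both the semigroup product and the Hadamard product, is automatically the span of the simple quantities of some partition, which one then identifies as $\S\wedge\T$; the paper's only new content is the observation that this argument never uses the group multiplication and therefore transfers verbatim to semigroup rings. You instead prove the equality $\Q[\S\wedge\T]=\Q[\S]\cap\Q[\T]$ by hand: identifying $\Q[G]\cong\Q^G$ (legitimate since $G$ is finite), you characterize $\Q[\S]$ and $\Q[\T]$ as the functions constant on the respective classes, unpack the common coarsening as the partition induced by the transitive closure of the union of the two class relations, and note that constancy propagates along chains; the ring property then follows from the purely formal fact that an intersection of subrings is a subring, which is all that the semigroup definition of a Schur ring requires. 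Your version is more elementary and self-contained, avoiding the idempotent-basis lemma entirely, at the modest cost of relying on the combinatorial description of $\S\wedge\T$ — to be fully rigorous you should add the one-line check that the transitive-closure partition really is the finest common coarsening, i.e.\ the partition the paper denotes $\S\wedge\T$. The paper's approach buys brevity and reuses Hadamard-product machinery that is standard in the Schur-ring literature; yours has the advantage of being readable without external references.
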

\begin{proof} See Proposition 2.25 from \cite{MePhD}. As the proof there relies solely on the representation theory of the Hadamard product, and not on the group multiplication, the proof carries over immediately to the semigroup ring $\Q[G]$.
\end{proof}

\begin{Cor}\label{coarsest}
Let $G$ be a finite semigroup. Then there is a coarsest Schur ring of $G$, which is unique.
\end{Cor}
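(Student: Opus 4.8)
The plan is to construct the coarsest Schur ring explicitly as the common coarsening of the entire collection of Schur rings over $G$, and then to verify that this partition is itself a Schur ring using the preceding proposition. First I would observe that finiteness is used right away: since $G$ is finite, there are only finitely many partitions of $G$ (their number being $\B(|G|)$), hence only finitely many Schur rings, and this collection is nonempty because the discrete Schur ring $\S = \{x\mid x\in G\} = G$ is always present. Enumerate the Schur rings of $G$ as $\S_1, \S_2, \ldots, \S_m$ with $m\ge 1$, and set $\S_* = \S_1\wedge \S_2\wedge\cdots\wedge \S_m$, the common coarsening of all of them.

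The second step is to show $\S_*$ is a Schur ring. The preceding proposition establishes that the common coarsening of any two Schur rings is again a Schur ring (via $\Q[\S\wedge \T] = \Q[\S]\cap \Q[\T]$). Since $\wedge$ is the associative meet operation in the lattice of partitions of $G$, an induction on $m$ then yields the claim: writing $\S_* = (\S_1\wedge\cdots\wedge \S_{m-1})\wedge \S_m$, the inductive hypothesis makes the first factor a Schur ring, and a single application of the proposition to that factor and $\S_m$ completes the step.

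Finally I would conclude. By the definition of the common coarsening, $\S_*$ is a coarsening of each $\S_i$; equivalently, every Schur ring of $G$ is a refinement of $\S_*$. Thus $\S_*$ is a Schur ring lying above every Schur ring in the refinement order, i.e.\ it is coarsest. Uniqueness is then automatic: if $\S'$ is any coarsest Schur ring, then $\S'$ is a coarsening of $\S_*$ (because $\S'$ is coarsest), while $\S_*$ is a coarsening of $\S'$ (by construction); two partitions each a coarsening of the other must coincide, so $\S' = \S_*$.

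The argument is essentially bookkeeping, so I do not expect a genuine obstacle. The only point demanding care is the inductive extension of the two-Schur-ring proposition to the meet of the whole family, which is exactly where finiteness of $G$ (guaranteeing finitely many Schur rings) and the associativity of $\wedge$ are invoked; everything else follows from the lattice-theoretic meaning of ``coarsest.''
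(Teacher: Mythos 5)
Your proof is correct and is exactly the argument the paper intends: the corollary is stated as an immediate consequence of the preceding proposition, obtained by taking the common coarsening of the (finitely many, nonempty by the discrete Schur ring) Schur rings over $G$ and applying the proposition inductively. The verification of coarseness and uniqueness via mutual coarsening is the same routine bookkeeping the paper leaves implicit.
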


\begin{Cor}\label{cor:omegabound}
Let $G$ be a finite semigroup, with $\S$ its coarsest Schur ring. If $\lambda_1,\lambda_2, \ldots, \lambda_n$ are the sizes of the primitive sets of $\S$, then
$$
\Omega(G) \leq \B(\lambda_1)\B(\lambda_2)\ldots\B(\lambda_n).
$$
\end{Cor}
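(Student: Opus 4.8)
The plan is to deduce everything from \corref{coarsest} together with an elementary count of refinements. Let $\S$ denote the unique coarsest Schur ring of $G$, and recall how it arises: by the preceding proposition, $\Q[\S\wedge\T]$ is a Schur ring whenever $\S$ and $\T$ are, so (since $G$ is finite and has only finitely many Schur rings) the common coarsening of \emph{all} Schur rings over $G$ is itself a Schur ring, and this is precisely $\S$. The key structural consequence I would extract is that $\S$ is a coarsening of every Schur ring over $G$; equivalently, every Schur ring $\T$ over $G$ is a refinement of $\S$. Thus the set of Schur rings over $G$ is contained in the set of refinements of $\S$, and it suffices to bound the latter.

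Next I would count the refinements of $\S$. Writing the primitive sets of $\S$ as $B_1,\ldots,B_n$ with $|B_i|=\lambda_i$, a partition $\T$ refines $\S$ if and only if every primitive set of $\T$ lies inside some $B_i$; equivalently, $\T$ is obtained by choosing a partition of each $B_i$ independently and taking the union of these. Since the number of partitions of a set of size $\lambda_i$ is the Bell number $\B(\lambda_i)$, and the choices on distinct $B_i$ are independent, the number of refinements of $\S$ is exactly $\B(\lambda_1)\B(\lambda_2)\cdots\B(\lambda_n)$. As every Schur ring over $G$ is one of these refinements, we conclude $\Omega(G)\le \B(\lambda_1)\B(\lambda_2)\cdots\B(\lambda_n)$.

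The only step that warrants care is the first one, namely that every Schur ring refines $\S$. This is not forced by the word ``coarsest'' in the abstract, but it is immediate from the construction of $\S$ in \corref{coarsest} as the meet of all Schur rings under common coarsening: being a coarsening of each Schur ring is exactly what that construction delivers. Once this is in hand, the remainder is routine combinatorics on refinements, so I anticipate no substantive obstacle.
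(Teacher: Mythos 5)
Your proof is correct and is exactly the argument the paper intends: the corollary is stated without an explicit proof precisely because, as you note, the coarsest Schur ring arises as the common coarsening of all Schur rings (via $\Q[\S\wedge\T]=\Q[\S]\cap\Q[\T]$ and finiteness), so every Schur ring refines it, and refinements of a partition with block sizes $\lambda_1,\ldots,\lambda_n$ number exactly $\B(\lambda_1)\cdots\B(\lambda_n)$. No gaps; your flagging of the distinction between "coarsest" as a label and the actual construction delivering that every Schur ring refines $\S$ is the right point of care.
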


When the indiscrete partition is a Schur ring, such as it is for finite rectangular groups, it is clearly the coarsest semigroup Schur ring. For $\O_n$, the coarsest Schur ring is the trivial Schur ring. For some semigroups, the coarsest Schur ring might even be more refined than that. For some semigroups, the coarsest Schur ring could be the discrete Schur ring. To demonstrate this claim, let \label{ch} $\CH_n$, called the \emph{chain semigroup}, be the set $\{1, 2, \ldots, n\}$ with the operation $\max(\cdot,\cdot)$. 

\begin{Thm}\label{thm:chainschur}
The only Schur ring over $\CH_n$ is the discrete Schur ring.
\end{Thm}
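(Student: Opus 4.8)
The plan is to show that the chain semigroup $\CH_n$, whose operation is $\max$, is so rigid that no two distinct elements can ever be merged into a common primitive set. The key structural observation is that in $\CH_n$ the element $n$ is an absorbing element (a zero), since $\max(x,n) = n$ for all $x$, while the element $1$ is an identity, since $\max(1,x) = x$. More generally, every element $k$ is idempotent, $k^2 = k$, and for $j \le k$ we have $jk = kj = k$. I would exploit this total-order structure to force each primitive set to be a singleton.

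First I would argue that $\theta = n$ must be a singleton: since $\CH_n$ is a zero-semigroup with zero $n$, \thmref{thm:zeroschur} immediately gives $n \in \S$. This handles the top of the chain. The heart of the argument is then an induction or a direct counting argument on primitive sets. Suppose some primitive set $X$ contains two distinct elements. Let $m = \max(X)$ be the largest element appearing in $X$, and let $j < m$ be another element of $X$. I would compute $X^2 = \sum_{x,y \in X} \max(x,y)$ and track multiplicities: the element $m$ appears in $X^2$ with multiplicity $2|X| - 1$ (it is produced by every pair involving $m$), which is strictly larger than the multiplicity with which the smaller element $j$ appears (namely $2|\{x \in X : x \le j\}| - 1$, counting pairs whose max is $j$). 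Since $X^2 \in \QS$ must be a linear combination of primitive sets, every element of the primitive set $X$ must appear in $X^2$ with the same coefficient. The unequal multiplicities of $m$ and $j$ within the single primitive set $X$ then yield a contradiction, forcing $|X| = 1$.

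More precisely, the clean way to see the multiplicity mismatch is to order the elements of $X$ as $a_1 < a_2 < \cdots < a_k$ and observe that $a_i$ arises as $\max(a_p, a_q)$ exactly when $\max(p,q) = i$, giving multiplicity $2i - 1$. Thus the coefficient of $a_i$ in $X^2$ is $2i-1$, which is strictly increasing in $i$. If $k \ge 2$ these coefficients are not all equal, so the elements of $X$ cannot all lie in a single primitive set of $\S$, contradicting $X \in \S$. Hence $k = 1$ and every primitive set is a singleton, so $\S$ is the discrete Schur ring.

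The main obstacle is being careful with the multiplicity bookkeeping: the condition is not merely that the coefficients in $X^2$ be constant on $G$, but that they be constant on each primitive set of $\S$, and the relevant primitive set here is $X$ itself. I would emphasize that since $a_1, \ldots, a_k \in X$ and the coefficient of $a_i$ in $X^2$ is $2i-1$, closure under multiplication demands these coefficients agree across $X$, and the strict monotonicity $1 < 3 < \cdots < 2k-1$ delivers the contradiction as soon as $|X| > 1$. This monotonicity, which is forced by the total order underlying $\max$, is exactly what distinguishes $\CH_n$ from the highly symmetric semigroups like $\LO_n$ or $\O_n$ that admit many Schur rings.
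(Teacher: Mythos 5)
Your proof is correct, but it takes a genuinely different route from the paper's. The paper disposes of the theorem in two lines by induction on $n$: since $\CH_n = \CH_{n-1}^\theta$, the correspondence of \corref{cor:thetaschur} identifies the Schur rings over $\CH_n$ with those over $\CH_{n-1}$, which are discrete by the inductive hypothesis. You instead give a direct multiplicity count: ordering a primitive set $X$ as $a_1 < \cdots < a_k$, the coefficient of $a_i$ in $X^2$ is $i^2 - (i-1)^2 = 2i-1$, and since $X^2 \in \QS$ forces the multiplicity function of $X^2$ to be constant on each primitive set --- in particular on $X$ itself --- the strict monotonicity of $2i-1$ forces $k=1$. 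Your bookkeeping is exactly right, including the key point that constancy is only required on each $\S$-class rather than on all of $G$. As for what each approach buys: the paper's induction is shorter because it reuses machinery already in place (\thmref{thm:zeroschur} through \corref{cor:thetaschur}) and it reinforces the paper's theme that one-element extensions $G^\theta$ control Schur rings; your argument is self-contained and elementary, needs neither induction nor the $G^\theta$ correspondence, and it applies verbatim to any chain, including infinite ones such as $(\N,\max)$ (where primitive sets are finite by the paper's standing convention), which an induction on $n$ does not directly cover. One small remark: your opening appeal to \thmref{thm:zeroschur} to isolate the top element $n$ is redundant, since your general counting argument already handles every primitive set, the one containing the zero included.
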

\begin{proof}
    The proof follows by induction on $n$. The base case is clear. Note that $\CH_n = \CH_{n-1}^\theta$. Thus, the Schur rings over $\CH_n$ correspond with the Schur rings over $\CH_{n-1}$, by \corref{cor:thetaschur}, but $\CH_{n-1}$ only has the discrete Schur ring by the inductive hypothesis.
\end{proof}

The following corollary follows immediately from Theorems \ref{thm:och} and \ref{thm:chainschur}.

\begin{Cor}\label{cor:och} For any positive integers $m,n$, the number of Schur rings over $\OCH_{m,n}$ is $\B(m-1)$.
\end{Cor}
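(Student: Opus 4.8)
The plan is to recognize $\OCH_{m,n}$ as a particular instance of the general construction $\OG_n$ analyzed in \thmref{thm:och}, with the zero-semigroup factor $G$ taken to be the chain semilattice $\CH_n$. The first step is to confirm that $\CH_n$ satisfies the hypothesis of \thmref{thm:och}, namely that it is a finite zero-semigroup. This is routine: under the operation $\max(\cdot,\cdot)$ the largest element $n$ satisfies $\max(n,x)=n$ for every $x\in \CH_n$, so $n$ is a zero element, and $\CH_n$ is finite by construction. Thus \thmref{thm:och} applies with $G=\CH_n$.

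Next I would pin down the value $\Omega(\CH_n)$ needed to feed into the enumeration formula. Here I would simply cite \thmref{thm:chainschur}, which asserts that the only Schur ring over $\CH_n$ is the discrete Schur ring; hence $\Omega(\CH_n)=1$.

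Finally, I would substitute into the count provided by \thmref{thm:och}. Matching the indices in the construction $\OCH_{m,n}=\O_m\cup \CH_n$ against the statement of \thmref{thm:och}, the role of the null-semigroup factor is played by $\O_m$ and the role of the zero-semigroup $G$ by $\CH_n$, so the number of Schur rings over $\OCH_{m,n}$ is $\Omega(\CH_n)\,\B(m-1)=1\cdot\B(m-1)=\B(m-1)$, as claimed.

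Since both inputs are already proved, there is no genuine difficulty in this argument; the one place demanding minor care is the subscript bookkeeping. One must ensure that the null-semigroup index appearing as ``$n-1$'' in \thmref{thm:och} is matched with $m$ (the index of the $\O_m$ factor), and that the second subscript $n$ is the one labeling the chain factor $\CH_n$ standing in for $G$. Getting this matching backwards would incorrectly produce $\B(n-1)$, so the verification of which index feeds which slot is the only step worth stating explicitly.
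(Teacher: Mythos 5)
Your proposal is correct and is exactly the paper's argument: the paper derives \corref{cor:och} "immediately from Theorems \ref{thm:och} and \ref{thm:chainschur}," i.e., apply \thmref{thm:och} with the null factor $\O_m$ and $G=\CH_n$ (a finite zero-semigroup, its zero being the top element $n$), then use $\Omega(\CH_n)=1$ to get $\Omega(\OCH_{m,n})=1\cdot\B(m-1)$. Your explicit care with the index matching (so the answer is $\B(m-1)$, not $\B(n-1)$) is the only detail the paper leaves implicit, and you got it right.
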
\label{och}

We will see in Section \ref{gap} that it is quite common for semigroups to have a single Schur ring, necessarily the discrete Schur ring. Regardless of how refined this coarsest Schur ring is, it serves the same purpose as the trivial Schur ring does over groups. Therefore, any consideration of Schur rings over semigroups should include this coarsest Schur ring, as noted by \corref{cor:omegabound}. 

\subsection{Wedge Products}
The notion of the wedge product was introduced in \cite{LeungI} by Leung and Man.\footnote{Wedge products were also independently introduced by Evdokimov and Ponomarenko \cite{Ponomarenko01, Ponomarenko02}, which they called the \emph{generalized wreath product}. The wedge product was later generalized to association schemes by Muzychuk \cite{MuzychukWedge} and to supercharacter theories by Hendrickson \cite{Hendrickson}.} A \emph{wedge product} of Schur rings is built by combining a Schur ring over a subgroup and a Schur ring over a quotient group, so long as certain compatibility conditions are satisfied.   The wedge product construction depends heavily on the theory of cosets, which is vastly different in the category of semigroups. As such, we do not present here a complete generalization of wedge products for semigroups. Instead, we present some examples of Schur rings generated by Rees quotients \cite{Rees}, which are examples that ought to be considered a wedge product over semigroups.

\begin{Thm}\label{thm:Reesextension} Let $G$ be a semigroup with ideal $I$. Let $G/I$ be the Rees quotient. Let $\S$ be a Schur ring over $G/I$. Then $I\vee \S$ 
 is a Schur ring over $G$, called the ideal extension of $I$ by $\S$.
\end{Thm}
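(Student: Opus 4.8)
The plan is to verify directly that $I\vee\S$ is closed under multiplication of simple quantities, which is the only axiom required of a semigroup Schur ring. First I would pin down what the partition $I\vee\S$ actually is. Since $G/I$ is a zero-semigroup whose zero is the class $[I]$, \thmref{thm:zeroschur} forces $[I]$ to be a singleton primitive set of $\S$; consequently every other primitive set of $\S$ is a subset of $G-I$. Identifying $[I]$ with the subset $I\subseteq G$ and refining this single block by the discrete partition of $I$, the common refinement $I\vee\S$ is precisely the partition $\widetilde\S$ of $G$ whose primitive sets are the singletons $\{a\}$ for $a\in I$, together with the primitive sets $X\in\S$ with $X\neq[I]$ (each now viewed as a subset of $G-I$). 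The substance of the theorem is that $\widetilde\S$ is closed under multiplication.

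The key tool is the Rees quotient homomorphism. Because $I$ is an ideal and $G/I$ is the Rees quotient, the linear map $\pi\colon\Q[G]\to\Q[G/I]$ determined by $\pi(g)=[g]$ (so $\pi(g)=g$ for $g\in G-I$ and $\pi(g)=[I]$ for $g\in I$) is a ring homomorphism. I would use $\pi$ to transport the multiplicative closure of $\S$ over $G/I$ back down to $G$; the guiding idea is that $\pi$ loses only the information of \emph{which} element of $I$ a product lands in, and that is exactly the information the discrete partition of $I$ inside $\widetilde\S$ retains.

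With $\widetilde\S$ described, I would check closure by cases on two primitive sets. Any product involving a singleton $\{a\}$ with $a\in I$ lands entirely in $I$ (as $I$ is an ideal), hence is a sum of singletons of $\widetilde\S$ and causes no difficulty. The substantive case is a product $XY$ of two primitive sets $X,Y\subseteq G-I$. I would split $XY=\sum_{xy\in G-I}xy+\sum_{xy\in I}xy$ in $\Q[G]$: the second sum is a sum of singletons of $\widetilde\S$, while applying $\pi$ to the first sum and comparing with $\pi(X)\pi(Y)=\sum_{C\in\S}\lambda_C C$ (valid since $\S$ is a Schur ring over $G/I$) would identify the first sum, term by term, with $\sum_{C\neq[I]}\lambda_C C$, a $\Q$-combination of primitive sets of $\widetilde\S$. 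Hence $XY\in\Q[\widetilde\S]$.

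The hard part is this coefficient-matching step: one must confirm that for each $g\in G-I$ the multiplicity of $g$ in $XY$ (computed in $\Q[G]$) equals the multiplicity of $[g]=g$ in $\pi(X)\pi(Y)$ (computed in $\Q[G/I]$). This holds because $\pi$ fixes each basis element of $G-I$ and sends all of $I$ to $[I]$, so $\pi$ is coefficient-preserving on the $G-I$ support; thus the uncollapsed part of $XY$ genuinely \emph{equals} $\sum_{C\neq[I]}\lambda_C C$ rather than merely mapping into it under $\pi$. This is where the ideal hypothesis (making $\pi$ a homomorphism) and \thmref{thm:zeroschur} (ensuring $I$ is one full $\S$-block, so that $\widetilde\S$ is well defined) are both essential.
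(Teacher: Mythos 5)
Your proposal is correct and takes essentially the same route as the paper's proof: products involving elements of $I$ are absorbed into $\Q[I]$, which lies in $\Q[I\vee \S]$ because the partition on $I$ is discrete, while a product $XY$ of two primitive sets contained in $G-I$ decomposes as an element $\alpha\in\Q[I]$ plus the combination $\sum_{C\neq [I]}\lambda_C C$ dictated by the Schur ring structure of $\S$ over $G/I$. The paper simply asserts this last decomposition in one line, whereas you justify it by coefficient-matching through the Rees quotient ring homomorphism $\pi\colon \Q[G]\to\Q[G/I]$ and pin down the partition $I\vee\S$ via \thmref{thm:zeroschur}; these are careful elaborations of steps the paper leaves implicit, not a different argument.
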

Note that $I\vee \S$ is the Schur ring induced by lifting $\S$ from $G/I$ into $G$ and inserting a discrete partition over $I$. As $\S$ is a partition of the set $\{I\}\cup G$, we see $I\vee \S$ is the refinement of $\S$ and the discrete partition over $I$.
\begin{proof}
    As $I$ is a discrete $(I\vee \S)$-subset, all subsets of $I$ are likewise $(I\vee \S)$-subsets. Hence, $\Q[I]\le \Q[I\vee \S]$. If $X,Y\in \S$, then $XY = \alpha + \sum_{C\in \S} \lambda_CC \in \Q[I\vee \S]$, for some $\alpha\in \Q[I]$. If $a, b\in I$, then each of $aX$, $Xa$, $ab \in \Q[I] \le \Q[I\vee \S]$. Therefore, $I\vee \S$ is a Schur ring over $G$.
\end{proof}

In Example \ref{exam:numerical}, we examine a numerical semigroup with two Schur rings, the discrete Schur ring and an ideal extension using the semigroup's unique maximal ideal and the only non-discrete Schur ring over this Rees quotient.

\begin{Thm}\label{thm:reesquotient} Let $G$ be a semigroup with ideal $I$. Let $\S$ be a Schur ring over $G$ such that $I$ is an $\S$-ideal. Then $\S/I = \S|_{G-I}\vee \{I\}$ is a Schur ring over the Rees quotient $G/I$.
\end{Thm}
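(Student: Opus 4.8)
The plan is to realize $\S/I$ as a homomorphic image of $\S$ under the canonical Rees projection, pushed up to the level of semigroup rings, and then to invoke the elementary fact that the homomorphic image of a subring is again a subring. First I would introduce the Rees projection $\pi\colon G\to G/I$ that fixes every element of $G-I$ and collapses all of $I$ to the zero $\theta$ of $G/I$, and extend it $\Q$-linearly to a map $\pi\colon \Q[G]\to \Q[G/I]$. A short verification, splitting into the cases where both factors lie outside $I$ (with their product inside or outside $I$) and where at least one factor lies in $I$, shows that $\pi(xy)=\pi(x)\pi(y)$ for all $x,y\in G$; hence $\pi$ is a surjective ring homomorphism. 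This is precisely the statement that the Rees construction is functorial on semigroup algebras.

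Next I would identify $\Q[\S/I]$ with $\pi(\Q[\S])$. This is exactly where the hypothesis that $I$ is an $\S$-ideal does its work: because $I$ is an $\S$-subset, every primitive set $X\in \S$ lies wholly inside $I$ or wholly inside $G-I$. For $X\subseteq G-I$ we have $\pi(X)=X$, a primitive set of $\S/I$, while for $X\subseteq I$ we have $\pi(X)=|X|\theta$. Since $\Q[\S]=\Span_\Q\{X\mid X\in \S\}$, applying $\pi$ yields $\pi(\Q[\S])=\Span_\Q(\{X\mid X\in \S|_{G-I}\}\cup\{\theta\})$, which is exactly $\Q[\S/I]$.

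Finally, since $\S$ is a Schur ring over $G$, the space $\Q[\S]$ is a subring of $\Q[G]$; applying the ring homomorphism $\pi$, its image $\pi(\Q[\S])=\Q[\S/I]$ is a subring of $\Q[G/I]$, which is precisely the assertion that $\S/I$ is a Schur ring over the Rees quotient $G/I$.

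I expect the only real obstacle to be bookkeeping rather than a conceptual hurdle: one must check carefully that $\pi$ respects multiplication, including the multiplicities that arise when distinct elements of $G-I$ multiply into $I$, and confirm that the $\S$-subset hypothesis is exactly what forces $\pi$ to carry each primitive $\S$-set to a scalar multiple of a primitive $\S/I$-set (so that the image is spanned by the correct classes). A direct alternative avoids $\pi$ entirely: for primitive sets $\bar X,\bar Y$ of $\S/I$ one observes $\bar X\bar Y=\pi(XY)$, expands $XY=\sum_{C\in \S}\lambda_C C$ in $\Q[G]$ using that $\S$ is a Schur ring, and applies $\pi$ term by term to land in $\Q[\S/I]$. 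The functorial formulation is cleaner, however, and makes the role of each hypothesis transparent.
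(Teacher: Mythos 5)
Your proposal is correct. The paper proves the theorem by direct verification: viewing $I$ as the zero of $G/I$, it first records the products involving the zero class ($I^2 = xI = Ix = I$ and $XI = IX = |X|I$ for nonzero $X$), and then for nonzero primitive sets $X, Y \in \S/I$ it expands $XY = \sum_{a\in I}\mu_a a + \sum_{C\in \S-\S|_I}\lambda_C C$ in $\Q[G]$ (using that $\S$ is a Schur ring together with the fact that $I$ is an $\S$-subset) and collapses every $a\in I$ to the zero $I$, landing in $\Q[\S/I]$. That collapsing step is exactly your projection $\pi$ applied term by term, so the computational core of the two arguments is identical; indeed, the ``direct alternative'' you sketch in your last paragraph is essentially the paper's proof verbatim. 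What your primary, functorial route buys is organization: once you verify that the Rees projection extends to a ring homomorphism $\pi\colon \Q[G]\to \Q[G/I]$ and identify $\pi(\Q[\S]) = \Q[\S/I]$ (which is precisely where the hypothesis that $I$ is an $\S$-ideal enters, forcing each primitive set to map to either itself or a scalar multiple of the zero), the products involving the zero class require no separate treatment, since everything follows from the elementary fact that the image of a subring under a ring homomorphism is a subring. The paper's proof is shorter on the page because it uses the homomorphism property of the Rees projection implicitly rather than isolating it; yours makes that lemma explicit and reusable, at the cost of the case-checking needed to establish that $\pi$ is multiplicative.
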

\begin{proof}
 Viewing $I$ as the zero element of $G/I$, we have $I^2 = xI= Ix = I$ for all $x\in G/I$. If $X\in \S/I$ is nonzero, then $XI = IX = |X|I$. If $Y\in \S/I$ is likewise nonzero, then, in $\Q[G]$, $XY = \sum_{a\in I} \mu_aa + \sum_{C\in \S-\S|_I} \lambda_CC$. Then, in $\Q[G/I]$, $XY = \sum_{a\in I}\mu_aI + \sum_{C\in \S-\S|_I} \lambda_CC\in \Q[\S/I]$. 
\end{proof}

Theorems \ref{thm:Reesextension} and \ref{thm:reesquotient} establish a Correspondence Theorem with respect to Schur rings and Rees quotients. In particular, there is a one-to-one correspondence between Schur rings over $G/I$ and the Schur rings over $G$ which contain $\Q[I]$ as a discrete Schur subring. Thus, we see that Schur rings $\S$ can be built from the Rees quotients with respect to their $\S$-ideals. We note that while a Schur ring $\S/I$ over $G/I$ is guaranteed to lift to a Schur ring $I\vee (\S/I)$ over $G$ where the partition over $I$ is discrete, it is possible for $G$ to have a Schur ring $\S$ with $\S$-ideal $I$ and the partition over $I$ is non-discrete (see Example \ref{exam:nilpotent}). 

In the next section, we will explore more the ramifications of \thmref{thm:reesquotient}, but before doing it we want to present some examples of semigroup ``wedge'' products. Given two semigroups $G$, $H$, we define a multiplication on the disjoint union $G\cup H$ by extending the respective multiplications in $G$ and $H$ by the rule $gh = hg = g$ whenever $g\in G$ and $h\in H$. This multiplication is formed by stacking $G$ onto $H$, where $G$ dominates over every element of $H$ similar to higher elements in a semilattice. As we will discuss some other ways to define a multiplication on $G\cup H$,  we call this semigroup the \emph{stack} of $G$ on $H$, denoted \label{stack}$G\s H$. Note that $G$ is an ideal in $G\s H$ and $(G\s H)/G \cong H^\theta$. This construction has been used before (see \cite{Distler}) but no well-accepted name or notation is known to the authors.

\begin{Thm}\label{thm:stackschur} If $\S$ and $\T$ are Schur rings over semigroups $G$ and $H$, respectively, then 
$\S\vee \T$ is a Schur ring over the stack $G \s H$. Furthermore, every Schur ring over $G \s H$ has this form. 
\end{Thm}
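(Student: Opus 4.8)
The plan is to prove the two directions separately, with the forward direction a routine verification and the converse the substantive part. For the forward direction, I would observe that since $G$ and $H$ are disjoint, the common refinement $\S\vee\T$ has as its primitive sets exactly the primitive sets of $\S$ (all lying in $G$) together with those of $\T$ (all lying in $H$). I would then check closure under multiplication by cases on two primitive sets $A,B$. If both lie in $G$, the product is computed inside the subsemigroup $G$, so $AB\in\Q[\S]\subseteq\Q[\S\vee\T]$; symmetrically if both lie in $H$. If $A\subseteq G$ and $B\subseteq H$, the defining rule $gh=g$ gives $AB=|B|\,A$, and if $A\subseteq H$, $B\subseteq G$, the rule $hg=g$ gives $AB=|A|\,B$. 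In either mixed case the product is a scalar multiple of a single primitive set, so $\S\vee\T$ is a Schur ring over $G\s H$.

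For the converse, let $\S$ be any Schur ring over $G\s H$. The key claim is that every primitive set lies entirely in $G$ or entirely in $H$; equivalently, both $G$ and $H$ are $\S$-subsets. I would argue by contradiction: suppose some primitive set $X$ straddles both, writing $X=X_G+X_H$ with $X_G=X\cap G$ and $X_H=X\cap H$ both nonempty. Expanding $X^2=X_GX_G+X_GX_H+X_HX_G+X_HX_H$ and applying the stacking rules $X_GX_H=X_HX_G=|X_H|X_G$, I would note that every product involving a $G$-element lands in $G$ (as $G$ is an ideal), so the two cross terms contribute $2|X_H|$ copies of each element of $X_G$, while the only products landing in $H$ are those of $X_HX_H$. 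Since $X$ is a single primitive set and $X^2\in\Q[\S]$, every element of $X$ must appear in $X^2$ with one common multiplicity $\mu$.

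The main obstacle, and the heart of the argument, is extracting a contradiction from this multiplicity bookkeeping. On one hand, each $g\in X_G$ already receives the $2|X_H|$ copies from the cross terms (plus possibly more from $X_GX_G$), forcing $\mu\geq 2|X_H|$. On the other hand, each $h\in X_H$ can receive contributions only from the $H$-internal product $X_HX_H$, which supplies at most $|X_H|^2$ products counted with multiplicity; since these must cover $|X_H|$ elements each with multiplicity $\mu$, we get $|X_H|\mu\leq|X_H|^2$, hence $\mu\leq|X_H|$. Together $2|X_H|\leq\mu\leq|X_H|$ forces $|X_H|=0$, contradicting $X_H\neq\emptyset$.

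This establishes that both $G$ and $H$ are $\S$-subsets. Since each is also a subsemigroup of $G\s H$, each is an $\S$-subsemigroup, so by \propref{prop:subring} the restrictions $\S|_G$ and $\S|_H$ are Schur rings over $G$ and $H$, respectively. Finally, since the primitive sets of $\S$ are precisely those of $\S|_G$ together with those of $\S|_H$, and these lie in the disjoint sets $G$ and $H$, we conclude $\S=\S|_G\vee\S|_H$, which is exactly the asserted form. I expect the only genuinely delicate point to be the clean separation of the $G$-contributions from the $H$-contributions in $X^2$; once that bookkeeping is set up correctly, the inequality $2|X_H|\leq\mu\leq|X_H|$ closes the argument immediately.
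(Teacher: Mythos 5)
Your proposal is correct and follows essentially the same route as the paper's proof: the forward direction reduces to the mixed products $XY = YX = |Y|X$, and the converse uses the identical expansion $X^2 = X_G^2 + 2|X_H|X_G + X_H^2$ with the same multiplicity count (cross terms force every element of $X$ to have multiplicity at least $2|X_H|$, while $X_H^2$ can supply at most $|X_H|^2$ appearances of $H$-elements), yielding $|X_H|=0$. The only cosmetic difference is that you divide by $|X_H|$ to get $2|X_H|\le\mu\le|X_H|$ where the paper compares the totals $2|X_H|^2\le|X_H|^2$; both close the argument identically.
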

\begin{proof}
As $\S$ and $\T$ are Schur rings, we need only check the products $XY$ and $YX$ where $X\in \S$ and $Y\in \T$. In this case, we have $XY = YX = |Y|X\in \QS \le \Q[\S\vee \T]$.

Let $\S$ be a Schur ring over $G \s H$. Let $X\in \S$ which contains an element from $G$. Let $X_G=X\cap G$ and $X_H=X\cap H$; hence, $X = X_G+X_H$. Then $X^2 = (X_G+X_H)^2 = X_G^2 + 2|X_H|X_G + X_H^2$. Since $X_G\neq \emptyset$, the elements of $X_G$ appear in $X^2$ with multiplicity at least $2|X_H|$. Since $X$ is a primitive set, the elements of $X_H$ must also appear with multiplicity at least $2|X_H|$. But the only elements of $H$ contained within $X^2$ must be within $X_H^2$. Counting multiplicities accounts for at most $|X_H|^2$ elements from $H$, and there must be at least $2|X_H|^2$ such elements. Therefore, $|X_H|=0$, that is, $X=X_G\subseteq G$. In particular, $G$, and consequentially $H$, must be an $\S$-subsemigroup, which completes the proof.
\end{proof}

Similar to how a  semidirect product is defined using automorphisms to twist a direct product, we can construct a \emph{semi-stack}\footnote{We will consider another version of a semi-stack in Section \ref{sec:idempotent}, denoted $\LORO(\ell,m,n,k)$, which is delayed until we consider indecomposable imposition.} of semigroups when the first semigroup is equivalent to $\LO_n$. As mentioned in Section \ref{sec:auto}, $\Aut(\LO_n) = S_n$. Let $\varphi: G \to S_n$ be a group homomorphism. Then define a multiplication on \label{semistack}$\LO_n \shat G = \LO_n\cup G$ which extends the multiplications on $\LO_n$ and $G$ by the rule: for each $x\in \LO_n$ and $g\in G$, define $xg = x$ and $gx = \varphi_g(x)$, where $\varphi_g :\LO_n\to \LO_n$ is the automorphism associated to $g$. Then it is easy to check that $\LO_n\shat G$ equipped with this multiplication is a semigroup and $\LO_n$ is an ideal. Note that the forgetfulness of multiplication in $\LO_n$ assures that this multiplication will be associative. For example, $x(gy)= x\varphi_g(y) = x = xy = (xg)y$, but if $x,y$ were drafted from an arbitrary semigroup then $xy = x\varphi_g(y)$ for all $g\in G$, which in most settings would be very restrictive. 

By similarly reasoning to the proof of \thmref{thm:stackschur}, all Schur rings over $\LO_n\shat G$ have the form $\S\vee\T$ for Schur rings over $\S$ and $\T$ over $\LO_n$ and $G$, respectively. Unlike the case of stacks though, not all Schur rings over $\LO_n$ and $G$ are compatiable.

\begin{Exam}
    In the special case where $G=\z_n$ and $\varphi : \z_n \to \Aut(\LO_n)$ is the cyclic permutation action, we let \label{lozn}$\LOZ_n = \LO_n\shat \z_n$. By \thmref{thm:leftnullschur}, all partitions over $\LO_n$ are Schur rings, giving $\B(n)$ such Schur rings. Counting Schur rings over cyclic groups is much more subtle. See \cite{CountingII} for a survey of enumerating group Schur rings over finite cyclic groups. Consider the case $\LOZ_3 = \LO_3\cup \z_3 = \{0,1,2\}\cup \{e,z,z^2\}$. Then $\z_3$ has only three semigroup Schur rings, the discrete, trivial, and indiscrete Schur rings: \[\{ e,z,z^2 \},\quad \{ e, z+z^2 \},\quad\text{and}\quad \{ e+z+z^2\}.\] Therefore, the number of Schur rings over $\LO_3\s\z_3$ will be $5(3)=15$, but $\LOZ_3$ only has 12 Schur rings. In particular, the partition
\[\S = \{ 0, 1+2, e,z,z^2 \}\] is not a Schur ring over $\LOZ_3$ since $z0 = 1 \notin \QS$. The other two missing partitions are constructed similarly.
\end{Exam}

Suppose that $G$ and $H$ are both zero-semigroups, with zeros $\theta_G$ and $\theta_H$, respectively. Extending the multiplication in $G$ and $H$ to $G\cup H$, for $g\in G$ and $h\in H$, we define $gh = \theta_G$ and $hg=\theta_H$. This defines an associative multiplication on the union of $G$ and $H$ with a \emph{twist}, which we denote \label{twist}$G\t H$. Unlike $G\s H$, neither $G$ nor $H$ are ideals in $G\t H$, although they are both right-ideals.  

\begin{Thm}\label{thm:veebarschur} If $\S$ and $\T$ are Schur rings over zero-semigroups $G$ and $H$, respectively, then $\S\vee \T$ is a Schur ring over $G \t H$. 
\end{Thm}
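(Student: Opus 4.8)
The plan is to verify the single Schur ring axiom (closure under multiplication) for the partition $\S\vee\T$ over the twisted union $G\t H$. Since $\S$ and $\T$ are already Schur rings over $G$ and $H$ respectively, and since $\S\vee\T$ restricts to $\S$ on $G$ and to $\T$ on $H$, the products $XX'$ with $X,X'\in\S$ and $YY'$ with $Y,Y'\in\T$ already lie in $\Q[\S]\le\Q[\S\vee\T]$ and $\Q[\T]\le\Q[\S\vee\T]$. Thus the only products I need to examine are the mixed ones, namely $XY$ and $YX$ for $X\in\S$ and $Y\in\T$.

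For the mixed products I would compute directly using the twist rule. For $X\in\S$ and $Y\in\T$, every product $xy$ with $x\in X\subseteq G$ and $y\in Y\subseteq H$ equals $\theta_G$, so $XY=|X||Y|\theta_G$. Since $G$ is a zero-semigroup, \thmref{thm:zeroschur} guarantees $\theta_G$ is a primitive set of $\S$, hence of $\S\vee\T$; therefore $XY=|X||Y|\theta_G\in\Q[\S\vee\T]$. Symmetrically, $YX=|X||Y|\theta_H$, and $\theta_H$ is a primitive set of $\T$ by the same theorem, so $YX\in\Q[\S\vee\T]$. This shows $\Q[\S\vee\T]$ is closed under all products and is therefore a subring of $\Q[G\t H]$.

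I do not expect a genuine obstacle here: the forgetful nature of the twist (every mixed product collapses onto a single zero element) is precisely what makes the computation immediate, in close analogy to the stack case of \thmref{thm:stackschur}. The only point requiring a little care is to confirm that $\S\vee\T$ is genuinely a partition of the \emph{set} $G\cup H$ and that its restrictions to $G$ and $H$ recover $\S$ and $\T$; this is automatic since $G$ and $H$ are disjoint and $\S\vee\T$ is by definition the common refinement, which on disjoint carriers is just the union of the two partitions. One might also remark, as the preceding discussion suggests, that unlike the stack case the converse likely fails here—not every Schur ring over $G\t H$ need split as $\S\vee\T$—so the statement is only asserting sufficiency, which is all the computation above establishes.
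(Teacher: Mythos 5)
Your proof is correct and follows essentially the same route as the paper's: the within-$G$ and within-$H$ products are handled by $\S$ and $\T$ being Schur rings, and the mixed products collapse to $|X||Y|\theta_G$ and $|X||Y|\theta_H$, which lie in $\Q[\S\vee\T]$ because the zeros are singleton primitive sets (you cite \thmref{thm:zeroschur} directly, the paper cites \corref{cor:thetaschur}; both rest on the same fact). No gaps.
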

\begin{proof}
By \corref{cor:thetaschur}, $\theta_G$ and $\theta_H$ are elements of $\S$ and $\T$, respectively. Let $X\in \S$ and $Y\in \T$. Then $XY = |X||Y|\theta_G$, and $YX = |X||Y|\theta_H\in \Q[\S\vee \T]$, which proves the result.
\end{proof}

\begin{Exam}\label{exam:veebar}
\thmref{thm:veebarschur} does not necessarily give every Schur ring of $G\t H$. For example, let $G=\z_2^\theta=\{\theta_G, e_G, g\}$ and $H=\z_2^\theta=\{\theta_H, e_H, h\}$, but notice that $G\t H$ has the Schur ring $\{\theta_G, \theta_H, e_G+ e_H, g+ h\}$ which is not given by the theorem.
\end{Exam}

The following theorem has a similar proof to Theorem \ref{thm:zeroschur}.

\begin{Thm} Let $G$ be a nontrivial zero-semigroup and let $\z_1 = \{\varepsilon\}$ be the trivial group. Let $\S$ be a Schur ring over $G\t \z_1$. Then $\varepsilon\in \S$. It follows that the Schur rings over $G\t \z_1$ are in one-to-one correspondence with the Schur rings over $G$. 
\end{Thm}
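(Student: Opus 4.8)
The statement has two parts: first, that $\varepsilon$ must be a singleton in any Schur ring $\S$ over $G\t \z_1$; second, that this forces a bijective correspondence between Schur rings over $G\t\z_1$ and Schur rings over $G$. The plan is to prove the first part by a multiplicity-counting argument modeled on the proof of \thmref{thm:zeroschur}, and then to deduce the correspondence exactly as in \thmref{thm:monoidSchur}, using the fact that once $\varepsilon$ is a singleton, $G$ becomes an $\S$-subsemigroup.

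For the first part, let $\mathcal E\in \S$ be the primitive set containing $\varepsilon$, and write $\mathcal E = \varepsilon + X$ for some $X\subseteq G\t\z_1 - \varepsilon$. The key structural fact I will exploit is that $\varepsilon$ is the identity of $\z_1$ and that the twist rule sends $\varepsilon y$ and $y\varepsilon$ into $\theta_G$ (the zero of $G$) whenever $y\in G$, while $\varepsilon^2 = \varepsilon$. I would expand $\mathcal E^2 = (\varepsilon + X)^2 = \varepsilon + \varepsilon X + X\varepsilon + X^2$ and track the multiplicity with which $\varepsilon$ itself appears. Since the only factorization yielding $\varepsilon$ is $\varepsilon\cdot\varepsilon$ (no product involving an element of $G$ can return $\varepsilon$, as $G$ is closed and the cross terms land in $\{\theta_G,\theta_H\}\subseteq G$), the multiplicity of $\varepsilon$ in $\mathcal E^2$ is exactly $1$. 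Because $\mathcal E$ is primitive, every element of $X$ must also appear in $\mathcal E^2$ with multiplicity exactly $1$. The obstacle — and the main point of the argument — is to show this is impossible unless $X=\emptyset$: I expect to find that any nonzero element of $X$ drawn from $G$ contributes to $\theta_G$ with multiplicity strictly greater than $1$ (mirroring the $(2|X|+1)\theta$ phenomenon in \thmref{thm:zeroschur}), so $\theta_G\in \mathcal E$ would then appear with multiplicity exceeding $1$, contradicting that $\varepsilon$ (also in $\mathcal E$) appears with multiplicity $1$. Hence $X=\emptyset$ and $\varepsilon\in\S$.

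With $\varepsilon\in\S$ established, the correspondence follows the template of \thmref{thm:monoidSchur}. Since $\varepsilon$ is a singleton and $G = (G\t\z_1)-\varepsilon$ is closed under multiplication (indeed $G$ is a subsemigroup, as all products among its elements stay in $G$), $G$ is an $\S$-subsemigroup, so by \propref{prop:subring} the restriction $\S|_G$ is a Schur ring over $G$. Conversely, given a Schur ring $\S$ over $G$, I would set $\S^\varepsilon = \S\cup\{\varepsilon\}$ and verify the cross-products: $X\varepsilon = \varepsilon X = |X|\theta_G\in\QS$ for $X\in\S$ (using the twist rule and that $\theta_G$ is a singleton-or-class in $\S$ by \corref{cor:thetaschur}, since $G$ is a zero-semigroup), and $\varepsilon^2=\varepsilon$, so $\Q[\S^\varepsilon]$ is closed under multiplication. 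The two assignments $\S\mapsto\S^\varepsilon$ and $\T\mapsto\T|_G$ are mutually inverse, giving the bijection. The only subtlety to check carefully is that $\varepsilon X$ and $X\varepsilon$ really do collapse into the zero of $G$ and thus remain inside $\Q[\S^\varepsilon]$; this is immediate from the defining twist relations $gx=\theta$-type rules, so I do not anticipate difficulty there.
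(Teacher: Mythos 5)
There is a genuine gap, and it sits at the heart of your multiplicity count. In the twist $G\t\z_1$, the single element $\varepsilon$ of $\z_1$ is the \emph{zero} of $\z_1$, so the defining rule $hg=\theta_H$ gives $\varepsilon y=\varepsilon$ for every $y\in G$, while $y\varepsilon=\theta_G$. Your parenthetical claim that both cross terms ``land in $\{\theta_G,\theta_H\}\subseteq G$'' is false: $\theta_H=\varepsilon\notin G$, and in fact $\varepsilon$ is a left zero of the whole semigroup. Consequently, writing $\mathcal{E}=\varepsilon+X$ with $X\subseteq G$, one gets $\mathcal{E}^2=(1+|X|)\varepsilon+|X|\theta_G+X^2$, so the multiplicity of $\varepsilon$ in $\mathcal{E}^2$ is $1+|X|$, not $1$, and your contradiction never materializes. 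Worse, the candidate $\mathcal{E}=\{\varepsilon,\theta_G\}$ actually passes your test, since $\mathcal{E}^2=2\varepsilon+2\theta_G=2\mathcal{E}\in\QS$. The telltale sign is that you never use the hypothesis that $G$ is nontrivial---and the theorem is false without it: if $G=\{\theta_G\}$, then $G\t\z_1\cong\LO_2$, whose indiscrete partition is a Schur ring in which $\varepsilon$ is not a singleton.

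The repair stays within the intended template (the paper itself only says the proof is ``similar to'' \thmref{thm:zeroschur}), but needs two steps. First, the corrected count on $\mathcal{E}^2$: since the coefficient of $\mathcal{E}$ in $\mathcal{E}^2$ is $1+|X|$ and $|\mathcal{E}^2|=(1+|X|)^2$, the support of $\mathcal{E}^2$ is exactly $\mathcal{E}$; if $|X|\ge 1$ this forces $\theta_G\in X$, and then the multiplicity of $\theta_G$ is at least $|X|+(2|X|-1)=3|X|-1$, which can equal $1+|X|$ only when $|X|\le 1$, leaving just $\mathcal{E}=\{\varepsilon\}$ or $\mathcal{E}=\{\varepsilon,\theta_G\}$. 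Second, to kill the latter case you must invoke nontriviality: pick $y\in G-\theta_G$ and let $Y\in\S$ be its primitive set, so $Y\subseteq G$ is disjoint from $\mathcal{E}$; then $Y\mathcal{E}=Y\varepsilon+Y\theta_G=|Y|\theta_G+|Y|\theta_G=2|Y|\theta_G$, which contains $\theta_G$ with positive multiplicity but $\varepsilon$ with multiplicity zero, contradicting that $\varepsilon$ and $\theta_G$ lie in the same primitive set. Your second part (the correspondence, following \thmref{thm:monoidSchur}) is structurally sound, but it repeats the same computational slip: $\varepsilon X=|X|\varepsilon$, not $|X|\theta_G$; since both values lie in $\Q[\S\cup\{\varepsilon\}]$ anyway, closure still holds once the products are computed correctly.
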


Similar to the semi-stack $\LOZ_n$, we can describe a similar \emph{semi-twist} again using $\LO_n$. More specifically, we can construct a twist utilizing left-zeros, which all elements of $\LO_n$ are. Let \label{semitwist}$\LO_n \t G = \LO_n\cup G$, for some zero-semigroup $G$, be a semigroup which extends the multiplications of $\LO_n$ and $G$ such that for all $g\in G$ and $x\in \LO_n$ we have $gx=\theta$ and $xg=x$. It is straightforward to show this multiplication is associative.\footnote{Note that the twist operation was previously defined for zero-semigroups, which $\LO_n$ is not when $n\ge 2$. But as each element of $\LO_n$ is a left-zero, extending twists to include $\LO_n$ extends the intent of twists, namely when products involve two elements from different semigroups, the left element acts as a zero to the right. In the case that $n=1$, $\LO_1 \cong \z_1$ and $\LO_1 \t G\cong \z_1\t G$. Hence, the two interpretations agree in the single case where the symbol $\mathfrak{t}$ is overloaded. For this reason, it was not necessary to introduce a new symbol, such as $\hat{\mathfrak{t}}$, unlike the case of stacks and semi-stacks.} Note that $\LO_n$ and $G$ are right-ideals and $\LO_n\cup \theta \cong \LO_{n+1}$ is a 2-sided ideal of $\LO_n\t G$. Hence, $\LO_n \t G$ is an ideal extension of $\LO_{n+1}$ by $G$. Note that $\LO_n \t \z_1\cong \LO_{n+1}$.

\begin{Thm} Let $G \not\cong\z_1$ be a zero-semigroup with zero $\theta$, and let $n\in \N$. Then every Schur ring over $\LO_n\t G$ has the form $\S\vee\T$ where $\S$ and $\T$ are Schur rings over $\LO_n$ and $G$, respectively. If $\Omega(G)$ is the number of Schur rings over $G$, then the number of Schur rings over $\LO_n\t G$ is $\B(n)\Omega(G)$.
\end{Thm}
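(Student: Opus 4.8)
The plan is to prove two things: first, that every Schur ring over $\LO_n\t G$ decomposes as $\S\vee\T$ (the structural claim), and second, that every such combination is indeed admissible (the compatibility claim), so that the count is exactly $\B(n)\Omega(G)$. Because $\LO_n\cup\theta\cong\LO_{n+1}$ is a two-sided ideal and the quotient is $G$, I expect the overall architecture to mirror the proof of \thmref{thm:stackschur}, but with the extra feature that here \emph{all} partitions of $\LO_n$ are Schur rings (\thmref{thm:leftnullschur}), which should make the compatibility direction work cleanly — unlike the semi-stack case $\LOZ_n$ where compatibility failed.

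First I would establish the structural direction: if $\S$ is a Schur ring over $\LO_n\t G$, then $\LO_n$ is an $\S$-subset (equivalently $G$ is an $\S$-subset). The natural tool is a multiplicity argument like the one in \thmref{thm:stackschur}. Take a primitive set $X$ and split it as $X = X_L + X_G$ with $X_L = X\cap \LO_n$ and $X_G = X\cap G$. Computing $X^2$ using the rules $xg=x$, $gx=\theta$, $x^2=x$ (for $x\in\LO_n$) and the internal multiplication of $G$, I would compare the multiplicities with which elements of $X_L$ versus elements of $X_G$ appear in $X^2$. The products $X_G\cdot X_L$ all collapse to $\theta$, while $X_L\cdot X_G = X_L$ and $X_L\cdot X_L = |X_L|$ copies spread over $X_L$, so the left-null part should accumulate multiplicity that the $G$-part cannot match unless one of $X_L$, $X_G$ is empty. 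I would push this inequality to force $X_L=\emptyset$ or $X_G=\emptyset$ for every primitive set, which gives that $\LO_n$ and $G$ are both $\S$-subsets. Then $\S|_{\LO_n}$ and $\S|_G$ are Schur rings (using that $\LO_n$ is a subsemigroup and $G$ is a subsemigroup, via \propref{prop:subring}), and $\S = \S|_{\LO_n}\vee\S|_G$.

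The main obstacle I anticipate is the bookkeeping in this multiplicity comparison, specifically handling the zero $\theta$ of $G$ carefully, since $\theta$ lives in $G$ yet is also the common sink for all the cross terms $gx$; I would need to track whether $\theta\in X_G$ and whether it inflates the multiplicity of $\theta$ enough to break the primitivity of $X$. The hypothesis $G\not\cong\z_1$ is presumably exactly what prevents a degenerate collapse (when $G=\z_1$ the construction reduces to $\LO_{n+1}$ with its full Bell count, so the formula $\B(n)\cdot\Omega(\z_1) = \B(n)$ would be wrong). I would invoke $G\neq\z_1$ at the point where the counting requires $G$ to genuinely contribute its own nontrivial zero structure.

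For the reverse (compatibility) direction, I would take arbitrary Schur rings $\S$ over $\LO_n$ and $\T$ over $G$ and verify $\S\vee\T$ is closed under multiplication. Every partition of $\LO_n$ is already a Schur ring, so the only products needing checking are the cross products $XY$ and $YX$ with $X\in\S$, $Y\in\T$. Using $xg=x$ and $gx=\theta$, I would compute $XY = |Y|X\in\QS$ and $YX = |X||Y|\theta$, where by \corref{cor:thetaschur} (applied to $G$, a zero-semigroup) $\theta$ is a singleton in $\T$, so $YX\in\Q[\T]\le\Q[\S\vee\T]$. This shows every pairing is compatible, so the Schur rings biject with pairs, giving $\B(n)\cdot\Omega(G)$ by \thmref{thm:leftnullschur}. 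I would close by noting the count is a genuine product precisely because, in contrast to $\LOZ_n$, the left-null elements are forgetful under right multiplication by $G$, so no compatibility constraint couples the two factors.
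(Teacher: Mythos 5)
Your architecture matches the paper's: a multiplicity count on $X^2$ to show every primitive set lies entirely in $\LO_n$ or entirely in $G$, followed by a direct check of the cross products for compatibility. The compatibility half of your proposal is correct and is essentially what the paper leaves implicit: $XY=|Y|X$ and $YX=|X||Y|\theta$, with $\theta$ a singleton in $\T$ — though the fact you need there is \thmref{thm:zeroschur} (every Schur ring over a zero-semigroup isolates $\theta$), not \corref{cor:thetaschur}, which is the statement about $G^\theta$.

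The genuine gap is in the structural direction, in exactly the case you flag but do not resolve: $\theta\in X_G$. The counting on $X^2$ does what you expect only up to the point of forcing $X_G=\{\theta\}$; it can go no further, because for $X=X_L+\theta$ with $X_L\subseteq \LO_n$ nonempty one computes $X^2 = |X_L|X_L + X_L + |X_L|\theta + \theta = |X|X$ exactly. So the multiplicity of $\theta$ in $X^2$ equals that of every element of $X_L$, and your proposed mechanism ("whether it inflates the multiplicity of $\theta$ enough to break the primitivity of $X$") provably cannot yield a contradiction — indeed, when $G\cong\z_1$ such an $X$ genuinely is primitive in a Schur ring over $\LO_n\t\z_1\cong\LO_{n+1}$, so no computation internal to $X$ can rule it out. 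The missing idea is to leave $X^2$ entirely and multiply $X$ on the left by a different primitive set: by the first case of the counting, any primitive set meeting both $\LO_n$ and $G$ contains $\theta$, hence $X$ is the only such set and $G-\theta$ is an $\S$-subset; it is nonempty precisely because $G\not\cong\z_1$. Taking a primitive $Y\subseteq G-\theta$ gives $YX = |X||Y|\theta$, whose support $\{\theta\}$ is a proper nonempty subset of the primitive set $X$, contradicting $YX\in\QS$. Note that no other product helps: $XZ$, $ZX$ for $Z\subseteq\LO_n$ and $XY$ all come out as multiples of $X$ or of $Z$. This single left multiplication is where the hypothesis $G\not\cong\z_1$ actually does its work, and it is the one step your plan does not contain.
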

\begin{proof}
    Let $\S$ be a Schur ring over $\LO_n\t G$ and let $X$ be a primitive set, where $X= X_n+X_G$ for $X_n\subseteq \LO_n$ and $X_G\subseteq G$. Note $X^2 = |X|X_n+ |X_n||X_G|\theta + X_G^2$. Since $X_n\subseteq X^2$, it must be that $X\subseteq X^2$. As $X_G\subseteq |X_n||X_G|\theta+X_G^2$, it must be that $|X_n||X_G|\theta + X_G^2 = |X|X_G$ if $\theta\in X_G$ or $X_G^2=|X|X_G$ if not. 
    
    Consider first if $\theta\notin X_G$. Then $X_G^2=|X|X_G = (|X_n|+|X_G|)X_G$, but, counting multiplicities, $X_G^2$ has $|X_G|^2$ products and $|X|X_G$ has $(|X_n|+|X_G|)|X_G|$ products. As $|X_G|^2 \le (|X_n|+|X_G|)|X_G|$, equality is only obtained at $|X_n|=0$ or $|X_G|=0$. 
    
    Consider next if $\theta \in X_G$. Then let $X_G=\theta+X_G'$ and then \[X_G^2 = (|X_n|(|X_G'|+1) +2|X_G'|+1)\theta + X_G'^2 = |X| = |X_n| + |X_G'|+1,\] which implies that $|X_G'|(|X_n|+1)=0$, that is, $|X_G'|=0$. Thus, $X=X_n+\theta$. Since $|G|>1$, $G-\theta$ is an $\S$-subset. Thus, there exists a primitive set $Y\subseteq G-\theta$, but $YX = |X||Y|\theta$, contradicting the primitivity of $X$. Therefore, $\LO_n$ and $G$ are $\S$-subsemigroups. The result then follows.
\end{proof}

Note that $\{\theta_G,\theta_H\}$ is an ideal of $G\t H$. Thus, we can remove the twist\footnote{Similar to how we extended twists to include $\LO_n$, we likewise can extend unites to include $\LO_n$. Hence, \label{semiunite}$\LO_n\u G$ is the quotient semigroup formed by identifying $0\in \LO_n$ with $\theta\in G$, but as this results in $\LO_{n+1}\u G\cong \LO_n\t G$ we gain no new semigroups or Schur rings with this generalization.} of $G\t H$ by \emph{uniting} the zero elements together. Let \label{unite}$G\u H = (G\t H)/\{\theta_G,\theta_H\}$. Then $G\u H$ contains both $G$ and $H$ as subsemigroups, but neither is necessarily an ideal. Regardless, we can build Schur rings over $G\u H$ similarly to \thmref{thm:stackschur}, which is an immediate consequence of \thmref{thm:reesquotient} and \thmref{thm:veebarschur}.

\begin{Cor}\label{thm:thetastackrings}
 If $\S$ and $\T$ are Schur rings over zero-semigroups $G$ and $H$, respectively, then $\S\vee \T$ is a Schur ring over $G \u H$.
\end{Cor}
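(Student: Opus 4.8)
The plan is to derive Corollary \ref{thm:thetastackrings} directly from the two results it cites, namely \thmref{thm:reesquotient} and \thmref{thm:veebarschur}, rather than computing any products from scratch. The key observation, already supplied in the paragraph preceding the statement, is that $I = \{\theta_G, \theta_H\}$ is an ideal of $G\t H$ and that $G\u H = (G\t H)/I$ is precisely the Rees quotient of $G\t H$ by this ideal. So the whole corollary becomes a matter of transporting Schur rings across the Rees quotient.

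First I would start with Schur rings $\S$ and $\T$ over the zero-semigroups $G$ and $H$. By \thmref{thm:veebarschur}, the common refinement $\S\vee\T$ is a Schur ring over the twist $G\t H$. Next I would verify that $I = \{\theta_G,\theta_H\}$ is an $(\S\vee\T)$-ideal: by \corref{cor:thetaschur} the zeros $\theta_G$ and $\theta_H$ are singleton primitive sets of $\S$ and $\T$ respectively, hence they remain singletons in $\S\vee\T$, so $I$ is an $(\S\vee\T)$-subset; since $I$ is already an ideal of $G\t H$, it is therefore an $(\S\vee\T)$-ideal. With that hypothesis in hand, \thmref{thm:reesquotient} applies and tells me that $(\S\vee\T)/I$ is a Schur ring over the Rees quotient $(G\t H)/I = G\u H$.

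The remaining step is purely bookkeeping: I must identify the partition $(\S\vee\T)/I$ with $\S\vee\T$ as a partition of $G\u H$. In $G\u H$ the two zeros are identified into a single zero element, and the Rees quotient operation merges the ideal $I$ into one primitive set while leaving the other primitive sets of $\S\vee\T$ untouched. Since $\theta_G$ and $\theta_H$ were already separate singletons in $\S\vee\T$, collapsing them into a single class is exactly the common refinement $\S\vee\T$ reinterpreted over the underlying set of $G\u H$. Thus $(\S\vee\T)/I = \S\vee\T$ as partitions of $G\u H$, and the Schur ring produced by \thmref{thm:reesquotient} is the claimed one.

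I do not anticipate a genuine obstacle here, as the statement is flagged as ``an immediate consequence'' of the two cited theorems; the only point requiring care is the identification in the last paragraph, where one must be attentive to the fact that the two former zeros become one zero in $G\u H$ and confirm that this does not disturb any other primitive set. Everything else follows formally from the ideal-extension and Rees-quotient machinery already established.
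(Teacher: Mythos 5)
Your proposal is correct and follows exactly the route the paper intends: it declares the corollary an immediate consequence of Theorem \ref{thm:reesquotient} and Theorem \ref{thm:veebarschur}, which is precisely your chain (twist Schur ring via \thmref{thm:veebarschur}, then descend through the Rees quotient by the ideal $\{\theta_G,\theta_H\}$ via \thmref{thm:reesquotient}), with your final bookkeeping step making explicit the identification the paper leaves implicit. One cosmetic remark: the fact that the zeros are singletons is really Theorem \ref{thm:zeroschur}, though your citation of \corref{cor:thetaschur} matches the paper's own (equally loose) usage in the proof of \thmref{thm:veebarschur}.
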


\begin{Exam}
Similar to \examref{exam:veebar}, we see that not all Schur rings over $G\u H$ arise by \corref{thm:thetastackrings}. Mimicking \examref{exam:veebar}, consider $G = \z_2 = \{e_G, g\}$, $H=\z_2 = \{e_H, h\}$, and observe that $G^\theta \u H^\theta$ has the Schur ring $\{g+ h, e_G+e_H, \theta\}$.

The Schur ring constructed above is generalizable. Let $G$ be any zero-semigroup with Schur ring $\S$. Then $G\u G$ contains two distinct copies of $G$, which we denote as $G_1$ and $G_2$. Likewise, the Schur ring $\S$ can be made into a Schur ring over $G_1$ and $G_2$, denoted $\S_1$ and $\S_2$, respectively. For each primitive set $X\in \S$, let $X_1\in \S_1$ and $X_2\in \S_2$ be the corresponding primitive sets. Then, we define $X^+ = X_1\cup X_2$, and $\S^+ = \{X^+\mid X\in \S\}\cup\theta$, which is a partition of $G\u G$. For each $X^+$, $Y^+\in \S^+$, we have $X^+Y^+ = (X_1+X_2)(Y_1+Y_2) = (|X_1||Y_2|+|X_2||Y_1|)\theta + X_1Y_1+X_2Y_2$. Since $X_1Y_1\in \S_1$ and $X_2Y_2\in \S_2$, it holds that $X_1Y_1+X_2Y_2 = (XY)^+ = \sum_{C\in \S} \lambda_CC^+$ for $\lambda_C\in \N$. Therefore, $\S^+$ is a Schur ring over $G\u G$. Note that both $G_1$ and $G_2$ are $(\S_1\vee\S_2)$-subsemigroups, but neither is an $\S^+$-subsemigroup.

By Correspondence (\thmref{thm:Reesextension}), $\S^+$ lifts to a Schur ring over $G\t G$ with the same primitive sets, except $\{\theta_{G_1}, \theta_{G_2}\}$ is now discrete. The coarsening of this lifted Schur ring, where $\{\theta_{G_1}, \theta_{G_2}\}$ is a primitive set, is likewise a Schur ring over $G\t G$.
\end{Exam}

One element extensions of semigroups are ubiquitous, such as $G^e = G\s \z_1$, $G^\theta = \z_1\s G$, $\ORO_{n,1}$, $G\t \z_1$, $G\u \O_2$, and $G\u\CH_2$, which we have seen above, and cloning as discussed in the next section. By combining these extensions, many semigroups can be recursively created one element at a time, but many of these listed extensions\footnote{In the case of $G\u \O_2$, the new element adjoined is indecomposable, as explained in Section \ref{indecomposable}, and has major restrictions on which other elements of $G$ it might be fused with in a Schur ring. Many zero-semigroups $G$ will not satisfy the conditions and will then require that the new element in $G\u \O_2$ be isolated in every Schur ring. But, unlike $G^e$, $G^\theta$, and $G\t \z_1$, it is possible for this new element to be fused with another element in a Schur ring over $G\u \O_2$. For example, $\O_n\u \O_2 \cong \O_{n+1}$, in which case every possible partition on the nonzero elements affords a Schur ring. Clones are likewise indecomposable and have similar considerations as will be shown.

A similar situation holds for $G\u \CH_2$. For many zero-semigroups $G$ will require the new element to be isolated in all Schur rings over $G\u\CH_2$, but for some $G$ this restriction can be overcome. Since $\K_{1,n}\u \CH_2 \cong \K_{1,n+1}$, every possible partition on the nonzero elements affords a Schur ring. Unlike the new element in $G\u \O_2$, the extra element of $G\u \CH_2$ is idempotent. The presence of idempotents in semigroups greatly affects the possible Schur rings over those semigroups, although the authors do not investigate this relationship here.} require the new adjoined element be a singleton in each Schur ring. This makes cases like \thmref{thm:chainschur} quite common in the enumeration of Schur rings over small semigroups. Combining these observations with results like Theorems \ref{thm:nullschur}, \ref{thm:leftnullschur}, \ref{thm:bipartiteschur}, and \ref{thm:oro}, the number of Schur rings over a small semigroup is often a sum or product of Bell numbers, as we will see in Section \ref{gap}.

\section{Indecomposable Elements}\label{indecomposable}
Let $G$ be a semigroup, and let $x\in G$. We say $x$ is \emph{decomposable} if there exists $a,b\in G$ such that $x=ab$. Otherwise, $x$ is \emph{indecomposable}. Note that the indecomposable elements of $G$ are those elements which never appear in the Cayley table of the semigroup.

\subsection{Cloning}\label{sec:clone}
Suppose $G$ is a semigroup and $x\in G$, then \label{clone}$G[x] = G\cup \{x^\prime\}$, where $x^\prime\cdot y=x\cdot y$,  $y\cdot x^\prime =y\cdot x$, and $\left(x^\prime\right)^2=x^2$ for any element $y\in G$. The element $x^\prime$, called a \textit{clone} of $x$, behaves indistinguishably from $x$. There is some freedom in the square of a clone, that is, it is possible to define $(x')^2$ to a different element than $x^2$, but for $G[x]$ we define $(x^\prime)^2=x^2$.\footnote{If $x$ is idempotent, then we present an alternative for $(x')^2$ in Section \ref{sec:idempotent}.}  The associativity of $G[x]$ or $G[\hat{x}]$ is clear.

The cloning process can be iterated, and the order in which one adjoins new clones to a semigroup is immaterial. Let $X\subseteq G$. Then $G[X] = \bigcup_{x\in X} G[x]$. Note that the product of two clones is also well-defined, i.e., if $x'$ and $y'$ are clones of $x$ and $y$, respectively, then $x'y' = xy' = x'y = xy$. If $x''$ is a clone of $x'$ which is a clone of $x$, then $x''y = x'y = xy$ for all $y\in G$. Hence, we need only consider clones of the original semigroup elements, and we consider the subset $X\subseteq G$ to be a multiset. 

\begin{Thm}\label{thm:clonerings}
Let $G$ be a semigroup and $\S$ a Schur ring over $G$. Let $X$ be an $\S$-subset. If $X^\prime$ is the set of clones of elements of $C$ in $G[X]$, then $\S[X] = \S\vee \{X^\prime\}$ is a Schur ring over $G[X]$.
\end{Thm}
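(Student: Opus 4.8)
The plan is to verify directly that $\Q[\S[X]]$ is closed under multiplication, since closure is the only axiom required of a semigroup Schur ring. First I would record the primitive sets of $\S[X] = \S\vee\{X'\}$: because $G$ and the clone set $X'$ are disjoint, the common refinement simply retains every $\S$-class inside $G$ and adjoins the single block $X'$ consisting of all the clones. Thus $\Q[\S[X]] = \QS \oplus \Q X'$ as a vector space, and it suffices to examine the four types of products $CD$, $CX'$, $X'C$, and $X'X'$ for primitive sets $C, D \in \S$.

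The crux is the observation that every product in $G[X]$ involving a clone collapses to the corresponding product of originals and lands back in $G$: by the defining rules of cloning, $c x' = cx$, $x' c = xc$, and $x' y' = xy$ for all $c \in G$ and clones $x', y'$, with the square $(x')^2 = x^2$ matching the diagonal term. In particular, no product ever outputs a clone, so clones are indecomposable in $G[X]$. Translating to simple quantities, this yields the identities $C X' = CX$, $X' C = XC$, and $X'^2 = X^2$, where the right-hand sides are the ordinary products computed in $\Q[G]$.

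Next I would invoke that $X$ is an $\S$-subset. Writing $X$ as a union of $\S$-classes, each of $CX$, $XC$, and $X^2$ is a $\Q$-linear combination of products of $\S$-classes, every one of which lies in $\QS$ because $\S$ is a Schur ring over $G$. Hence $CX', X'C, X'^2 \in \QS \subseteq \Q[\S[X]]$, and of course $CD \in \QS$ as well. Every product of primitive sets therefore lies in $\Q[\S[X]]$, so $\Q[\S[X]]$ is a subring of $\Q[G[X]]$ and $\S[X]$ is a Schur ring over $G[X]$.

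I expect the only point needing care to be the reduction of clone-products to $G$-products, particularly checking that the square $X'^2$ (including its diagonal terms $(x')^2 = x^2$) reproduces $X^2$ exactly, and confirming that lumping \emph{all} clones into the single block $X'$ does not break closure. This succeeds precisely because $X$ is an $\S$-subset, so $X^2 \in \QS$; no finer partition of the clones is required.
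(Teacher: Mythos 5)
Your proposal is correct and follows essentially the same route as the paper's own (much terser) proof: both reduce every product involving the clone block to the corresponding product in $G$, namely $X'Y = XY$, $YX' = YX$, and $(X')^2 = X^2$, and then invoke that $X$ is an $\S$-subset and $\S$ is a Schur ring to place these in $\QS$. Your additional care about the vector-space decomposition $\QS \oplus \Q X'$ and the diagonal terms of $(X')^2$ is sound but simply spells out what the paper leaves implicit.
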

Note that $\S[X]$ is the partition of $G[X]$ formed by cloning a $\S$-subset and adjoining it to the original partition $\S$.
\begin{proof}
Note that $\left(X^\prime\right)^2 = X^2\in \QS$,  $X^\prime Y = XY\in \QS$, and $YX^\prime = YX \in \QS$, for all $Y\in \S$. 
\end{proof}

Given $G[X]$, $G$ is an ideal and $G[X]/G \cong \O_{|X|+1}$. Therefore, $G[X]$ is an ideal extension of $G$ by $\O_n$. Then the Schur ring $\S[X]$ is an ideal extension Schur ring using any Schur ring over the ideal $G$ and the trivial Schur ring over $\O_n$, the opposite allowance given by \thmref{thm:Reesextension}. This further illustrates the need for compatibility criteria for the fusion of two Schur rings over two semigroups in an ideal extension.

\thmref{thm:clonerings} can be iterated, cloning one $\S$-subset after another, until we have adjoined the clone of some coarsening of $\S$. 

\begin{Exam} Let $G = \z_2[\z_2] = \{e, g, e', g'\}$. The subgroup $\z_2$ has two (semigroup) Schur rings: $\{e,g\}$ and $\{e+g\}$. This then gives three cloned Schur rings: $\{e,g,e',g'\}$,  $\{e,g, e'+g'\}$, and $\{e+g, e'+g'\}$. By \thmref{thm:indecomp} below, the only other partition to consider over $G$ would be $\{e+g, e', g'\}$, which is not a Schur ring since $(e')^2 = e$ is not a primitive set.
\end{Exam}


\subsection{Indecomposable Imposition}
The \emph{decomposable elements} of a semigroup $G$ form the ideal $G^2=\{ab \mid a,b \in G\}$. Let \label{decomp}$\I(G) = G- G^2$ denote the set of indecomposable elements of $G$. Note that $G/G^2 \cong \O_{|\I(G)|+1}$. Hence, $G$ is an ideal extension of $G^2$ by a null semigroup.

If $G$ is a monoid, then $\I(G)=\emptyset$ since $x=ex=xe$. Hence, indecomposable elements play no role in group theory, let alone Schur rings over a group. Conversely, when a semigroup does have indecomposable elements, they play a critical role on which partitions of the semigroup afford Schur rings and which do not.

\begin{Thm}\label{thm:indecomp} Let $G$ be a semigroup, and let $\S$ be a Schur ring over $G$. Then $\I(G)$ is an $\S$-subset.
\end{Thm}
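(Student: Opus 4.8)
The plan is to prove the equivalent statement that the set of decomposable elements $G^2$ is an $\S$-subset, and then pass to the complement. The key observation I would exploit is that $G^2$ can be recovered from the products of primitive sets: I claim
\[
G^2 = \bigcup_{C,D\in \S} \operatorname{supp}(CD),
\]
where $\operatorname{supp}(CD)$ denotes the set of elements of $G$ appearing with nonzero coefficient in the simple quantity $CD\in \QG$. Granting this, the result follows quickly, since each $\operatorname{supp}(CD)$ is an $\S$-subset, making $G^2$ a union of $\S$-subsets and hence itself an $\S$-subset.

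First I would verify the two containments defining the displayed identity. For ``$\subseteq$'', take $g=ab\in G^2$ with $a,b\in G$, and let $A,B\in\S$ be the primitive sets containing $a$ and $b$. In the product $AB$ the element $g=ab$ arises from the term with first factor $a$ and second factor $b$; since all structure constants in the semigroup ring are nonnegative integers, this contribution cannot be cancelled, so $g\in\operatorname{supp}(AB)$. For ``$\supseteq$'', any $g\in\operatorname{supp}(CD)$ appears with positive multiplicity in $CD=\sum_{c\in C,\,d\in D}cd$, so $g=cd$ for some $c\in C$, $d\in D$, whence $g\in G^2$.

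Next I would show each $\operatorname{supp}(CD)$ is an $\S$-subset. Because $\S$ is a Schur ring, $CD\in\QS$, so $CD=\sum_{E\in\S}\lambda_E E$ for some coefficients $\lambda_E$. The coefficient of any $g\in E$ in $CD$ is exactly $\lambda_E$, so $\operatorname{supp}(CD)=\bigcup_{\lambda_E\neq 0}E$, a union of primitive sets. Therefore $G^2=\bigcup_{C,D\in\S}\operatorname{supp}(CD)$ is an $\S$-subset, and since $\S$ partitions $G$, its complement $\I(G)=G-G^2$ is an $\S$-subset as well.

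I do not expect a serious obstacle here; the one point that deserves care is the nonnegativity of the structure constants in $\QG$, which guarantees that no cancellation occurs when forming $CD$. This is precisely what makes $\operatorname{supp}(CD)$ coincide with the set of honest products $\{cd\mid c\in C,\,d\in D\}$ rather than merely containing it, and it is the only place where I rely on working over $\Q$ with simple quantities having coefficients in $\{0,1\}$. Everything else is a routine manipulation of the Schur ring axiom and the partition structure.
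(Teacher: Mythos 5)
Your proof is correct. It rests on the same key fact as the paper's argument—namely that for primitive sets $C,D\in\S$ the product $CD$ lies in $\QS$ with nonnegative structure constants, so its support is a union of $\S$-classes—but you deploy it along a different route. The paper argues by contradiction on a single mixed class: if some $X\in\S$ contained an indecomposable $x$ and a decomposable $y=ab$, then $y\in AB$ forces $X\subseteq AB$ (precisely because the support of $AB$ is an $\S$-subset), whence $x=cd$ for some $c\in A$, $d\in B$, a contradiction. You instead prove the complementary statement directly and globally, exhibiting the decomposable elements as
\[
G^2=\bigcup_{C,D\in\S}\operatorname{supp}(CD),
\]
so that $G^2$, and hence its complement $\I(G)$, is an $\S$-subset. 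What your formulation buys is that it shows outright that $G^2$ is an $\S$-ideal and iterates with no extra work to all powers $G^k$—which is exactly what the paper establishes separately afterward, via the analogous identity $G^3=\bigcup_{A,B,C\in\S}ABC$ and induction, in Corollary \ref{cor:indecomp}. What the paper's local contradiction buys is brevity. Your explicit attention to the nonnegativity of coefficients (no cancellation in $CD$) is the right point to isolate; it is used silently in the paper's step ``$y\in AB$ implies $X\subseteq AB$.''
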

\begin{proof}
    Suppose not, that is, let $X$ be a primitive set in $\S$ containing elements $x$ and $y$ such that $x$ is indecomposable and $y$ is decomposable. Hence, there are elements $a,b\in G$ such that $y=ab$. Then $y\in AB$, which implies that $X\subseteq AB$. As $x\in X$, there exists some $c\in A$ and $d\in B$ such that $x=cd$, a contradiction.  
\end{proof}

Any partition of $\I(G)$ can appear in a Schur ring by \thmref{thm:Reesextension}, that is, if $\S$ is a partition of $\I(G)$, then $G^2\vee \S$ is a Schur ring over $G$ containing $\S$.

\begin{Exam}\label{exam:nullindecomp} Note that every nonzero element of $\O_n$ is indecomposable. Every Schur ring over $\O_n$ is of the type described above, that is, $\theta \vee \S$. 
\end{Exam}

\thmref{thm:indecomp} tells us that $G^2$ is an $\S$-ideal for every Schur ring $\S$ over semigroup $G$. Of course, $G^3 =\{abc\mid a,b,c\in G\}$ is likewise an ideal of $G$. 
Since $G^3=\bigcup_{A,B,C\in \S} ABC$, $G^3$ is likewise an $\S$-subset. By induction, we have the following corollary.

\begin{Cor}\label{cor:indecomp} Let $\S$ be a Schur ring over a semigroup $G$. Then $G^k$ is an $\S$-ideal for all $k\in \N$.
\end{Cor}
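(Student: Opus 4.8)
The plan is to prove Corollary~\ref{cor:indecomp} by induction on $k$, leveraging the fact that \thmref{thm:indecomp} has already established the base case in a stronger form, and that products of $\S$-subsets are $\S$-subsets. For $k=1$ we have $G^1 = G$, which is trivially an $\S$-ideal as it is the whole semigroup (and is the union of all primitive sets). The genuine content begins with $k=2$: by \thmref{thm:indecomp}, $\I(G) = G - G^2$ is an $\S$-subset, and since $\S$ is a partition of $G$, its complement $G^2$ is therefore also an $\S$-subset. That $G^2$ is an ideal is immediate from its definition. Thus $G^2$ is an $\S$-ideal, giving the base of the induction.

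For the inductive step, I would assume $G^k$ is an $\S$-subset and show $G^{k+1}$ is as well. The key identity is $G^{k+1} = G \cdot G^k = \bigcup_{C,D \in \S} C D'$, where the union ranges over primitive sets $C$ covering $G$ and $D'$ covering $G^k$; more cleanly, since both $G$ and $G^k$ are $\S$-subsets, I can write $G = \bigsqcup_{C \in \S} C$ and $G^k = \bigsqcup_{D \in \S_k} D$ where $\S_k \subseteq \S$ is the collection of primitive sets contained in $G^k$. Then
\[
G^{k+1} = \bigcup_{C \in \S} \bigcup_{D \in \S_k} CD.
\]
Because $\S$ is a Schur ring, each product $CD$ lies in $\QS$, meaning $CD$ as a multiset is a nonnegative-integer combination of primitive $\S$-sets; in particular the underlying set $CD$ is a union of primitive $\S$-sets, i.e.\ an $\S$-subset. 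A union of $\S$-subsets is an $\S$-subset, so $G^{k+1}$ is an $\S$-subset. Since $G^{k+1}$ is visibly an ideal of $G$, it is an $\S$-ideal, completing the induction.

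The only subtlety worth flagging is the translation between the multiset/algebra language (products of simple quantities in $\QG$) and the set-theoretic language (unions of primitive sets). The Schur ring axiom gives $CD = \sum_{E \in \S} \lambda_E E \in \QS$, and the claim I need is that an element $g \in G$ lies in the \emph{set} $CD$ precisely when its primitive set $E$ appears with $\lambda_E > 0$; this is exactly the observation used repeatedly in the earlier propositions (for instance in the proof of \propref{prop:subring}), so I would simply invoke it rather than re-derive it. Given \thmref{thm:indecomp} and this standard identification, the argument is routine and the main obstacle is purely bookkeeping: ensuring the induction is anchored correctly and that one does not conflate $G^k$ (an $\S$-subset by hypothesis) with the ambient partition $\S$ itself.
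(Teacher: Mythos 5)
Your proof is correct and follows essentially the same route as the paper: anchor the induction with \thmref{thm:indecomp} (so $G^2$ is an $\S$-subset), then use closure of $\QS$ under multiplication to write each higher power as a union of products of primitive sets, which are $\S$-subsets. The paper states this argument tersely (via $G^3=\bigcup_{A,B,C\in\S}ABC$ and "by induction"); your write-up merely fills in the same induction more carefully, including the multiset-versus-set bookkeeping.
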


We say that $x\in G$ is a $k$-\emph{decomposable} element if \label{kdecomp}$x\in G^{k}- G^{k+1}$, that is, an element which can be factored into a product of $k$ factors but not into $k+1$ many factors. Let $\I_k(G)$ denote the $k$-decomposable elements of $G$. Note that the $1$-decomposable elements of $G$ are simply just the indecomposable elements of $G$. \corref{cor:indecomp} gives us that $\I_k(G)$ is an $\S$-subset for all $k\in \N$. Let \label{ginfty}$G^\infty = \bigcap_{k=1}^\infty G^k$. As $\I_k(G)\cap \I_\ell(G)=\emptyset$ for $k\neq \ell$, we obtain the following lower bound for the coarsest Schur ring over a semigroup.

\begin{Thm}[Indecomposable Imposition]\label{Thm:indecompseries} Let $\S$ be a Schur ring over semigroup $G$. Then $\S$ is a refinement of the partition
\[\{G^\infty\} \cup \{\{\I_k(G)\}\mid k\in \N\}.\]
\end{Thm}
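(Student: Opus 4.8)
The plan is to prove that every Schur ring $\S$ over $G$ refines the partition whose blocks are the single set $G^\infty$ together with each layer $\I_k(G)$ of $k$-decomposable elements. Since a partition $\S$ is a refinement of a partition $\P$ precisely when every block of $\P$ is an $\S$-subset, it suffices to show that each proposed block is an $\S$-subset. By \corref{cor:indecomp}, each power $G^k$ is an $\S$-ideal, hence in particular an $\S$-subset; and since $\I_k(G) = G^k - G^{k+1}$ is a set-difference of two $\S$-subsets, it is itself an $\S$-subset. The remaining block is $G^\infty = \bigcap_{k=1}^\infty G^k$, an intersection of $\S$-subsets, which is again an $\S$-subset.

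The only subtlety is to confirm that these pieces genuinely partition $G$ and that $\S$ refining each block is equivalent to $\S$ refining the whole partition. First I would observe that the layers are pairwise disjoint: for $k\neq \ell$ we have $\I_k(G)\cap \I_\ell(G)=\emptyset$, as noted just before the statement, since the chain $G \supseteq G^2 \supseteq G^3 \supseteq \cdots$ is nested and $\I_k(G)$ consists of exactly those elements that drop out between stage $k$ and stage $k+1$. An element of $G$ either leaves the chain at some finite stage, landing in a unique $\I_k(G)$, or belongs to every $G^k$ and hence lies in $G^\infty$; these alternatives are mutually exclusive and exhaustive, so $\{G^\infty\}\cup\{\I_k(G)\mid k\in\N\}$ is a genuine partition of $G$.

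To finish, I would verify the refinement claim directly from the definition of coarsening given in the introduction: $\S$ is a refinement of $\P$ iff every primitive set of $\P$ is a union of primitive $\S$-sets, i.e., an $\S$-subset. Having shown above that each block ($G^\infty$ and every $\I_k(G)$) is an $\S$-subset, the conclusion follows immediately.

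There is essentially no hard obstacle here; the result is a clean corollary of \corref{cor:indecomp} once one recognizes that $\S$-subsets are closed under finite set-difference and arbitrary intersection. The one point that deserves a word of care is the infinite block $G^\infty$: closure of $\S$-subsets under intersection is immediate (an arbitrary intersection of unions-of-$\S$-classes is again a union of $\S$-classes, since membership in the intersection is decided classwise), but one should note explicitly that no finiteness hypothesis on $G$ is needed for this—each primitive set is finite by the standing assumption, yet $G$ and the number of distinct layers $\I_k(G)$ may be infinite, and the argument goes through unchanged.
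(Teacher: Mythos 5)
Your proof is correct and follows exactly the paper's route: the paper presents this theorem as an immediate consequence of \corref{cor:indecomp} (each $G^k$ is an $\S$-ideal, hence each layer $\I_k(G)=G^k-G^{k+1}$ is an $\S$-subset, with the layers pairwise disjoint and $G^\infty$ the remaining block), which is precisely the argument you give. Your added verifications—closure of $\S$-subsets under set-difference and arbitrary intersection, and that the blocks genuinely partition $G$—are the details the paper leaves implicit, so nothing differs in substance.
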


This shows that the decomposability of elements in a semigroup imposes severe restrictions on those partitions which can be Schur rings.


\begin{Exam}\label{exam:numerical} A \emph{numerical semigroup} is a subsemigroup of the monoid $(\N,+)$. Let $G = \langle 2,3\rangle$ be the numerical semigroup generated by the primes $2$ and $3$. Note that $G = \{2,3,4,5,6,\ldots\}$. The two generators are indecomposable in $G$. In fact, $\I(G) = \{2,3\}$. More generally, the indecomposable elements of a generic numerical semigroup are exactly the elements of the minimum generating set. There are two possible partitions on $\{2,3\}$. By \thmref{thm:Reesextension}, these afford two distinct Schur rings\footnote{We usually use $+$ to denote classes in a partition, given that the operation of the semigroup is multiplication. As the operation of $\N$ is itself $+$, we use the more clunky notation $\{\ \{\ldots\}, \{\ldots\}, \ldots \}$ to avoid confusion.} over $G$, namely
\[\{\ 2, 3, 4, 5, 6, \ldots\ \} \quad\text{and}\quad
\{\ \{2,3\}, 4, 5, 6, \dots\ \}\] These are actually the only two Schur rings over $G$. As observed, $\I_1(G) = \{2,3\}$, but we also note 
\[\I_2(G) = \{4,5\},\ \I_3(G) = \{6, 7\},\ \I_4(G) = \{8,9\}, \ldots \]
Generally, $\I_k(G) = \{2k, 2k+1\}$. Note that for this $G$, in a Schur ring $\S$, if $2$ and $3$ are singletons, then they generate the entire discrete Schur ring $G$, which must be a subring of $\S$ by \propref{prop:singleton}. Suppose that $\{2,3\}$ is a primitive $\S$-set. Since 
$\{2,3\} + \{2,3\} = \{4,6\} + 2\{5\}$, $4$ and $5$ belong to distinct primitive $\S$-sets. Since $\I_2(G) = \{4,5\}$ is an $\S$-subset, it must hold that $4, 5\in \S$ are primitive sets. Similar arguments show that each set $\I_k(G)$ must likewise be discrete, which proves our claim. 
\end{Exam}

\subsection{Monogenic Semigroups}
\begin{multicols}{2}
A \emph{monogenic} (or \emph{cyclic}\footnote{As mentioned above, Leung and Man classified all Schur rings over finite cyclic groups as traditional. The question of extending this result to the infinite cyclic group is what led Misseldine et al. \cite{InfiniteI} to extend the notion of Schur rings to infinite groups. This same question provides genesis again for extending the notion of Schur rings to semigroups as observed here.}) semigroup is a semigroup generated by a single element. For $m,n\in \N$, let \label{zmn}$\z_{m,n} = \langle z\mid z^m=z^{m+n}\rangle$ present the finite monogenic semigroup with \emph{index} $m$ and \emph{period} $n$. Note that the order of a finite monogenic semigroup is given as $|\z_{m,n}| = m+n-1$. The ideal $( z^m)$ is a cyclic subgroup of order $n$, that is, $(z^m) \cong \z_n$. The unique element $z^d$ where $m\le d < m+n$ and $n\mid d$ is the identity of this subgroup and is the unique idempotent of $\z_{m,n}$. The monogenic semigroup \label{monoG}$\z_{m,n}$ is itself a cyclic group if and only if $m=1$, namely $\z_{1,n} \cong \z_n$. \columnbreak

\begin{center}
\begin{tikzpicture}
    \coordinate (A) at (-2,0);  
    \coordinate (B) at (-1,0);  
    \coordinate (C) at (0,0);  
    \coordinate (D) at (1,1);  
    \coordinate (E) at (2,1);  
    \coordinate (F) at (3,0);  
    \coordinate (G) at (2, -1);
    \coordinate (H) at (1, -1);
    \fill (A) circle (2pt) node[below] {$z^1$};
    \fill (B) circle (2pt) node[below] {$z^2$};
    \fill (C) circle (2pt) node[right] {$z^3=z^9$};
    \fill (D) circle (2pt) node[below] {$z^4$};
    \fill (E) circle (2pt) node[below] {$z^5$};
    \fill (F) circle (2pt) node[right] {$z^6$};
    \fill (G) circle (2pt) node[below] {$z^7$};
    \fill (H) circle (2pt) node[below] {$z^8$};

    \draw[->,shorten >=5pt] (A) to (B);
    \draw[->,shorten >=5pt] (B) to (C);
    \draw[->,shorten >=5pt] (C) to [bend left=40] (D);
    \draw[->,shorten >=5pt] (D) to (E);
    \draw[->,shorten >=5pt] (E) to [bend right=320] (F);
    \draw[->,shorten >=5pt] (F) to [bend right=320] (G);
    \draw[->,shorten >=5pt] (G) to (H);
    \draw[->,shorten >=5pt] (H) to [bend right=320] (C);
\end{tikzpicture}\\
The Cayley graph for $\z_{3,6}$
\end{center}
\end{multicols}

This more diverse family of monogenic semigroups, compared to the subfamily of finite cyclic groups, would presumably contain a more diverse family of Schur rings. Surprisingly, the opposite holds. If $m>1$, then $\I(\z_{m,n}) = z$. Furthermore, $\I_{k}(\z_{m,n}) = z^{k}$ for $k\leq m$, $\I_k(\z_{m,n})=\emptyset$ for $k > m$, and $\z_{m,n}^\infty = (z^m)$.

\begin{Thm}\label{thm:monogenic} If $m>1$, then the only Schur ring over $\z_{m,n}$ is the discrete one.  
\end{Thm}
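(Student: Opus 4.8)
The plan is to exploit the Indecomposable Imposition theorem (\thmref{Thm:indecompseries}) together with the explicit description of the decomposability strata of $\z_{m,n}$ given just before the statement. Since $m>1$, we have $\I_k(\z_{m,n}) = \{z^k\}$ for $1\le k\le m$ and $\z_{m,n}^\infty = (z^m) \cong \z_n$. Thus \thmref{Thm:indecompseries} immediately forces any Schur ring $\S$ over $\z_{m,n}$ to refine the partition $\{z, z^2, \ldots, z^{m-1}\} \cup \{(z^m)\}$, so that each of $z^1, \ldots, z^{m-1}$ is already a singleton in $\S$. The entire burden of the proof therefore reduces to showing that the cyclic group part $(z^m) \cong \z_n$ must also be broken into singletons.

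The key step is to argue that the presence of the singleton $z^1$ forces every element of the subgroup $(z^m)$ to be separated. First I would observe that by \propref{prop:singleton}, since $z^1, \ldots, z^{m-1}$ are singletons in $\S$, the subsemigroup they generate under multiplication is discrete; in particular all powers $z^k$ for $k < 2m$ that are obtainable as products of these singletons are singletons. More efficiently, I would use multiplication by the singleton $z$: for any primitive set $X \subseteq (z^m)$, the product $zX = \{z x \mid x \in X\}$ must again be an $\S$-subset (indeed a single primitive set, since $z$ is a singleton and left multiplication by $z$ is a bijection on the group $(z^m)$, as $z \cdot z^m = z^{m+1}$ and multiplication by $z$ permutes the coset appropriately). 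The crucial point is that left multiplication by $z$ maps the idempotent $z^d$ (the identity of $(z^m)$) to $z^{d+1}$, which lies \emph{outside} $(z^m)$ when... — so I must be careful here. The cleaner route is to note $z \cdot z^m = z^{m+1}$, and to track how the singleton $z^{m-1}$ interacts with $(z^m)$: the product $z^{m-1} \cdot z^m = z^{2m-1}$ is a singleton (being a product of a singleton with a primitive set, it lies in a single primitive set, but since the left factor is a singleton and right multiplication by $z^{m-1}$ is injective on the group, the image of a primitive set is a primitive set of the same size).

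Concretely, the argument I expect to work is: let $X$ be the primitive set of $\S$ containing the idempotent $e = z^d$ of $(z^m)$. Since $z^{m-1}$ is a singleton, $z^{m-1} X$ is a single primitive set, and $z^{m-1} e = z^{m-1} z^d = z^{d+m-1}$. Because right-multiplication by the fixed group element $z^{m-1} \cdot e$ acts as a bijection on the group $(z^m)$ when composed appropriately, I can show that $X$ and its translate have the same cardinality and that iterating produces enough distinct translates to force $|X| = 1$, or alternatively that the orbit structure under multiplication by the singleton $z$ separates all group elements. The main obstacle is precisely this final bijection/counting step: one must verify that multiplication by the indecomposable singleton $z$ (or by $z^{m-1}$) genuinely separates the elements of the subgroup $(z^m)$ rather than permuting a block as a whole. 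I anticipate the proof will instead proceed by induction using $\z_{m,n}^\theta$-type reductions or by directly computing $z \cdot X$ and comparing multiplicities, showing any nontrivial block in $(z^m)$ yields a contradiction with the established singletons $z^1, \ldots, z^{m-1}$; getting this separation argument airtight is the heart of the matter.
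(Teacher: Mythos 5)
You assemble the right tools but stop one step short of the actual proof, and the substitute argument you sketch cannot be completed. After your (correct) reduction --- you invoke \thmref{Thm:indecompseries}, though the paper gets what it needs from \thmref{thm:indecomp} alone, since $\I(\z_{m,n})=\{z\}$ --- you know that $z,z^2,\ldots,z^{m-1}$ are singleton primitive sets of $\S$. At this point \propref{prop:singleton} finishes the theorem immediately: the set of \emph{all} singletons of $\S$ is a subsemigroup of $\z_{m,n}$, it contains $z$, and $z$ generates $\z_{m,n}$; hence every element of $\z_{m,n}$, including every element of the group part $(z^m)$, is a product of copies of $z$ and is therefore itself a singleton. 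The group part needs no separate treatment, because its elements are products of the already-forced singletons: $z^m=z\cdot z^{m-1}$, $z^{m+1}=z\cdot z\cdot z^{m-1}$, and so on. Your proposal instead truncates this closure at powers $z^k$ with $k<2m$ ``obtainable as products of these singletons,'' and that truncation is exactly where the argument goes astray: subsemigroup generation iterates, so $\langle z\rangle=\z_{m,n}$ consists entirely of singletons. This two-line generation argument is precisely the paper's proof.

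The replacement mechanism you pursue --- multiplying primitive subsets of $(z^m)$ by the singleton $z$ or $z^{m-1}$ and counting translates --- is genuinely insufficient, which is why you could not make it airtight. Left multiplication by $z$ is a bijection of $(z^m)$ (it is translation by a generator of this cyclic group of order $n$), so every multiplicity in $zX$ equals $1$ and no multiplicity comparison can yield a contradiction; the most such an argument can establish is that the partition $\S|_{(z^m)}$ is invariant under the cyclic shift. Shift-invariance does not imply discreteness: the partition of $\z_{m,n}$ having $z,\ldots,z^{m-1}$ as singletons and all of $(z^m)$ as a single block is invariant under that shift (as is any coset partition of $(z^m)\cong\z_n$), yet it is not a Schur ring. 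What rules such partitions out is not how $z$ \emph{translates} $(z^m)$ but the fact that products of the forced singletons \emph{land in} and exhaust $(z^m)$ --- for instance $z\cdot z^{m-1}=z^m$ already forces $\{z^m\}\in\S$ --- which is again the generation argument you needed to carry out.
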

\begin{proof}
    Let $\S$ be a Schur ring over $\z_{m,n}=\langle z\rangle$. By \thmref{thm:indecomp}, $\I(\z_{m,n}) = z$ is an $\S$-subset. As $z$ generates all of $\z_{m,n}$, the necessarily primitive set $\{z\}$ generates the whole discrete Schur ring $\Z_{m,n}$, which is a subring of $\S$ by \propref{prop:singleton}.
\end{proof}

A similar argument applies to the infinite monogenic semigroup. Let $\Z^+ = \{1, 2,3,4,\ldots\}$ denote the set of positive integers. Then, up to isomorphism, $(\Z^+,+)$ is the only infinite monogenic semigroup.\footnote{This likewise represents the rank 1, free semigroup.} It follows that $\I_{k}(\Z^+) = k$ for all $k\in \Z^+$. By \corref{cor:indecomp}, any Schur ring over $\Z^+$ is necessarily discrete.

\begin{Cor} All Schur rings over monogenic semigroups which are not cyclic groups are discrete. In particular, all Schur rings over a monogenic semigroup are traditional. \end{Cor}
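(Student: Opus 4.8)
The plan is to prove both assertions by a case split along the classification of monogenic semigroups, assembling results already in hand rather than computing anything new. Every monogenic semigroup is either a finite semigroup $\z_{m,n}$ or the infinite monogenic semigroup $(\Z^+,+)$, and by the discussion preceding the statement it is a cyclic group precisely when it equals $\z_{1,n}\cong\z_n$; in particular $(\Z^+,+)$ is never a group. Hence the monogenic semigroups that are \emph{not} cyclic groups are exactly the $\z_{m,n}$ with $m>1$ together with $(\Z^+,+)$, and this dichotomy organizes the whole argument.

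For the first assertion, that every such non-group example admits only the discrete Schur ring, I would dispatch the two families separately. The finite case $m>1$ is exactly \thmref{thm:monogenic}, so nothing further is needed there. For the infinite case I would invoke the indecomposable structure: since $\I_k(\Z^+)=k$ for every $k\in\Z^+$, \corref{cor:indecomp} (equivalently, Indecomposable Imposition) forces each $\I_k(\Z^+)$ to be an $\S$-subset, so every element sits in its own singleton and the only Schur ring is discrete. Combining the two families yields the first sentence of the corollary.

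For the second assertion, traditionality, I would again separate cases. When the monogenic semigroup is not a cyclic group, the unique Schur ring is discrete, and the discrete Schur ring is automorphic: it is the partition afforded by the trivial subgroup $\H=1\le\Aut(G)$ as described in Section~\ref{sec:auto}, and automorphic Schur rings are one of the four traditional families, so it is traditional. When the monogenic semigroup \emph{is} a cyclic group $\z_n$, every Schur ring is traditional by the Leung--Man classification of Schur rings over finite cyclic groups, which is the cited motivating result. Uniting these sub-cases proves the claim. The only real point of the argument is conceptual rather than computational: recognizing that the discrete Schur ring lands in the traditional hierarchy via the trivial automorphism action, and that the group case is deferred entirely to the external Leung--Man theorem; everything else follows directly from \thmref{thm:monogenic} and the $\Z^+$ argument already established.
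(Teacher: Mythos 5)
Your proposal is correct and follows essentially the same route as the paper: the finite non-group case is exactly \thmref{thm:monogenic}, the infinite case uses $\I_k(\Z^+)=k$ together with \corref{cor:indecomp} to force discreteness, and traditionality follows since the discrete Schur ring is automorphic (trivial subgroup of $\Aut(G)$) while the cyclic-group case is deferred to the Leung--Man classification. This matches the paper's own (implicit) argument in the surrounding text.
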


Monogenic monoids can be defined analogously. As every monogenic monoid is isomorphic to $\z_{m,n}^e$ or $\N$, \thmref{thm:monogenic} and its corollary transfer immediately to monogenic monoids. In particular, the Schur ring over a monogenic monoid which is itself not a group is the discrete Schur ring.

\subsection{Nilpotent Semigroups}
A zero-semigroup $G$, with zero $\theta$, is called \emph{nilpotent} if for each $x\in G$, there exists $n\in\N$ such that $x^n=\theta$. If $G$ is finite, this is equivalent to $G^n=\theta$ for some $n$. We say a semigroup is $n$-\emph{nilpotent} if $G^n=\theta$. Note that $G$ is 2-nilpotent if and only if $G\cong\O_n$. \thmref{Thm:indecompseries} provides an important partition to finite nilpotent semigroups since 
\[G > G^2 > G^3 > G^4 >\ldots > \theta\] is a strictly decreasing series of ideals.

\begin{Thm}\label{thm:nil3} Let $G$ be a nilpotent semigroup such that $G^3=\theta$. Then all Schur rings $\S$ over $G$ are constructed by the following manner: first, select any partition $\S|_{\I_1(G)}$ on the indecomposable elements $\I_1(G)$; second, we say two elements of $\I_2(G)$ are equivalent if their multiplicities in $XY$ agree for all primitive sets $X,Y\in \S|_{\I_1(G)}$; third, select any refinement $\S|_{\I_2(G)}$ of this equivalence on  $\I_2(G)$; fourth, $\S = \S|_{\I_1(G)}\vee \S|_{\I_2(G)}\vee \theta$.
\end{Thm}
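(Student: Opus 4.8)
The plan is to use Indecomposable Imposition (\thmref{Thm:indecompseries}) to pin down the coarse skeleton of any Schur ring, and then observe that $G^3=\theta$ collapses almost every product, leaving a single genuine closure condition to analyze. First I would record the consequences of $G^3 = \theta$: we have $\I_k(G) = \emptyset$ for $k \ge 3$ and $G^\infty = \theta$, so the partition imposed by \thmref{Thm:indecompseries} is exactly $\{\theta\} \cup \{\I_1(G)\} \cup \{\I_2(G)\}$, where $\I_2(G) = G^2 - \theta$. Hence every Schur ring $\S$ refines this partition, which forces $\theta$ to be a singleton of $\S$ and makes both $\I_1(G)$ and $\I_2(G)$ into $\S$-subsets. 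Consequently $\S$ splits as $\S = \S|_{\I_1(G)} \vee \S|_{\I_2(G)} \vee \theta$ for some partition $\S|_{\I_1(G)}$ of $\I_1(G)$ and some partition $\S|_{\I_2(G)}$ of $\I_2(G)$; this already delivers the fourth clause and shows the two partitions may be recorded independently.

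Next I would determine which products actually constrain the partition. Using $G \cdot G^2 \subseteq G^3 = \theta$ and $G^2 \cdot G^2 \subseteq G^4 = \theta$, every product of primitive sets except one collapses to a scalar multiple of $\theta$: for $X \in \S|_{\I_1(G)}$ and $Z, W \in \S|_{\I_2(G)}$ one gets $XZ = ZX = |X||Z|\theta$ and $ZW = |Z||W|\theta$, all automatically in $\QS$, and products with $\theta$ are immediate. The only products that can escape $\theta$ are $XY$ with $X, Y \in \S|_{\I_1(G)}$, and these are supported on $G^2 = \I_2(G) \cup \theta$. Since indecomposable elements never appear in the Cayley table, no product ever lands in $\I_1(G)$, so the partition $\S|_{\I_1(G)}$ is subject to no closure requirement whatsoever; this explains the ``any partition'' of the first clause.

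The crux is then to translate the requirement ``$XY \in \QS$ for all $X, Y \in \S|_{\I_1(G)}$'' into the stated refinement condition. For $XY$ to lie in $\QS$, the multiplicity of each element of $\I_2(G)$ appearing in $XY$ must be constant across each primitive set of $\S|_{\I_2(G)}$. Defining $a \sim b$ on $\I_2(G)$ precisely when $a$ and $b$ have equal multiplicities in $XY$ for every $X, Y \in \S|_{\I_1(G)}$ (the second clause), this constancy says exactly that each $\S|_{\I_2(G)}$-class sits inside a single $\sim$-class, i.e.\ $\S|_{\I_2(G)}$ refines the $\sim$-partition (the third clause); this gives necessity. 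For sufficiency I would run the computation in reverse: given any $\S|_{\I_1(G)}$ and any refinement $\S|_{\I_2(G)}$ of $\sim$, the refinement guarantees $XY \in \QS$, while the collapse computations of the previous paragraph dispatch every remaining product, so $\S$ is closed under multiplication and hence a Schur ring.

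I expect the only delicate point to be the bookkeeping in the last step, namely verifying that ``$XY \in \QS$ for all $X,Y$'' is genuinely \emph{equivalent} to $\sim$-refinement rather than merely implied by it. This hinges on two observations that I would state explicitly: that $\theta$ forms its own class, so its (possibly large) multiplicity in $XY$ imposes no constraint, and that the $\I_1(G)$- and $\I_2(G)$-parts of $\S$ contribute nothing to these products beyond $\theta$. Everything else is a routine consequence of the nilpotency degree $G^3 = \theta$ annihilating all higher products.
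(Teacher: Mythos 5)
Your proposal is correct and follows essentially the same route as the paper's proof: Indecomposable Imposition (Theorem \ref{Thm:indecompseries}) pins the coarse skeleton $\{\theta\}\cup\{\I_1(G)\}\cup\{\I_2(G)\}$, the nilpotency $G^3=\theta$ collapses every product except $XY$ with $X,Y\in\S|_{\I_1(G)}$ to a multiple of $\theta$, and closure of those remaining products is exactly the $\sim$-refinement condition. You spell out the necessity of the refinement condition more explicitly than the paper does (the paper leaves it folded into the citation of Theorem \ref{Thm:indecompseries}), but the underlying argument is identical.
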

\begin{proof}
    Let $X,Y\in \S|_{\I_1(G)}$. Note, $\I_1(G)$ is necessarily nonempty, since $G$ is finite. Then $XY$ is, by construction, a linear combination of subsets of equivalent elements of $\I_2(G)$ and $\theta$. Note that all other products in $\S$ equal a multiple of $\theta$. Therefore, $\S$ is closed under multiplication and hence is a Schur ring. By \thmref{Thm:indecompseries}, all Schur rings over $G$ must have this form.
\end{proof}

The vast majority of finite semigroups are nilpotent, and among those the vast majority are 3-nilpotent \cite{Distler}. We describe briefly a simple scheme to generate 3-nilpotent semigroups. For $m,n,k\in \Z$ for which $0\le k<m^{n^2}$, let \label{bigO}$\O(m,n,k) = \{0,1,\ldots, m+n-1\}$ be a semigroup with the following multiplication: let $x,y\in \O(m,n,k)$; if $x<m$ then $xy=yx=0$; otherwise, the product $xy$, where $x,y\ge m$, is determined by an $n\times n$ matrix consisting of entries from $\{0,1,\ldots, m-1\}$. This matrix can be encoded as an integer $k$. If the $n^2$ entries of this matrix are, reading left-to-right and top-to-bottom, $k_{n^2-1},\ldots, k_2, k_1, k_0$, then $k = k_{n^2-1}(m^{n^2-1}) + \ldots + k_2(m^2)+ k_1(m) + k_0$. Note that for all $x,y,z\in \O(m,n,k)$ we have that $x(yz) = (xy)z=0$, which makes $\O(m,n,k)$ a 3-nilpotent semigroup for all $m,n,k$. Note that the choice of integer $k$ is just a random matrix inserted into the Cayley table. Let $M=\{0,\ldots, m-1\}$ and $N=\{m,\ldots, m+n-1\}$. Then $M\cong \O_m$. If $m-1\notin N^2$, then $\O(m,n,k) = \O(m-1,n+1,k')$, where $k'$ is the matrix formed by augmenting $k$ with an extra row and columns of zeros. Thus, we may assume that $NG=M$, ignoring multiplicities.

Schur rings over $G=\O(m,n,k)$ are determined by the restriction imposed by the indecomposable elements $\I_1(G) = N$. Any partition of these elements does afford a Schur ring over $G$, giving at least $\B(n)$ many Schur rings. The possible partitions on $\I_2(G) = M-0$ will be restricted based upon the selected partition on $\I_1(G)$ and the equivalencies imposed by its resulting multiplicities.  Since $M\cong \O_m$, supposing that all possible partitions on $M-0$ are compatible with all the partitions on $N$, we get an upper bound on the number of Schur rings over $G$ at $\B(m-1)\B(n)$. We summarize these observations.

\begin{Thm}\label{thm:bigOschur} Let $\S$ be a Schur ring over $\O(m,n,k)$ for positive integers $m,n,k$. Then $M,N\in \QS$. Furthermore, $\B(n) \le \Omega(\O(m,n,k))\le \B(m-1)\B(n)$.
\end{Thm}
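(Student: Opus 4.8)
The plan is to extract both claims directly from the decomposability stratification of $G = \O(m,n,k)$ together with \thmref{thm:nil3}. First I would record the three strata. Since the product of two elements of $N$ lands in $M$ and any product involving an element of $M$ equals $0$, we have $G^2 = NG = M$ (invoking the standing normalization $NG = M$), while $3$-nilpotency gives $G^3 = \{0\}$. Hence $\I_1(G) = G - G^2 = N$, $\I_2(G) = G^2 - G^3 = M - 0$, and $G^\infty = \{0\}$. These identifications are the structural backbone for everything that follows.

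For the first claim, I would invoke Indecomposable Imposition (\thmref{Thm:indecompseries}): every Schur ring $\S$ over $G$ is a refinement of $\{0\} \cup \{M - 0\} \cup \{N\}$. Consequently each of $N$ and $M = \{0\} \cup (M - 0)$ is a union of $\S$-classes, that is, $M, N \in \QS$, as desired.

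For the lower bound I would note that \emph{any} partition of $N = \I_1(G)$ extends to a Schur ring: taking the discrete partition on $M - 0$ (which is trivially a refinement of the multiplicity-equivalence of \thmref{thm:nil3}) together with $\{0\}$ as a singleton, the recipe of \thmref{thm:nil3} produces a Schur ring. Distinct partitions of $N$ yield distinct Schur rings, since they already differ on $N$, so $\Omega(\O(m,n,k)) \ge \B(|N|) = \B(n)$; equivalently, one may lift the $\B(n)$ Schur rings of the Rees quotient $G/G^2 \cong \O_{n+1}$ through \thmref{thm:Reesextension}. For the upper bound I would again appeal to \thmref{thm:nil3}: every Schur ring $\S$ is completely determined by the pair $\bigl(\S|_{\I_1(G)}, \S|_{\I_2(G)}\bigr)$ (the block $\{0\}$ being forced), where the first factor ranges over partitions of $N$, of which there are $\B(n)$, and the second over partitions of $M - 0$, of which there are at most $\B(m-1)$ since the admissible ones must refine the induced equivalence. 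Multiplying the counts gives $\Omega(\O(m,n,k)) \le \B(m-1)\B(n)$.

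The argument is essentially bookkeeping once the strata are fixed, so I expect the only delicate point to be the structural reduction itself: confirming $\I_1(G) = N$ and $\I_2(G) = M - 0$ exactly. This rests on the preceding normalization $NG = M$ (ensuring no nonzero element of $M$ is genuinely indecomposable) and on $G^3 = \{0\}$ (ensuring nothing is $3$-decomposable, so $\I_2(G)$ captures all of $M - 0$). With these secured, both inequalities and the membership $M, N \in \QS$ follow immediately from \thmref{thm:indecomp}, \thmref{Thm:indecompseries}, and \thmref{thm:nil3}.
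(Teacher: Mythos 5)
Your proposal is correct and follows essentially the same route as the paper: the paper also obtains the theorem as a summary of the observations that $\I_1(G)=N$ and $\I_2(G)=M-0$ (under the normalization $NG=M$), that indecomposable imposition together with \thmref{thm:nil3} forces $M,N\in\QS$ and determines every Schur ring by a pair of partitions on $N$ and $M-0$, that any partition of $N$ with the discrete partition on $M-0$ yields a Schur ring (giving the lower bound $\B(n)$), and that counting the possible pairs gives the upper bound $\B(m-1)\B(n)$. Your writeup simply makes explicit the stratification $G^2=M$, $G^3=\{0\}$ that the paper leaves implicit.
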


\begin{Exam} \label{exam:nilpotent} 
Let $G = \O(4,2,123)$, whose Cayley table is illustrated to the right. Note that $123 = 1(4^3) + 3(4^2)+2(4) + 3$, as indicated in the red region. First, we observe that $\I_1(G) = \{4,5\}$ and $\I_2(G) = \{1,2,3\}$. There are two possible partitions on $\I_1(G)$. If $\{4,5\}$ is a primitive set of a Schur ring $\S$ over $G$, then $1$ and $2$ are equivalent with no other equivalences on $\I_2(G)$. Hence, 
\[\{ 0, 1+2, 3, 4+5\}\] 

\begin{center}
\begin{tikzpicture}
    \fill[fill=red!70, fill opacity=0.3] (1.75, -2.75) rectangle (2.75, -1.75); 
        \path (0, 0.5) node {$0$} (0.5, 0.5) node {$1$} (1, 0.5) node {$2$} (1.5, 0.5) node {$3$} 
            (2, 0.5) node {$4$} (2.5, 0.5) node {$5$}
        (-0.5, 0) node {$0$} (-0.5, -0.5) node {$1$} (-0.5, -1) node {$2$} (-0.5, -1.5) node {$3$} 
            (-0.5, -2) node  {$4$} (-0.5, -2.5) node {$5$}
        (-1, -0.75) node {$M$} (-1, -2.25) node {$N$}
        (0.75, 1) node {$M$} (2.25, 1) node {$N$};
    \draw (-0.25, 1.25)--(-0.25, -2.75) (-1.25, 0.25)--(2.75, 0.25);
    \draw[dashed]  (1.75, 0.25) -- (1.75, -2.75)
         (-0.25, -1.75) -- (2.75, -1.75);
    \path (0, 0) node {$0$} (0.5, 0) node {$0$} (1, 0) node {$0$} (1.5, 0) node  {$0$} 
            (2, 0) node {$0$} (2.5, 0) node  {$0$}
        (0, -0.5) node {$0$} (0.5, -0.5) node {$0$} (1, -0.5) node {$0$} (1.5, -0.5) node {$0$} 
            (2, -0.5) node {$0$} (2.5, -0.5) node {$0$}
        (0, -1) node {$0$} (0.5, -1) node {$0$} (1, -1) node {$0$} (1.5, -1) node {$0$}
            (2, -1) node {$0$} (2.5, -1) node {$0$}
        (0, -1.5) node {$0$} (0.5, -1.5) node {$0$} (1, -1.5) node {$0$} (1.5, -1.5) node {$0$}
            (2, -1.5) node {$0$} (2.5, -1.5) node {$0$}
        (0, -2) node {$0$} (0.5, -2) node {$0$} (1, -2) node {$0$} (1.5, -2) node {$0$}
            (2, -2) node {$1$} (2.5, -2) node {$3$}
        (0, -2.5) node {$0$} (0.5, -2.5) node {$0$} (1, -2.5) node {$0$} (1.5, -2.5) node {$0$}
            (2, -2.5) node {$2$} (2.5, -2.5) node {$3$};
\end{tikzpicture}
\end{center}

is the coarsest Schur ring over $G$. The partition $\{ 1+2, 3 \}$ on $\I_2(G)$ has only one other refinement: the discrete one; which affords the Schur ring\footnote{This is the same construction as outlined in \thmref{thm:Reesextension}.}
\[\{ 0, 1, 2, 3, 4+5 \}.\]   Lastly, suppose that $4$ and $5$ are singletons in $\S$. Then the partition on $\I_2(G)$ must also be discrete, constructing the discrete Schur ring over $G$. Hence, $G$ has three Schur rings.
\end{Exam}

Note that the construction mentioned in \thmref{thm:nil3} is recursive. After selecting a partition on the indecomposables $\I_1(G)$, we determine an equivalence on $\I_2(G)$ according to the multiplicities generated by all possible products of primitive sets over $\I_1(G)$. We then selected some refinement of that equivalence. 

For general finite nilpotent semigroups, where the sets $\I_k(G)$ may be nonempty, we can generalize this process and define an equivalence on $\I_3(G)$ according to the multiplicities generated by all possible products of primitive sets over $\I_1(G)\cup \I_2(G)$. We then can select any refinement of that equivalence. Inductively, an equivalence is imposed on $\I_k(G)$ according to the multiplicities generated by products of all primitive sets over $\bigcup_{j=1}^{k} \I_{j}(G)$. We may select any refinement of this partition on $\I_k(G)$. Continuing in this manner, we eventually arrive upon $G^n=\theta$, which has only one possible partition. This process recursively constructs a Schur ring over $G$.

Consider now a general semigroup $G$. This same process of recursively imposing restrictions on $\I_k(G)$ based upon the previous $j$-decomposables $\bigcup_{j=1}^{k}\I_j(G)$ is still equally valid, but the coarsest partition imposed on $G^\infty$ by the partitions of $j$-decomposables is not necessarily a Schur ring over $G^\infty$. Following the manner of \thmref{thm:Reesextension}, one compatible partition on $G^\infty$ with the indecomposable imposition is the discrete Schur ring over $G^\infty$.  We call such a Schur ring constructed recursively in this manner a \emph{recursively indecomposable imposition Schur ring} (or a RIISR). 

\thmref{thm:nil3} states that every Schur ring over a $3$-nilpotent semigroup is RIISR. The previous argument generalizes \thmref{thm:nil3} and shows the following:

\begin{Thm}\label{thm:nilriisr} All Schur rings over finite nilpotent semigroups are RIISR.
\end{Thm}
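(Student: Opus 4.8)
The plan is to reduce the statement to the Indecomposable Imposition (\thmref{Thm:indecompseries}) together with the elementary fact that, in a Schur ring, each primitive set occurs with constant multiplicity in any product of primitive sets. First I would record the structural consequences of finiteness and nilpotency: the chain $G = G^1 \supsetneq G^2 \supsetneq \cdots$ terminates at $\{\theta\}$, so $G^\infty = \theta$ and the sets $\I_1(G), \I_2(G), \ldots$ together with $\theta$ partition $G$, only finitely many $\I_k(G)$ being nonempty and $\I_1(G)$ being nonempty. By \thmref{Thm:indecompseries}, any Schur ring $\S$ over $G$ refines this partition; hence every primitive set of $\S$ lies inside a single $\I_k(G)$ or equals $\theta$, and $\S = \{\theta\} \vee \bigvee_k \S|_{\I_k(G)}$, where $\S|_{\I_k(G)}$ denotes the induced partition of $\I_k(G)$.

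Next I would verify that the recursion underlying the definition of a RIISR is well-founded, i.e., that the equivalence imposed on $\I_k(G)$ depends only on partitions already chosen at strictly lower levels. The point is a degree count: if $X \subseteq \I_i(G)$ and $Y \subseteq \I_j(G)$ are primitive sets, then $XY \subseteq G^{i+j}$, so $XY$ can have support meeting $\I_k(G)$ only when $i + j \le k$, forcing $i, j < k$. Thus the multiplicities appearing in $\I_k(G)$ are generated entirely by products of primitive sets drawn from $\bigcup_{j<k}\I_j(G)$, all of which are fixed by the time the recursion reaches level $k$; higher-level factors contribute nothing at level $k$.

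The heart of the argument is to show that $\S|_{\I_k(G)}$ is a legitimate choice at step $k$, namely a refinement of the multiplicity-induced equivalence on $\I_k(G)$. Fix $k$ and let $X, Y$ be any primitive sets of $\S$ from levels below $k$. Since $\S$ is a Schur ring, $XY = \sum_{C \in \S} \lambda_C C \in \QS$, so within each primitive set $C$ every element occurs in $XY$ with the same multiplicity $\lambda_C$. In particular, if $a$ and $b$ belong to a common primitive set $C \subseteq \I_k(G)$, then $a$ and $b$ have equal multiplicity in $XY$ for every such product; that is, $a$ and $b$ are equivalent under the relation defining the $k$-th step of the construction. Hence each primitive set of $\S|_{\I_k(G)}$ is contained in a single equivalence class, so $\S|_{\I_k(G)}$ is a refinement of that equivalence, exactly as the construction permits.

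Putting these together, $\S$ is reproduced by the recursive procedure: choose $\S|_{\I_1(G)}$ (any partition, as allowed), and at each subsequent level choose $\S|_{\I_k(G)}$, shown above to be an admissible refinement; the final join with $\{\theta\}$ recovers $\S$. Therefore $\S$ is a RIISR. This generalizes \thmref{thm:nil3} essentially verbatim, the $3$-nilpotent case being the instance where only $\I_1(G)$, $\I_2(G)$, and $\theta$ occur. The step demanding the most care is the well-foundedness claim, confirming that no constraint on $\I_k(G)$ is overlooked and that products involving level-$k$ or higher sets impose nothing new there; this is precisely what the containment $XY \subseteq G^{i+j}$ settles.
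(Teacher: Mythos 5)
Your proof is correct and follows essentially the same route as the paper: the paper obtains this theorem by generalizing the argument of \thmref{thm:nil3} via Indecomposable Imposition (\thmref{Thm:indecompseries}), which is exactly your decomposition $\S = \{\theta\}\vee\bigvee_k \S|_{\I_k(G)}$ followed by the multiplicity argument. Your write-up makes explicit two points the paper leaves implicit, namely the degree count $XY\subseteq G^{i+j}$ showing the recursion is well-founded, and the observation that constant multiplicities within primitive sets force each $\S|_{\I_k(G)}$ to refine the induced equivalence.
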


We have seen that nilpotent semigroups of low rank, namely $\O_n$ and $\O(m,n,k)$, can have a relatively high number of Schur rings. As the nilpotency rank increases, the number of Schur rings decreases as the indecomposable imposition of \thmref{thm:nil3} becomes more restrictive. For example, $\z_{n, 1}$ is the unique $n$-nilpotent semigroup of order $n$, which has only one Schur ring.

\begin{Cor} Let $G$ be an $(n-1)$-nilpotent semigroup of order $n$. Then $G$ has exactly two Schur rings.
\end{Cor}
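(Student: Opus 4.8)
The plan is to combine the rigidity of the decomposability filtration with Indecomposable Imposition. First I would record the structure forced by the nilpotency index being exactly $n-1$: since $G$ is nilpotent, $G^\infty = \theta$ and the chain $G \supsetneq G^2 \supsetneq \cdots \supsetneq G^{n-1} = \theta$ is strictly decreasing (any repetition would immediately collapse the chain to $\theta$). Hence the $n-1$ ideals $G^1, \dots, G^{n-1}$ have strictly decreasing cardinalities running from $n$ down to $1$, so the $n-2$ nonempty blocks $\I_1(G), \dots, \I_{n-2}(G)$ partition the $n-1$ nonzero elements. Because each block is nonempty and the sizes sum to $n-1$, exactly one block $\I_{k^*}(G)$ has two elements while every other $\I_k(G)$ is a singleton.

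Second, I would pin down that $k^*=1$, i.e.\ that the doubled level is the indecomposable level. The key observation is that $G$ is generated by its indecomposables $\I_1(G)$: any $g\in G$ of decomposability level $k$ factors as $g=xy$ with $x,y$ of strictly smaller level (since $xy\in G^{i+j}$ with $i,j\ge 1$), so induction on level yields $G=\langle \I_1(G)\rangle$. If $\I_1(G)$ were a singleton, then $G$ would be monogenic, hence $G\cong\z_{m,1}$ for some $m$ by \thmref{thm:monogenic} and its surrounding discussion; but the order-$n$ nilpotent monogenic semigroup is $\z_{n,1}$, whose nilpotency index is $n$, not $n-1$, a contradiction. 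Since the doubled level has size at most $2$, this forces $\I_1(G)=\{a,b\}$ with $a\neq b$ and every higher $\I_k(G)$ a singleton.

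Finally I would count. By Indecomposable Imposition (\thmref{Thm:indecompseries}) every Schur ring $\S$ refines $\{\theta\}\cup\{\I_k(G)\}$, all of whose blocks are singletons except $\{a,b\}$; thus $\S$ is determined by whether it keeps $\{a,b\}$ together or splits it, giving at most two candidates. The discrete partition is always a Schur ring (the automorphic Schur ring of the trivial group). I would then verify directly that the partition whose sole nontrivial block is $\{a,b\}$ is closed under multiplication: the only products that could leave $\Q[\S]$ are ones producing $a$ or $b$ with unequal coefficients, but $a$ and $b$ are indecomposable and so never occur in any product $xy$. Consequently singleton$\,\times\,$singleton products are single elements of $G$ distinct from $a,b$, while $(a+b)\{x\}=ax+bx$, $\{x\}(a+b)=xa+xb$, and $(a+b)^2=a^2+ab+ba+b^2$ all land in the span of the singleton blocks. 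Both candidates are genuine Schur rings, so $G$ has exactly two.

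The main obstacle is the middle step, ruling out a doubled level strictly above $\I_1(G)$. This is essential: if two \emph{decomposable} elements shared a level, merging them need not preserve closure, and the ``exactly two'' count could fail. The monogenic/$\z_{n,1}$ argument is precisely what guarantees the doubled level consists of indecomposables, and it is exactly their indecomposability that makes the merged partition automatically multiplicative in the last paragraph.
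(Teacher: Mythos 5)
Your proof is correct, and it follows the same overall strategy as the paper's — everything is read off from the decomposability filtration — but it is more self-contained in two respects worth noting. The paper's own proof is two sentences: it simply asserts that $|\I_1(G)|=2$ and $|\I_k(G)|=1$ for $2\le k\le n-1$, and then cites Theorem \ref{thm:nilriisr} (every Schur ring over a finite nilpotent semigroup is RIISR) to conclude that the count is $\B(|\I_1(G)|)=2$. You instead prove the asserted structure: the strictly decreasing chain $G\supsetneq G^2\supsetneq\cdots\supsetneq G^{n-1}=\theta$ forces exactly one level of size two, and your generation argument (every element of level $k\ge 2$ factors into elements of strictly smaller level, so $G=\langle \I_1(G)\rangle$; a singleton $\I_1(G)$ would make $G$ monogenic, hence $G\cong\z_{n,1}$, which is $n$-nilpotent rather than $(n-1)$-nilpotent) pins the doubled level to $k=1$. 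This step is genuinely necessary — if the doubled level could sit at some $k^*\ge 2$, the count could fail, as you observe — and it is silently absorbed into the paper's ``Note that.'' Second, where the paper invokes the RIISR classification to get both bounds at once, you use only the weaker Theorem \ref{Thm:indecompseries} for the upper bound of two and verify closure of the merged partition directly; that verification could equally be delegated to Theorem \ref{thm:Reesextension}, since your second Schur ring is exactly the ideal extension $G^2\vee\{\{a,b\}\}$ of the discrete partition on $G^2$ by the indiscrete partition on $\I(G)$. The paper's route is shorter given its machinery; yours is more elementary and fills a real gap in the published argument.
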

\begin{proof}
    Note that $|\I_1(G)|= 2$ and $|\I_k(G)| = 1$ for $2\le k \le n-1$. By \thmref{thm:nilriisr}, the number of Schur rings over $G$ will be $\B(|\I_1(G)|) = 2$. 
\end{proof}

By similar reasoning to the previous corollary, the number of Schur rings over an $(n-2)$-nilpotent semigroup of order $n$ is less than $\B(3)=5$.

For any semigroup $G$ and $k\in \N$, $G/G^k$ is a nilpotent semigroup. Therefore, studying the Schur rings of nilpotent semigroups will illustrate how this recursive indecomposable imposition affects more general semigroups via ideal extensions.

We recognize that a coarser partition on $G^\infty$ might be compatible with the indecomposable imposition of $G$. If we could wrangle both the recursive imposition on $j$-decomposables and the coarsest Schur ring on $G^\infty$ compatible with the indecomposable imposition, then every Schur ring over a semigroup could be constructed from recursive indecomposable imposition. Of course, without further clarity on the Schur subring over $G^\infty$ or the decomposition series itself, such an observation may be quite trivial. For example, if $G$ is a group, then $G=G^\infty$ and $\I_j(G)=\emptyset$ for all $j$. But when the decomposition series converges to something relatively small, this consideration can be very meaningful, such as is the case for finite nilpotent semigroups. For numerical semigroups, we observe that $G^\infty = \emptyset$. Generalizing the arguments from \examref{exam:numerical}, we see the following:

\begin{Thm} All Schur rings over a numerical semigroup are RIISR.
\end{Thm}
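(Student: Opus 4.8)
The plan is to show that every Schur ring $\S$ over a numerical semigroup $G$ is reproduced, level by level, by the recursive indecomposable-imposition procedure, exploiting two features special to the numerical case: that $G^\infty=\emptyset$ (already observed above) and that multiplication strictly increases decomposition depth. For $x\in G$ write $\ell(x)$ for the unique $k$ with $x\in\I_k(G)$; this is well defined precisely because $G^\infty=\emptyset$, so $\{\I_k(G)\mid k\in\N\}$ partitions all of $G$. By Indecomposable Imposition (\thmref{Thm:indecompseries}), each primitive $\S$-set then lies entirely inside a single $\I_k(G)$, so $\S=\bigvee_k \S|_{\I_k(G)}$ and there is no residual partition on $G^\infty$ to account for.

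The key lemma I would prove first is strict monotonicity of depth: for any $x,y\in G$ one has $\ell(xy)\ge\ell(x)+\ell(y)$, obtained by concatenating maximal factorizations, and since every element of $G$ is a positive integer we have $\ell\ge 1$, whence $\ell(xy)>\max(\ell(x),\ell(y))$. Consequently, if $X\subseteq\I_a(G)$ and $Y\subseteq\I_b(G)$ are primitive sets, every element in the support of $XY$ lies in some $\I_m(G)$ with $m\ge a+b$; in particular no product of primitive sets that meets $\I_k(G)$ can involve a factor drawn from $\I_k(G)$ or higher. Thus the structure constants controlling $\I_k(G)$ come strictly from lower levels, which is exactly the data the recursion at stage $k$ is permitted to use.

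With these in hand I would reconstruct $\S$ inductively up the series. At level $1$, $\S|_{\I_1(G)}$ is an arbitrary partition of the indecomposables, a legal first choice. Assuming $\S$ has been recovered on $\bigcup_{j<k}\I_j(G)$, the recursion forms the equivalence $\sim_k$ on $\I_k(G)$ declaring $a\sim_k b$ when $a$ and $b$ occur with equal multiplicity in $XY$ for every pair of lower-level primitive sets $X,Y$. Because $\S$ is a Schur ring, two elements in the same primitive $\S$-set necessarily have equal multiplicity in every product $XY=\sum_{C}\lambda_C C$ (both multiplicities equal the single coefficient $\lambda_C$ of their common class $C$); hence $\S|_{\I_k(G)}$ refines $\sim_k$ and is an admissible choice at stage $k$. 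By the monotonicity lemma, $\sim_k$ is computed from precisely the lower-level products, so this inductive step is self-consistent.

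Running the recursion with the choices $\S|_{\I_k(G)}$ at each stage therefore returns $\S$ itself, and since $G^\infty=\emptyset$ the discrete partition of $G^\infty$ contributes nothing; hence $\S$ is a RIISR. I expect the only genuine obstacle to be the monotonicity lemma: depth is in general only super-additive, not additive, on a numerical semigroup (products may jump several levels at once), so some care is needed to isolate the one fact actually required — that products always move \emph{strictly} upward — and to derive it from positivity of the generators rather than from any exact additivity of factorization length.
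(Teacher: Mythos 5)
Your proof is correct and follows essentially the same route the paper sketches: Indecomposable Imposition (\thmref{Thm:indecompseries}) together with $G^\infty=\emptyset$ shows every primitive set sits in a single level $\I_k(G)$, and the levelwise induction then exhibits $\S$ as an output of the recursive construction, which is exactly what the paper means by ``generalizing the arguments from Example \ref{exam:numerical}.'' One small remark: your ``key lemma'' is not really special to the numerical case, since $G^aG^b\subseteq G^{a+b}$ gives $\ell(xy)\ge \ell(x)+\ell(y)$ in any semigroup once depths are defined (i.e.\ once $G^\infty=\emptyset$), so positivity of the integers is needed only to establish $G^\infty=\emptyset$ itself, not the strict upward movement of products.
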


We present next a similar construction to $\O(m,n,k)$ but with a nontrivial $G^\infty$. For $\ell, m,n,k\in \N$ for which $0\le k<\ell^{nm}$, let \label{loo}$\LOO(\ell, m,n,k) = \{0,1,\ldots, \ell+m+n-1\}$ be a semigroup with the following multiplication: let $x,y\in \LOO(\ell, m,n,k)$; if $x<\ell+m$ then $xy=x$; if $x\ge \ell + m$ and if $y<\ell$ or $y\ge \ell + m$ then $xy=0$; otherwise, the product $xy$, where $x\ge \ell +m$ and $\ell \le y <m$, is determined by an integer $k$ which encodes this portion of the Cayley table.\footnote{We admit that for sufficiently large $m$ or $n$, this encoding scheme with an integer $k$ is much more inefficient than simply reporting this portion of the Cayley table, but it works well for the $3$-nilpotent semigroups of order up to $7$ which we wish to discuss.} When $k$ is converted into a base $\ell$ number, it gives the entries of the $n\times m$ submatrix in standard reading order, that is left-to-right, top-to-bottom. Let $L=\{0,\ldots, \ell-1\}$, $M=\{\ell, \ldots, \ell+m-1\}$, and $N=\{\ell+m,\ldots, \ell+m+n-1\}$. Then $L\cong \LO_\ell$, $M\cong \LO_m$, $L\cup M\cong \LO_{\ell+m}$, $N\cup 0 \cong \O_{n+1}$, and $L\cup N\cong \LO_{\ell-1}\t \O_{n+1}$. If $\ell-1\notin NM$, then $\LOO(\ell,m,n,k) = \LOO(\ell-1,m+1,n,k')$ for some potentially new $k'$. Thus, we may assume that $NM=L$, ignoring multiplicities.

Schur rings over $G=\LOO(\ell,m,n,k)$ are determined by the restriction imposed by the indecomposable elements $\I_1(G)=N$, giving $\B(n)$ partitions on $N$. On the other hand, $G^\infty=L\cup M$, but the partition on $N$ places no imposition on $M$, although the choice of $k$ may impose upon $L$. More explicitly, if $\S$ is a Schur ring over $G$, then $L+M$, $N\in \QS$, since $N=\I_1(G)$.  Then $N(L+M) = L$, ignoring multiplicities, which implies $L$, $M\in \QS$. In particular, not every partition on $G^\infty = L\cup M \cong \LO_{\ell+m}$ can be found in a Schur ring. As $M\cong \LO_m$, there $\B(m)$ partitions we can place $M$. Therefore, $\LOO(\ell,m,n,k)$ has at least $\B(m)\B(n)$ many Schur rings, which are counted by assuming the partition on $L$ is discrete. Note that many of these Schur rings are not RIISR, since we have a non-discrete partition on $G^\infty = L\cup M$. To allow a non-discrete partition on $L$, a partition must be selected according to the restrictions imposed by the partition on $N$ but is independent to the partition on $M$. Since $L\cong \LO_\ell$, supposing that all possible partitions on $L-0$ are compatible with all the partitions on $M\cup N$, we get an upper bound on the number of Schur rings over $G$ at $\B(\ell-1)\B(m)\B(n)$. We summarize these observations.

\begin{Thm}\label{thm:looschur} Let $\S$ be a Schur ring over $\LOO(\ell,m,n,k)$ for positive integers $\ell,m,n,k$. Then $L,M,N\in \QS$. Furthermore, $\B(m)\B(n) \le \Omega(\LOO(\ell,m,n,k))\le \B(\ell-1)\B(m)\B(n)$.
\end{Thm}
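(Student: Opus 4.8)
The plan is to prove the three claims in sequence: that $L$, $M$, and $N$ are each $\S$-subsets; the lower bound by exhibiting $\B(m)\B(n)$ Schur rings; and the upper bound by pinning each Schur ring to a triple of induced partitions. Throughout I use that the indecomposable elements are exactly $N=\I_1(G)$ and that the decomposables are $G^2=L\cup M$, as recorded just before the statement.

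First I would obtain $N,\,L\cup M\in\QS$ directly: \thmref{thm:indecomp} gives $N=\I(G)\in\QS$, and \corref{cor:indecomp} gives $L\cup M=G^2\in\QS$. The delicate point is prying $L$ apart from $M$, since $L\cup M\cong\LO_{\ell+m}$ admits every partition in isolation by \thmref{thm:leftnullschur}, so the separation must be forced from outside $G^\infty$ by multiplying by $N$. I would compute $N(L\cup M)\in\QS$: each $y\in L$ sends all of $N$ to $0$, whereas $NM$ has support exactly $L$ by the standing normalization $NM=L$. Hence $N(L\cup M)$ is an element of $\QS$ with positive coefficient on every element of $L$ and zero coefficient off $L$. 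Since any element of $\QS$ is constant on each primitive set, every primitive set meeting $L$ must be contained in $L$; therefore $L\in\QS$, and then $M=(L\cup M)-L\in\QS$.

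With $L$, $M$, $N$ all $\S$-subsets partitioning $G$, every primitive set lies entirely in one of them, so $\S$ is determined by the partitions $\S_L$, $\S_M$, $\S_N$ it induces on $L$, $M$, $N$. For the lower bound I would take $\S_L$ discrete and let $\S_M$, $\S_N$ range over all partitions of $M$ and of $N$, then verify closure by cases on a primitive set $X$: if $X\subseteq L\cup M$ every element is a left-zero, so $XY=|Y|X\in\QS$ for any primitive $Y$; if $X\subseteq N$, then $XY$ is a multiple of $0$ whenever $Y\subseteq L\cup N$ and lies in $\Q[L]$ whenever $Y\subseteq M$, both of which sit in $\QS$ because $L$ is discrete. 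This yields $\B(m)\B(n)$ distinct Schur rings.

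For the upper bound the map $\S\mapsto(\S_L,\S_M,\S_N)$ is injective, so it suffices to bound each coordinate. The partitions $\S_M$ and $\S_N$ contribute at most $\B(m)$ and $\B(n)$ possibilities. For $L$ I would note that every product of two elements of $N$ is $0$, so $N^2=n^2\cdot 0$ and hence $\{0\}\in\QS$, forcing $0$ to be its own primitive set; thus $\S_L$ is a partition of the $\ell$-element set $L$ that isolates $0$, of which there are $\B(\ell-1)$. Multiplying the three bounds gives $\Omega(\LOO(\ell,m,n,k))\le\B(\ell-1)\B(m)\B(n)$. The main obstacle is the separation step $L\in\QS$: it is invisible inside $G^\infty=L\cup M$ and relies entirely on $N$ distinguishing $L$ (annihilated to $0$) from $M$ (mapped onto all of $L$), which itself depends on the full-support normalization $NM=L$ built into the definition of $\LOO$.
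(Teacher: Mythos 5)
Your proof is correct and follows essentially the same route as the paper: indecomposable imposition gives $N$ and $G^2=L\cup M$ as $\S$-subsets, the product $N(L+M)$ (with support exactly $L$ by the normalization $NM=L$) separates $L$ from $M$, the lower bound comes from a discrete partition on $L$ with free partitions on $M$ and $N$, and the upper bound from isolating $0$. You merely make explicit two steps the paper leaves implicit, namely the closure verification for the lower-bound construction and the argument that $N^2=n^2\cdot 0$ forces $\{0\}$ to be a primitive set.
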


\begin{Exam}
\begin{multicols}{2}
    We present here the Cayley table for the semigroup $G = \LOO(2,2,2,6)$, illustrated to the right. Note that $\ell=2$ and $k=6 = 0(\ell^3) + 1(\ell^2)+1(\ell) + 0$. The two shaded submatrices indicate the portion encoded by $k$ (red) and the $N\cup 0 = \O_3$ (green). By Theorem \ref{thm:looschur}, there are exactly four Schur rings over $G$. There are two RIISRs over $G$, namely
    \[\{ 0,1,2,3,4+5 \}\quad \text{and}\quad\{ 0,1,2,3,4,5 \}. \] The two other Schur rings over $G$ are  
\begin{center}
    \begin{tikzpicture}
    \fill[fill=red!70, fill opacity=0.3] (0.75, -1.75) rectangle (1.75, -2.75); 
    \fill[fill=green!70, fill opacity=0.3] (1.75, 0.25) rectangle (2.75, -0.25) (-0.25, 0.25) rectangle (0.25, -0.25)
        (1.75, -1.75) rectangle (2.75, -2.75) (-0.25, -1.75) rectangle (0.25, -2.75); 
        \path (0, 0.5) node {$0$} (0.5, 0.5) node {$1$} (1, 0.5) node {$2$} (1.5, 0.5) node {$3$} 
            (2, 0.5) node {$4$} (2.5, 0.5) node {$5$}
        (-0.5, 0) node {$0$} (-0.5, -0.5) node {$1$} (-0.5, -1) node {$2$} (-0.5, -1.5) node {$3$} 
            (-0.5, -2) node  {$4$} (-0.5, -2.5) node {$5$}
        (-1, -0.25) node {$L$} (-1, -1.25) node {$M$} (-1, -2.25) node {$N$}
        (0.25, 1) node {$L$} (1.25, 1) node {$M$} (2.25, 1) node {$N$};
    \draw (-0.25, 1.25)--(-0.25, -2.75) (-1.25, 0.25)--(2.75, 0.25);
    \draw[dashed] (0.75, 0.25) -- (0.75, -2.75) (1.75, 0.25) -- (1.75, -2.75)
        (-0.25, -0.75) -- (2.75, -0.75) (-0.25, -1.75) -- (2.75, -1.75);
    \path (0, 0) node {$0$} (0.5, 0) node {$0$} (1, 0) node {$0$} (1.5, 0) node  {$0$} 
            (2, 0) node {$0$} (2.5, 0) node  {$0$}
        (0, -0.5) node {$1$} (0.5, -0.5) node {$1$} (1, -0.5) node {$1$} (1.5, -0.5) node {$1$} 
            (2, -0.5) node {$1$} (2.5, -0.5) node {$1$}
        (0, -1) node {$2$} (0.5, -1) node {$2$} (1, -1) node {$2$} (1.5, -1) node {$2$}
            (2, -1) node {$2$} (2.5, -1) node {$2$}
        (0, -1.5) node {$3$} (0.5, -1.5) node {$3$} (1, -1.5) node {$3$} (1.5, -1.5) node {$3$}
            (2, -1.5) node {$3$} (2.5, -1.5) node {$3$}
        (0, -2) node {$0$} (0.5, -2) node {$0$} (1, -2) node {$0$} (1.5, -2) node {$1$}
            (2, -2) node {$0$} (2.5, -2) node {$0$}
        (0, -2.5) node {$0$} (0.5, -2.5) node {$0$} (1, -2.5) node {$1$} (1.5, -2.5) node {$0$}
            (2, -2.5) node {$0$} (2.5, -2.5) node {$0$};
    \end{tikzpicture}
\end{center}
\end{multicols}\vspace{-15pt}
\[\{ 0,1,2+3,4+5 \}\quad \text{and}\quad\{ 0,1,2+3,4,5 \}.\] 
\end{Exam}

The above observations and examples establish the following principle: analyzing this series of indecomposable elements is a critical step in studying the Schur rings over semigroups, an analysis completely absent in the category of groups.

\subsection{Indivisible Idempotents}\label{sec:idempotent}
Before closing this section, we want to briefly compare indecomposable elements with \emph{indivisible idempotents}, that is, $\varepsilon^2 = \varepsilon$ is the only factorization of $\varepsilon$.  While they are not indecomposable as defined above, indivisible idempotents do\textemdash in many cases\textemdash behave like indecomposables as they impose restrictions on partitions which are Schur rings. Compare for example $\O_{n+1}$ and $\K_{1,n}$. As outlined in Theorems \ref{thm:nullschur} and \ref{thm:bipartiteschur}, $\O_{n+1}$ and $\K_{1,n}$ have the same number of Schur rings, $\B(n)$, which derive from all possible partitions on the nonzero elements. For $\O_{n+1}$, there are $n$ indecomposable elements, all of whose products are $\theta$. Likewise, for $\K_{1,n}$, there are $n$ indivisible idempotents, but all non-square products are $\theta$. Furthermore, if $I$ is the set of indivisible idempotents, then $G-I$ is an ideal and $G/(G-I)\cong \K_{1,|I|}$. 

As another example, the new identity $e$ adjoined to $G^e$ is an indivisible idempotent. As we saw in \thmref{thm:monoidSchur}, as the only indivisible idempotent, it must be a singleton in each Schur ring.

Similar to indecomposables, there are settings where we can guarantee that indivisible idempotents are isolated from the rest of the semigroup in a Schur ring.

\begin{Thm}\label{thm:indivisibleidempotent} Let $G$ be a commutative semigroup and let $\S$ be a Schur ring over $G$. Then the set of indivisible idempotents of $G$ is an $\S$-subset.
\end{Thm}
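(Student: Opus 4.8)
The plan is to mimic the proof of \thmref{thm:indecomp}, upgrading ``$x$ appears in the product $AB$'' to ``$x$ appears with positive multiplicity in a product of primitive sets,'' and to exploit the two features that single out an indivisible idempotent $\varepsilon$: it \emph{is} idempotent (so it genuinely occurs in products) yet its only factorisation is $\varepsilon=\varepsilon^2$. Fix such an $\varepsilon$ and let $X$ be its $\S$-class; it suffices to show that every element of $X$ is an indivisible idempotent, for then the set of all indivisible idempotents is a union of $\S$-classes. Throughout I will use the basic fact that every element of a fixed primitive set occurs with the same multiplicity in any member of $\QS$, together with the standing hypothesis that primitive sets are finite.

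The crucial move is a \emph{localisation step}: if $y\in X$ and $y=cd$ is any factorisation in $G$, then in fact $c,d\in X$. Indeed, letting $C,D$ be the classes of $c,d$, the element $y$ occurs in $CD\in\QS$ with positive multiplicity; since $\varepsilon$ lies in the same class $X$ as $y$, it occurs in $CD$ with the \emph{same} positive multiplicity, so $\varepsilon=c'd'$ for some $c'\in C$, $d'\in D$. Indivisibility of $\varepsilon$ forces $c'=d'=\varepsilon$, hence $C=D=X$ and $c,d\in X$. Thus every factorisation of an element of $X$ already lives inside $X\times X$. Next I analyse $X^2=\sum_C\lambda_C C\in\QS$: because $(\varepsilon,\varepsilon)$ is the only pair in $X\times X$ with product $\varepsilon$, the coefficient $\lambda_X$ equals $1$, so every $w\in X$ occurs in $X^2$ with multiplicity exactly $1$. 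Here commutativity enters, as off-diagonal pairs $(a,b),(b,a)$ contribute equally; each $w\in X$ therefore receives an even contribution from off-diagonal products, and since the total is the odd number $1$ this contribution is $0$ and $w$ is the square of exactly one element of $X$. Counting the $|X|$ squares then shows that $\sigma\colon X\to X$, $\sigma(a)=a^2$, is a well-defined bijection.

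Combining the two: given $y\in X$, finiteness of $X$ and bijectivity of $\sigma$ place $y$ in a $\sigma$-cycle, so $y^{2^d}=y$ for some $d$, and hence $y$ generates a finite cyclic subgroup $H\le G$ with identity $e$. If $y$ were not idempotent then $e\neq y$, and from $y=ye=ey$ the localisation step gives $e\in X$; but then $(y,e)$ and $(e,y)$ are two distinct pairs of $X\times X$ with product $y$, forcing the multiplicity of $y$ in $X^2$ to be $\ge 2$ and contradicting $\lambda_X=1$. Hence $y$ is idempotent, and now the localisation step with $\lambda_X=1$ says the unique factorisation of $y$ is $(y,y)$, i.e.\ $y$ is an indivisible idempotent.

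I expect the localisation step to be the heart of the argument: transferring information from $y$ to $\varepsilon$ through the equal-multiplicity property is precisely what drags the factors back into $X$, and it is the analogue of the product trick in \thmref{thm:indecomp}. The genuinely delicate point is ruling out a non-idempotent $y$ sharing the class of $\varepsilon$; both commutativity and the finiteness of primitive sets are essential there, since without commutativity the parity bookkeeping in $X^2$ collapses, and without finiteness the squaring map need not return $y$ to itself (so $y$ need not land in a subgroup), which is exactly what a semigroup such as $\Z_{m,1}$ shows when no indivisible idempotent is present.
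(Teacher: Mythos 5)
Your proposal is correct, and although it shares the paper's two basic tools---the fact that an indivisible idempotent occurs with multiplicity exactly $1$ in $X^2$ (hence, by equality of multiplicities across a class, so does every member of $X$), and the commutativity pairing of $(a,b)$ with $(b,a)$---the step that forces idempotency is genuinely different. The paper argues by contradiction with some $x\in X$ that is not an indivisible idempotent: it extracts $x=y^2$ with $y\in X$ exactly as you do, but then, if $x\neq y$, it writes $y=z^2$ with $z\neq y$ and passes to $X^3$, where $x=yzz=zyz=zzy$ has multiplicity at least $3$ while the indivisible idempotent still has multiplicity $1$; the remaining case (an idempotent but divisible $x$) is then eliminated by the same class-localisation of factors that you prove as your opening lemma. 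You instead never leave $X^2$: the parity count upgrades ``every element of $X$ is a square from $X$'' to ``squaring is a bijection of $X$'' (your counting argument here is terse but valid---the $|X|$ square roots are distinct, hence exhaust $X$, so $a\mapsto a^2$ really does map $X$ into $X$), you iterate to get $y^{2^d}=y$, embed $y$ in a finite cyclic subgroup, and use its identity $e$ to manufacture the off-diagonal factorisation $y=ye=ey$, contradicting $\lambda_X=1$. The paper's $X^3$ count is shorter and more elementary, avoiding the group-theoretic detour entirely; your route front-loads a localisation lemma and a squaring bijection that are reusable structural facts about the class of any indivisible idempotent, and it makes explicit where finiteness of primitive sets enters (the cycle argument). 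Your endgame for indivisibility (localisation plus $\lambda_X=1$) coincides with the paper's final step, just stated in its natural general form first; the closing aside about $\Z_{m,1}$ is tangential and plays no role in the argument.
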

\begin{proof}
    Let $X\in \S$ such that $x,\alpha\in X$ where $\alpha$ is an indivisible idempotent and $x$ is not. Consider $X^2$ which contains $\alpha$ with a multiplicity of $1$. Thus, $X\subseteq X^2$. So, there $y,z\in X$ such that $x=yz$. If $y\neq z$, then $x=zy$ and the multiplicity of $x$ in $X^2$ is at least two, a contradiction. Hence, $y=z$ and $x=y^2$.
    If $x\neq y$, then by similar reasoning, there exists some $z\in X$ such that $y=z^2$ and $y\neq z$. Then the multiplicity of $x$ in $X^3$ is at least $3$ since $yzz=zyz=zzy= x$, but the multiplicity of $\alpha$ in $X^3$ is still $1$. Hence, $x=y$, that is, $x^2=x$. As $x$ is divisible, there exists $y,z\in G$ such that $x=yz$ and $z\neq x$. Let $Y,Z\in \S$ be the primitive sets for $y,z$, respectively. Hence, $x\in YZ$, which implies that $\alpha\in YZ$. But this forces $Y=Z=X$. Then $x=x^2=yz=zy\in X^2$, making the multiplicity of $x$ strictly larger than that of $\alpha$, a contradiction.
\end{proof}

We may modify the cloning process introduced in Section \ref{sec:clone} when $x\in G$ is idempotent, specifically, we define $(x^\prime)^2=x^\prime$ and denote this semigroup \label{Iclone}$G[\hat{x}] = G\cup \{x'\}$, where $x'$ is an idempotent clone of $x$. The associativity of $G[\hat{x}]$ is clear.

\begin{Exam} The behavior of the Schur rings over $G[x]$ versus $G[\hat{x}]$ can differ. For example, consider $G=\O_n$ and $x=\theta$. Then $\O_n[\theta]\cong \O_{n+1}$ and $\Omega(\O_n[\theta]) = \B(n)$ by Theorem \ref{thm:nullschur}. Whereas in $\O_n[\hat{\theta}]$, which is commutative, $\theta'$ is the unique indivisible idempotent, which means $\theta'$ is a singleton in every Schur ring over $\O_n[\hat{\theta}]$, by Theorem \ref{thm:indivisibleidempotent}. Likewise, $\theta$ is a singleton in each Schur ring since $\O_n[\hat{\theta}]$ is a zero semigroup. Lastly, $\I(\O_n[\hat{\theta}]) = \O_n[\hat{\theta}] - \{\theta,\theta'\}$. Hence, a partition $\S$ over $\O_n[\hat{\theta}]$ is a Schur ring if and only if $\theta, \theta'\in \S$, similar to Example \ref{exam:nullindecomp}. Therefore, $\Omega(\O_n[\hat{\theta}]) = \B(n-1)$.
\end{Exam}

To demonstrate that the commutativity hypothesis is necessary in Theorem \ref{thm:indivisibleidempotent}, we present an example of a noncommutative semigroup $\OROP_n$ with two idempotent elements, $\alpha$ and $\beta$. In $\OROP_n$, $\beta$ is indivisible but $\alpha$ is not, but $\alpha+\beta$ is a primitive set in some Schur rings over $\OROP_n$.

\begin{Exam}
    For $n\ge 2$, we define a semigroup \label{orop}$\OROP_n = \{\theta,x_1,\ldots, x_{n-1}\}\cup\{\alpha,\beta\}$ by the following multiplication rules. For convenience,  let $X = \{\theta,x_1,\ldots, x_{n-1}\}$. For each $x,y\in X$, we define $xy=\theta$. Hence, $X\cong \O_n$. We define $x\alpha = \theta$, $\alpha x= x$, and $\alpha^2=\alpha$. Hence, $X\cup \alpha \cong\ORO_{n,1}$. We define $x\beta = x$, $\beta x = \theta$, and $\beta^2 = \beta$. Hence, $X\cup \beta\cong \ORO_{n,1}^\text{op}$. Lastly, $\alpha\beta =x_1$ and $\beta\alpha=\alpha$. It is routine to verify this multiplication is associative. We note that $X$ is an ideal of $\OROP_n$, and so $\OROP_n$ is an ideal extension of $\O_n$ by $K_{1,2}$. 


  \begin{Thm}\label{thm:orop} Let $\S$ be a partition over $\OROP_{n}$, for some $n\ge2$. Then $\S$ is a Schur ring if and only if $\theta$, $x_1$, $\alpha+\beta\in \QS$.\footnote{Note that $\alpha+\beta\in \QS$ does not imply that $\alpha+\beta$ is a primitive set, as we are not saying $\alpha+\beta\in \S$. Instead, if $\alpha+\beta$ is not a primitive set, then $\alpha$ and $\beta$ are. Hence, we require $\alpha+\beta\in \S$ or $\alpha,\beta\in\S$.} The number of Schur rings over $\OROP_{n}$ is $2\B(n-2)$.
\end{Thm}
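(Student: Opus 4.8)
The plan is to show that the two membership conditions are simultaneously necessary and sufficient for $\S$ to be a Schur ring, and then to read off the count. First observe that $\theta$ is the zero element of $\OROP_n$: every product $xy$ with $x,y\in X$ equals $\theta$, and $\theta$ annihilates $\alpha$ and $\beta$ on both sides as well. Hence $\OROP_n$ is a zero-semigroup and \thmref{thm:zeroschur} already forces $\theta\in\S$, so only the conditions $x_1\in\QS$ and $\alpha+\beta\in\QS$ remain to be characterized.

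For necessity, the crux is to show that no primitive set may contain an element of $\{\alpha,\beta\}$ together with an element of $X-\theta$; granting this, each block meeting $\{\alpha,\beta\}$ lies entirely inside $\{\alpha,\beta\}$, so $\alpha+\beta\in\QS$ in either surviving configuration ($\{\alpha,\beta\}$ a single block, or $\alpha$ and $\beta$ separate singletons). To rule out fusion, suppose a primitive set $W$ contains some $x_j\in X-\theta$ and at least one of $\alpha,\beta$. The decisive feature is the asymmetry of the one-sided actions: $\alpha$ is a left identity and right zero on $X$ (so $\alpha x=x$, $x\alpha=\theta$), while $\beta$ is a right identity and left zero ($x\beta=x$, $\beta x=\theta$), and both are idempotent. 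I would compare the coefficient of the $\{\alpha,\beta\}$-element against that of $x_j$ in a product forced to lie in $\QS$: if $\alpha\in W$, $\beta\notin W$, use $WB$ with $B$ the block of $\beta$ (there $x_j\beta=x_j$ contributes to $x_j$, yet no product in $WB$ returns $\alpha$); if $\beta\in W$, $\alpha\notin W$, use the mirror product $AW$ with $A$ the block of $\alpha$; and if $\{\alpha,\beta\}\subseteq W$, use the square $W^2$, where the one-sided actions make the multiplicities of $\alpha$, $\beta$, and $x_j$ pairwise distinct. In every case two elements of the single block $W$ receive different coefficients in an element of $\QS$, contradicting primitivity. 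Once $\{\alpha,\beta\}$ is isolated, $x_1\in\QS$ follows from $\alpha\beta=x_1$: if $\alpha,\beta$ are separate then $\alpha\beta=x_1\in\QS$ exhibits $\{x_1\}$ as a primitive set, and if $\{\alpha,\beta\}$ is a single block $W$ then expanding $W^2$ writes $x_1$ as a $\Q$-combination of $W$, $\theta$, and $x_1$, again forcing $x_1\in\S$.

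For sufficiency, assume $\theta$ and $x_1$ are singletons, $\{\alpha,\beta\}$ is isolated, and $x_2,\ldots,x_{n-1}$ are partitioned arbitrarily. Only a handful of product types need checking. Products of two blocks inside $X$ collapse to a multiple of $\theta$; a block $Y\subseteq X$ multiplied by $\alpha$ or $\beta$ returns either $|Y|\theta$ or $Y$ by the one-sided actions, hence stays in $\QS$; and the products among $\alpha,\beta$ are governed by $\alpha^2=\alpha$, $\beta^2=\beta$, and $\alpha\beta=x_1$. The key computation is $(\alpha+\beta)^2$, which expands into a $\Q$-combination of $\alpha+\beta$, $x_1$, and $\theta$, each a primitive set; this is precisely where the hypothesis that $x_1$ is a singleton is consumed. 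Thus $\QS$ is closed under multiplication.

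Finally I would count. The elements $\theta$ and $x_1$ are forced singletons, the pair $\{\alpha,\beta\}$ contributes a factor of $2$ (merged or split), and the remaining $n-2$ elements $x_2,\ldots,x_{n-1}$ may be partitioned in any of the $\B(n-2)$ ways with no further constraint, yielding $\Omega(\OROP_n)=2\B(n-2)$. I expect the isolation step in the necessity direction to be the main obstacle: it is the only place requiring a genuine multiplicity/primitivity argument, and it must be split into the three cases above so that the contrast between the left action of $\alpha$ and the right action of $\beta$ on $X$ is exploited correctly rather than collapsing the two symmetric-looking subcases.
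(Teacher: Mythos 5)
Your proposal is correct and takes essentially the same route as the paper: isolate $\{\alpha,\beta\}$ by comparing coefficients in the mixed product (the paper's $AB$, your $WB$/$AW$) and in the square $W^2$, extract $x_1$ from $\alpha\beta$ or from $(\alpha+\beta)^2$, run the same sufficiency checks, and count $2\B(n-2)$. The only cosmetic differences are that you get $\theta\in\S$ upfront from \thmref{thm:zeroschur} (the paper instead restricts to $\O_n$ and invokes \thmref{thm:nullschur}), and your claim that the multiplicities of $\alpha$, $\beta$, $x_j$ in $W^2$ are \emph{pairwise} distinct slightly overstates the computation ($\alpha$ and $\beta$ each occur with multiplicity $1$ there), though the contrast you actually use---$x_j$ against $\alpha$---does hold, so the argument stands.
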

    \begin{proof}
    Recall $\alpha$ and $\beta$ are indivisible idempotents. Let $\S$ be a Schur ring over $G=\OROP_{n}$, and let $A,B\in \S$ be the primitive sets containing $\alpha$ and $\beta$, respectively. Suppose that $A=B$, that is, $A = \alpha+\beta +C$ for some $C\subseteq \O_n$. Then $A^2 = \alpha + \beta + (|C|^2+2|C|+1)\theta + x_1 + 2C$. If $|C|\neq 0$, then the elements of $C$ have a multiplicity in the product $A^2$ higher than that of $\alpha$ and $\beta$, which is $1$. Thus, $C=\emptyset$. Suppose that $A=\alpha+C$ and $B=\beta +D$ for some $C,D\subseteq \O_n$, where $C\cap D=\emptyset$. Then $AB = C+D+|C||D|\theta + x_1$. If $|C|\neq 0$, then the elements of $C$ have a multiplicity in this product higher than that of $\alpha$, which is zero. Thus, $C=\emptyset$. Similarly, $D=\emptyset$. Therefore, $\{\alpha,\beta\}$ is an $\S$-subset for any Schur ring $\S$. Likewise, its complement, namely $\O_n$, is an $\S$-subsemigroup. By \thmref{thm:nullschur}, $\theta \in \S|_{\O_n}$. Finally, $(\alpha+\beta)^2 = \theta + x_1 + \alpha+\beta$. Therefore, $x_1\in\S$.

    Conversely, suppose that $\theta$, $x_1$, $\alpha+\beta\in \QS$. Since $\O_n=G-(\alpha+\beta)$ is an $\S$-ideal and $\theta\in \S|_{\O_n}$, $\S|_{\O_n}$ is a Schur ring over $\O_n$. Note $\theta\alpha = \alpha\theta=\theta\beta=\beta\theta = \theta \in \S$. Let $C\in \S|_{\O_n}$ be a nonzero primitive set. Then $\alpha C = C\beta = C$, $C\alpha = \beta C = |C|\theta\in \QS$. Independent of the primitive set containing $\alpha$ or $\beta$, this shows that the product between it and $C$ is contained in $\Q[\S|_{\O_n}]$. If $\alpha$ is a primitive set, then $\beta$ is likewise primitive and $\alpha^2=\alpha, \alpha\beta=x_1$, $\beta\alpha=\theta$, $\beta^2=\beta\in \S$. If $\alpha+\beta$ is a primitive set, then $(\alpha+\beta)^2 = \theta+x_1+(\alpha+\beta)\in \S$. Therefore, $\S$ is closed under multiplication.
    
    There are two choices for the primitive set containing $\alpha$, namely $\alpha$ or $\alpha+\beta$. There are $\B(n-2)$ Schur rings over $\O_n$ if $\theta$ and $x_1$ are necessarily singletons. By \thmref{thm:nullschur}, all these partitions are Schur rings over $\O_n$. 
    \end{proof}
\end{Exam}

By modifying the construction of $\LOO(\ell,m,n,k)$, we can construct a band where certain subsemigroups impose restriction on the rest of the semigroup, although none of the idempotents are indivisible. Let \label{loro}$\LORO(\ell,m,n,k) = \{0,1,\ldots, \ell+m+n-1\}$ be a semigroup with the following multiplication: let $x,y\in \LORO(\ell, m,n,k)$; if $x<\ell+m$ then $xy=x$; if $x\ge \ell + m$ and if $y<\ell$ or $y\ge \ell + m$ then $xy=y$; otherwise, the product $xy$, where $x\ge \ell +m$ and $\ell \le y <m$, is determined by an integer $k$ which encodes this portion of the Cayley table as a number which, when written in base $\ell$, fills in the $n\times m$ portion of the Cayley table in standard reading order. Let $L=\{0,\ldots, \ell-1\}$, $M=\{\ell, \ldots, \ell+m-1\}$, and $N=\{\ell+m,\ldots, \ell+m+n-1\}$. Then $L\cong \LO_\ell$, $M\cong \LO_m$, $L\cup M\cong \LO_{\ell+m}$, $N \cong \RO_n$, and $L\cup N\cong \LO_{\ell}\s \RO_n$. Unlike $\O(m,n,k)$ and $\LOO(\ell,m,n,k)$, a change of $k$ cannot guarantee that $NM=L$, but we still have $NM\subseteq L$. Despite neither $N$ nor $M$ containing any indivisible idempotents, the choice of $k$ still imposes restrictions on compatible partitions on $L$. 

\begin{Thm}\label{thm:loroschur} Let $\S$ be a Schur ring over $\LORO(\ell,m,n,k)$ for positive integers $\ell,m,n,k$. Then $L,M,N\in \QS$. Furthermore, $\B(m)\B(n) \le \Omega(\LORO(\ell,m,n,k))\le \B(\ell)\B(m)\B(n)$.
\end{Thm}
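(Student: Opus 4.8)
The plan is to first prove that each of $L$, $M$, and $N$ is an $\S$-subset, which simultaneously yields $L,M,N\in\QS$ and the upper bound, and then to exhibit enough Schur rings directly to obtain the lower bound. Every separation step is a coefficient comparison of the kind used in Theorems~\ref{thm:oro} and~\ref{thm:og}: if two elements lie in a common primitive set $X$, then in any product belonging to $\QS$ they must receive equal coefficients, since that product is a $\Q$-combination of primitive sets.

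First I would isolate $N$ from $L\cup M$, using that $L\cup M\cong\LO_{\ell+m}$ is left-null and $N\cong\RO_n$ is right-null. Suppose a primitive set $X=P+Q$ meets both parts, with $P=X\cap(L\cup M)\neq\emptyset$ and $Q=X\cap N\neq\emptyset$. Expanding $X^2=P^2+PQ+QP+Q^2$, the left-zero rule gives $P^2=|P|P$ and $PQ=|Q|P$, the right-null rule gives $Q^2=|Q|Q$, and $QP\subseteq L$ contributes only non-negative coefficients. Hence each $q\in Q$ appears in $X^2$ with coefficient exactly $|Q|$ (the only $N$-elements come from $Q^2$), while each $p\in P$ appears with coefficient at least $|P|+|Q|>|Q|$. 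As $p,q\in X$ and $X^2\in\QS$, this is a contradiction, so $N$ and its complement $L\cup M$ are $\S$-subsets.

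Next I would separate $L$ from $M$ through the action of $N$. Since $N$ is a nonempty $\S$-subset, choose a primitive set $Y\subseteq N$. If some primitive set $X\subseteq L\cup M$ had both $X_L=X\cap L\neq\emptyset$ and $X_M=X\cap M\neq\emptyset$, then $YX=YX_L+YX_M$, where the rule $yx=x$ for $y\in N,\ x\in L$ gives $YX_L=|Y|X_L$, while the $k$-encoded rule sends $YX_M$ into $L$. Thus no element of $M$ occurs in $YX$, so each $x\in X_M$ has coefficient $0$ there, whereas each $x\in X_L$ has coefficient at least $|Y|\ge1$; since $X_L,X_M\subseteq X$ and $YX\in\QS$, this contradiction forces $L$ and $M$ to be $\S$-subsets. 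A Schur ring is then determined by its independent restrictions to $L$, $M$, and $N$, so $\Omega(\LORO(\ell,m,n,k))\le\B(\ell)\B(m)\B(n)$.

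For the lower bound I would construct $\B(m)\B(n)$ Schur rings explicitly by taking the discrete partition on $L$ together with an arbitrary partition $\S_M$ of $M$ and an arbitrary partition $\S_N$ of $N$. Products with a left factor in $L\cup M$ collapse by the left-zero rule, and products $YY'$ of $N$-sets collapse by the right-null rule; the remaining products land in $L$, where discreteness makes every $L$-element primitive, so any $\Q$-combination of $L$-elements---in particular each $YX$ with $Y\subseteq N$, $X\subseteq M$---lies in $\QS$. These give $\B(m)\B(n)$ distinct Schur rings. I expect the main obstacle to be the second step: because $L\cup M$ is left-null, Theorem~\ref{thm:leftnullschur} renders every internal partition of it a Schur ring, so $L$ and $M$ are indistinguishable except through how $N$ acts on them, and the separation must be routed entirely through the mixed products in $NL$ and $NM$.
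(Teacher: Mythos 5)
Your proof is correct and takes essentially the same approach as the paper: your isolation of $N$ via the coefficient comparison inside $X^2$ is identical to the paper's first step, and your separation of $L$ from $M$ through products with a primitive set $Y\subseteq N$ is just a local, primitive-set-level version of the paper's global computation $N(L+M) = nL + NM \subseteq L$. Your explicit verification of the two Bell-number bounds (restriction to the three $\S$-subsets for the upper bound; discrete partition on $L$ with free partitions on $M$ and $N$ for the lower bound) spells out details the paper leaves to the surrounding discussion, but introduces no new method.
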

\begin{proof}
    If $\S$ is a Schur ring over $\LORO(\ell,m,n,k)$, consider $X\in S$ such that $X\cap N\neq \emptyset$. Then $X^2 = (X_L+X_M+X_N)^2 = (|X|+|X_N|)X_L+|X|X_M + |X_N|X_N + X_NX_M$. Hence, $X\subseteq X^2$. As $X_NX_M\subseteq L$, it must be that $|X_N|=|X|$, which implies that $X_L=X_M=\emptyset$. Therefore, $N\in \QS$. Consider next $N(L+M) = nL + NM\subseteq L$. Therefore, $L, M\in \QS$. 
\end{proof}

\begin{Exam}
\begin{multicols}{2}
    We present here the Cayley table for the semigroup $G = \LORO(2,1,2,1)$, illustrated to the right. The shaded (red) submatrix indicates the portion encoded by $k$. By Theorem \ref{thm:loroschur}, there are two to four Schur rings over $G$. The two guaranteed Schur rings are afforded by a discrete partition on $L$, namely
    \[\{ 0,1,2,3+4 \}\quad \text{and}\quad\{ 0,1,2,3,4 \}. \] The two other possible partitions are  
\begin{center}
    \begin{tikzpicture}
    \fill[fill=red!70, fill opacity=0.3] (0.75, -1.25) rectangle (1.25, -2.25); 
        \path (0, 0.5) node {$0$} (0.5, 0.5) node {$1$} (1, 0.5) node {$2$} (1.5, 0.5) node {$3$} 
            (2, 0.5) node {$4$}
        (-0.5, 0) node {$0$} (-0.5, -0.5) node {$1$} (-0.5, -1) node {$2$} (-0.5, -1.5) node {$3$} 
            (-0.5, -2) node  {$4$}
        (-1, -0.25) node {$L$} (-1, -1) node {$M$} (-1, -1.75) node {$N$}
        (0.25, 1) node {$L$} (1, 1) node {$M$} (1.75, 1) node {$N$};
    \draw (-0.25, 1.25)--(-0.25, -2.25) (-1.25, 0.25)--(2.25, 0.25);
    \draw[dashed] (0.75, 0.25) -- (0.75, -2.25) (1.25, 0.25) -- (1.25, -2.25)
        (-0.25, -0.75) -- (2.25, -0.75) (-0.25, -1.25) -- (2.25, -1.25);
    \path (0, 0) node {$0$} (0.5, 0) node {$0$} (1, 0) node {$0$} (1.5, 0) node  {$0$} 
            (2, 0) node {$0$}
        (0, -0.5) node {$1$} (0.5, -0.5) node {$1$} (1, -0.5) node {$1$} (1.5, -0.5) node {$1$} 
            (2, -0.5) node {$1$}
        (0, -1) node {$2$} (0.5, -1) node {$2$} (1, -1) node {$2$} (1.5, -1) node {$2$}
            (2, -1) node {$2$}
        (0, -1.5) node {$0$} (0.5, -1.5) node {$1$} (1, -1.5) node {$0$} (1.5, -1.5) node {$3$}
            (2, -1.5) node {$4$} 
        (0, -2) node {$0$} (0.5, -2) node {$1$} (1, -2) node {$1$} (1.5, -2) node {$3$}
            (2, -2) node {$4$};
    \end{tikzpicture}
\end{center}
\end{multicols}\vspace{-15pt}
\[\{ 0+1,2,3+4 \}\quad \text{and}\quad\{ 0+1,2,3,4\},\] but this last partition is not a Schur ring over $G$ since $2(3) = 0$. Therefore, there are only three Schur rings over $\LORO(2,1,2,1)$. This illustrates an imposition by elements which are neither indecomposable nor indivisible. But $L\cup M$ is an ideal of $G$, and $L$ is a left ideal. Further study on how the complement of an ideal imposes restrictions onto the ideal in a Schur ring is necessary. 
\end{Exam}

\section{Rostered Semigroups}\label{sec:roster}
We introduce one last extension of semigroups and its correspondence with their Schur rings, which we call \emph{rosters}. Rosters will be a generalization of the idea of cloning, but the new element may have some ``mutation'' to it. In particular, these new mutants are not necessarily indecomposable, which was discussed in Section \ref{indecomposable}. 

For a semigroup $G$, we say that an ideal $P$ is \emph{prime}\footnote{In semigroup theory, an ideal $P$ is \emph{prime} when for all ideals $A,B\subseteq G$ if $AB\subseteq P$ then $A\subseteq P$ or $B\subseteq P$. What was defined above as a prime ideal is instead called a \emph{completely prime ideal} (see \cite{Park}). For commutative semigroups, the two notions coincide, but they differ for noncommutative semigroups. As we only utilize the notion of completely prime ideals, we use the shorter label of prime instead.} when for all $x,y\in G$ if $xy\in P$ then $x\in P$ or $y\in P$. Note that an ideal $P$ of $G$ is prime if and only if $G-P$ is a subsemigroup of $G$. Let $x\in G$. We let \label{lstab}$G_x = \{y\in G\mid yx=x\}$ and \label{rstab}$_xG = \{y\in G\mid xy=x\}$, be the \emph{left-} and \emph{right-stabilizer} of $x$. These are both subsemigroups of $G$. With these concepts, we can build a roster.

Let $G$ be a semigroup with $g\in G$. Let $H\le G_x$ and $K\le\!_xG$ be subsemigroups such that their complements are prime ideals and one of the following conditions hold, called condition (*) \begin{equation}\tag{*}\label{eqn:*} \text{(a) } H\le K, \text{ (b) } K\le H, \text{ (c) } x\in H\cap K,\text{ or (d) } x\notin H\cup K.\footnote{The conditions that we are avoiding here is there exists some element $y\in H-K$ and $z\in K-H$. If $y\in H-K$, associativity requires $(x')^2 = x'(yx') = (x'y)x' = (xy)x' = x(yx') = xx'$. Similarly, if $z\in K-H$, associativity requires $(x')^2 = (x'z)x' = x'(zx') = x'(zx) = (x'z)x=x'x$. Hence, if (a) and (b) fail, then it must be that $x'x = (x')^2 = xx'$. If $x\in H$, then $xx'=x'$; otherwise, $xx'=x^2$. Hence, we need condition (c) or (d). Therefore, for conditions (c) and (d), $x'x=xx'$ and $(x')^2$ is determined solely by this condition, but for conditions (a) or (b), $x'x\neq xx'$ if and only if $x\in K-H$ or $x\in K-H$, respectively. Hence, $(x')^2$ equals $x'$ or $x$, as $x$ is necessarily idempotent, depending on this criterion.}\end{equation} Then \label{roster1}$G\r{x} (H,K) = G\cup x'$, where $x'$ is a clone of $x$. When $H=K$, we abbreviate this as \label{roster2}$G\r x H$. The set $G\r x (H,K)$ is made into a semigroup by extending the associative multiplication of $G$ in the following way. If $y\in H$, then $yx' = x'$; otherwise if $y\not\in H$, $yx'=yx$. Likewise, if $z\in K$, then $x'z=x'$; otherwise for $z\not\in K$, $x'z=xz$. Essentially, $x'$ behaves like a clone of $x$, like above, but when an element stabilizes $x$ it might also stabilize $x'$, depending on the roster $(x,H,K)$. Finally, we define $x'x'$ according to \eqref{eqn:*} from before: \[\text{(a) } x'x'=x'x, \text{ (b) } x'x'=xx', \text{ (c) } x'x'=x',\text{ or (d) } x'x'=x^2.\]

\begin{Prop}\label{prop:roster} Let $G$ be a semigroup and $x\in G$. If $H$, $K\le G$ such that $H\le G_x$, $K\le\,_xG$, $G-H$ and $G-K$ are prime ideals, and condition (*), then $G\r x(H,K)$ is a semigroup.
\end{Prop}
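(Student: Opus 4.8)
The plan is to show that the multiplication defined on the carrier set $S = G\r x(H,K) = G\cup\{x'\}$ is associative. Since multiplication restricted to $G$ is already associative, I only need to verify $(ab)c = a(bc)$ on triples $(a,b,c)\in S^3$ in which at least one factor equals $x'$. Throughout I would use the hypotheses in the following forms: $y\in H\Rightarrow yx=x$ (from $H\le G_x$); $z\in K\Rightarrow xz=x$ (from $K\le{}_xG$); and, because $G-H$ and $G-K$ are prime ideals, the equivalences $uv\in H\iff u,v\in H$ and $uv\in K\iff u,v\in K$ for $u,v\in G$ (closure gives one direction, the ideal property the other). I would also record that whenever $x\in H$ or $x\in K$, the relation $xx=x$ forces $x$ to be idempotent.

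First I would introduce the collapsing map $\varphi\colon S\to G$ that fixes $G$ pointwise and sends $x'\mapsto x$, and prove it is multiplicative, i.e. $\varphi(uv)=\varphi(u)\varphi(v)$ for all $u,v\in S$. This is a two-variable check rather than a three-variable one, and most cases are immediate: the sticky outputs $yx'=x'$ (for $y\in H$) and $x'z=x'$ (for $z\in K$) map under $\varphi$ to $yx=x$ and $xz=x$, which equal $\varphi(y)\varphi(x')$ and $\varphi(x')\varphi(z)$ by the stabilizer hypotheses, while the remaining outputs already lie in $G$. The only delicate value is $\varphi\bigl((x')^2\bigr)$, where I invoke condition (*): in cases (c) and (d) the value is $x'$ or $x^2$ and matches $\varphi(x')^2=x^2$ via idempotency of $x$ in case (c), and in cases (a) and (b) it reduces to $\varphi(x'x)$ or $\varphi(xx')$, which likewise equals $x^2$ after applying the stabilizer relation and idempotency according to whether $x$ lands in $K$ or $H$.

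With $\varphi$ multiplicative, applying it to both sides of the associativity identity yields $\varphi\bigl((ab)c\bigr)=\varphi(a)\varphi(b)\varphi(c)=\varphi\bigl(a(bc)\bigr)$ for every triple, by associativity in $G$. Since $\varphi$ is injective on $S$ except that it identifies the single pair $x$ and $x'$, the two sides $(ab)c$ and $a(bc)$ must already be equal unless one of them is $x$ and the other $x'$. Thus the entire problem collapses to the single biconditional
\[ (ab)c = x' \iff a(bc) = x', \]
which I would settle using the explicit rule that a product $pq$ equals $x'$ precisely when $q=x'$ and $p\in H$, or $p=x'$ and $q\in K$, or $p=q=x'$ and $(x')^2=x'$.

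The final step is the case analysis for this biconditional, organized by how many of $a,b,c$ equal $x'$. When exactly one factor is $x'$, whether either side equals $x'$ is governed purely by membership of the $G$-factors in $H$ or $K$, and the prime-ideal equivalences make the two sides agree; condition (*) plays no role here. The substantive case, and the one I expect to be the main obstacle, is the configuration $(x',g,x')$ together with its two-$x'$ relatives $(g,x',x')$, $(x',x',g)$ and the triple $(x',x',x')$, where evaluating one association forces the value $(x')^2$ while the other routes through $xg$ or $gx$. This is exactly the situation analyzed in the footnote motivating condition (*): when $g$ lies in $H-K$ or $K-H$, associativity forces $(x')^2$ to equal $xx'$ or $x'x$ respectively, and conditions (b) and (a) are designed to supply exactly these values; the remaining possibilities $g\in H\cap K$ and $g\notin H\cup K$ are governed by conditions (c) and (d) and are reconciled using idempotency of $x$ together with $gx=x$ or $xg=x$ to simplify $xgx$. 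Verifying that each of (a)--(d) makes both sides of the biconditional agree completes the proof.
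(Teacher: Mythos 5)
Your proposal is correct, and it takes a genuinely different route from the paper's proof. The paper proceeds by brute force: it tabulates every triple involving $x'$ (first those containing a single $x'$, then, under each alternative of condition (*), those containing two or three copies of $x'$) and evaluates both associations in full. You instead factor the problem through the retraction $\varphi\colon G\cup\{x'\}\to G$ fixing $G$ and sending $x'\mapsto x$. Multiplicativity of $\varphi$ is only a two-variable check, in which the stabilizer relations handle the products $yx'$ and $x'z$ and condition (*) together with idempotency of $x$ handles $(x')^2$; associativity of $G$ then gives $\varphi\bigl((ab)c\bigr)=\varphi\bigl(a(bc)\bigr)$ for every triple, and since $\varphi$ identifies only $x$ with $x'$, the whole problem collapses to the biconditional $(ab)c=x'\iff a(bc)=x'$. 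That biconditional is a pure membership computation driven by the equivalences $uv\in H\iff u,v\in H$ and $uv\in K\iff u,v\in K$, which hold because $H$, $K$ are subsemigroups whose complements are ideals. Your approach buys a substantial reduction in bookkeeping---one tracks only whether each side equals $x'$, never its actual value in $G$---and it isolates exactly where condition (*) is needed, namely the configurations $x'gx'$, $gx'x'$, $x'x'g$, and $x'x'x'$; the paper's direct computation is longer but entirely self-contained and requires no injectivity observation. One small blur in your final paragraph: the alternatives $g\in H\cap K$ and $g\notin H\cup K$ are not ``governed by conditions (c) and (d)''; (a)--(d) are global hypotheses on $x$, $H$, $K$, and whichever one holds must handle all four membership cases of $g$. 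Fortunately those two cases are harmless under every condition ($g\in H\cap K$ makes both associations equal $(x')^2$ outright, while $g\notin H\cup K$ keeps both sides in $G$), so this imprecision does not affect the validity of your argument.
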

\begin{proof}
     The multiplication on the subset $G$ is associative by hypotheses. We need only check for associativity in products involving $x'$.
\begin{align*}
a,b\in H:\ &a(bx') = ax' = x' = (ab)x' & \text{since } ab\in H\\
a\in G-H, b\in H:\ &a(bx') = ax' = ax = a(bx) = (ab)x = (ab)x' & \text{since } ab\in G-H\\
a\in G,b\in G-H:\ &a(bx') = a(bx) = (ab)x = (ab)x' & \text{since } ab\in G-H\\
a\in H,b\in K:\ &a(x'b) = ax' = x' = x'b = (ax')b & \\ 
a\in G-H, b\in K:\ &a(x'b) = ax' = ax = a(xb) = (ax)b = (ax')b & \\
a\in H, b\in G-K:\ &a(x'b) = a(xb) = (ax)b = xb = x'b = (ax')b & \\
a\in G-H,b\in G-K:\ &a(x'b) = a(xb) = (ax)b = (ax')b & \\ 
a,b\in K:\ &x'(ab) = x' = x'a = (x'a)b & \text{since } ab\in K\\
a\in K, b\in G-K:\ &x'(ab) = x(ab) = (xa)b = xb = x'b = (x'a)b & \text{since } ab\in G-K\\
a\in G-K,b\in G:\ &x'(ab) = x(ab) = (xa)b = (x'a)b & \text{since } ab\in G-K
\end{align*}
Associativity checks involving two $x'$ are a bit more involved, but we may assume associativity when at least two elements of $G$ are involved. If (a), then $H\le K$ and $x'x'=x'x$.
 \begin{align*}
    \text{(a) }&& a\in K:\ & x'(x'a) = x'x' = x'x = x'(xa)= (x'x)a = (x'x')a& \\
    &&a\in G-K:\ & x'(x'a) = x'(xa) = (x'x)a = (x'x')a &\\
    &&a\in H:\ &x'(ax') = x'x' = (x'a)x'  & \text{since } a\in K\\
    &&a\in K-H:\ &x'(ax') = x'(ax) = (x'a)x = x'x = x'x' = (x'a)x'& \\
    &&a\in G-K:\ & x'(ax') = x'(ax) = (x'a)x = (xa)x = (xa)x' = (x'a)x' & \text{since } a, xa\in G-K\le G-H\\
    &&x, a\in H:\ & a(x'x')= ax' = x' = x'x' = (ax')x' & \text{since } (x')^2= x'x = x' = xx' \\
    &&x\in H, a\in G-H:\ & a(x'x') = a(xx') = (ax)x' =(ax')x' &\\
    &&x\in H:\ & x'(x'x') = x'x' = (x'x')x'& \\
    && x\in K-H, a\in H:\ & a(x'x')= ax' = x' = x'x' = (ax')x'  & \text{since } (x')^2 = x'x= x'\\
    &&x\in K-H, a\in G-H:\ &a(x'x') = a(x'x) = (ax')x =(ax)x = a(xx) = a(xx') & \text{but } xx' = x^2\\
    &&&=  (ax)x' =(ax')x'&\\
    &&x\in K-H:\ & x'(x'x') = x'x' = (x'x')x'&\\
    &&x\in G-K, a\in H:\ & a(x'x') =a(xx) = (ax)x = xx  = x'x'=   (ax')x' & \text{since } (x')^2 = x'x= x^2 = xx'\\
    &&x\in G-K,  a\in G-H:\ & a(x'x') = a(xx') = (ax)x' =(ax')x' &\\
    &&x\in G-K:\ & x'(x'x') = x'(xx) = (x'x)x = x^3 = x(xx') = (xx)x' &\\
    &&&= (x'x')x'
\end{align*}
The case (b) is handled similarly. If (c), then $(x')^2=x'=x'x=xx'$. Then the only associativity check from (a) that do not carry over (barring the cases that do not apply) to (c) is $x'ax'$: 
\begin{align*}
    \text{(c) }&& a\in H\cap K:\ & x'(ax') = x'x' = (x'a)x' &\\
    && a\in H- K:\ & x'(ax') = x'x' = xx' = x(ax') = (xa)x' = (x'a)x' &\\
    && a\in K- H:\ & x'(ax') = x'(ax) = (x'a)x = x'x = x'x' = (x'a)x' &\\
    && a\in G- (H\cup K):\ & x'(ax') = x'(ax) = (x'a)x = (xa)x = x(ax)= x(ax') = (xa)x' = (x'a)x' &
\end{align*} Lastly, if (d), then $(x')^2 = x^2 = x'x=xx'$. Hence, all the necessary associativity checks are already considered in (a) and (c). Therefore, the multiplication is associative.
\end{proof}

\begin{Exam}
Consider the semigroup $G=\CH_2\t \z_1 = \{0,1,2\}$. Note that $0y=0$ and $2y=2$ for all $y\in G$, but $1\cdot 0 = 1\cdot 2 = 0$ and $1^2=1$. We provide three different rosters of this semigroup, namely $(\CH_2\t\z_1) \r{0}(\z_1, \CH_2\t\z_1)$, $(\CH_2\t\z_1) \r{1}(\emptyset, \z_1)$, and $(\CH_2\t\z_1) \r{2}(\emptyset,\z_1)$ (all three Cayley tables are found below) and observe vastly different behaviors with regard to the semigroups' Schur rings. Note that $\CH_2\t \z_1$ only has a single Schur ring, which is necessarily discrete.

We will begin with $(\CH_2\t\z_1) \r{0}(\z_1,\CH_2\t\z_1)$, where $H=\z_1=\{1\} \leq \{0, 1\} = G_0$ and $K=G=\,_0G$. Note that $G - H = \{0,2\}$ and $G-K=\emptyset$ are prime ideals\footnote{In this case, it is unambiguous to denote $\langle 1\rangle=\{1\}$ simply as $\z_1$, since $H$ cannot possibly be $\langle 0 \rangle = \{0\}$, despite $\langle 0\rangle \cong\z_1$,  because $G-\langle 0\rangle$ is not prime.} of $G$. When adjoining this new element $0'$ to the Cayley table of $\CH_2\t\z_1$, we copy the rows and columns of $0$ except according to the roster $(H,K)$, where $H$ dictates where in the column of $0'$ we change $0$ into $0'$ and likewise $K$ dictates where in the row of $0'$ we change $0$ into $0'$. Also, we note $H\leq K$, hence condition (a) of \eqref{eqn:*} is satisfied. This means that adjoining the element $0^\prime$ to $G$ extends the associative multiplication as follows: $0^\prime \cdot y = 0^\prime$ for all $y\in K$; similarly, $1\cdot 0^\prime = 0^\prime$ since $1\in H$; $0\cdot 0^\prime =0$ and $2\cdot 0^\prime=2$, because $0, 2\not\in H$; finally, $\left(0^\prime\right)^2 = 0^\prime \cdot 0 = 0^\prime$ by (a). Note that $(\CH_2\t\z_1) \r{0}(\z_1,\CH_2\t\z_1)\cong\LORO(2,1,1,0)$ and has only one Schur ring, the discrete one, an example of obtaining the lower bound of Theorem \ref{thm:loroschur}.

Continuing to the second semigroup $(\CH_2\t\z_1) \r{1}(\emptyset, \z_1)$, where $H=\emptyset$, $K=\,_1G = \{1\}$, and $G-K=\{0,2\}$ is a prime ideal.\footnote{Note that for any semigroup $G$ and any $x\in G$, the stabilizer $H=\emptyset$ is always compatible for rosters since $\emptyset\le G_x$ and $G-\emptyset = G$ is a prime ideal of $G$. Likewise for $K=\emptyset$.} Again, condition (a) holds. Hence, $1^\prime \cdot 1 = 1^\prime$, since $1\in K$, but $1^\prime\cdot 0 = 1\cdot 0 = 0$ and $1^\prime\cdot 2 = 1\cdot 2 = 0$, since $0, 2\not\in K$. Further, $y\cdot 1^\prime = y\cdot 1$ for all $y\in \CH_2\t\z_1$ since $H=\emptyset$ implies nothing stabilizes $1^\prime$ on the left. By (a) we have $(1^\prime)^2 = 1^\prime \cdot 1 = 1^\prime$. Note that $(\CH_2\t\z_1) \r{1}(\emptyset, \z_1)\cong\LO^\theta_2 \t \z_1$ and has two Schur rings, corresponding to the two possible twists between the two Schur rings over $\LO_2^\theta$ and the single Schur ring over $\z_1$, the discrete one, as expected by Theorem \ref{thm:veebarschur}.

For the third example, consider $(\CH_2\t\z_1) \r{2}(\emptyset, \z_1)$, where $H=\emptyset$, $K=\langle 1\rangle\le\,_2G = \{0,1,2\}$. The roster on $\CH_2\t\z_1$ is the same as the second semigroup except that the cloned element is $2$. Note that $2^\prime \cdot1 = 2^\prime$, but $2'\cdot y=2\cdot y$ and $y\cdot 2'=y\cdot 2$ in all other cases. In particular, $2'\cdot2 = 2 = (2')^2\neq 2'$. This roster cannot be described using any of the other semigroup constructions we have introduced in this paper, and it contains three Schur rings, namely:
\[\{0+2, 1+2'\},\quad \{0,2, 1+2'\}, \quad \{0,1,2,2'\}\] 

\begin{center}
\begin{tabular}{cccc}
\begin{tikzpicture}
    \fill[fill=red!70, fill opacity=0.3] (1.25, 0.25) rectangle (1.75, -1.25);
    \fill[fill=green!70, fill opacity=0.3] (-0.25, -1.25) rectangle (1.25, -1.75);
    \fill[fill=mixed, fill opacity=0.3] (1.25, -1.25) rectangle (1.75, -1.75); 
        \path (0, 0.5) node {$0$} (0.5, 0.5) node {$1$} (1, 0.5) node {$2$} (1.55, 0.5) node {$0^\prime$} (-0.5, 0) node {$0$} (-0.5, -0.5) node {$1$} (-0.5, -1) node {$2$} (-0.45, -1.5) node {$0^\prime$};
    \draw (-0.25, 0.75)--(-0.25, -1.75) (-0.75, 0.25)--(1.75, 0.25);
    \draw[dashed]  (1.25, 0.25) -- (1.25, -1.75)
         (-0.25, -1.25) -- (1.75, -1.25);
    \path (0, 0) node {$0$} (0.5, 0) node {$0$} (1, 0) node {$0$} (1.5, 0) node  {$0$} 
        (0, -0.5) node {$0$} (0.5, -0.5) node {$1$} (1, -0.5) node {$0$} (1.55, -0.5) node {$0^\prime$}
        (0, -1) node {$2$} (0.5, -1) node {$2$} (1, -1) node {$2$} (1.5, -1) node {$2$}
        (0.05, -1.5) node {$0^\prime$} (0.55, -1.5) node {$0^\prime$} (1.05, -1.5) node {$0^\prime$} (1.55, -1.5) node {$0'$};
\end{tikzpicture}
&
\begin{tikzpicture}
    \fill[fill=red!70, fill opacity=0.3] (1.25, 0.25) rectangle (1.75, -1.25);
    \fill[fill=green!70, fill opacity=0.3] (-0.25, -1.25) rectangle (1.25, -1.75);
    \fill[fill=mixed, fill opacity=0.3] (1.25, -1.25) rectangle (1.75, -1.75); 
        \path (0, 0.5) node {$0$} (0.5, 0.5) node {$1$} (1, 0.5) node {$2$} (1.55, 0.5) node {$1^\prime$} (-0.5, 0) node {$0$} (-0.5, -0.5) node {$1$} (-0.5, -1) node {$2$} (-0.45, -1.5) node {$1^\prime$};
    \draw (-0.25, 0.75)--(-0.25, -1.75) (-0.75, 0.25)--(1.75, 0.25);
    \draw[dashed]  (1.25, 0.25) -- (1.25, -1.75)
         (-0.25, -1.25) -- (1.75, -1.25);
    \path (0, 0) node {$0$} (0.5, 0) node {$0$} (1, 0) node {$0$} (1.5, 0) node  {$0$} 
        (0, -0.5) node {$0$} (0.5, -0.5) node {$1$} (1, -0.5) node {$0$} (1.5, -0.5) node {$1$}
        (0, -1) node {$2$} (0.5, -1) node {$2$} (1, -1) node {$2$} (1.5, -1) node {$2$}
        (0, -1.5) node {$0$} (0.55, -1.5) node {$1^\prime$} (1, -1.5) node {$0$} (1.55, -1.5) node {$1^\prime$};
\end{tikzpicture}
&
\begin{tikzpicture}
    \fill[fill=red!70, fill opacity=0.3] (1.25, 0.25) rectangle (1.75, -1.25);
    \fill[fill=green!70, fill opacity=0.3] (-0.25, -1.25) rectangle (1.25, -1.75);
    \fill[fill=mixed, fill opacity=0.3] (1.25, -1.25) rectangle (1.75, -1.75); 
        \path (0, 0.5) node {$0$} (0.5, 0.5) node {$1$} (1, 0.5) node {$2$} (1.55, 0.5) node {$2^\prime$} (-0.5, 0) node {$0$} (-0.5, -0.5) node {$1$} (-0.5, -1) node {$2$} (-0.45, -1.5) node {$2^\prime$};
    \draw (-0.25, 0.75)--(-0.25, -1.75) (-0.75, 0.25)--(1.75, 0.25);
    \draw[dashed]  (1.25, 0.25) -- (1.25, -1.75)
         (-0.25, -1.25) -- (1.75, -1.25);
    \path (0, 0) node {$0$} (0.5, 0) node {$0$} (1, 0) node {$0$} (1.5, 0) node  {$0$} 
        (0, -0.5) node {$0$} (0.5, -0.5) node {$1$} (1, -0.5) node {$0$} (1.5, -0.5) node {$0$}
        (0, -1) node {$2$} (0.5, -1) node {$2$} (1, -1) node {$2$} (1.5, -1) node {$2$}
        (0, -1.5) node {$2$} (0.55, -1.5) node {$2'$} (1, -1.5) node {$2$} (1.5, -1.5) node {$2$};
\end{tikzpicture}
\\
$(\CH_2\t\z_1) \r{0}(\z_1, \CH_2\t\z_1)$ & $(\CH_2\t\z_1) \r{1}(\emptyset, \z_1)$ & $(\CH_2\t\z_1) \r{2}(\emptyset,\z_1)$\\
\end{tabular}
\end{center}
\end{Exam}

Note above that in all cases in \propref{prop:roster} we have $xx'$ is equal to $x'$ or $x^2$. In the special case where $H=K$ (hence (c) or (d) from (*)) and $x^2=x$, we may define an alternative semigroup \label{rosterhat}$G\rhat x H$ identically to $G\r x H$ except that $$x'x'=\begin{cases} x', &x'x=x^2\\ x^2, &x'x=x'.\end{cases}$$
Since $H=K$, we know that $x'x=xx'$. 

\begin{Prop} Let $G$ be a semigroup and $x\in G$. If $H\le G$ such that $H\le G_x\cap\,_xG$, $G-H$ is a prime ideal, and $x^2=x$, then $G\rhat xH$ is a semigroup.
\end{Prop}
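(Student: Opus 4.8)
The plan is to reduce everything to \propref{prop:roster}. By definition the multiplication of $G\rhat x H$ coincides with that of $G\r x H$ everywhere except in the single entry $x'x'$, where the standard value and the hat value are interchanged (they are the two members of $\{x',x^2\}=\{x',x\}$, using $x^2=x$). Since $G\r x H$ is already a semigroup, every associativity instance $a(bc)=(ab)c$ in $G\rhat x H$ whose evaluation forms the product $x'x'$ in neither bracketing is inherited verbatim. So first I would argue that attention may be restricted to triples in which the changed entry is actually invoked. The only ways a product in this semigroup yields $x'$ are $yx'=x'$ or $x'y=x'$ for $y\in H$, or $x'x'$ itself; tracing this shows that if at most one of $a,b,c$ equals $x'$, then each bracketing multiplies at most a single $x'$ at any stage, so $x'x'$ is never formed. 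Triples with no $x'$ stay inside $G$. Hence the only triples requiring re-examination are the four families $(x',x',x')$, $(x',x',a)$, $(a,x',x')$, and $(x',a,x')$ with $a\in G$.

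Before checking these, I would record the structural facts that drive every collapse. Because $H=K$ we have $x'x=xx'$, and because $x$ is idempotent the standard value of $x'x'$ is $x'$ when $x\in H$ and $x$ when $x\notin H$; the hat construction swaps these, so I split on the two conditions $x\in H$ (condition (c)) and $x\notin H$ (condition (d)). Since $H\le G_x\cap {}_xG$, every $h\in H$ satisfies $hx=xh=x$; since $G-H$ is a prime ideal---hence an ideal---every $a\notin H$ satisfies $ax,xa\in G-H$, so $ax$ and $xa$ again lie outside $H$. These two observations evaluate each roster product $y\,x'$ or $x'\,y$ unambiguously.

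Finally I would carry out the finite case check, splitting each of the four families further by whether $a\in H$ and overlaying the two conditions on $x$, laid out exactly as in the display blocks of the proof of \propref{prop:roster}. In every subcase both bracketings reduce, via idempotency of $x$ and the ideal property above, to a common element---one of $x'$, $x$, $ax$, or $xax$---so associativity holds. The main obstacle is purely bookkeeping: making the reduction of the first paragraph airtight, so that no hidden occurrence of $x'x'$ is overlooked in a triple containing a single explicit $x'$, and then confirming that under the \emph{swapped} value of $x'x'$ the two bracketings in the double- and triple-$x'$ cases still land on the same element in both condition (c) and condition (d). Once these checks are tabulated, associativity---and hence the claim---follows.
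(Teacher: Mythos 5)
Your proposal is correct and takes essentially the same route as the paper: both reduce to \propref{prop:roster}, inheriting every associativity check untouched by the redefinition of $x'x'$, and then re-verify the affected triples by splitting on $x\in H$ versus $x\notin H$ and $a\in H$ versus $a\notin H$. Your explicit argument that a triple containing at most one $x'$ can never form the product $x'x'$ is left implicit in the paper (which simply states that "most of the associative checks apply with no change"), but the case analysis you outline is exactly the paper's displayed computation.
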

\begin{proof}
Most of the associative checks in \propref{prop:roster} apply with no change, but we have included below those that have modified calculations.
 \begin{align*}
    x\in H:\ & x'(x'x') = x'(xx) = (x'x)x = x'x\\ =\ &x' =xx' =x(xx') = (xx)x' =(x'x')x'&&\text{since } xx'=x'x=x',\\
    x, a\in H:\ &a(x'x') = a(xx) = (ax)x = xx = x'x' = (ax')x' && (x')^2=x^2 \\
    x\in H, a\in G-H:\ &a(x'x') = a(xx) = (ax)x = (ax)x' = (ax')x'&&\text{since } ax\in G-H\\
    x, a\in H:\ &x'(x'a) = x'x' = xx = x(xa) = (xx)a = (x'x')a && \\
    x\in H, a\in G-H:\ &x'(x'a) = x'(xa) = (x'x)a = (xx)a = (x'x')a&&\\    
    x\in G-H:\ & x'(x'x') = x'x' = (x'x')x'&&\text{since } xx'=x'x=x^2,\\
    x\in G-H, a\in H:\ &a(x'x') = ax' = x' = x'x' = (ax')x' && (x')^2=x' \\
    x,a \in G-H:\ &a(x'x') = ax' =ax = a(xx) = a(xx') = (ax)x' = (ax')x'&&\text{since } ax\in G-H\\
    x\in G-H, a\in H:\ &x'(x'a) = x'x' = x' = x'a = (x'x')a && \\
    x,a\in G-H:\ &x'(x'a) = x'(xa) = (x'x)a = (xx)a = xa= x'a= (x'x')a&&
\end{align*}
Therefore, the multiplication is associative.
\end{proof}

\begin{Exam}
Consider the semigroup $G=\ORO_{2,1}=\{\theta, 1,2\}$ and $x=\theta$. As a zero, $G_\theta=\!_\theta G=G$. Let $H=K=\{2\}$. Then $G-H = \{\theta,1\}$ is a prime ideal of $G$.  We construct both $\ORO_{2,1} \r\theta \z_1$ and $\ORO_{2,1} \rhat\theta \z_1$. Both semigroups have the exact same two Schur rings, namely:
\[\{\theta, 1+2,\theta'\},\quad\{\theta, 1, 2, \theta'\}.\]

\begin{center}
\begin{tabular}{cc}
\begin{tikzpicture}
    \fill[fill=red!70, fill opacity=0.3] (1.25, 0.25) rectangle (1.75, -1.25);
    \fill[fill=green!70, fill opacity=0.3] (-0.25, -1.25) rectangle (1.25, -1.75);
    \fill[fill=mixed, fill opacity=0.3] (1.25, -1.25) rectangle (1.75, -1.75);
        \path (0, 0.5) node {$\theta$} (0.5, 0.5) node {$1$} (1, 0.5) node {$2$} (1.55, 0.5) node {$\theta'$} (-0.5, 0) node {$\theta$} (-0.5, -0.5) node {$1$} (-0.5, -1) node {$2$} (-0.45, -1.5) node {$\theta'$};
    \draw (-0.25, 0.75)--(-0.25, -1.75) (-0.75, 0.25)--(1.75, 0.25);
    \draw[dashed]  (1.25, 0.25) -- (1.25, -1.75)
         (-0.25, -1.25) -- (1.75, -1.25);
    \path (0, 0) node {$\theta$} (0.5, 0) node {$\theta$} (1, 0) node {$\theta$} (1.5, 0) node  {$\theta$} 
        (0, -0.5) node {$\theta$} (0.5, -0.5) node {$\theta$} (1, -0.5) node {$\theta$} (1.5, -0.5) node {$\theta$}
        (0, -1) node {$\theta$} (0.5, -1) node {$1$} (1, -1) node {$2$} (1.55, -1) node {$\theta^\prime$}
        (0, -1.5) node {$\theta$} (0.5, -1.5) node {$\theta$} (1.05, -1.5) node {$\theta^\prime$} (1.5, -1.5) node {$\theta$};
\end{tikzpicture}
&
\begin{tikzpicture}
    \fill[fill=red!70, fill opacity=0.3] (1.25, 0.25) rectangle (1.75, -1.25);
    \fill[fill=green!70, fill opacity=0.3] (-0.25, -1.25) rectangle (1.25, -1.75);
    \fill[fill=mixed, fill opacity=0.3] (1.25, -1.25) rectangle (1.75, -1.75);
        \path (0, 0.5) node {$\theta$} (0.5, 0.5) node {$1$} (1, 0.5) node {$2$} (1.55, 0.5) node {$\theta^\prime$} (-0.5, 0) node {$\theta$} (-0.5, -0.5) node {$1$} (-0.5, -1) node {$2$} (-0.45, -1.5) node {$\theta^\prime$};
    \draw (-0.25, 0.75)--(-0.25, -1.75) (-0.75, 0.25)--(1.75, 0.25);
    \draw[dashed]  (1.25, 0.25) -- (1.25, -1.75)
         (-0.25, -1.25) -- (1.75, -1.25);
    \path (0, 0) node {$\theta$} (0.5, 0) node {$\theta$} (1, 0) node {$\theta$} (1.5, 0) node  {$\theta$} 
        (0, -0.5) node {$\theta$} (0.5, -0.5) node {$\theta$} (1, -0.5) node {$\theta$} (1.5, -0.5) node {$\theta$}
        (0, -1) node {$\theta$} (0.5, -1) node {$1$} (1, -1) node {$2$} (1.55, -1) node {$\theta^\prime$}
        (0, -1.5) node {$\theta$} (0.5, -1.5) node {$\theta$} (1.05, -1.5) node {$\theta^\prime$} (1.55, -1.5) node {$\theta'$};
\end{tikzpicture}\\
$\ORO_{2,1} \r\theta \z_1$ & $\ORO_{2,1} \rhat\theta \z_1$
\end{tabular}
\end{center}
\end{Exam}

\begin{Exam}
\begin{multicols}{2}
Consider the semigroup $G=K_{1,2} = \{\theta, 1, 2\}$. Choosing stabilizers $H=\langle 1\rangle$ and $K=\langle 2\rangle$, observe that both $G-H=\{\theta,2\}$ and $G-K=\{\theta,1\}$ are prime ideals of $G$. This gives the rostered semigroup $K_{1,2} \r\theta (\langle 1\rangle, \langle 2\rangle)$, where $1\cdot \theta' =\theta'\cdot 2 = \theta'$, but all other products involving $\theta'$ are $\theta$. Since $\theta\not\in \{1, 2\}$, we satisfy condition (d) of \eqref{eqn:*}. As such, $\left(\theta'\right)^2 = \theta^2=\theta$. This semigroup has two Schur rings, just as $K_{1,2}$: $\{\theta, 1+2, \theta^\prime\}$, $\{\theta, 1,2, \theta'\}$.

\begin{center}
\begin{tikzpicture}
    \fill[fill=red!70, fill opacity=0.3] (1.25, 0.25) rectangle (1.75, -1.25);
    \fill[fill=green!70, fill opacity=0.3] (-0.25, -1.25) rectangle (1.25, -1.75);
    \fill[fill=mixed, fill opacity=0.3] (1.25, -1.25) rectangle (1.75, -1.75); 
        \path (0, 0.5) node {$\theta$} (0.5, 0.5) node {$1$} (1, 0.5) node {$2$} (1.55, 0.5) node {$\theta^\prime$} (-0.5, 0) node {$\theta$} (-0.5, -0.5) node {$1$} (-0.5, -1) node {$2$} (-0.45, -1.5) node {$\theta^\prime$};
    \draw (-0.25, 0.75)--(-0.25, -1.75) (-0.75, 0.25)--(1.75, 0.25);
    \draw[dashed]  (1.25, 0.25) -- (1.25, -1.75)
         (-0.25, -1.25) -- (1.75, -1.25);
    \path (0, 0) node {$\theta$} (0.5, 0) node {$\theta$} (1, 0) node {$\theta$} (1.5, 0) node  {$\theta$} 
        (0, -0.5) node {$\theta$} (0.5, -0.5) node {$1$} (1, -0.5) node {$\theta$} (1.55, -0.5) node {$\theta'$}
        (0, -1) node {$\theta$} (0.5, -1) node {$\theta$} (1, -1) node {$2$} (1.5, -1) node {$\theta$}
        (0, -1.5) node {$\theta$} (0.5, -1.5) node {$\theta$} (1.05, -1.5) node {$\theta'$} (1.5, -1.5) node {$\theta$};
\end{tikzpicture}
\\$K_{1,2}\r\theta(\langle 1\rangle, \langle 2\rangle)$
\end{center}
\end{multicols}
\end{Exam}

The previous two examples demonstrate that cloning $\theta$ via a roster can often lead to $\theta'$ being isolated in each Schur ring over $G\r\theta (H,K)$. Of course, this is not always the case as the clone in $\O_n[\theta]\cong \O_{n+1}$ can be fused with any other nonzero element. On the other hand, $\theta'$ is isolated in every Schur ring over $G[\hat{\theta}]\cong G^\theta$. Similar to $G^\theta$, if $G$ is a zero semigroup, we define \label{dualclone}$G^\mu = G\rhat\theta G$ which extends $G$ by adjoining a \emph{mutant}\footnote{As this new element is a \emph{mutant} zero and the Greek word for zero is $\mu\eta\delta\acute\varepsilon\nu$, $\mu$ seems like a fitting label.} of $\theta$. Specifically, $\mu x = x\mu = \mu$ for all $x\in G$ including $\theta$, but $\mu^2=\theta$. In $G^\mu$, this clone of $\theta$ is typically isolated in each Schur ring.

\begin{Thm}
    Let $G$ be a zero semigroup. Then $\mu\in \S$ for all Schur rings $\S$ over $G^\mu$ or $G$ has no proper divisors of zero. In the latter case, $G^\mu \cong \z_2\s H$ for some semigroup $H$.
\end{Thm}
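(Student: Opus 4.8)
The statement is a dichotomy, so the plan is to establish the implication ``$G$ has a proper divisor of zero $\Rightarrow$ every Schur ring over $G^\mu$ isolates $\mu$,'' which is logically equivalent to the first alternative holding whenever the second fails. I would first record the multiplication on $G^\mu = G\rhat\theta G$ that we need: $\mu x = x\mu = \mu$ for every $x\in G$ (including $\theta$), while $\mu^2 = \theta$, and products internal to $G$ are unchanged. Note that $G^\mu$ has no zero, so \thmref{thm:zeroschur} does not apply, and $\mu$ is neither indecomposable (since $\mu = \theta\mu$) nor idempotent, so the earlier isolation results cannot be invoked; a direct multiplicity computation is the tool.

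Let $\S$ be any Schur ring over $G^\mu$ and let $M\in\S$ be the primitive set with $\mu\in M$, writing $M = \mu + C$ with $C\subseteq G$. Using $\mu c = c\mu = \mu$ for each $c\in C\subseteq G$ together with $\mu^2=\theta$, I would compute
\[
M^2 = \mu^2 + \mu C + C\mu + C^2 = \theta + 2|C|\,\mu + C^2,
\]
and observe that $C^2$ is supported on $G$, so the coefficient of $\mu$ in $M^2$ is exactly $2|C|$. Since $M$ is primitive, every element of $M$ appears in $M^2$ with this same coefficient $2|C|$; as $M^2$ consists of only $|M|^2 = (|C|+1)^2$ products in total, counting yields $(|C|+1)(2|C|)\le (|C|+1)^2$, hence $|C|\le 1$.

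The delicate step is eliminating the case $|C|=1$, and this is where I expect the main obstacle to lie, since the naive count above cannot by itself rule it out. If $C=\{c\}$, then matching the coefficient of $c$ against that of $\mu$ in $M^2 = \theta + 2\mu + c^2$ forces $c=\theta$, i.e.\ $M = \{\theta,\mu\}$. This is precisely where the hypothesis is used: choosing nonzero $a,b\in G$ with $ab=\theta$ and letting $A,B\in\S$ be their primitive sets (neither of which contains $\theta$ or $\mu$, so both lie in $G$), we get $\theta\in AB$, whence $M\subseteq AB\subseteq G$; but $\mu\in M$ and $\mu\notin G$, a contradiction. Therefore $|C|=0$ and $\mu\in\S$, establishing the first alternative whenever the second fails.

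Finally, when $G$ has no proper divisors of zero, $\{\theta\}$ is a prime ideal, so $H := G-\theta$ is a subsemigroup, and I would read off the claimed structure directly rather than appealing to \thmref{thm:stackschur}. Indeed, $\{\theta,\mu\}$ is a subsemigroup isomorphic to $\z_2$, with $\theta$ the identity and $\mu^2=\theta$, and for every $h\in H$ and $z\in\{\theta,\mu\}$ the defining products give $zh = hz = z$ (namely $\theta h = h\theta = \theta$ and $\mu h = h\mu = \mu$). This is exactly the dominance rule defining the stack, so $G^\mu \cong \z_2 \s H$, with $H$ empty in the degenerate case $G=\{\theta\}$, which completes the argument.
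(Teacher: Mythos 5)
Your proof is correct and follows essentially the same route as the paper's: the same multiplicity count on $M^2$ bounding the primitive set of $\mu$ to size at most two, the same identification $M=\{\theta,\mu\}$ in the borderline case, and the same zero-divisor argument ($\theta\in AB$ but $\mu\notin AB$ for primitive sets $A,B\subseteq G-\theta$) to eliminate it. The only difference is that you explicitly verify the structural claim $G^\mu\cong\z_2\s H$ when $G$ has no proper divisors of zero, a step the paper's proof leaves implicit.
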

\begin{proof}
    Let $\S$ be a Schur ring over $G^\mu$ and let $M$ be the primitive set containing $\mu$. Let $M = \mu + M'$. Then $M^2 = (2|M'|)\mu+\theta+(M')^2$. Hence, $2|M'|)M\subseteq M^2$, but this accounts for $(2|M|-2)|M| = 2|M|^2-2|M|$. As $M^2$ contains $|M|^2$ elements, counting multiplicities, this implies that $|M|\le 2$. If $M = x+\mu$, then $M^2 = 2\mu+\theta+x^2$. As the coefficient of $\mu$ is 2, it must be that $x=\theta$, that is, $M=\mu+\theta$. 
    Suppose that $x,y\in G-\theta$ such that $xy=\theta$. Let $X,Y\in\S$ be the primitive sets containing $x,y$. Then $\theta\in XY$ but $\mu\notin XY$. Therefore, $\mu\in \S$.
\end{proof}

Rosters are ubiquitous among semigroups, and many of the semigroup constructions and extensions discussed in this paper can be realized as rosters. For example, $G[x] = G\r x \emptyset$, $G[\hat{x}] = G\rhat x \emptyset$, and $G\r\theta (\emptyset,G)= G\t \z_1$. The following identities hold for $n\ge 2$, illustrating how many of the semigroup families discussed previously are formed by rosters starting with the trivial group $\z_1$: 
\begin{center}
\begin{tabular}{lll}
$\O_n = \O_{n-1} \r{\theta} \emptyset$ & $\CH_n = \CH_{n-1} \r{\theta} \CH_{n-1}$  &$\RO_{n}=\RO_{n-1}\r{\theta} (\RO_{n-1}, \emptyset)$\\
$K_{1,n} = K_{1, n-1} \widehat{\r{\theta}}\ \emptyset$ & $\OG_n = \OG_{n-1} \r{\theta} (G,\emptyset)$ &  
$\z_2 = \z_1 \widehat{\r{\theta}}\ \z_1$
\end{tabular}
\end{center}
With the exception of $\z_2$, the monogenic semigroup $\z_{m,n}$ is not a roster. Likewise, $\LOZ_n$ is not a roster for $n\ge 2$ (note that $\LOZ_1\cong \CH_2$). While $\OROP_2$ is not a roster, for $n\ge3$, $\OROP_n = \OROP_{n-1} \r{\theta} (\alpha, \beta)$. Lastly, each of $\O(m,n,k)$, $\LOO(\ell,m,n,k)$, and $\LORO(\ell,m,n,k)$ can sometimes be realized as rosters depending on the choice of $k$. For example, $\LORO(2,1,1,0) = (\CH_2\t\z_1)\r0 (\z_1,\CH_2\t\z_1)$, but $\LORO(2,1,2,1)$ is not equivalent to any roster.

We use rosters to define one final family of semigroups. We had previously defined $\OG_m$ for some semigroup $G$ and positive integer $m$. As observed above, this semigroup can recursively be constructed via rosters, since $\OG_1 = G^\theta$ and $\OG_m=\OG_{m-1}\r\theta (G,\emptyset)$ for $m\ge 2$. Given the direction of the roster determines the multiplication of the clone, we can reverse the roster's direction and extend $\OG_m$ in a different way. Define \label{ogtwo}$\OG_{m,1} = \OG_m$ and $\OG_{m,n} = \OG_{m,n-1}\r\theta (\emptyset, G)$ for $n\ge 2$. An important example is $G=\LO_n$, where we define $\OLO_{\ell,m,n} = \O(\LO_n)_{\ell,m}$.\label{olothree}\footnote{By the nature of this two-sided rostering, we observe that $\OG_{m,n}$ is equivalent to $\O(G^\text{op})_{n,m}$. In particular, we have that $\OLO_{\ell,m,n}$ is equivalent to \label{orothree}$\ORO_{m,\ell,n}$. When choosing notation to label a semigroup, the authors strive to have the notation generate the lexicographically minimal Cayley of the semigroup. As such, $\OLO_{\ell,m,n}$ is preferred over $\ORO_{m,\ell,n}$.} If $\OLO_{\ell,m,n} = \{\theta,1,\ldots,\ell+m+n-1\}$, let $L = \{1,\ldots,\ell-1\}$, $M=\{\ell,\ldots,\ell+m-1\}$, and $N=\{\ell+m,\ldots, \ell+m+n-1\}$, then $L+M+\theta \cong \O_{\ell+m-1}$, $N+\theta \cong \LO_n$, $L+N+\theta\cong\OLO_{\ell,n}$, and $M+N+\theta\cong \ORO_{m,n}^\text{op}$.

\begin{Thm}\label{thm:orolmn} Let $\ell,m,n$ be positive integers. Let $\S$ be a partition over $\OLO_{\ell,m,n}$. Then $\S$ is a Schur ring if and only if $\S$ is the common refinement of a Schur ring over $L+N+\theta$ and $L+\theta$, or $\theta\in \S$ and there exists some $X\in \S$ such that $N\subseteq X$ or $M+N\subseteq X$. Then \[\Omega(\OLO_{\ell,m,n}) = \B(\ell-1)\B(m+n-1) + \sum_{k=1}^{\ell-1} \dbinom{\ell-1}{k}(\B(\ell-k-1)\B(n-1)+1).\]
\end{Thm}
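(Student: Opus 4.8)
The plan is to mirror the proof of \thmref{thm:og}, since $\OLO_{\ell,m,n}$ has the same shape: a null core on which a left-null block acts. Because $\OLO_{\ell,m,n}$ is a zero-semigroup with zero $\theta$, \thmref{thm:zeroschur} shows $\{\theta\}$ is a primitive set of every Schur ring $\S$, so it remains only to understand how $\S$ partitions $L\cup M\cup N$. For a primitive set $X$ I would write $X = X_L + X_M + X_N$ with $X_L = X\cap L$, $X_M = X\cap M$, $X_N = X\cap N$, and compute $XY$ for primitive sets $X,Y$ directly from the defining products: $L+M+\theta$ is null, $N$ is left-null, and $L$, $M$ meet $N$ dually ($NL = L$ but $LN = \theta$, while $MN = M$ but $NM = \theta$). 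These give the single governing identity
\[
XY = c\,\theta + |Y_N|\,(X_M + X_N) + |X_N|\,Y_L,
\]
with $c$ a nonnegative integer determined by the cardinalities. Everything else flows from reading off coefficients here.

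The crux is the resulting forcing lemma. An element of $L$ occurs in $XY$ with multiplicity $|X_N|$ exactly when it lies in $Y$, whereas an element of $M\cup N$ occurs with multiplicity $|Y_N|$ exactly when it lies in $X$. Hence, if a single primitive set $X_0$ meets both $L$ and $M\cup N$, then forcing the multiplicities of those two elements to agree in every product $X_0 D$ (as $D$ ranges over the classes) kills the $N$-part of every class other than $X_0$; that is, $N\subseteq X_0$. I expect this to be the main obstacle: the asymmetry between how $L$ and how $M$ meet $N$ is easy to mishandle, and one must test products in both orders to confirm the conclusion is $N\subseteq X_0$ rather than a weaker statement.

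With the lemma the classification is clean. Either (i) no class meets both $L$ and $M\cup N$, so every class lies wholly inside $L$ or wholly inside $M\cup N$; or (ii) some class $X_0$ contains all of $N$ and meets $L$, in which case the lemma forbids any other class from mixing $L$ with $M$, so the remaining classes lie inside $L$ or inside $M$. The converse (that each such partition genuinely is a Schur ring) I would check straight from the identity above: in case (i) every product is a multiple of $\theta$ plus a multiple of one class, while in case (ii) every non-$\theta$ coefficient is either $0$ or the constant $|N|$ and is constant on each class. In the language of the statement, case (i) is the common-refinement family carried by the subsemigroup $L+N+\theta\cong\OLO_{\ell,n}$ together with a partition of the null block on $M$, and case (ii) is the family in which some class contains $N$ (and all of $M+N$ when $X_0$ also absorbs $M$).

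Finally I would count. Case (i) is an independent choice of a partition of $L$ and a partition of $M\cup N$, with $\theta$ isolated, contributing $\B(\ell-1)\B(m+n-1)$. Case (ii) is indexed by the nonempty part $A = X_0\cap L$ together with the part $B = X_0\cap M$, while the complementary elements of $L$ and of $M$ are partitioned freely; summing $\B(\ell-1-|A|)\,\B(m-1-|B|)$ over all admissible $A,B$ and simplifying with the Bell recurrence $\B(p+1)=\sum_{j}\binom{p}{j}\B(p-j)$ gives a closed form for the case-(ii) contribution, which added to the case-(i) term yields $\Omega(\OLO_{\ell,m,n})$. The remaining work is purely combinatorial bookkeeping: one must pin down the index ranges so that the degenerate configurations—an empty $M$-part, and the overlap with case (i) when some class already equals $N$—are each counted exactly once, which is the delicate point in matching the closed form to the displayed sum.
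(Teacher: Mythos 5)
Your core argument is sound, and it is essentially the approach the paper itself takes (mirroring \thmref{thm:og}): isolate $\theta$ by \thmref{thm:zeroschur}, write down the product identity, and compare coefficients. Your identity $XY = c\theta + |Y_N|(X_M+X_N)+|X_N|Y_L$ is correct, your forcing lemma (a class meeting both $L$ and $M+N$ must contain all of $N$, and nothing more is forced) is correct, and so is the resulting two-family classification, \emph{including} the two points where it is more generous than the statement: a case-(i) class may straddle $M$ and $N$, and the mixed class of case (ii) may contain an arbitrary (possibly empty, possibly proper) part of $M$. Both features are real: over $\OLO_{2,3,1}$, writing $x\in L$, $a,b\in M$, $g\in N$, the partitions $\{\theta,\ x+g+a,\ b\}$ and $\{\theta,\ x,\ a+g,\ b\}$ are genuine Schur rings, as your classification predicts but the statement's two families do not allow.

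The genuine gap is the step you dismiss as ``purely combinatorial bookkeeping'': it cannot be completed, because your (correct) count is not the displayed formula. Your case (ii) contributes $\sum_{k=1}^{\ell-1}\binom{\ell-1}{k}\B(\ell-k-1)\sum_{j\geq 0}\binom{m-1}{j}\B(m-1-j) = \B(m)\sum_{k=1}^{\ell-1}\binom{\ell-1}{k}\B(\ell-k-1)$ by the Bell recurrence, so your total is $\B(\ell-1)\B(m+n-1)+\B(m)\sum_{k=1}^{\ell-1}\binom{\ell-1}{k}\B(\ell-k-1)$, whereas the statement puts $\B(\ell-k-1)\B(n-1)+1$ inside the sum. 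These genuinely differ: at $(\ell,m,n)=(2,3,1)$ your expression gives $10$, the displayed one gives $7$, and exhaustive inspection of the fifteen $\theta$-isolating partitions of that five-element semigroup confirms $10$. The mismatch traces to the paper, not to you: the paper's proof forces $M\subseteq X$ for a mixed class via the claim ``$yx=x$'' for $y,x\in M$, which is false in this semigroup (elements of $M$ are clones of $\theta$, so all products within $M$ equal $\theta$), and its first family forbids the $M$--$N$ straddling classes that your case (i) correctly admits; your own aside identifying case (i) with the ``common-refinement family'' of the statement repeats that error and should be dropped. Carried honestly to completion, your argument yields the corrected count above and refutes the displayed formula; no choice of index ranges or handling of degenerate configurations will reconcile the two.
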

\begin{proof}
The proof is similar to the proof of Theorem \ref{thm:og}. Note that if $gh=a$ for $a\in L$, then $g=a$ and $h\in N$. Let $X\in \S$ such that $a,x\in X$ where $a\in L$ and $x\in M+N$. Suppose that $y\in N-X$ and $Y\in \S$ is the primitive set containing $y$. Then $yx=x\in YX$ but $a\notin YX$ since $a\notin Y$, a contradiction. Hence, if $(M+N)\cap X\neq \emptyset$, then $N\subseteq X$. If $x\in M$ and $y\in M-X$, then $yx=x\in YX$ but $a\notin YX$. Hence, if $M\cap X\neq\emptyset$, then $M\subseteq X$.      
\end{proof}

\begin{Exam} 
\begin{multicols}{2}The semigroup $\OLO_{2,2,1}$ is illustrated in the Cayley table to the right. It has four Schur rings: \[\{0,1+2+3\},\quad \{0,1,2+3\},\quad \{0,2,1+3\},\quad\text{and}\quad\{0,1,2,3\},\] as predicted by Theorem \ref{thm:orolmn}.
\begin{center}
\begin{tikzpicture}
    \fill[fill=red!70, fill opacity=0.3] (1.25, 0.25) rectangle (1.75, -1.25);
    \fill[fill=green!70, fill opacity=0.3] (-0.25, -1.25) rectangle (1.25, -1.75);
    \fill[fill=mixed, fill opacity=0.3] (1.25, -1.25) rectangle (1.75, -1.75);
        \path (0, 0.5) node {0} (0.5, 0.5) node {1} (1, 0.5) node {2} (1.5, 0.5) node {3} (-0.5, 0) node {0} (-0.5, -0.5) node {1} (-0.5, -1) node {2} (-0.5, -1.5) node {3};
    \draw (-0.25, 0.75)--(-0.25, -1.75) (-0.75, 0.25)--(1.75, 0.25);
    \draw[dashed]  (1.25, 0.25) -- (1.25, -1.75)
         (-0.25, -1.25) -- (1.75, -1.25);
    \path (0, 0) node {0} (0.5, 0) node {0} (1, 0) node {0} (1.5, 0) node  {0} 
        (0, -0.5) node {0} (0.5, -0.5) node {0} (1, -0.5) node {0} (1.5, -0.5) node {0}
        (0, -1) node {0} (0.5, -1) node {0} (1, -1) node {0} (1.5, -1) node {2}
        (0, -1.5) node {0} (0.5, -1.5) node {1} (1, -1.5) node {0} (1.5, -1.5) node {3};
\end{tikzpicture}
\\$\ORO_{2,2,1}$
\end{center}
\end{multicols}
\end{Exam}

Like cloning, rosters can be iterated. Let $x,y\in G$, let $H\le G_x\cap G_y$ such that $G-H$ is a prime ideal and $x,y\notin H$ and let $K\le\, _xG\cap\, _yG$ such that $G-K$ is a prime ideal.\footnote{When rostering a single element $x'$ to $G$, we are allowed to have $x\in H$, but when considering a sequential rostering, such as $G\r{x,y} (H,K)$ we do not allow for the clones $x'$ and $y'$ to stabilize $x$ and $y$ to stabilize themselves, otherwise we might break associativity with products involving $x'y'$. In particular, we require (d) in this case.} Then $G\r{x} (H,K) - H$ and $G\r{x} (H,K)-K$ are prime ideals in $G\r{x} (H,K)$. Hence, $(G\r x (H,K))\r y(H,K) = (G\r y (H,K))\r x(H,K)$ is a roster semigroup. We define \label{rosterdual}$G\r{x,y} (H,K) = (G\r x (H,K))\r y(H,K)$.  Let $X\subseteq G$, let $H\le \bigcap_{x\in X} G_x$ such that $G-H$ is a prime ideal and $X\subseteq G-H$ and let $K\le \bigcap_{x\in X}\,_xG$ such that $G-K$ is a prime ideal and $X\subseteq G-K$. By induction, we define \label{rinduction}$G\r X (H,K) = G\r{X-x,x} (H,K)$ for any $x\in X$. Note that we allow $X$ here to be a multiset, that is, elements of $X$ may repeat. For example, $\OG_n = G^\theta\r\Theta (G,\emptyset)$ where $\Theta$ is $(n-1)$-copies of $\theta$.

\begin{Thm}\label{thm:rosterrings}
Let $G$ be a semigroup, let $X\subseteq G$, let $H\le \bigcap_{x\in X} G_x$ such that $G-H$ is a prime ideal and $X\subseteq G-H$, and let $K\le \bigcap_{x\in X}\, _xG$ such that $G-K$ is a prime ideal  and $X\subseteq G-K$. Let $\S$ be a Schur ring over $G$ such that $X$ is an $\S$-subset and $H,K$ are $\S$-subsemigroups. If $X^\prime$ is the set of clones of elements of $X$ in $G\r X (H,K)$, then $\S \r X (H,K) =\S\vee \{X^\prime\}$ is a Schur ring over $G\r X (H,K)$.
\end{Thm}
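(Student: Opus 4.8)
The plan is to verify directly that $\S \r X (H,K) = \S \vee \{X'\}$ is closed under multiplication in the semigroup ring $\Q[G\r X (H,K)]$, following the template of \thmref{thm:clonerings} but carrying the extra bookkeeping introduced by the stabilizers $H$ and $K$. The primitive sets of this partition are precisely the primitive sets $C \in \S$ (which cover the original semigroup $G$) together with the single new set $X'$ (which covers the clones), and since $G\r X(H,K) = G \cup X'$ is a disjoint union, this is a legitimate partition whose only new block is the finite set $X'$ inherited from the $\S$-subset $X$. The products that must be examined are therefore $CD$, $CX'$, $X'C$, and $(X')^2$ for $C,D \in \S$.

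First I would record the clone multiplication rules in the iterated roster. Because $X \subseteq G-H$ and $X \subseteq G-K$, condition (d) of \eqref{eqn:*} is the one in force, and unwinding the recursive construction $G\r X (H,K) = (G \r{X-x}(H,K))\r x (H,K)$ shows that any two clones multiply exactly like their originals, namely $x'y' = xy$ for all $x,y \in X$ (including the square $(x')^2 = x^2$). For a mixed product with an original element $c \in G$, the defining rules give $c x' = x'$ when $c \in H$ and $cx' = cx$ otherwise, and dually $x' c = x'$ when $c \in K$ and $x'c = xc$ otherwise.

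The key structural observation is that, since $H$ and $K$ are $\S$-subsemigroups and hence $\S$-subsets, every primitive set $C \in \S$ satisfies either $C \subseteq H$ or $C \cap H = \emptyset$, and likewise for $K$. This dichotomy collapses each mixed product into one clean case. For $CX'$: if $C \subseteq H$ then every factor contributes $cx' = x'$, so $CX' = |C|X'$, a rational multiple of the primitive set $X'$; if $C \cap H = \emptyset$ then $cx' = cx$ for all $c \in C$, so $CX' = CX$, which lies in $\QS$ because $X$ is an $\S$-subset (hence $X \in \QS$) and $\S$ is a Schur ring over $G$. The product $X'C$ is handled symmetrically using $K$. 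Finally $(X')^2 = \sum_{x,y \in X} x'y' = \sum_{x,y\in X} xy = X^2 \in \QS$, and the products $CD$ are computed entirely inside $G$ with its unchanged multiplication, so they already lie in $\QS$. Since every product lands in $\QS \subseteq \Q[\S \r X(H,K)]$, the partition is a Schur ring over $G\r X(H,K)$.

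I expect the main obstacle to be the first step: confirming from the recursive definition that clones multiply like their originals, $x'y' = xy$, and that the square rule $(x')^2 = x^2$ genuinely holds under condition (d). This requires chaining the left- and right-roster rules through the intermediate semigroup $G\r{X-x}(H,K)$ and invoking $X \subseteq G-H$ and $X \subseteq G-K$ at exactly the right moments, which is also precisely where the earlier requirement of condition (d) for sequential rostering earns its keep. Once this clone arithmetic is pinned down, the remaining closure checks reduce mechanically to the $\S$-subset dichotomy for $H$ and $K$ and to the Schur-ring closure of $\S$ already established on $G$.
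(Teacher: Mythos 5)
Your proposal is correct and follows essentially the same route as the paper's proof: both reduce the closure check to the products $(X')^2 = X'X = XX' = X^2 \in \QS$, $CX'$, and $X'C$, and both resolve the mixed products by the same dichotomy, namely that each primitive set lies in $H$ (giving $CX' = |C|X'$) or in $G-H$ (giving $CX' = CX \in \QS$), and symmetrically with $K$ for $X'C$. The only difference is expository: you spell out the clone arithmetic $x'y' = xy$ and the role of condition (d) under the hypotheses $X \subseteq G-H$ and $X \subseteq G-K$, which the paper's terse proof takes as given from the roster construction.
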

\begin{proof}
Note that $\left(X'\right)^2 = X'X = XX' = X^2 \in \QS$. If $Y\in \S|_H$, then $YX' = |Y|X'\in\Q[S\r X(H,K)]$. If $Y\in \S|_{G-H}$, the $YX'=YX\in \QS \le \Q[\S \r X (H,K)]$. Similarly, $X'Y \in \Q[S\r X(H,K)]$.
\end{proof}

\section{Schur Rings over Semigroups of Small Order}\label{gap}
Algebraists have worked to enumerate all semigroups, up to equivalence, for small orders. This sequence of integers grows rapidly. At the time of this writing, all semigroups of order up to 10 have been enumerated \cite[A001423]{semicount}. While orders 0, 1, 2, and 3 are simple enough to enumerate by hand, semigroups of order $4$, for which there are 126 equivalence types, proved more challenging. Notably, in 1955 Forsythe \cite{Forsythe} used the early computer system SWAC to enumerate all semigroups of order up to 4, correcting a previous attempt. Presently, all semigroups of order up to 8 can be found within GAP \cite{GAP}, specifically the package \emph{smallsemi}. The indexing of the semigroups in \emph{smallsemi} disagrees with the indexing found in \cite{Forsythe}, the latter of which lists semigroups by their Cayley table in lexicographical order, where each semigroup is represented by its lexicographically-minimum Cayley table. As such there is disagreement in how to number them, we list both indexing methods for the convenience of the reader in Table \ref{apx:A}. 

Using \emph{smallsemi}, we enumerate all Schur rings over each semigroup up to order $7$. This was accomplished simply by an exhaustive search over all possible partitions for each of the semigroups. We will often refer to the index of semigroups in the \emph{smallsemi} package as a semigroups GAP identification number.

\subsection{Labeling of Semigroups of Small Order}
Throughout this paper, we have introduced many families of semigroups, including $\CH$, $\K$, $\LO$, $\LOO$, $\LORO$, $\LOZ$, $\O$, $\OLO$, $\ORO$, $\OROP$, $\RO$, $\z$, and constructions of semigroups, including $G[x]$, $G[\hat{x}]$, $\mathfrak{r}$, $\mathfrak{s}$, $\mathfrak{t}$, $\mathfrak{u}$, $G^e$, $G^\mu$, $G^\theta$, and $\times.$ We have then discussed classification and enumeration of Schur rings over these families and constructions of semigroups. 

These families and constructions are sufficient to classify all semigroups up to order $4$ (see Tables \ref{fig:sgsr2}, \ref{fig:sgsr3}, and \ref{fig:sgsr4}). In many instances, multiple classifications are available for an equivalence class of semigroups. In those instances, the authors followed two principles. First, the authors elected to use classifiers which generated the minimal Cayley table of a semigroup, e.g. preferring $\LO_n^\theta$ over $\RO_n^\theta$ but defining $\ORO_{m,n}$ via $\RO_n$ as left-identities instead of $\LO_n$ as right-identities. Second, the authors elected to use the perceived simplest classifier. When common patterns emerged in classifying small semigroups, families with parameters were introduced, e.g. $\LORO$, $\ORO$, or $\OROP$. In these examples, ideal extensions were present and codes were chosen to denote this ideal and Rees quotient. These ideal extensions were often restricted to specific semigroup families, e.g. the ideal is restricted to $\O_n$ or $\LO_n$. When no or few restrictions were required, we instead defined operations on semigroups, e.g. $\mathfrak{s}$, $\mathfrak{t}$, or $\mathfrak{u}$. Despite its wide versatility, the authors elected to use rosters only when no other classification in this scheme was available. But even among this classification scheme of small semigroups, some patterns emerged with such high frequency that abbreviations were adopted, such as $G^\theta$, $G^\mu$, and $G[x]$. The goal in this scheme is to encode as compactly as possible the original multiplication and Cayley table with few parameters. For the convenience of the reader, the complete almanac of this classification scheme is provided in the Appendix. 
This scheme certainly does not classify all semigroups of small order but describes a majority of the semigroups within the \emph{smallsemi} database.

It is also noteworthy that this classification scheme emerged from the study of Schur rings over these semigroups of small order. As the Schur rings were classified, enumerated, and organized, this organized their respective semigroups into families, thus illustrating the potential of Schur rings in an algebraic category other than groups. For example, we will see that many semigroups only have 1 or 2 Schur rings. Also, the number of Schur rings generally cluster toward Bell numbers as well as toward their sums or products. It is rare for a semigroup to have a very large number of Schur rings. We say that a semigroup with a relatively high number of Schur rings compared to its order is \emph{Schur-dense}, meaning more than $\B(n-2)$ Schur rings.

\subsection{Semigroups of Orders $0$, $1$, and $2$}
For orders $0$ and $1$, there is a unique semigroup, namely $\emptyset$ and $\z_1$, respectively. In each case, there is a unique partition on these semigroups, which is a Schur ring in both cases, namely the discrete Schur ring. 

For semigroups of order $2$, there are $4$ equivalence types. There are only two possible partitions on a set of cardinality $2$, namely the discrete and indiscrete partitions. As mentioned above, the discrete partition always affords a Schur ring. Hence, the question of enumeration is equivalent to determining whether the indiscrete partition affords a Schur ring over specific order $2$ semigroups or not. In Table \ref{fig:sgsr2}, we list the number of distinct Schur rings over each semigroup of order $2$, using Forsythe's lexicographical ordering of Cayley tables. The equivalence type of each semigroup is also provided, using notation introduced earlier, for convenience and clarity. The GAP index of each semigroup is also included for convenience, but was not used in the calculation of these. Thus our labeling of the elements may disagree with that of GAP.

\begin{table}[ht]
\begin{center}
\begin{tabular}{ccc|c}
\multicolumn{2}{c}{G} & GAP ID  & $\Omega$\\
\hline
00 & $\O_2$ & 1 & 1\\
01 & $\CH_2$ & 3 & 1 \\
02 & $\LO_2$ & 4 & 2 \\
03 & $\z_2$ & 2 & 2
\end{tabular}
\end{center}
\caption{Schur rings of Semigroups of Order $2$}
\label{fig:sgsr2}
\end{table}

We observe that exactly half the semigroups only have a single Schur ring, namely $\text{O}_2$ and $\CH_2$, which agrees with Theorems \ref{thm:nullschur} and \ref{thm:chainschur}. Whereas the other two semigroups, namely $\LO_2$ and $\z_2$, have the maximum number of Schur rings, which agrees with Theorems \ref{thm:leftnullschur} and \ref{thm:monogenic}. 

For semigroups of small order, the Schur rings can be found using GAP. The package \textit{smallsemi} contains the semigroups of orders $1$ through $8$. Using this package, we provide the number of Schur rings each semigroup has, for orders $0$ -- $4$. For these orders, we also use the Cayley tables given in \cite{Forsythe} to give the specific Schur rings. For order $5$ -- $7$ we list how many semigroups have a fixed number of Schur rings, and identify some.  

This was done by writing one function which took two lists of elements in a given semigroup and multiplied them as if it were a semigroup ring. Another function was defined which checked if a given semigroup ring element was a linear combination of primitive sets from a partition. We then performed an exhaustive search over all possible products of all partitions for each the semigroups, and recorded how many semigroups attain each number of Schur rings. It is for this reason that order $8$ proved computationally prohibitive at this time.

\subsection{Semigroups of Order $3$}
As there are five partitions on a set of cardinality 3, we will need to be more concise in our reporting of Schur rings over semigroups of order $3$ than we have been so far. In Figure \ref{fig:3-partition} we illustrate the five possible partitions, where elements of the semigroup are denoted $0,1,2$. Note that partition 1 illustrates the discrete partition, while partition 5 illustrates the indiscrete partition. 

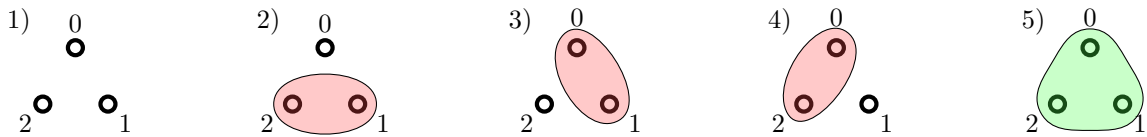
\begin{figure}[ht]
\begin{multicols}{5}
\begin{center}
\begin{tikzpicture}
\path (-0.75, 0.85) node {1)};
\path[ultra thick] (90:0.5) \vertex (0)  node[above, yshift=2] {$0$};
\path[ultra thick] (-30:0.5) \vertex (1)  node[below right] {$1$};
\path[ultra thick] (210:0.5) \vertex (2)  node[below left] {$2$};
\end{tikzpicture}
\end{center}

\begin{center}
\begin{tikzpicture}
\path (-0.75, 0.85) node {2)};
\path[ultra thick] (90:0.5) \vertex (0)  node[above, yshift=2] {$0$};
\path[ultra thick] (-30:0.5) \vertex (1)  node[below right, xshift=3] {$1$};
\path[ultra thick] (210:0.5) \vertex (2)  node[below left, xshift=-3] {$2$};
\filldraw[fill=red!70, fill opacity=0.3] ($(2)+(180:0.25)$)
    to[out=90,in=90] ($(1)+(0:0.25)$)
    to[out=270,in=270] ($(2)+(180:0.25)$);
\end{tikzpicture}
\end{center}

\begin{center}
\begin{tikzpicture}
\path (-0.75, 0.85) node {3)};
\path[ultra thick] (90:0.5) \vertex (0)  node[above, yshift=4] {$0$};
\path[ultra thick] (-30:0.5) \vertex (1)  node[below right, xshift=3] {$1$};
\path[ultra thick] (210:0.5) \vertex (2)  node[below left] {$2$};
\filldraw[fill=red!70, fill opacity=0.3] ($(0)+(135:0.25)$)
    to[out=45,in=30] ($(1)+(-60:0.25)$)
    to[out=210,in=225] ($(0)+(135:0.25)$);
\end{tikzpicture}
\end{center}

\begin{center}
\begin{tikzpicture}
\path (-0.75, 0.85) node {4)};
\path[ultra thick] (90:0.5) \vertex (0)  node[above, yshift=4] {$0$};
\path[ultra thick] (-30:0.5) \vertex (1)  node[below right] {$1$};
\path[ultra thick] (210:0.5) \vertex (2)  node[below left, xshift=-3] {$2$};
\filldraw[fill=red!70, fill opacity=0.3] ($(0)+(45:0.25)$)
    to[out=135,in=120] ($(2)+(210:0.25)$)
    to[out=300,in=315] ($(0)+(45:0.25)$);
\end{tikzpicture}
\end{center}

\begin{center}
\begin{tikzpicture}
\path (-0.75, 0.85) node {5)};
\path[ultra thick] (90:0.5) \vertex (0)  node[above, yshift=4] {$0$};
\path[ultra thick] (-30:0.5) \vertex (1)  node[below right] {$1$};
\path[ultra thick] (210:0.5) \vertex (2)  node[below left, xshift=-3] {$2$};
\filldraw[fill=green!70, fill opacity=0.3] ($(0)+(90:0.25)$)
    to[out=0,in=120] ($0.5*(0)+0.5*(1)+(45:0.35)$)
    to[out=300,in=60] ($(1)+(-30:0.25)$)
    to[out=240,in=0] ($0.5*(2)+0.5*(1)+(270:0.35)$)   
    to[out=180,in=-60] ($(2)+(210:0.25)$)
    to[out=120,in=240] ($0.5*(0)+0.5*(2)+(135:0.35)$)
    to[out=60, in=180] ($(0)+(90:0.25)$);
\end{tikzpicture}
\end{center}
\end{multicols}
\caption{The Five Partitions of Order 3}
\label{fig:3-partition}
\end{figure}

In Table \ref{fig:sgsr3}, we list the number of distinct Schur rings over each semigroup of order $3$, using Forsythe's lexicographical ordering of Cayley tables. The equivalence type of each semigroup is also provided, using the classification notion introduced earlier, for convenience and clarity. Additionally, a list of Schur rings for each semigroup is provided, using the indexing of Figure \ref{fig:3-partition}. The GAP index of each semigroup is again included for convenience. 

\begin{table}[ht]
\begin{center}
\begin{tabular}{ccc|c|c}
\multicolumn{2}{c}{$G$} & GAP ID & Schur Ring Indices & $\Omega(G)$\\
\hline
00 & $\O_3$ & 1 & 1, 2 & 2\\
01 & $\z_{3,1}$ & 4 & 1 & 1\\
02 & $\O_2 \u \CH_2$ & 5 & 1 & 1\\
03 & $\ORO_{2,1}$ & 6 & 1, 2 & 2\\
04 & $\O_2 \t \z_1$ & 7 & 1 & 1\\
05 & $\O_2^e$ & 8 & 1 & 1\\
06 & $\K_{1,2}$ & 12 & 1, 2 & 2\\
07 & $\CH_2\t\z_1$ & 13 & 1 & 1\\
08 & $\O_2^\theta$ & 9 & 1 & 1\\
09 & $\CH_3$ & 14 & 1 & 1\\
10 & $\LO_2^\theta$ & 15 & 1, 2 & 2\\ 
11 & $\z_2^\theta$ & 11 & 1, 2 & 2\\
12 & $\LO_2^e$ & 16 & 1, 4 & 2\\
13 & $\LO_3$ & 17 & 1,2,3,4,5 & 5\\
14 & $\z_2[e]$& 2 & 1 & 1\\
15 & $\z_2^e$ & 10 & 1, 4 & 2\\
16 & $\z_{2,2}$ & 3 & 1 & 1\\
17 & $\z_3$ & 18 & 1, 2, 5 & 3\\
\end{tabular}
\end{center}
\caption{Schur rings of Semigroups of Order $3$}
\label{fig:sgsr3}
\end{table}

An inspection of Table \ref{fig:sgsr3} shows that the mean number of Schur rings for semigroups of order $3$ is $\mu = 1.72$, and the standard deviation is $\sigma=1.02$. Additionally, 88\% of the semigroups have only 1 or 2 Schur rings, where the second one is never the indiscrete partition. Thus, having three or more Schur rings is a rare feature for a semigroup of order $3$. The two exceptions are the left-null semigroup $\LO_3$ and the only group of order $3$, $\z_3$. For $\LO_3$, the maximum number of Schur rings is obtained, as observed by Theorem \ref{thm:leftnullschur}. The three Schur rings over the cyclic group $\z_3$ are the discrete Schur ring, the indiscrete Schur ring, and the trivial Schur ring. 

\subsection{Semigroups of Order $4$}
In similar manner for semigroups of order $4$, there are fifteen partitions on a set of cardinality $4$, as illustrated in Figure \ref{fig:4-partition}. In Table \ref{fig:sgsr4}, we list the number of distinct Schur rings over each semigroup of order $4$, using Forsythe's order \cite{Forsythe}. We also provide a correspondence between Forsythe's indexing of semigroups and GAP's, given in Table \ref{apx:A} at the end of this paper, which appears to not be available elsewhere in the literature. 

Since there are $15$ ways to partition the elements $0, 1, 2, 3$ to potentially create Schur rings, we provide and label these. These are the labels which are being refered to in Table \ref{fig:sgsr4} and the elements $0-3$ are understood to refer to the elements of the semigroup as identified in \cite{Forsythe}.

\begin{figure}[!ht]
\begin{multicols}{5}
\begin{center}
\begin{tikzpicture}
\path (-0.75, 1.25) node {1)};
\path[ultra thick] (0,1) \vertex (0)  node[above left, yshift=2] {$0$};
\path[ultra thick] (1,1) \vertex (1)  node[above right] {$1$};
\path[ultra thick] (1,0) \vertex (2)  node[below right] {$2$};
\path[ultra thick] (0,0) \vertex (3)  node[below left] {$3$};
\end{tikzpicture}
\end{center}

\begin{center}
\begin{tikzpicture}
\path (-0.75, 1.25) node {2)};
\path[ultra thick] (0,1) \vertex (0)  node[above left, yshift=2] {$0$};
\path[ultra thick] (1,1) \vertex (1)  node[above right] {$1$};
\path[ultra thick] (1,0) \vertex (2)  node[below right] {$2$};
\path[ultra thick] (0,0) \vertex (3)  node[below left] {$3$};
\filldraw[fill=red!70, fill opacity=0.3] ($(0)+(180:0.25)$)
    to[out=90,in=180] ($0.5*(0)+0.5*(1)+(0,0.3)$)
    to[out=0,in=90] ($(1)+(0:0.25)$) 
    to[out=270,in=0] ($0.5*(0)+0.5*(1)-(0,0.3)$)
    to[out=180,in=270] ($(0)+(180:0.25)$);
\end{tikzpicture}
\end{center}

\begin{center}
\begin{tikzpicture}
\path (-0.75, 1.25) node {3)};
\path[ultra thick] (0,1) \vertex (0)  node[above left, yshift=2] {$0$};
\path[ultra thick] (1,1) \vertex (1)  node[above right] {$1$};
\path[ultra thick] (1,0) \vertex (2)  node[below right] {$2$};
\path[ultra thick] (0,0) \vertex (3)  node[below left] {$3$};
\filldraw[fill=red!70, fill opacity=0.3] ($(0)+(135:0.25)$)
    to[out=45,in=135] ($0.5*(0)+0.5*(2)+(45:0.35)$)
    to[out=315,in=45] ($(2)+(315:0.25)$) 
    to[out=225,in=315] ($0.5*(0)+0.5*(2)-(45:0.35)$)
    to[out=135,in=225] ($(0)+(135:0.25)$);
\end{tikzpicture}
\end{center}

\begin{center}
\begin{tikzpicture}
\path (-0.75, 1.25) node {4)};
\path[ultra thick] (0,1) \vertex (0)  node[above left, yshift=2] {$0$};
\path[ultra thick] (1,1) \vertex (1)  node[above right] {$1$};
\path[ultra thick] (1,0) \vertex (2)  node[below right] {$2$};
\path[ultra thick] (0,0) \vertex (3)  node[below left] {$3$};
\filldraw[fill=red!70, fill opacity=0.3] ($(0)+(90:0.25)$)
    to[out=0,in=90] ($0.5*(0)+0.5*(3)+(0.3,0)$)
    to[out=270,in=0] ($(3)+(270:0.25)$) 
    to[out=180,in=270] ($0.5*(0)+0.5*(3)-(0.3,0)$)
    to[out=90,in=180] ($(0)+(90:0.25)$);
\end{tikzpicture}
\end{center}

\begin{center}
\begin{tikzpicture}
\path (-0.75, 1.25) node {5)};
\path[ultra thick] (0,1) \vertex (0)  node[above left, yshift=2] {$0$};
\path[ultra thick] (1,1) \vertex (1)  node[above right] {$1$};
\path[ultra thick] (1,0) \vertex (2)  node[below right] {$2$};
\path[ultra thick] (0,0) \vertex (3)  node[below left] {$3$};
\filldraw[fill=red!70, fill opacity=0.3] ($(0)+(180:0.25)$)
    to[out=90,in=180] ($0.5*(0)+0.5*(1)+(0,0.3)$)
    to[out=0,in=90] ($(1)+(0:0.25)$) 
    to[out=270,in=0] ($0.5*(0)+0.5*(1)-(0,0.3)$)
    to[out=180,in=270] ($(0)+(180:0.25)$);
\filldraw[fill=red!70, fill opacity=0.3] ($(3)+(180:0.25)$)
    to[out=90,in=180] ($0.5*(3)+0.5*(2)+(0,0.3)$)
    to[out=0,in=90] ($(2)+(0:0.25)$) 
    to[out=270,in=0] ($0.5*(3)+0.5*(2)-(0,0.3)$)
    to[out=180,in=270] ($(3)+(180:0.25)$);
\end{tikzpicture}
\end{center}
\end{multicols}
\begin{multicols}{5}
\begin{center}
\begin{tikzpicture}
\path (-0.75, 1.25) node {6)};
\path[ultra thick] (0,1) \vertex (0)  node[above left, yshift=2] {$0$};
\path[ultra thick] (1,1) \vertex (1)  node[above right] {$1$};
\path[ultra thick] (1,0) \vertex (2)  node[below right] {$2$};
\path[ultra thick] (0,0) \vertex (3)  node[below left] {$3$};
\filldraw[fill=red!70, fill opacity=0.3] ($(0)+(135:0.25)$)
    to[out=45,in=135] ($0.5*(0)+0.5*(2)+(45:0.35)$)
    to[out=315,in=45] ($(2)+(315:0.25)$) 
    to[out=225,in=315] ($0.5*(0)+0.5*(2)-(45:0.35)$)
    to[out=135,in=225] ($(0)+(135:0.25)$);
\filldraw[fill=red!70, fill opacity=0.3] ($(1)+(45:0.25)$)
    to[out=315,in=45] ($0.5*(1)+0.5*(3)+(-45:0.35)$)
    to[out=225,in=315] ($(3)+(225:0.25)$) 
    to[out=135,in=225] ($0.5*(1)+0.5*(3)-(-45:0.35)$)
    to[out=45,in=135] ($(1)+(45:0.25)$);
\end{tikzpicture}
\end{center}

\begin{center}
\begin{tikzpicture}
\path (-0.75, 1.25) node {7)};
\path[ultra thick] (0,1) \vertex (0)  node[above left, yshift=2] {$0$};
\path[ultra thick] (1,1) \vertex (1)  node[above right] {$1$};
\path[ultra thick] (1,0) \vertex (2)  node[below right] {$2$};
\path[ultra thick] (0,0) \vertex (3)  node[below left] {$3$};
\filldraw[fill=red!70, fill opacity=0.3] ($(0)+(90:0.25)$)
    to[out=0,in=90] ($0.5*(0)+0.5*(3)+(0.3,0)$)
    to[out=270,in=0] ($(3)+(270:0.25)$) 
    to[out=180,in=270] ($0.5*(0)+0.5*(3)-(0.3,0)$)
    to[out=90,in=180] ($(0)+(90:0.25)$);
\filldraw[fill=red!70, fill opacity=0.3] ($(1)+(90:0.25)$)
    to[out=0,in=90] ($0.5*(1)+0.5*(2)+(0.3,0)$)
    to[out=270,in=0] ($(2)+(270:0.25)$) 
    to[out=180,in=270] ($0.5*(1)+0.5*(2)-(0.3,0)$)
    to[out=90,in=180] ($(1)+(90:0.25)$);    
\end{tikzpicture}
\end{center}

\begin{center}
\begin{tikzpicture}
\path (-0.75, 1.25) node {8)};
\path[ultra thick] (0,1) \vertex (0)  node[above left, yshift=2] {$0$};
\path[ultra thick] (1,1) \vertex (1)  node[above right] {$1$};
\path[ultra thick] (1,0) \vertex (2)  node[below right] {$2$};
\path[ultra thick] (0,0) \vertex (3)  node[below left] {$3$};
\filldraw[fill=red!70, fill opacity=0.3] ($(1)+(90:0.25)$)
    to[out=0,in=90] ($0.5*(1)+0.5*(2)+(0.3,0)$)
    to[out=270,in=0] ($(2)+(270:0.25)$) 
    to[out=180,in=270] ($0.5*(1)+0.5*(2)-(0.3,0)$)
    to[out=90,in=180] ($(1)+(90:0.25)$);    
\end{tikzpicture}
\end{center}

\begin{center}
\begin{tikzpicture}
\path (-0.75, 1.25) node {9)};
\path[ultra thick] (0,1) \vertex (0)  node[above left, yshift=2] {$0$};
\path[ultra thick] (1,1) \vertex (1)  node[above right] {$1$};
\path[ultra thick] (1,0) \vertex (2)  node[below right] {$2$};
\path[ultra thick] (0,0) \vertex (3)  node[below left] {$3$};
\filldraw[fill=red!70, fill opacity=0.3] ($(1)+(45:0.25)$)
    to[out=315,in=45] ($0.5*(1)+0.5*(3)+(-45:0.35)$)
    to[out=225,in=315] ($(3)+(225:0.25)$) 
    to[out=135,in=225] ($0.5*(1)+0.5*(3)-(-45:0.35)$)
    to[out=45,in=135] ($(1)+(45:0.25)$);
\end{tikzpicture}
\end{center}

\begin{center}
\begin{tikzpicture}
\path (-0.75, 1.25) node {10)};
\path[ultra thick] (0,1) \vertex (0)  node[above left, yshift=2] {$0$};
\path[ultra thick] (1,1) \vertex (1)  node[above right] {$1$};
\path[ultra thick] (1,0) \vertex (2)  node[below right] {$2$};
\path[ultra thick] (0,0) \vertex (3)  node[below left] {$3$};
\filldraw[fill=red!70, fill opacity=0.3] ($(3)+(180:0.25)$)
    to[out=90,in=180] ($0.5*(3)+0.5*(2)+(0,0.3)$)
    to[out=0,in=90] ($(2)+(0:0.25)$) 
    to[out=270,in=0] ($0.5*(3)+0.5*(2)-(0,0.3)$)
    to[out=180,in=270] ($(3)+(180:0.25)$);
\end{tikzpicture}
\end{center}
\end{multicols}
\begin{multicols}{5}
\begin{center}
\begin{tikzpicture}
\path (-0.75, 1.25) node {11)};
\path[ultra thick] (0,1) \vertex (0)  node[above left, yshift=2] {$0$};
\path[ultra thick] (1,1) \vertex (1)  node[above right] {$1$};
\path[ultra thick] (1,0) \vertex (2)  node[below right] {$2$};
\path[ultra thick] (0,0) \vertex (3)  node[below left] {$3$};
\filldraw[fill=green!70, fill opacity=0.3] ($(1)+(45:0.25)$)
    to[out=315,in=90] ($0.5*(1)+0.5*(2)+(0:0.35)$)
    to[out=270,in=45] ($(2)+(315:0.25)$)
    to[out=225,in=315] ($(2)+(225:0.25)$)
    to[out=135,in=315] ($(0)+(225:0.25)$)

    to[out=135,in=225] ($(0)+(135:0.25)$)
    to[out=45,in=180] ($0.5*(0)+0.5*(1)+(90:0.35)$)
    to[out=0,in=135] ($(1)+(45:0.25)$);
\end{tikzpicture}
\end{center}

\begin{center}
\begin{tikzpicture}
\path (-0.75, 1.25) node {12)};
\path[ultra thick] (0,1) \vertex (0)  node[above left, yshift=2] {$0$};
\path[ultra thick] (1,1) \vertex (1)  node[above right] {$1$};
\path[ultra thick] (1,0) \vertex (2)  node[below right] {$2$};
\path[ultra thick] (0,0) \vertex (3)  node[below left] {$3$};
\filldraw[fill=green!70, fill opacity=0.3] ($(0)+(135:0.25)$)
    to[out=45,in=180] ($0.5*(0)+0.5*(1)+(90:0.35)$)
    to[out=0,in=135] ($(1)+(45:0.25)$) 
    to[out=315,in=45] ($(1)+(315:0.25)$)
    to[out=225,in=45] ($(3)+(315:0.25)$)
    to[out=225,in=315] ($(3)+(225:0.25)$)    
    to[out=135,in=270] ($0.5*(3)+0.5*(0)+(180:0.35)$)
    to[out=90,in=225] ($(0)+(135:0.25)$);
\end{tikzpicture}
\end{center}

\begin{center}
\begin{tikzpicture}
\path (-0.75, 1.25) node {13)};
\path[ultra thick] (0,1) \vertex (0)  node[above left, yshift=2] {$0$};
\path[ultra thick] (1,1) \vertex (1)  node[above right] {$1$};
\path[ultra thick] (1,0) \vertex (2)  node[below right] {$2$};
\path[ultra thick] (0,0) \vertex (3)  node[below left] {$3$};
\filldraw[fill=green!70, fill opacity=0.3] ($(3)+(225:0.25)$)
    to[out=315,in=180] ($0.5*(3)+0.5*(2)+(270:0.35)$)
    to[out=0,in=225] ($(2)+(315:0.25)$)
    to[out=45,in=315] ($(2)+(45:0.25)$)
    to[out=135,in=315] ($(0)+(45:0.25)$)
    to[out=135,in=45] ($(0)+(135:0.25)$)
    to[out=225,in=90] ($0.5*(0)+0.5*(3)+(180:0.35)$)
    to[out=270,in=135] ($(3)+(225:0.25)$);   
\end{tikzpicture}
\end{center}

\begin{center}
\begin{tikzpicture}
\path (-0.75, 1.25) node {14)};
\path[ultra thick] (0,1) \vertex (0)  node[above left, yshift=2] {$0$};
\path[ultra thick] (1,1) \vertex (1)  node[above right] {$1$};
\path[ultra thick] (1,0) \vertex (2)  node[below right] {$2$};
\path[ultra thick] (0,0) \vertex (3)  node[below left] {$3$};
\filldraw[fill=green!70, fill opacity=0.3] ($(2)+(315:0.25)$)
    to[out=45,in=270] ($0.5*(2)+0.5*(1)+(0:0.35)$)
    to[out=90,in=315] ($(1)+(45:0.25)$) 
    to[out=135,in=45] ($(1)+(135:0.25)$)
    to[out=225,in=45] ($(3)+(135:0.25)$)
    to[out=225,in=135] ($(3)+(225:0.25)$)    
    
    to[out=315,in=180] ($0.5*(3)+0.5*(2)+(270:0.35)$)
    to[out=0,in=225] ($(2)+(315:0.25)$);
\end{tikzpicture}
\end{center}

\begin{center}
\begin{tikzpicture}
\path (-0.75, 1.25) node {15)};
\path[ultra thick] (0,1) \vertex (0)  node[above left, yshift=2] {$0$};
\path[ultra thick] (1,1) \vertex (1)  node[above right] {$1$};
\path[ultra thick] (1,0) \vertex (2)  node[below right] {$2$};
\path[ultra thick] (0,0) \vertex (3)  node[below left] {$3$};
\filldraw[fill=blue!70, fill opacity=0.3] ($(0)+(135:0.25)$)
    to[out=45,in=180] ($0.5*(0)+0.5*(1)+(90:0.35)$)
    to[out=0,in=135] ($(1)+(45:0.25)$) 
    to[out=315,in=90] ($0.5*(2)+0.5*(1)+(0:0.35)$)
    to[out=270,in=45] ($(2)+(315:0.25)$)
    to[out=225,in=0] ($0.5*(2)+0.5*(3)+(270:0.35)$)
    to[out=180,in=315] ($(3)+(225:0.25)$)
    to[out=135,in=270] ($0.5*(0)+0.5*(3)+(180:0.35)$)    
    to[out=90,in=225] ($(0)+(135:0.25)$);
\end{tikzpicture}
\end{center}
\end{multicols}
\caption{The Fifteen Partitions of Order 4}
\label{fig:4-partition}
\end{figure}
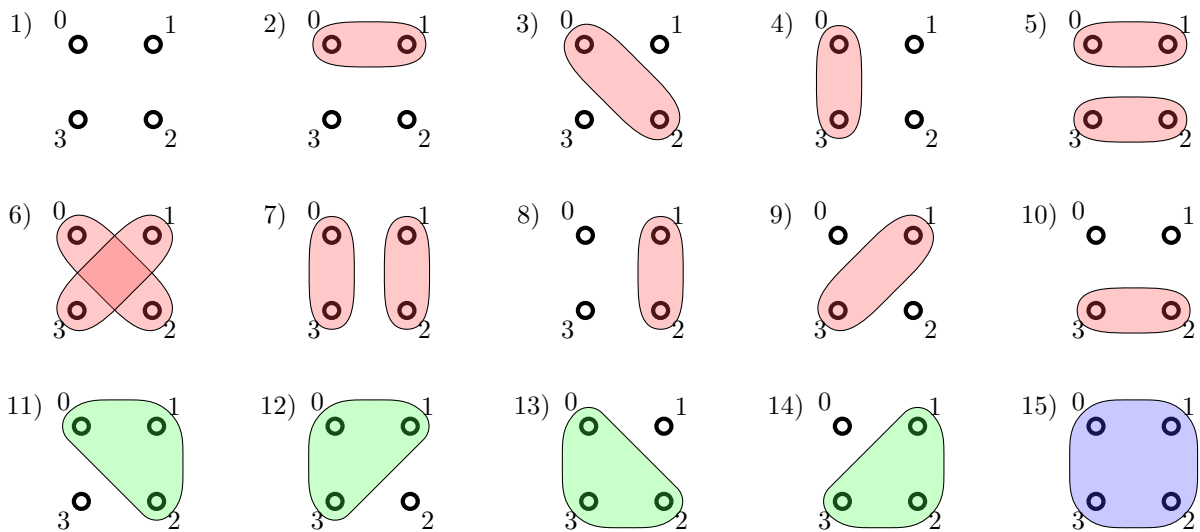

\begin{table}[p]
\hspace{-35pt}
\begin{tabular}{cccc|cccc|cccc}
\multicolumn{2}{l}{$G$} & Schur Rings  & $\Omega$ & \multicolumn{2}{l}{$G$} & Schur Rings & $\Omega$ & \multicolumn{2}{l}{$G$} & Schur Rings & $\Omega$\\
\hline
000 & $\O_4$ & \resizebox{45pt}{\height}{1, 8, 9, 10, 14} & 5 & 042 &$\z_{3,1}^e$& 1 & 1 & 084 &$\CH_2\s\O_2$& 1 & 1\\
001 &$\O_2\u\z_{3,1}$& 1, 10 & 2 & 043 &$\O_2^e\u\CH_2$& 1, 10 & 2 & 085 &$\CH_4$ & 1 & 1\\
002 &$\O_3\u\CH_2$& 1, 8 & 2 & 044 &\resizebox{45pt}{\height}{$(\O_2\u\CH_2)^e$}& 1 & 1 & 086 &\resizebox{45pt}{\height}{$\CH_2\s\LO_2$}& 1, 10 & 2\\
003 &$\O(2,2,2)$& 1, 10 & 2 & 045 &\resizebox{45pt}{\height}{$\K_{1,2}\r\theta (\{1\},\{2\})$}& 1, 10 & 2 & 087 &\resizebox{45pt}{\height}{$\CH_2\s\z_2$}& 1, 10 & 2\\
004 &$\O(2,2,3)$& 1, 10 & 2 & 046 &$\OROP_2$& 1, 10 & 2 & 088 &$\LO_2^{\theta e}$& 1, 9 & 2\\
005 &\resizebox{45pt}{\height}{$\O_2\u\ORO_{2,1}$}& 1, 10 & 2 & 047 &$\ORO_{2,1}^e$& 1, 8 & 2 & 089 &$\LO_3^\theta$& \resizebox{45pt}{\height}{1, 8, 9, 10, 14} & 5\\
006 &$\ORO_{2,1}[1]$& 1 & 1 & 048 &$\ORO_{2,1}^\text{op} \t \z_1$ & 1, 9 & 2 & 090 &$\z_2[e]^\theta$& 1 & 1\\
007 &$\ORO_{3,1}$& \resizebox{45pt}{\height}{1, 8, 9, 10, 14} & 5 & 049 & \resizebox{76pt}{\height}{$\ORO_{2,1}\r\theta (\ORO_{2,1},\z_1)$} & 1, 9 & 2 & 091 &$\LO_2\s \O_2$& 1, 4 & 2\\
008 &$\O_3\t\z_1$& 1, 8 & 2 & 050 &$\O_2^e\t\z_1$& 1 & 1 & 092 &$\z_2^{\theta e}$& 1, 9 & 2\\
009 &$\O(2,2,6)$& 1, 10 & 2 & 051 &\resizebox{45pt}{\height}{$(\O_2\t\z_1)^e$}& 1 & 1 & 093 &\resizebox{45pt}{\height}{$\LO_2\s\CH_2$}& 1, 4 & 2\\
010 &$\O(2,2,7)$& 1, 10 & 2 & 052 & \resizebox{70pt}{\height}{$(\CH_2\t\z_1) \r2 (\emptyset,\z_1)$} & 1, 6, 9 & 3 & 094 &\resizebox{45pt}{\height}{$\LO_2\s\LO_2$}& 1, 4, 7, 8 & 4\\
011 &$\z_{4,1}$& 1 & 1 & 053 &\resizebox{45pt}{\height}{$\LOO(2,1,1,1)$}& 1 & 1 & 095 &$\z_{2,2}^\theta$& 1 & 1\\
012 &$\O_2\u\O_2^e$& 1 & 1 & 054 &$\CH_2\s\O_2$& 1 & 1 & 096 &\resizebox{45pt}{\height}{$\LO_2\s\RO_2$}& 1, 4, 7, 8 & 4\\
013 &$\OLO_{2,2,1}$& \resizebox{45pt}{\height}{1, 9, 10, 14} & 4 & 055 &$\O_2\s\CH_2$& 1 & 1 & 097 &$\LO_2\s\z_2$& 1, 4, 7, 8 & 4\\
014 &\resizebox{45pt}{\height}{$\ORO_{2,1}\r\theta \z_1$}& 1, 9 & 2 & 056 &$\O_2\s\LO_2$& 1, 10 & 2 & 098 &$\z_3^\theta$& 1, 10, 14 & 3\\
015 &$\O(2,2,9)$& 1, 10 & 2 & 057 &$\O_2\s\z_2$& 1, 10 & 2 & 099 &$\LOZ_2$& 1, 3, 6, 9 & 4\\
016 &\resizebox{45pt}{\height}{$\z_{3,1}\u\CH_2$}& 1 & 1 & 058 &$\K_{1,3}$& \resizebox{45pt}{\height}{1, 8, 9, 10, 14} & 5 & 100 &$\LO_3^e$& \resizebox{45pt}{\height}{1, 3, 4, 10, 12} & 5\\
017 &$\O(2,2,11)$& 1, 10 & 2 & 059 &$\K_{1,2}\t\z_1$& 1, 8 & 2 & 101 & $\LO_4$ & 1--15 & 15\\
018 &$\z_{3,1}\t\z_1$& 1 & 1 & 060 &$\O_2^\theta\u\CH_2$& 1 & 1 & 102 &$\z_2[e,e]$& 1, 8 & 2\\
019 &$\z_{3,1}[z]$& 1, 10 & 2 & 061 &\resizebox{45pt}{\height}{$\CH_2\u\CH_3$}& 1, 9 & 2 & 103 &
$\z_{3,1}^\mu$& 1 & 1\\
020 &$\O_2\u \K_{1,2}$& 1, 10 & 2 & 062 &\resizebox{45pt}{\height}{$\CH_2\u\LO_3^\theta$}& 1, 10 & 2 & 104 &$\z_2[e,\hat{e}]$& 1 & 1\\
021 &\resizebox{45pt}{\height}{$\ORO_{2,1}\u\CH_2$}& \resizebox{45pt}{\height}{1, 9, 10, 14} & 4 & 063 &$\CH_2\u\z_2^\theta$& 1, 10 & 2 & 105 &
$\ORO_{2,1}^\mu$ & 1, 8 & 2\\
022 &\resizebox{45pt}{\height}{$(\O_2\u\CH_2)\t\z_1$}& 1 & 1 & 064 &$\LO_2^e[\hat{0}]$& 1, 8 & 2 & 106 &$\z_2[e]^e$& 1 & 1\\
023 &$\O_2\times\CH_2$& 1, 9 & 2 & 065 &\resizebox{45pt}{\height}{$\CH_2\t\CH_2$}& 1, 6, 9 & 3 & 107 &$\z_2[\hat{e},\hat{e}]$& 1, 8 & 2\\
024 &$\O_2\u\CH_3$& 1 & 1 & 066 &\resizebox{45pt}{\height}{$\LO_2\t\CH_2$}& 1, 10 & 2 & 108 &$\z_2\s\O_2$& 1, 4 & 2\\
025 &$\O_2\u\LO_2^\theta$& 1, 10 & 2 & 067 &\resizebox{45pt}{\height}{$\CH_2\times\CH_2$}& 1, 8 & 2 & 109 &$\z_2\s\CH_2$& 1, 4 & 2\\
026 &\resizebox{45pt}{\height}{$\ORO_{2,1}\rhat\theta \z_1$}& 1, 9 & 2 & 068 &$\O_2^\theta\t\z_1$& 1 & 1 & 110 &$\z_2\s\LO_2$& 1, 4, 7, 8 & 4\\
027 &$\O_2\u\z_2^\theta$& 1, 10 & 2 & 069 &$\CH_3\t\z_1$& 1 & 1 & 111 &$\z_2\s\z_2$& 1, 4, 7, 8 & 4\\
028 &$\LO_2^e[0]$& 1 & 1 & 070 &\resizebox{45pt}{\height}{$(\CH_2\t\z_1)^e$}& 1 & 1 & 112 &$\O_2\times\z_2$& 1, 6, 9 & 3\\
029 &\resizebox{45pt}{\height}{$\ORO_{2,1}\t\z_1$}& 1, 8 & 2 & 071 &$\LO_2^\theta\t\z_1$& 1, 9 & 2 & 113 &$\z_{3,2}$& 1 & 1\\
030 &$\OO_{2,2}$
& 1 & 1 & 072 &\resizebox{45pt}{\height}{$\CH_2\times\LO_2$}& 1, 6, 9 & 3 & 114 &$\z_2^e[z]$& 1 & 1\\
031 &$\OCH_{2,2}$
& 1 & 1 & 073 &
$\RO_2^\theta\t\z_1$& 1,9 & 2 & 115 &\resizebox{45pt}{\height}{$\z_2^e \r z (\emptyset,\z_1)$}& 1, 6, 9 & 3\\
032 &$\OLO_{2,2}$& 1, 10, 14 & 3 & 074 &$\z_2^\theta\t\z_1$& 1, 9 & 2 & 116 &$\z_{2,2}^e$& 1 & 1\\
033 &$\ORO_{2,2}$& \resizebox{45pt}{\height}{1, 8, 9, 10, 14} & 5 & 075 & $\LORO(2,1,1,0)$ 
&1 & 1 & 117 &$\CH_2\times\z_2$& 1, 3, 6, 9 & 4\\
034 &$\OZ_{2,2}$
& 1, 10, 14 & 3 & 076 &$\O_3^\theta$& 1, 10 & 2 & 118 &\resizebox{45pt}{\height}{$\LO_2\times\RO_2$}& \resizebox{45pt}{\height}{1, 5, 6, 7, 15} & 5\\
035 &\multicolumn{2}{l}{\resizebox{76pt}{\height}{$\ORO_{2,1}\r\theta (\z_1,\ORO_{2,1})$} 1, 8} & 2 & 077 &$\z_{3,1}^\theta$& 1 & 1 & 119 &$\LO_2\times\z_2$& \resizebox{45pt}{\height}{1, 3, 5, 6, 7, 9, 15} & 7\\
036 &$\O_2\times\LO_2$& 1, 6, 9 & 3 & 078 &\resizebox{45pt}{\height}{$(\O_2\u\CH_2)^\theta$}& 1 & 1 & 120 &$\z_3[e]$& 1, 10 & 2\\
037 &$\O_2\t\CH_2$& 1 & 1 & 079 &$\ORO_{2,1}^\theta$& 1, 10 & 2 & 121 &$\z_3^e$& 1, 10, 13 & 3\\
038 &\resizebox{45pt}{\height}{$\LO_2\t\O_2$}& 1, 10 & 2 & 080 &$(\O_2\t\z_1)^\theta$& 1 & 1 & 122 &$\z_{2,2}[z]$& 1, 10 & 2\\
039 &$\O_2^e[1]$& 1 & 1 & 081 &$\O_2^{\theta e}$& 1 & 1 & 123 &$\z_{2,3}$& 1 & 1\\
040 &\resizebox{45pt}{\height}{$\O_2^e\r1 (\emptyset,\z_1)$}& 1, 10 & 2 & 082 &$\K_{1,2}^\theta$& 1, 10 & 2 & 124 &$\text{V}_4$ & \resizebox{45pt}{\height}{1, 5-10, 14, 15} & 9\\
041 &$\O_3^e$& 1, 8 & 2 & 083 &\resizebox{45pt}{\height}{$(\CH_2\t\z_1)^\theta$}& 1 & 1 & 125 &$\z_4$ & \resizebox{45pt}{\height}{1, 5, 10, 14, 15} & 5
\end{tabular}
\caption{Schur rings of semigroups of order $4$}
\label{fig:sgsr4}
\end{table}

We see a similar statistical distribution as the semigroups of order $3$ in Table \ref{fig:sgsr4}. The mean number of Schur rings over semigroups of order $4$ is $\mu=2.29$, with a standard deviation of $\sigma=1.76$. Additionally, $76.2\%$ of the semigroups have only 1 or 2 Schur rings (meaning 23.8\% are Schur-dense), and $91.2\%$ have a number of Schur rings less than $\B(3)=5$. 
The only semigroups having $5$ Schur rings are 000, 007, 033, 058, 089, 100, 118, and 125, which are isomorphic to $\O_4$, $\ORO_{3,1}$, $\ORO_{2,2}$, $\K_{1,3}$, $\LO_3^\theta$, $\LO_3^e$, $\LO_2\times \RO_2$, and $\z_4$, respectively.

All but 118 and 125 are predicted by \thmref{bnminus}. The semigroup $\LO_2\times \RO_2$ is only purely rectangular band\footnote{Recall that a \emph{band} is a semigroup where all elements are idempotent. A \emph{rectangular band} $G$ is a band with the property that $xyz = xz$ for $x,y,z\in G$. A \emph{purely rectangular band} is a rectangular band not equivalent to $\LO_n$ for any $n$.} of order $4$. Since $\Aut(\LO_n\times \RO_m) \cong S_n\times S_m$ we know the number of automorphic Schur rings over $\LO_n\times \RO_m$ is at least $\B(n)\B(m)$. There are also diagonal subgroups of $S_n\times S_m$ which afford automorphic Schur rings too. In the case of $\LO_2\times\RO_2$, all five Schur rings are automorphic and correspond to the five subgroups of $S_2\times S_2\cong V_4$. The only semigroups obtaining more than $\B(3)$ Schur rings are 101, 119, and 124, which are isomorphic to $\LO_4$, $\LO_2\times \z_2$, and $V_4$, respectively.

\subsection{Semigroups of Order $5$}
We observe that there are $1160$ semigroups of order $5$ and $\B(5)=52$. Therefore, we cannot reasonably provide a comprehensive list of all the Schur rings over these semigroups. Instead, we provide a summary of the GAP calculations. Of the total, 65.8\% have only 1 or 2 Schur rings, and only 4.7\% are Schur-dense (having 6 or more Schur rings). Observe that $924$  semigroups (79.7\%) have a Bell number of Schur rings. Many of these semigroups are predictable; in particular, $\B(5)=52$ will only be attained by $\LO_5$ and the $7$ semigroups which have $\B(4)=15$ Schur rings are given by Theorem \ref{bnminus}. In Table \ref{fig:sgsr5}, we use $\Omega$ to denote the number of Schur rings and $G$ for the number of semigroups which obtain $\Omega$ Schur rings.

We will refer to semigroups by their GAP identification while making some comments. Several semigroups obtain a product of Bell numbers, which is not a surprise. Semigroups 1007, 1008, 1149, and 1152 are $\z_2 \s \z_3$, $\z_3 \s \z_2$, $\LO_2\s\z_3$, and $\z_3\s\LO_2$, respectively, where Theorem \ref{thm:stackschur} predicts the amount of Schur rings. Of interest are semigroups 314 and 315, being $\OLO_{3,2}$ and $\ORO_{3,2}$, which despite their similarity have different numbers of Schur rings ($7$ and $15$, respectively) as indicated in \thmref{thm:oro} and \corref{cor:olo}. Semigroups 738 and 745 ($\OLO_{2,3}$ and $\ORO_{2,3}$) also illustrate this point ($6$ and $15$ Schur rings). Other Schur-dense semigroups ($148$, $149$, $1125$, $1140$, $671$, and $672$) come from one element extensions of order 4 Schur-dense semigroups ($V_4^e$, $V_4^\theta$, $\LO_4^\theta$, $\LO_4^e$, $(\LO_2\times \z_2)^e$, and $(\LO_2\times \z_2)^\theta$, respectively). Furthermore, unites of Schur-dense semigroups often produce Schur-dense semigroups (\thmref{thm:thetastackrings}); e.g. $297$, $331$, and $672$, which are $\ORO_{2, 1}\u\ORO_{2,1}$, $\OLO_{2,1}\u\ORO_{2,1}$, and $\z_2^\theta\u \z_2^\theta$, respectively. 

\begin{table}[!ht]
\begin{center}
\begin{tabular}{cc|cc|cc|cc}
$\Omega$ & $G$ & $\Omega$ & $G$ & $\Omega$ & $G$ & $\Omega$ & $G$ \\
\hline
$1$ & $246$ & $4$ & $117$ & $7$ & $12$ & $10$ & $19$\\
 $2$ &$517$ & $5$ & $153$ & $8$ & $2$  & $15$ & $7$\\
 $3$ &$84$  & $6$ & $9$   & $9$ & $3$  & $52$ & $1$
\end{tabular}
\caption{Summary of results for Schur rings of semigroups of order $5$}
\label{fig:sgsr5}
\end{center}
\end{table}

Considering the distribution, the mean number of Schur rings over semigroups of order 5 is $\mu=2.75$, 
with a standard deviation of $\sigma=2.35.$ 
The most common number of Schur rings afforded a semigroup of this order is $2$, with $517$ semigroups admitting a single non-discrete Schur ring.

%

\subsection{Semigroups of Order $6$}
We observe that there are 15973 semigroups of order $6$ and $\B(6)=203$. We must again offer a brief summary of the GAP calculations, given in the table. Of the total, 46.0\% have only 1 or 2 Schur rings, and only 0.4\% are Schur-dense (having 16 or more Schur rings). Observe that 11636 semigroups (72.8\%) have a Bell number of Schur rings. There are two groups of order $6$, namely $S_3$ and $\z_6$. The symmetric group $S_3$ (4337) is the only semigroup of this order to attain $45$ semigroup Schur rings. The cyclic group $\z_6$ (14996) has $11$ semigroup Schur rings, a property shared by $11$ other semigroups. We again mention some of the Schur-dense semigroups, in addition to $S_3$.  The semigroup $\LO_3\times \z_2$ (9978) is the only semigroup with $31$ Schur rings, and $\ORO_{2,1}\u \ORO_{2,1} \u \CH_2$ (7066) is the only semigroup to have $22$. Rosters of Schur-dense semigroups are often Schur-dense themselves. The only two semigroups with $16$ Schur rings (3353 and 12431) are rosters, including $\OLO_{2,4}$. The only two semigroups with $17$ Schur rings (1002 and 7714) are rosters.

While not considered Schur-dense, the two semigroups with $13$ Schur rings (7273 and 15874) are $\OLO_{3,3}$ and $\LO_3\times \RO_2$, the latter being the unique purely rectangular band of order $6$. 



\begin{table}[!ht]
\begin{center}
\begin{tabular}{cc|cc|cc|cc|cc|cc|cc}
$\Omega$   & $G$ & $\Omega$   & $G$ & $\Omega$  & $G$ & $\Omega$   & $G$ & $\Omega$ & $G$ & $\Omega$ & $G$ & $\Omega$ & $G$\\
\hline
1  &2093 &  5  &2443 &  9  &46  &  13 &2    &  17 &2  &  25 &5  &  52 & 8 \\
2  &5259 &  6  &400  &  10 &292 &  14 &13   &  18 &18 &  30 &9  &  203 & 1 \\
3  &916  &  7  &215  &  11 &12  &  15 &1832 &  20 &10 &  31  & 1  \\
4  &2202 &  8  &173  &  12 &17  &  16 &2   &  22 &1  &  45  & 1  \\
\end{tabular}
\caption{Summary of results for Schur rings of semigroups of order $6$}
\label{fig:sr6}
\end{center}
\end{table}

Considering the distribution, the mean number of Schur rings over semigroups of order 6 is $\mu=4.67$, 
with a standard deviation of $\sigma=4.70$. 
The most common number of Schur rings afforded a semigroup of this order is again $2$, with $5259$ semigroups admitting a single non-discrete Schur ring. See Table \ref{fig:sr6} for more details.

\subsection{Semigroups of Order $7$}
We observe that there are 836021 semigroups of order $7$ and $\B(6)=877$. Of the total, 10.8\% have only 1 or 2 Schur rings, and only 0.006\% are Schur-dense (having 53 or more Schur rings). Observe that 694279 semigroups (83.0\%) have a Bell number of Schur rings. There is one group of order $7$, namely $\z_7$ (836017), which has five Schur rings. The only semigroup with $55$ Schur rings (724285) is $\ORO_{3,2}\u \RO_2^\theta$. The semigroup $(\O_3^e[\hat{\theta}, \theta])^e$ (546872) is the lone semigroup with $57$ Schur rings, and $\ORO_{2,1}\u\ORO_{2,1}\u K_{1,2}$ (723804) is the only semigroup to have $65$ Schur rings. Among semigroups which produce an uncommon number of Schur rings, rosters are again common. For example, one of the three semigroups with $36$ Schur rings (546862) is $\OLO_{3,2,3}$, as well as the only semigroup with $54$ Schur rings (730772), $\OLO_{2,2,4}$.

\begin{table}[!ht]
\begin{center}
\begin{tabular}{cc|cc|cc|cc|cc}
$\Omega$    & $G$ & $\Omega$     & $G$ & $\Omega$  & $G$ & $\Omega$     & $G$ & $\Omega$ & $G$  \\
\hline
1  &22667 &  12 &680    &  23 &7   &  34 &2      &  55  & 1  \\
2  &67360 &  13 &73     &  24 &10  &  35 &11     &  57  & 1  \\
3  &12394 &  14 &471    & 25 & 265 &  36 &3      &  60  & 9  \\
4  &40933 &  15 &423748 &  26 &8   &  40 &1      &  65  & 1  \\
5  & 30789 & 16 &45356  &  27 &6   &  41 &3      &  67  & 6  \\
6  &8812  &  17 &9503   &  28 &6   &  44 &2      &  75  & 7  \\
7  &5338  &  18 &836    &  29 &8   &  45 &9      &  77  & 1  \\
8  &5551  &  19 &89     &  30 &328 &  50 &8      &  82  & 1  \\
9  &1077  &  20 &823    &  31 &8   &  52 &149705 &  104  & 9 \\
10 &8846  &  21 &25     &  32 &5   &  53 &2      &  203  & 9 \\
11 &125   &  22 &72     &  33 &9   & 54 & 1      &  877  & 1
\end{tabular}
\caption{Summary of results for Schur rings of semigroups of order $7$}
\label{fig:sr7}
\end{center}
\end{table}

Considering the distribution, the mean number of Schur rings over semigroups of order 7 is $\mu = 18.96$, 
with a standard deviation of $\sigma = 16.24$. 
Unlike the previous orders, the mode is at $\B(4) = 15$, with 423748 semigroups.  See Table \ref{fig:sr7} for more details.

\section{Conclusion}\label{sec:questions}
This paper has studied Schur rings over semigroups, which are those partitions of the semigroup which afford combinatorial subrings of the associated semigroup algebra. More broadly speaking, Schur rings are those partitions of the semigroup which behave like subsemigroups. Subsemigroups are those subsets of a semigroup which are closed under multiplication, that is, they are the algebraic subsets. Likewise, homomorphisms are those functions between semigroups which are algebraic, that is, functions which preserve multiplication. Our position here is that Schur rings are the algebraic partitions of a semigroup in similar manner. Two specific partitions on semigroups of note that were not considered in this paper are kernels of homomorphisms and the Green relations. Of future interest will be how these partitions relate to the Schur rings of semigroups. Some natural questions are immediate, such as the fact that all five\footnote{Really four, since $\mathcal{D}=\mathcal{J}$ for finite semigroups.} Green relations on a finite nilpotent semigroup are discrete, and hence a Schur ring.

As Bell numbers count partitions of sets, we have seen that Bell numbers are significant in the counting of algebraic partitions of semigroups, with many families demonstrating a Bell number of Schur rings, such as $\LO_n$, $\O_n$, $\K_{1,n}$, $\ORO_{n,m}$, and $\OCH_{n,m}$. Examining all Schur rings up to order 7, we see $82.2\%$ of semigroups have a Bell number of Schur rings. In the distribution of Schur rings number, we clearly see a clustering around Bell numbers. For these semigroups, we recognize broad flexibility of Schur rings, an algebraic property of the semigroup. Conversely, we have also seen great rigidity where specific elements or subsets must necessarily be isolated in every Schur ring, such as $G^e$, $G^\mu$, $G^\theta$, or $G\t \z_1$. In extreme cases, the semigroup might be so rigid to only exhibit $1$ or $2$ Schur rings, such as $\CH_n$ and $\z_{n,m}$. Combining these observations, we see that specific semigroups have algebraic zones of rigidity and zones of flexibility, which the Schur rings measure. Given that for some families of semigroups the number of Schur rings grows multiplicatively, such as $G\s H$, $\LO_n\t G$, $\OO_{n,m}$, or $\OROP_n$, it is clear why products of Bell numbers likewise see clustering of semigroups, that is $\Omega(G) $ being $10=2\cdot 5$, $25 = 5\cdot 5$, $30=2\cdot 15$ is a common behavior for semigroups of order up to $7$. We have also seen many almost-multiplicative constructions on semigroups that influence their number of Schur rings, meaning that most of the Schur rings are accounted for by multiplicative growth but a few others may be present (or missing) for other reasons, such as $G\t H$, $G\u H$, $\LOZ_n$, $\OLO_{n,m}$, or $\OZ_{n,m}$.

Because of computational efficiency and the sheer volume of semigroups of order 8, this paper only statistically analyzes semigroups up to order 7, despite the entire catalog of order 8 semigroups being within the \emph{smallsemi} package. Of future interest will be an attempt to analyze these Schur rings over semigroups of order 8. While not contained within \emph{smallsemi}, semigroups of orders 9 and 10 have been enumerated \cite{Distler}. Of future interest would be any attempt to analyze the Schur ring distribution of any order beyond 8. Dwarfing even the number of semigroups of order 8, a census approach may be unlikely, but utilizing sampling methods and other statistical analyses may be able to still provide sufficient insights.

From the data we have been able to analyze, asymptotically $\mu\sim \sigma \sim \B(n-3)$. This distribution could be explained by the clustering around Bell numbers as mentioned above, especially around 1, 2, $\B(n-3)$, and $\B(n-2)$ many Schur rings. As mentioned above, as the order increases the percentage of nilpotent semigroups, especially a majority of semigroups being $3$-nilpotent, likewise increases. The indecomposable imposition puts an enormous restriction on Schur rings of nilpotent semigroups, there is hope that statistical methods can provide an asymptotic approximation of this Schur ring distribution.

As observed previously, it is quite common for semigroups to have a single Schur ring, necessarily the discrete Schur ring. As the authors have explored the Schur rings over various families of semigroups, we have observed that classifying the coarsest Schur ring over a semigroup is not always obvious nor straightforward. For example, in preparation of this paper, the authors have observed that the coarsest Schur ring over a random, finite semilattice can vary dramatically from another random semilattice. Since attempting to classify just the coarsest Schur ring over a semigroup $G$ led to so many insights on the classification of all the Schur rings over $G$, identifying the coarsest Schur ring is the first question to ask when studying Schur rings over a specific semigroup. Therefore, the classification of the coarsest Schur ring over specific semigroup families will be a fruitful tree for future research on this subject (see, for example, Theorem \ref{thm:nilriisr}). 

The converse question, classifying semigroups which obtain a specific coarsest Schur ring will also be of interest, e.g. semigroups whose coarsest Schur ring is discrete, indiscrete, trivial, or which only have two Schur rings, the discrete and coarsest Schur rings. Additionally, we propose the question of which semigroups have a Schur ring for every refinement of its coarsest Schur ring, which will necessarily result in the product of Bell number of Schur rings seen in \corref{cor:omegabound}, similar to Theorems \ref{leftnullconverse} and \ref{bnminus}. When all but one class is a singleton, the number of Schur rings will be a Bell number. As such, we ask which semigroups have as the coarsest Schur ring a partition having exactly two or three singletons and all refinements are Schur rings, the classification of which would enlighten the clustering of semigroups at $\B(n-2)$ and $\B(n-3)$.

We have seen many examples of ideal extensions of semigroups and their connections to Schur rings. Consider, for example, stacks of semigroups as an ideal extension. Just like $G$ is an ideal of the semigroup $G\s H$, the Schur ring $\S$ is an ideal in the ring $\S\s\T$. In fact, every Schur ring over $G\s H$ is an ideal extension of $\S$ by $\T$, where $\S$ and $\T$ Schur rings over $G$ and $H^\theta$, respectively. By \thmref{cor:thetaschur}, we may identify $\T$ with a Schur ring over $H$. Furthermore, the compatibility criterion for such ideal extensions of Schur rings is always trivial, that is, $\S$ can be any Schur ring over $G$, not just the discrete partition. By contrast, while the two Schur rings in Example \ref{exam:numerical} are both ideal extensions, they both require the partition on the ideal be discrete, even though this ideal has non-discrete Schur rings. These examples compared side-by-side illustrate the wide gap of possibilities for the partition of the ideal in these ideal extension Schur rings, further suggesting the need for deeper consideration.

We saw that recursive indecomposable imposition gives a sequence of ideal extensions which imposes bounds on the coarsest Schur ring. The ideal $G^k$ inherited restrictions recursively from its complement. Can these restrictions be predicted systematically from the Cayley table of the semigroup? Which non-discrete partitions on $G^\infty$ are compatible with the selected partition on its complement?

Ideal extensions and Rees quotients are a special case of semigroup extensions. Returning to the question about kernels of homomorphisms, wedge products of group Schur rings lift Schur rings from the quotient group via cosets. The partition on the kernel may be selected so long as it is ``compatible'' with the partition of the quotient. The articulation of this compatibility criterion for wedge products was the key piece regarding the classification of Schur rings over finite cyclic groups \cite{LeungII, LeungI}. We anticipate that a similar formalization of compatibility criteria for the semigroup ``wedge'' products discussed here will likewise prove potent and deserves further investigation.

We present one more question. This paper began with taking a group and relaxing its structure into a semigroup and observing an increase of Schur rings. If we view Schur rings as the algebraic partitions of a group, it is expected that there would be more algebraic partitions when we weaken the axioms of the algebra. We would expect a group to have fewer Schur rings as a group as it would as a monoid or inverse semigroup, as it would also as a semigroup. But we observed that zero semigroups have the exact same Schur rings as the weaker semigroup structure. Where else does this switch in algebraic categories result in the same number of Schur rings? As another example, as a semigroup $G^e$ has the same number of Schur rings as it's monoid structure. For small abelian groups, we observed that the number of monoid Schur rings is identical to its number of group Schur rings. We conjecture that this holds for all finite abelian groups.

We hope these results and questions will excite future study of Schur rings over semigroups, which may elucidate algebraic structure of semigroups not presently examined. Wielandt's ``Method of Schur,'' which has proven helpful in the study of groups, now appears equally useful in the study of semigroups. Perhaps this ``Method of Schur'' could be applied in other algebraic categories. 

\begin{table}[h!]
\begin{center}
\begin{tabular}{cc|cc|cc|cc|cc|cc}
Fors. & GAP & Fors. & GAP & Fors. & GAP & Fors. & GAP & Fors. & GAP & Fors. & GAP\\
\hline
000 & 1  & 021 & 54 & 042 & 40  & 063 & 92  & 084  & 86  & 105 & 27  \\
001 & 8  & 022 & 55 & 043 & 67  & 064 & 102 & 085  & 114 & 106 & 28  \\
002 & 14 & 023 & 42 & 044 & 68  & 065 & 103 & 086  & 115 & 107 & 88  \\
003 & 2  & 024 & 56 & 045 & 69  & 066 & 104 & 087  & 94  & 108 & 49  \\
004 & 9  & 025 & 57 & 046 & 70  & 067 & 105 & 088  & 116 & 109 & 89  \\
005 & 15 & 026 & 58 & 047 & 71  & 068 & 85  & 089  & 117 & 110 & 90  \\
006 & 16 & 027 & 43 & 048 & 72  & 069 & 106 & 090  & 29  & 111 & 50  \\
007 & 17 & 028 & 59 & 049 & 73  & 070 & 107 & 091  & 87  & 112 & 5   \\
008 & 18 & 029 & 60 & 050 & 74  & 071 & 108 & 092  & 91  & 113 & 13  \\
009 & 3  & 030 & 44 & 051 & 75  & 072 & 109 & 093  & 118 & 114 & 30  \\
010 & 10 & 031 & 61 & 052 & 76  & 073 & 110 & 094  & 119 & 115 & 31  \\
011 & 11 & 032 & 62 & 053 & 77  & 074 & 93  & 095  & 33  & 116 & 32  \\
012 & 19 & 033 & 63 & 054 & 47  & 075 & 111 & 096  & 120 & 117 & 51  \\
013 & 20 & 034 & 45 & 055 & 78  & 076 & 25  & 097  & 95  & 118 & 123 \\
014 & 21 & 035 & 64 & 056 & 79  & 077 & 41  & 098  & 124 & 119 & 52  \\
015 & 34 & 036 & 46 & 057 & 48  & 078 & 80  & 099  & 96  & 120 & 97  \\
016 & 38 & 037 & 65 & 058 & 98  & 079 & 81  & 100 & 121 & 121 & 125 \\
017 & 35 & 038 & 66 & 059 & 99  & 080 & 82  & 101 & 122 & 122 & 6   \\
018 & 39 & 039 & 22 & 060 & 84  & 081 & 83  & 102 & 4   & 123 & 126 \\
019 & 36 & 040 & 23 & 061 & 100 & 082 & 112 & 103 & 12  & 124 & 7   \\
020 & 53 & 041 & 24 & 062 & 101 & 083 & 113 & 104 & 26  & 125 & 37 \\
\\
\\
\\
GAP & Fors. & GAP & Fors.  & GAP & Fors.  & GAP & Fors. & GAP & Fors.  & GAP & Fors.  \\
\hline
1   & 000      & 22  & 039  & 43  & 027  & 64  & 035 & 85  & 068  & 106 & 069  \\
2   & 003      & 23  & 040  & 44  & 030  & 65  & 037 & 86  & 084  & 107 & 070  \\
3   & 009      & 24  & 041  & 45  & 034  & 66  & 038 & 87  & 091  & 108 & 071  \\
4   & 102      & 25  & 076  & 46  & 036  & 67  & 043 & 88  & 107  & 109 & 072  \\
5   & 112      & 26  & 104  & 47  & 054  & 68  & 044 & 89  & 109  & 110 & 073  \\
6   & 122      & 27  & 105  & 48  & 057  & 69  & 045 & 90  & 110  & 111 & 075  \\
7   & 124      & 28  & 106  & 49  & 108  & 70  & 046 & 91  & 092  & 112 & 082  \\
8   & 001      & 29  & 090  & 50  & 111  & 71  & 047 & 92  & 063  & 113 & 083  \\
9   & 004      & 30  & 114  & 51  & 117  & 72  & 048 & 93  & 074  & 114 & 085  \\
10  & 010      & 31  & 115  & 52  & 119  & 73  & 049 & 94  & 087  & 115 & 086  \\
11  & 011      & 32  & 116  & 53  & 020  & 74  & 050 & 95  & 097  & 116 & 088  \\
12  & 103      & 33  & 095  & 54  & 021  & 75  & 051 & 96  & 099  & 117 & 089  \\
13  & 113      & 34  & 015  & 55  & 022  & 76  & 052 & 97  & 120  & 118 & 093  \\
14  & 002      & 35  & 017  & 56  & 024  & 77  & 053 & 98  & 058  & 119 & 094  \\
15  & 005      & 36  & 019  & 57  & 025  & 78  & 055 & 99  & 059  & 120 & 096  \\
16  & 006      & 37  & 125  & 58  & 026  & 79  & 056 & 100 & 061  & 121 & 100 \\
17  & 007      & 38  & 016  & 59  & 028  & 80  & 078 & 101 & 062  & 122 & 101 \\
18  & 008      & 39  & 018  & 60  & 029  & 81  & 079 & 102 & 064  & 123 & 118 \\
19  & 012      & 40  & 042  & 61  & 031  & 82  & 080 & 103 & 065  & 124 & 098  \\
20  & 013      & 41  & 077  & 62  & 032  & 83  & 081 & 104 & 066  & 125 & 121 \\
21  & 014      & 42  & 023  & 63  & 033  & 84  & 060 & 105 & 067  & 126 & 123
\end{tabular}
\end{center}
\caption{Correspondence between indexing of order 4 semigroups in Forsythe \cite{Forsythe} and GAP}
\label{apx:A}
\end{table}

\subsection*{Appendix A: \quad Notation}\label{appB}
All unions are assumed to be disjoint unions. If $G$ and $H$ are semigroups, multiplication on them is preserved, i.e. when defining a multiplication on $G\cup H$ we need only consider the definitions of $gh$ and $hg$, for $g\in G$ and $h\in H$.\\ 

\begin{hangparas}{15pt}{1}

$\CH_n$, the chain semigroup, is the set $\{1, \ldots, n\}$ under the operation $\max(\cdot, \cdot)$ and is introduced on page \pageref{ch}. It has $1$ Schur ring by \thmref{thm:chainschur}.

$G \times H$, the direct product of $G$ and $H$, is the set $\{(g, h) \mid g\in G, h\in H\}$ where $(g_1, h_1)(g_2, h_2)=(g_1g_2, h_1h_2)$ and is introduced on page \pageref{direct}.

$G^e$ is the set $G\cup\{e\}$ where $ge=eg=g$ for all $g\in G$ and is introduced on page \pageref{monoid}. Its Schur rings are described by \thmref{thm:monoidSchur}.


$G^\text{op}$, the opposite semigroup of $G$, requires knowledge of the operation of $G$. Let juxtaposition denote the operation of $G$ and $\cdot$ the operation of $G^\text{op}$. Then if $xy=z$ we have $y\cdot x=z$. It is introduced on page \pageref{Pageop}.

$G \r x (H,K)$ is the set $G\cup\{x^\prime\}$. We require that $H\leq G_x$ and $K\leq \, _x G$ where both $G-H$ and $G-K$ are prime ideals. Further, one of 
$$\begin{tikzpicture}
    \path (0, 0) node {(a)} (2, 0) node {(b)} (4, 0) node {(c)} (6, 0) node  {(d)} 
        (0, -0.5) node {$H\leq K$} (2, -0.5) node  {$K\leq H$}
        (4, -0.5) node {$x\in H\cap K$} (6, -0.5) node {$x\not\in H\cap K$}
        (0, -1) node {$x^\prime x$} (2, -1) node {$xx^\prime$} (4, -1) node {$x^\prime$} (6, -1) node {$x^2$};
\end{tikzpicture}$$
must be true. For $y\in H$ we have $yx^\prime=x^\prime$ and if $y\not\in H$ then $yx^\prime=yx$. If $z\in K$ then $x^\prime z= x^\prime$ but if $z\not\in K$ then $x^\prime z = xz$, and $(x^\prime)^2$ is determined by which of (a), (b), (c), or (d) are satisfied. This is called a roster on $G$, where $(x, H, K)$ is the roster of $G \r{x}(H, K)$, and is introduced on page \pageref{roster1}.

$G \r x H$ is the set $G\cup\{x^\prime\}$, and is a simplified notation for $G \r x (H, H)$. For $y\in H$ we have $yx^\prime=x^\prime$ and $x^\prime y = x^\prime$, but if $y\not\in H$ then $yx^\prime=yx$ and $x^\prime y = xy$, additionally $(x^\prime)^2=xx^\prime=x^\prime x$. We require $H\leq G_x$, $H\leq\ _x G$, and that $G-H$ is a prime ideal. It is introduced on page \pageref{roster2}.

$G \rhat x H$ is the set $G\cup\{x^\prime\}$. For $y\in H$ we have $yx^\prime=x^\prime$ and $x^\prime y = x^\prime$, but if $y\not\in H$ then $yx^\prime=yx$ and $x^\prime y = xy$, additionally $(x^\prime)^2$ is $x^\prime$ if $x^\prime x=x^2$ but $x^2$ if $x^\prime x = x^\prime$. We require $H\leq G_x$, $H\leq\ _x G$, and that $G-H$ is a prime ideal, $x^2=x$. It is introduced on page \pageref{rosterhat}.

$G\r{x,y} (H,K)$ is the set $(G\r x (H,K))\r y(H,K)$. This is introduced on page \pageref{rosterdual}.

$G\r X (H,K)$ is equal to $G\r{X-x,x} (H,K)$ for any $x\in X$. It is introduced on page \pageref{rinduction}.

$G \s H$ is the set $G\cup H$ where $gh=hg=g$ for all $g\in G$ and $h\in H$ and is introduced on page \pageref{stack}. It is called the stack of $G$ and $H$. The Schur rings of $G \s H$ are given by \thmref{thm:stackschur}.

$G \t H$ is the set $G\cup H$ where $gh=\theta_G$ and $hg=\theta_H$ for all $g\in G$, $h\in H$. Both $G$ and $H$ must be zero-semigroups. It is introduced on page \pageref{twist}. This is called the twist of $G$ and $H$. If $G=\LO_n$ see $\LO_n \t G$ below. Some of its Schur rings are described by \thmref{thm:veebarschur}.

$G^\theta$ is the set $G\cup\{\theta\}$ where $g\theta=\theta g = \theta$ for all $g\in G$ and is introduced on page \pageref{theta}. The Schur rings are given by \corref{cor:thetaschur}.

$G^{e\theta}=G^{\theta e} = (G^e)^\theta = (G^\theta)^e$.

$G \u H$ is given by $(G\t H)/\{\theta_G, \theta_H\}$. Multiplication is defined by $gh=hg=\theta$. Both $G$ and $H$ must be zero-semigroups. It is introduced on page \pageref{unite}. This is called the unite or unification of $G$ and $H$. If $G=\LO_n$ see $\LO_n \u G$ below. Some of the Schur rings are described by \corref{thm:thetastackrings}.

$G[x]$ is the set $G\cup \{x^\prime\}$ where $gx^\prime = gx$ and $x^\prime g = xg$, for $x\in G$ and is introduced on page \pageref{clone}. This process is called cloning, where $x^\prime$ is the clone of $x$. We can iterate this process, adjoining multiple clones, including multiple clones of the same elements. If $X=\{x_1, x_2, \ldots x_n\}$ is a multiset of elements of $G$, then $G[X]$ is the semigroup $G[x_1][x_2]\ldots[x_n]$. Schur rings can be constructed using clones, \thmref{thm:clonerings}.

$G[\hat{x}]$ is the set $G\cup \{x^\prime\}$ where $gx^\prime = gx$, $x^\prime g = xg$, and $(x^\prime)^2=x^\prime$, for $x\in G$, $x^2=x$ and is introduced on page \pageref{Iclone}. It is the same as cloning except that $x^\prime$, being idempotent, is called an idempotent clone.

$G^\mu$ is the set $G \cup \{\mu\}$ where $g\mu = \mu g = \mu$ for all $g\in G$, and $\mu^2 =\theta$. This is the roster $G\rhat{\theta} G$. It is introduced on page \pageref{dualclone}.



$\K_{1,n}$, the bipartite semilattice, is the set $\{\theta, 1, \ldots, n\}$ where $x^2=x$ and all other products are $\theta$ and is introduced on page \pageref{bipart}. It has $\B(n-1)$ Schur rings by \thmref{thm:bipartiteschur}.


$\LO_n$, the left-null semigroup, is the set $\{1, \ldots, n\}$ where $ab=a$ for all $a$, $b\in \LO_n$ and is introduced on page \pageref{lnull}. It is the semigroup $\RO_n^\text{op}$. It has $\B(n)$ Schur rings by \thmref{thm:leftnullschur}, and up to equivalence is defined by that fact, see \thmref{leftnullconverse}.

$\LOO(\ell, m, n, k)$ is the set $\{0, 1, \ldots, \ell+m+n-1\}$. We require $0\le k<\ell^{nm}$. Let $x$, $y\in\LOO(\ell, m, n, k)$, if $x<\ell+m$ then $xy=x$, otherwise if $x\geq \ell+m$ and if $y<\ell$ or $y\geq\ell+m$ then $xy=0$. If $x\geq \ell+m$ and $\ell\leq y < m$ then $k$ reports the behavior of the product by encoding how the $n\times m$ submatix of the Cayley table is to be filled in.  When $k$ is converted into a base $\ell$ number, it gives the entries of the $n\times m$ submatrix in standard reading order, that is left-to-right, top-to-bottom. Let $L=\{0,\ldots, \ell-1\}$, $M=\{\ell, \ldots, \ell+m-1\}$, and $N=\{\ell+m,\ldots, \ell+m+n-1\}$. Then $L\cong \LO_\ell$, $M\cong \LO_m$, $L\cup M\cong \LO_{\ell+m}$, $N\cup 0 \cong \O_{n+1}$, and $L\cup N\cong \LO_{\ell-1}\t \O_{n+1}$. By a change of $k$ if necessary, we may require that $NM=L$, ignoring multiplicities. It is introduced on page \pageref{loo}. We know $\B(m)\B(n) \le \Omega(\LOO(\ell,m,n,k))\le \B(\ell-1)\B(m)\B(n)$ by \thmref{thm:looschur}.

$\LORO(\ell,m,n,k)$ is the set $\{0,1,\ldots, \ell+m+n-1\}$. We define multiplication of $x$, $y\in \LORO(\ell, m, n, k)$ as $xy=x$ if $x<\ell+m$, otherwise if $x\geq \ell+m$ and one of $y< \ell$ or $y\geq \ell+m$, we have $xy=y$. The remaining $n\times m$ submatrix of the Cayley table is given by a number $k$ which, when converted into a base $\ell$ number, gives the entries of the submatrix in standard reading order. It is introduced on pages \pageref{semistack}, \pageref{loro}. By \thmref{thm:loroschur} $\B(m)\B(n) \le \Omega(\LORO(\ell,m,n,k))\le \B(\ell)\B(m)\B(n)$.

$\LO_n \shat G$ is the set $\LO_n \cup G$. We require $\varphi: G\to S_n$ to be a homomorphism. Then for $x\in \LO_n$ and $g\in G$ define $xg=x$ and $gx=\varphi_g(x)$, where $\varphi_g\colon \LO_n\rightarrow \LO_n$ is the automorphism associated to $g$. It is introduced on page \pageref{semistack}. It is called a semi-stack.

$\LO_n \t G$ is the set $\LO_n\cup G$ where for $g\in G$ and $x\in \LO_n$ $gx=\theta$ and $xg=x$. We require $\theta\in G$. It is introduced on page \pageref{semitwist}. This is called a semi-twist.

$\LO_n \u G$ is the set $(\LO_n \t G)/\{0, \theta_G\}$, where $G$ is a zero-semigroup, but as this equals $\LO_{n-1}\t G$ we do not use this ``semi-unite.'' It is mentioned in a footnote on page \pageref{semiunite}.

$\LOZ_n$ is the set $\LO_n\cup\z_n$ (see $\LOZ_{m,n}$, $\LO_n \shat G$) and is introduced on page \pageref{lozn}.

$\LOZ_{m,n}$ is the set $\LO_m \cup \z_n$ (see $\LO_n \shat G$).

$\LOZ_{m,n,k}$ is the set $\LO_m \cup \z_{n,k}$ (see $\LO_n \shat G$).

$\O_n$, the null semigroup, is the set $\{\theta, 1, \ldots, n-1\}$ where $ab=\theta$ for all $a$, $b$ and is introduced on page \pageref{null}. Every partition containing $\theta$ is a Schur ring, giving $\B(n-1)$ Schur rings by \thmref{thm:nullschur}.

$\O(m,n,k)$ is the set $\{0,1,\ldots, m+n-1\}$. Multiplication is defined for $x,y\in \O(m,n,k)$ by cases. If $x<m$ then $xy=yx=0$, if $x$, $y\geq m$ then $k$ reports the behavior of the product by encoding how the Cayley table is to be filled it. There is an $n\times n$ submatrix of the Cayley table which has not been determined which will consist of integers from $\{0, 1, \ldots, m-1\}$. If the $n^2$ entries of this matrix are, reading left-to-right and top-to-bottom, $k_{n^2-1},\ldots, k_2, k_1, k_0$, then $k = k_{n^2-1}(m^{n^2-1}) + \ldots + k_2(m^2)+ k_1(m) + k_0$. It is introduced on page \pageref{bigO}. By \thmref{thm:bigOschur} we know $\B(n) \le \Omega(\O(m,n,k))\le \B(m-1)\B(n)$.

$\OCH_{m,n}$ is the set $\O_m\cup\CH_n$ (see $\O G_n$) and is introduced on page \pageref{och}. It has $\B(m-1)$ Schur rings by \corref{cor:och}.

$\O G_n$ is the set $\O_n\cup G$ where for $x\in \O_n$, $g\in G$ we have $xg=\theta$, $gx=x$ and is introduced on page \pageref{og}. If $G$ is a finite zero-semigroup, then by \thmref{thm:och} we have $\Omega(OG_n)=\Omega(G)\B(n-1)$, otherwise see \thmref{thm:og}.

$\O G_{m, n}$ is introduced on page \pageref{ogtwo}. It is defined recursively by $\OG_{m,1} = \OG_m$ and $\OG_{m,n} = \OG_{m,n-1}\r\theta (\emptyset, G)$ for $n\ge 2$.

$\O \K_{m, n}$ is the set $\O_m \cup \K_{1,n}$, see $\O G_n$. It has $\B(m-1)\B(n-1)$ Schur rings.

$\OO_{m,n}$ is the set $O_m\cup\O_n$ (see $\O G_n$) and is introduced on page \pageref{oo}. By \corref{oo} it has $\B(n-1)\B(m-1)$ Schur rings.

$\OLO_{m,n}$ is the set $O_m\cup\LO_n$ (see $\O G_n$) and is introduced on page \pageref{olo}. By \corref{cor:olo} its Schur rings are given by the sum $\B(m-1)\B(n) + \sum_{k=1}^{m-1} \binom{m-1}{k}\B(m-k-1)$.

$\OLO_{\ell, m, n}$ is the set $\{\theta, 1, \ldots, \ell+m+n-1\}$. It is defined recursively on page \pageref{olothree} by $\ORO_{\ell, m, n} = \OLO_{\ell, m-1, n} \r{\theta} (\emptyset, \LO_n)$  where $\OLO_{\ell, 1, n}=\OLO{\ell, n}$. This semigroup has $\Omega(\OLO_{\ell,m,n}) = \B(\ell-1)\B(m+n-1) + \sum_{k=1}^{\ell-1} \dbinom{\ell-1}{k}(\B(\ell-k-1)\B(n-1)+1)$ Schur rings. We remark that $\OLO_{\ell, m, n} = \ORO_{m, \ell, n}$. 


$\ORO_{m,n}$ is the set $O_m\cup\RO_n$ (see $\O G_n$) and is introduced on page \pageref{oro}. By \thmref{thm:oro} it has $B(m+n-1)$ Schur rings.

$\ORO_{\ell, m,n}$ is the set $\{\theta, 1, \ldots, \ell+m+n-1\}$. It is first mentioned on page \pageref{orothree}. It is defined recursively by $\ORO_{\ell, m, n} = \ORO_{\ell, n} \r{\theta} (\ORO_{\ell, m-1,n}, \RO_n)$ where $\ORO_{\ell, 1, n}=\ORO{\ell, n}$. This semigroup has $\Omega(\ORO_{\ell,m,n}) = \B(\ell+n-1)\B(m-1) + \sum_{k=1}^{m-1} \binom{m-1}{k}\B(\ell-1)(\B(m-k-1)+1)$ Schur rings.

$\OROP_n$ is the set $O_n\cup \{\alpha,\beta\}$ where for $x$, $y\in \O_n$ we have $xy=\theta$, $\alpha x=\alpha$, $\alpha^2=\alpha$, $x\beta=x$, $\beta x =\theta$, $\beta^2=\beta$, $\alpha\beta=x_1$, $\beta\alpha=\alpha$ and is introduced on page \pageref{orop}. By \thmref{thm:orop} the number of Schur rings is $2\B(n-2)$.

$\OZ_{m,n}$ is the set $\O_m\cup\z_n$ (see $\O G_n$) and is introduced on page \pageref{oz2}.

$\OZ_{m, n, k}$ is the set $\O_m\cup \z_{n,k}$ (see $\O G_n$).

$\RO_m$, the right-null semigroup, is the set $\{1, \ldots, n\}$ where $ab=b$ for all $a$, $b\in\RO_n$ and is introduced on page \pageref{rnull}. It has the same number of Schur rings as $\RO_n^\text{op} = \LO_n$, being $\B(n)$.





$\z_n$, the cyclic group, is generated by $\langle z \mid z^n=e\rangle$ and is introduced on page \pageref{Cg}.


$\z_{m,n}$, the monogenic semigroup, is generated by $\langle z \mid z^m=z^n\rangle$ and is introduced on page \pageref{zmn}. For $m>1$ it has a single Schur ring by \thmref{thm:monogenic}. Observe that $\z_{1,m}=\z_n$.
\end{hangparas}

\bibliographystyle{plain}
\bibliography{Srings}

\begin{thebibliography}{10}

\bibitem{InfiniteI}
Nicholas Bastian, Jaden Brewer, Stephen Humphries, Andrew Misseldine, and Cache Thompson.
\newblock On {Schur} rings over infinite groups.
\newblock {\em Algebr. Represent. Theory}, 23(3):493--511, 2020.

\bibitem{Distler}
Andreas Distler.
\newblock {\em Classification and Enumeration of Finite Semigroups}.
\newblock PhD thesis, University of St. Andrews, 2010.

\bibitem{Ponomarenko01}
S.~A. {Evdokimov} and I.~N. {Ponomarenko}.
\newblock {On a family of Schur rings over a finite cyclic group}.
\newblock {\em {St. Petersbg. Math. J.}}, 13(3):441--451, 2002.

\bibitem{Ponomarenko02}
S.~A. {Evdokimov} and I.~N. {Ponomarenko}.
\newblock {Characterization of cyclotomic schemes and normal Schur rings over a cyclic group.}
\newblock {\em {St. Petersbg. Math. J.}}, 14(2):11--55, 2003.

\bibitem{Forsythe}
George~E. Forsythe.
\newblock {SWAC Computes $126$ Distinct Semigroups of Order $4$}.
\newblock {\em {Proceedings of the American Mathematical Society}}, 6(3):443--447, 1955.

\bibitem{GAP}
The GAP~Group.
\newblock {\em {GAP -- Groups, Algorithms, and Programming, Version 4.13.1}}, 2024.

\bibitem{Ponomarenko2022}
Akihide Hanaki, Takuto Hirai, and Ilia Ponomarenko.
\newblock On a huge family of non-{Schurian} {Schur} rings.
\newblock {\em Electron. J. Comb.}, 29(2):research paper p2.14, 7, 2022.

\bibitem{Hendrickson}
Anders O.~F. {Hendrikson}.
\newblock Supercharacter theory constructions corresponding to {S}chur ring products.
\newblock {\em {Communications in Algebra}}, 40:4420--4438, 2012.

\bibitem{KlinPoschel}
M.~Kh. {Klin} and R.~P\"{o}schel.
\newblock The {K\"{o}nig} problem, the isomorphism problem for cyclic graphs and the method of schur rings.
\newblock {\em {Algebraic Methods in Graph Theory}}, 1, 2, 1978.

\bibitem{LeungII}
Ka~Hin Leung and Shing~Hing Man.
\newblock On {Schur} rings over cyclic groups. {II}.
\newblock {\em J. Algebra}, 183(2):273--285, 1996.

\bibitem{LeungI}
Ka~Hin Leung and Shing~Hing Man.
\newblock On {Schur} rings over cyclic groups.
\newblock {\em Isr. J. Math.}, 106:251--267, 1998.

\bibitem{CountingII}
Andrew {Misseldine}.
\newblock Enumeration techniques on cyclic {S}chur rings.
\newblock {\em {Trudy Instituta Matematiki i Mekhaniki UrO RAN}}, 27(4):276--289.

\bibitem{MePhD}
Andrew {Misseldine}.
\newblock {\em Algebraic and Combinatorial Properties of {S}chur Rings over Cyclic Groups}.
\newblock PhD thesis, Brigham Young University, 2014.

\bibitem{Muzychuk2016}
M.~E. Muzychuk and I.~N. Ponomarenko.
\newblock On {Schur} 2-groups.
\newblock {\em J. Math. Sci., New York}, 219(4):565--594, 2016.

\bibitem{MuzychukWedge}
Mikhail Muzychuk.
\newblock A wedge product of association schemes.
\newblock {\em Eur. J. Comb.}, 30(3):705--715, 2009.

\bibitem{Muzychuk09}
Mikhail Muzychuk and Ilia Ponomarenko.
\newblock Schur rings.
\newblock {\em Eur. J. Comb.}, 30(6):1526--1539, 2009.

\bibitem{Park}
Young~Soo Park and Ju~Pil Kim.
\newblock {Prime and Semiprime Ideals in Semigroups}.
\newblock {\em {Kyungpook Mathematical Journal}}, 32(3):629--633, 1992.

\bibitem{Rees}
D.~Rees.
\newblock On semi-groups.
\newblock {\em Proc. Camb. Philos. Soc.}, 36:387--400, 1940.

\bibitem{Ryabov2015}
Grigory~Konstantinovich Ryabov.
\newblock On {Schur} 3-groups.
\newblock {\em Sib. {\`E}lektron. Mat. Izv.}, 12:223--231, 2015.

\bibitem{Ryabov2022}
Grigory~Konstantinovich Ryabov.
\newblock On nilpotent {Schur} groups.
\newblock {\em Sib. {\`E}lektron. Mat. Izv.}, 19(2):1077--1087, 2022.

\bibitem{Ryabov2023}
Grigory~Konstantinovich Ryabov.
\newblock Generalized {Schur} groups.
\newblock {\em Algebra Logic}, 62(2):166--178, 2023.

\bibitem{Schur33}
Issai {Schur}.
\newblock Zur theorie der einfach transitiven permutationsgruppen.
\newblock {\em {Sitzungsber. Preuss. Akad. Wiss. Phy-Math Klasse, Berlin}}, 118:309--310, 1933.

\bibitem{Tamaschke70}
Olaf {Tamaschke}.
\newblock On {S}chur-rings which define a proper character theory on finite groups.
\newblock {\em {Math. Z.}}, 117:340--360, 1970.

\bibitem{semicount}
{The OEIS Foundation Inc.}
\newblock The on-line encyclopedia of integer sequences, 2026.

\bibitem{Wielandt49}
Helmut Wielandt.
\newblock Zur {Theorie} der einfach transitiven {Permutationsgruppen}. {II}.
\newblock {\em {Math. Z.}}, 52:384--393, 1949.

\bibitem{Wielandt64}
Helmut Wielandt.
\newblock {\em Finite permutation groups}.
\newblock {Academic Press, New York-London}, 1964.

\bibitem{Ziv14}
Matan {Ziv-Av}.
\newblock Enumeration of schur rings over small groups.
\newblock In {\em {Computer algebra in scientific computing. 16th international workshop, CASC 2014, Warsaw, Poland, September 8--12, 2014. Proceedings}}, pages 491--500. Berlin: Springer, 09 2014.

\end{thebibliography}
\end{document}